\documentclass[10pt]{amsart}%{article}
\usepackage{amsmath,amsthm, epsfig, amscd}
\usepackage{psfrag}
\usepackage[psamsfonts]{amssymb}
\usepackage[all]{xy}
\usepackage{mathrsfs}

\numberwithin{equation}{section}

%%%%%%%%%%%%%%%%%%%%%%%%%%%%%%%%%
\newtheorem{Thm}{Theorem}[section]
\newtheorem{Prop}[Thm]{Proposition}
\newtheorem{Lem}[Thm]{Lemma}
\newtheorem{Cor}[Thm]{Corollary}
\newtheorem{Conj}[Thm]{Conjecture}

\theoremstyle{remark}
\newtheorem{Rem}[Thm]{Remark}

\theoremstyle{definition}
\newtheorem{Def}[Thm]{Definition}
\newtheorem{Assum}[Thm]{Assumption}

\newtheorem{Exa}[Thm]{Example}

%%%%%%%%%%%%%%%%%%%%%%%%%%%%%%%%%
\newcommand{\mysection}[2]{%
\vspace{2mm}\section{\bf #1}\label{#2}
}

%%%%%%%%%%%%%%%%%%%%%%%%%%%%%%%%%
\newcommand{\fig}[1]
        {\raisebox{-0.5\height}
                 {\includegraphics{#1}}
        }

\newcommand{\cfig}[1]
        {\begin{center}\fig{#1}\end{center}
        }
%%%%%%%%%%%%%%%%%%%%%%%%%%%
%\setlength{\unitlength}{1in}
\setlength{\headheight}{30pt}
\setlength{\topmargin}{0cm} % for arxiv
\setlength{\oddsidemargin}{1.9cm} % for arxiv
\setlength{\evensidemargin}{1.9cm} % for arxiv

%%%%%%%%%%%%%%%%%%%%%%%%%%
%%%%%%%%%%%%%%%%%%%%%%%%%%
%  My macros
%%%%%%%%%%%%%%%%%%%%%%%%%%
\def\Z{{\mathbb Z}}
\def\R{{\mathbb R}}
\def\Q{{\mathbb Q}}

\def\calA{\mathscr{A}}

\def\calC{\mathscr{C}}
\def\calD{\mathscr{D}}

\def\calF{\mathscr{F}}
\def\calG{\mathscr{G}}

\def\calM{\mathscr{M}}
\def\calN{\mathscr{N}}

\def\calR{\mathscr{R}}
\def\calS{\mathscr{S}}

\def\calU{\mathscr{U}}

\def\calX{\mathscr{X}}

\def\deg{\mathrm{deg}}
\def\Hom{\mathrm{Hom}}

\def\tcoprod{\textstyle\coprod}

\newcommand{\mapright}[1]{
	\smash{\mathop{
		\hbox to 1cm{\rightarrowfill}}\limits^{#1}}}

\newcommand{\mapleft}[1]{
	\smash{\mathop{
		\hbox to 1cm{\leftarrowfill}}\limits^{#1}}}

%%%%%%%%%%%%%%%%%%%%%%%%%%%%%%
\def\In{\mathrm{In}}
\def\Out{\mathrm{Out}}

\def\Br{\mathrm{Br}}
\def\Se{\mathrm{Se}}
\def\Comp{\mathrm{Comp}}
\def\End{\mathrm{End}}
\def\Tr{\mathrm{Tr}}

\def\asum#1#2{\sum_{{{#1}\atop{#2}}}}
\def\Conf{C}
\def\bvec#1{\mbox{\boldmath{$#1$}}}
\def\bcalM{\overline{\calM}}
\def\bcalN{\overline{\calN}}

\def\wgamma{\widetilde{\gamma}}
\def\bcalD{\overline{\calD}}
\def\bcalA{\overline{\calA}}

\def\Dom{\mathrm{Domain}}
\def\bbcalM#1#2#3{{\calM'(#1;#2,#3)}}
\def\codim{\mathrm{codim}\,}
\def\Tan{\mathrm{Tan}}
\def\ve{\varepsilon}
\def\bConf{\overline{C}}
\def\loc{\mathrm{local}}
\def\wDelta{\widehat{\Delta}}
\def\bM{\mathsf{M}}
\def\wcalD{\widetilde{\calD}}
\def\wcalA{\widetilde{\calA}}

%%%%%%%%%%%%%%%%%%%%%%%%%%%%%%
\begin{document}

\title[A generalization of Fukaya's invariant of 3-manifolds I]{Higher order generalization of Fukaya's Morse homotopy invariant of 3-manifolds I. Invariants of homology 3-spheres}
\author{Tadayuki Watanabe}
\address{Department of Mathematics, Shimane University,
1060 Nishikawatsu-cho, Matsue-shi, Shimane 690-8504, Japan}
\email{tadayuki@riko.shimane-u.ac.jp}
\date{\today}
\subjclass[2000]{57M27, 57R57, 58D29, 58E05}

{\noindent\footnotesize {\rm Preprint} (2012)}\par\vspace{15mm}
\maketitle
\vspace{-6mm}
\setcounter{tocdepth}{2}
\begin{abstract}
{We give a generalization of Fukaya's Morse homotopy theoretic approach for 2-loop Chern--Simons perturbation theory to 3-valent graphs with arbitrary number of loops at least 2. We construct a sequence of invariants of integral homology 3-spheres with values in a space of 3-valent graphs (Jacobi diagrams or Feynman diagrams) by counting graphs in an integral homology 3-sphere satisfying certain condition described by a set of ordinary differential equations.}
\end{abstract}
\par\vspace{3mm}
%\tableofcontents

%%%%%%%%%%%%%%%%%%%%%%%%%%%%%%
%%%%%%%%%%%%%%%%%%%%%%%%%%%%%%
%%%%%%%%%%%%%%%%%%%%%%%%%%%%%%
\def\baselinestretch{1.06}\small\normalsize

\mysection{Introduction}{s:intro}

After Witten's discovery of path integral invariants of knots and 3-manifolds (\cite{Wi2}), several mathematical constructions of universal invariant for homology 3-spheres appeared, e.g. perturbative Chern--Simons theory $Z^{\mathrm{CS}}$ of Axelrod--Singer \cite{AS} and Kontsevich \cite{Ko1}, and a combinatorial invariant $Z^{\mathrm{LMO}}$ of Le--Murakami--Ohtsuki \cite{LMO}. Here a homology 3-sphere denotes a closed 3-manifold $M$ with $H_*(M;\Z)\cong H_*(S^3;\Z)$. These invariants take values in a space of graphs called Jacobi diagrams or Feynman diagrams (\cite{BN, Ko2}), and are known to be universal among Ohtsuki's finite type invariants for rational homology 3-spheres (\cite{BGRT, KT, Les2}). $Z^{\mathrm{CS}}$ is defined by integration over spaces of configurations of points on a 3-manifold. $Z^{\mathrm{LMO}}$ is constructed from Kontsevich's link invariant \cite{Ko2} by ingenious combinatorial arguments.

This article is concerned with Fukaya's graph counting invariant of 3-manifolds, developed in \cite{Fuk2} via Morse homotopy theory (\cite{BC, Fuk1}) and conjectured to coincide with 2-loop perturbative Chern--Simons theory. Fukaya considered triads $\vec{f}=(f_1,f_2,f_3)$ of Morse functions on a 3-manifold $M$ and for pairs $(M,\zeta_i)$, $i=1,2$, of $M$ and for acyclic flat Lie algebra bundles $\zeta_i$ on $M$, he defined some number $\hat{Z}_2(\vec{f};\zeta_i)$. Roughly, it counts with weights the ways that the $\Theta$-graph can be immersed such that edges follow gradient lines. The weights are determined by the holonomies taken along the edges. Fukaya showed that the difference $\hat{Z}_2(\vec{f};\zeta_1)-\hat{Z}_2(\vec{f};\zeta_2)$ depends only on $(M,\zeta_1,\zeta_2)$. 

Fukaya discusses in \cite{Fuk2} some heuristic argument involving the Witten deformation of de Rham complex (\cite{Wi1, Bo}) which suggests that his invariant coincides with the 2-loop part of perturbative Chern--Simons theory. Fukaya also discusses conjectural relation with open string theory on the cotangent bundle of a manifold.

The aim of this article is to construct graph-valued invariants of $\Z$-homology 3-spheres via Morse homotopy theory, as a higher order generalization of \cite{Fuk2}. We generalize the idea of Fukaya to graphs with the first Betti numbers $\geq 2$ for homology 3-sphere $M$ with the trivial connection and generalize Fukaya's conjecture which asks if his invariant coincides with perturbative Chern--Simons theory. To give an explicit formula for our invariant for all orders, we introduce an appropriate graph complex for Morse homotopy theory being based on Kontsevich's graph complex in \cite{Ko1}. 

As in \cite{Fuk2}, the proof that our invariant $\widehat{Z}_{2k,3k}$ is well-defined is done by a topological field theoretic argument for a 1-parameter family of smooth functions on $M$ without higher singularities. Namely, the difference of $\widehat{Z}_{2k,3k}$ for two auxiliary choices is given by the contribution of the 0-dimensional moduli spaces at the endpoints of a 1-parameter family. The moduli spaces of flow graphs generalized suitably to 1-parameter family gives a possibly non-compact cobordism between the 0-dimensional moduli spaces on the endpoints. The cobordism may have inner ends. By counting the contributions of the inner ends in the cobordism, we may obtain the difference of $\widehat{Z}_{2k,3k}$. To make the difference trivial, or the contributions of the inner ends cancel with each other, we consider some linear equations (the IHX relation) among coefficients for the counts of the 0-dimensional moduli spaces. The point is that the proof is reduced to checking that the sum of weighted counts of flow graphs is 0. In this paper, we consider graphs for all orders, so we attempt to give a general description of the structure of a smooth manifold of a moduli space of flow graphs and of arguments of orientations etc. in a similar fashion as \cite{BH, We}.

The moduli space of flow graphs will be described as the intersections of several submanifolds of a configuration space of $M$ or of the direct product of a configuration space of $M$ with $[0,1]$. We confirm the invariance of $\widehat{Z}_{2k,3k}$ one at a time by using a Cerf theoretic method as in \cite{Ce, Hu}. 

Also, unlike in \cite{Fuk2}, we consider only the trivial connection contribution and we do not take the difference of terms for two flat connections as in \cite{Fuk2}. To do so, we need to introduce an `anomaly correction' term appropriately. We define an anomaly term $Z_{2k,3k}^\mathrm{anomaly}$ by taking some linear combination of the numbers of infinitesimal flow graphs in a vector bundle over a compact 4-manifold $W$ with $\partial W=M$. The key point for the correction term to be well-defined is the spin cobordism invariance of the anomaly term $Z_{2k,3k}^\mathrm{anomaly}$. The spin cobordism invariance allows us to define an analogue of the signature defect, which includes $Z_{2k,3k}^\mathrm{anomaly}$ instead of the relative $L$-class, and it gives the desired correction term. 

\subsection{Organization} 
The organization of the present paper is as follows. In \S{2}, we give definitions of Fukaya's moduli spaces $\calM_\Gamma$ of flow graphs and our invariant. 

From \S{3} to \S{5}, we give some basics for the trajectory spaces. 
In \S{3}, we study the moduli space of gradient trajectories between two points and construct its compactification $\bcalM_2(f)$. In \S{4}, we define a compactification $\bcalM_\Gamma$ of $\calM_\Gamma$ using $\bcalM_2(f)$. In \S{5}, we study (co)orientations of the moduli spaces. 

From \S{6} to \S{7}, we show that our invariant depends only on a sequence of Morse functions and metrics on $M$. 
In \S{6}, we show that the principal term $Z_{2k,3k}$ is independent of combinatorial propagator. In \S{7}, we show that the correction term $Z_{2k,3k}^\mathrm{anomaly}(\vec{\gamma}_W)-\mu_k\,\mathrm{sign}\,W$ is independent of the choice of 4-cobordism $(W,\vec{\gamma}_W)$. 

In the final \S{10}, we shall show that our invariant is also independent of the choice of Morse functions and metrics on $M$ and complete the proof of the main theorem. \S{8} and \S{9} are preliminaries for \S{10}, which give basics for the trajectory spaces in 1-parameter family, which are mainly analogues of the results in \S{3} to \S{5}. In \S{8}, we consider the compactification for the moduli space of flow graphs in 1-parameter family of smooth functions to construct cobordisms. In \S{9}, we study (co)orientations of the moduli spaces in 1-parameter family. In \S{10}, in accordance with the results in previous sections, we check the invariance of our invariant by a cobordism argument. For each of the four types of bifurcations that may occur in a generic 1-parameter family, we confirm the invariance one at a time. 

In Appendix, we describe some facts on smooth manifolds with corners, convention for orientation, the chain complex of endomorphisms of an acyclic chain complex and the definition of blow-up.

\subsection{Conventions}
We denote by $C^r(M)$ the space of $C^r$ functions $f:M\to \R$ on a manifold $M$ for sufficiently large $r$ and we equip $C^r(M)$ the Whitney $C^r$-topology. By smooth maps or smooth manifolds we mean $C^r$ maps or $C^r$ manifolds for sufficiently large $r$. For a $C^r$ function $f$ on a manifold $M$, we denote by $\Sigma(f)$ the subset of $M$ of critical points of $f$. Let $\Sigma^1(f)$ denote the subset of $\Sigma(f)$ consisting of Morse singularities. For a Morse singularity $p\in \Sigma(f)$, we denote by $i(p)$ the Morse index of $p$. For a Morse function $f$, a critical point $p$ of $f$ and a Riemannian metric $\mu$ on a manifold, we denote by $\calD_p(f)=\calD_p(f;\mu)$ (resp. $\calA_p(f)=\calA_p(f;\mu)$) the descending manifold (resp. ascending manifold) of $(f,\mu)$ at $p$. 

We denote by $\Gamma(E)$ the space of sections of a fiber bundle $E\to B$. 

For a sequence of submanifolds $A_1,A_2,\ldots,A_r\subset W$ of a smooth Riemannian manifold $W$, we say that the intersection $A_1\cap A_2\cap \cdots\cap A_r$ is {\it transversal} if for each point $x$ in the intersection, the subspace $N_xA_1+N_xA_2+\cdots+N_xA_r\subset T_xW$ spans the direct sum $N_xA_1\oplus N_xA_2\oplus\cdots\oplus N_xA_r$, where $N_xA_i$ is the orthogonal complement of $T_xA_i$ in $T_xW$ with respect to the Riemannian metric. 

%\tableofcontents

%\clearpage
%%%%%%%%%%%%%%%%%%%%%%%%%%%%%%
%%%%%%%%%%%%%%%%%%%%%%%%%%%%%%
\mysection{Definition of the invariant}{s:def_inv}

In this section, the definition of Fukaya's moduli space of flow graphs in a manifold is recalled and the definition of our invariant is given.

%%%%%%%%%%%%%%%%%%%%%%%%%%%%%%
\subsection{Graphs}

By a {\it graph}, we mean a finite graph with each edge oriented, i.e. an ordering of the boundary vertices of an edge is fixed. We identify a graph with its geometric realization. For an oriented edge $e$ with orientation $(v_1,v_2)$, we call $v_1$ (resp. $v_2$) the input (resp. output) vertex of $e$. In diagrams we represent edge orientations by arrows directed toward the output vertices, as in Figure~\ref{fig:edges}. For a graph $\Gamma$, let 
\[ \begin{split}
\In(\Gamma)&=\{\mbox{univalent vertices of $\Gamma$ that are inputs}\},\\
\Out(\Gamma)&=\{\mbox{univalent vertices of $\Gamma$ that are outputs}\},\\
W(\Gamma)&=\{\mbox{vertices of $\Gamma$ of valence $\leq 2$ (``white vertices'')}\},\\
B(\Gamma)&=\{\mbox{vertices of $\Gamma$ of valence $\geq 3$ (``black vertices'')}\}.\\
\end{split}\]

We define an {\it admissible graph} to be a pair $(\Gamma,\rho)$, where
\begin{enumerate}
\item $\Gamma$ is a graph with $|\In(\Gamma)|=|\Out(\Gamma)|$, 
\item $\rho:\In(\Gamma)\to \Out(\Gamma)$ is a fixed bijection, and 
\item each bivalent vertex has exactly one incoming and one outgoing edges.
\item $\Gamma$ does not have a self-loop.
\end{enumerate}
We will omit $\rho$ when referring to an admissible graph. For an admissible graph $\Gamma$, we consider the following (sets of) edges:
\begin{enumerate}
\item A {\it compact edge} is an edge connecting two black vertices. 
\item A {\it separated edge} is a pair of edges $\{(a,x),(y,b)\}$ with $x,y\in B(\Gamma)$, $a\in \In(\Gamma)$, $b\in \Out(\Gamma)$ such that $b=\rho(a)$. 
\item A {\it broken edge} is a pair of edges $\{(x,a),(a,y)\}$ with $x,y\in B(\Gamma)$, $a\in W(\Gamma)$. 
\item A {\it broken separated edge} is either a triple $\{(a,b),(b,x),(y,c)\}$ or a triple $\{(a,x),(y,b),(b,c)\}$, with $x,y\in B(\Gamma)$, $a,b,c\in W(\Gamma)$ such that $c=\rho(a)$. 
\end{enumerate}
See Figure~\ref{fig:edges}. Let $\Comp(\Gamma)$, $\Se(\Gamma)$, $\Br(\Gamma)$, $\Se'(\Gamma)$ be the set of compact, separated, broken, broken separated edges respectively. Let $E(\Gamma)=\Comp(\Gamma)\cup\Se(\Gamma)\cup\Br(\Gamma)\cup\Se'(\Gamma)$.
\begin{figure}
\fig{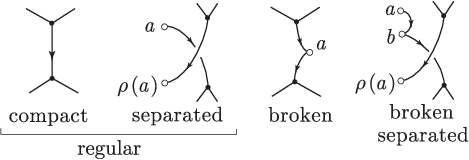}
\caption{}\label{fig:edges}
\end{figure}

A {\it labeled graph} is an admissible graph $\Gamma$ equipped with bijections $\alpha:\{1,2,\ldots,n\}\to B(\Gamma)$ and $\beta:\{1,2,\ldots,\ell\}\to E(\Gamma)$, where $n=|B(\Gamma)|$ and $\ell=|E(\Gamma)|$. Let $C_*^{(i)}=(C_*^{(i)},\partial^{(i)})$, $C_*^{(i)}=\Z^{P_*^{(i)}}$, $i=1,2,\ldots,\ell$, be a sequence of acyclic chain complexes over $\Z$ with finite bases. For a sequence $\vec{C}=(C_*^{(1)},\ldots,C_*^{(\ell)})$, we define a {\it $\vec{C}$-colored graph} as a labeled graph $\Gamma=(\Gamma,\alpha,\beta)$ such that on each white vertex of $\beta(i)$ a basis element $p\in P_*^{(i)}$ is attached for each $i$. Later we will substitute the Morse complex of a Morse pair to each $C_*^{(i)}$. Then $P_*^{(i)}$ will correspond to the set of critical points of a Morse function.

For each edge $e=\beta(i)\in E(\Gamma)$ in a $\vec{C}$-colored graph, we define its {\it degree} by
\[ \deg(e)=\left\{\begin{array}{ll}
	1 & \mbox{if $e\in\Comp(\Gamma)$}\\
	i(p)-i(q) & \mbox{if $e\in\Se(\Gamma)$, }\\
	0 & \mbox{if $e\in\Br(\Gamma)$}\\
	i(p)-i(q)-1 & \mbox{if $e\in \Se'(\Gamma)$,}\\
	\end{array}\right. \]
where $i(x)$ denotes the degree of $x$ and where $p\in P_*^{(i)}$ is on the input, $q\in P_*^{(i)}$ is on the output of $e$. We define the {\it degree} of a $\vec{C}$-colored graph by $\deg(\Gamma)=(\deg(\beta(1)),\ldots,\deg(\beta(\ell)))$. We will call a $\vec{C}$-colored graph with degree $\vec{\eta}=(\eta_1,\ldots,\eta_\ell)$, with $n$ black vertices and with $|\Comp(\Gamma)|+|\Se(\Gamma)|=m$ a {\it $\vec{C}$-colored graph of type $(n,m,\vec{\eta})$}. We define the {\it closure} $\widehat{\Gamma}$ of $\Gamma$ as the graph obtained from $\Gamma$ by identifying white vertices of each input/output pair $(a,\rho(a))$. An example of a $\vec{C}$-colored graph of type $(4,6,(1,1,1,1,1,1))$ is given in (\ref{eq:ex_graph}).

%%%%%%%%%%%%%%%%%%%%%%%%%%%%%%
\subsection{The space $\calG_{n,m,\vec{\eta}}(\vec{C})$}\label{ss:space_G}

Let $\calG_{n,m,\vec{\eta}}^0(\vec{C})$ be the set of pairs $(\Gamma,o)$, where
\begin{enumerate}
\item $\Gamma$ is a $\vec{C}$-colored graph of type $(n,m,\vec{\eta})$ with connected closure $\widehat{\Gamma}$,
\item $o$ is an orientation of the real vector space
\[ \R^{B(\Gamma)}
	\oplus\bigoplus_{e\in\Comp(\Gamma)}\R^{H(e)}, \]
where $H(e)=\{e_+,e_-\}$ is the two-element set of `half-edges', namely $e_-=\varphi^{-1}[0,\frac{1}{2}]$ and $e_+=\varphi^{-1}[\frac{1}{2},1]$ for an orientation preserving homeomorphism $\varphi:e\to [0,1]$.
\end{enumerate}
Let $\calG_{n,m,\vec{\eta}}(\vec{C})$ be the vector space over $\Q$ spanned by $\calG_{n,m,\vec{\eta}}^0(\vec{C})$, quotiented by the relation $(\Gamma,-o)=-(\Gamma,o)$. 
The bijection $\alpha$ and the edge orientation of a labelled graph $\Gamma$ define a canonical graph orientation $o(\Gamma)$, as
\begin{equation}\label{eq:graph_ori}
 o(\Gamma)=\alpha(1)\wedge\cdots\wedge\alpha(n)\wedge\bigwedge_{e\in \Comp(\Gamma)}(e_+\wedge e_-),
\end{equation}
where $e$ is oriented as $(e_-,e_+)$.

We denote by $\calG_{n,m}^0(\vec{C})$ the subset of $\calG_{n,m,(1,\ldots,1)}^0(\vec{C})$ consisting of graphs without bivalent vertices such that $\ell=m$, i.e. $\vec{C}=(C_*^{(1)},\ldots,C_*^{(m)})$. Let $\calG_{n,m}(\vec{C})$ be the span of $\calG_{n,m}^0(\vec{C})$ over $\Q$. Let $\calG_{n,m,(1,\ldots,1)}^\mathrm{comp,0}(\vec{C})$ be the subset of $\calG_{n,m,(1,\ldots,1)}^0(\vec{C})$ consisting of graphs with only compact edges. Since the sequence of complexes $\vec{C}$ is unnecessary to represent a graph in $\calG_{n,m,(1,\ldots,1)}^\mathrm{comp,0}(\vec{C})$, there are canonical bijections between $\calG_{n,m,(1,\ldots,1)}^\mathrm{comp,0}(\vec{C})$ for different sequences $\vec{C}$. Identifying $\calG_{n,m,(1,\ldots,1)}^\mathrm{comp,0}(\vec{C})$ for all $\vec{C}$ by the canonical bijections, we simply write 
\[\calG_{n,m}^0=\calG_{n,m,(1,\ldots,1)}^\mathrm{comp,0}(\vec{C}) \]
and we define $\calG_{n,m}$ to be the vector space over $\Q$ spanned by $\calG_{n,m}^0$. 

%%%%%%%%%%%%%%%%%%%%%%%%%%%%%%
\subsection{Assumption on Morse functions}\label{ss:morse_func}

We make an assumption on Morse functions, as in \cite{Les3}, \cite[\S{4.1}]{Sh}\footnote{In an earlier version of the present paper, we did not make such an assumption. But the referee pointed out that without this assumption, there may be some boundary strata in the trajectory spaces which may violate the invariance of $\widehat{Z}_{2k,3k}$. Considering a homology sphere with one point removed as the connected sum of $\R^d$ with a homology sphere is originally due to Kontsevich (\cite{Ko1}).}. Let $M$ be a $d$-dimensional homology sphere with a distinguished point $\infty_M\in M$. We consider $S^d$ as the one point compactification $\R^d\cup \{\infty\}$. Let $U_\infty$ be the open ball around $\infty$:
\[ U_\infty=\{x\in \R^d\,;\,\|x\|> R \}\cup\{\infty\}\subset S^d \]
for some large $R$. Fix a small open ball $U_\infty'\subset M$ including $\infty_M$ and a diffeomorphism $\varphi_\infty:U_\infty'\to U_\infty$ which sends $\infty_M$ to $\infty$. We consider a Morse function on $M_0=M-\{\infty_M\}$ and a Riemannian metric $\mu$ on $M_0$ that are {\it standard near $\infty_M$}. We say that a function $f:M_0\to \R$ is standard near $\infty_M$ if $f|_{U_\infty'-\{\infty_M\}}$ agrees with the pullback of a rank one linear map $\R^d\to \R$ by $\varphi_\infty$. Similarly, we say that a Riemannian metric $\mu$ on $M_0$ is standard near $\infty_M$ if the restriction of $\mu$ to $T(U_\infty'-\{\infty_M\})$ agrees with the pullback of the standard metric on $\R^d$ by $\varphi_\infty$. Let $C^r_{\varphi_\infty}(M_0)$ denote the subspace of $C^r(M_0)$ consisting of functions that are standard near $\infty_M$ with respect to $\varphi_\infty$.

\begin{Assum}\label{assum:infty}
Fix a sufficiently large integer $r>0$. Throughout this paper, a Morse function on $M_0$ is always a $C^r$ Morse function $f:M_0\to \R$ that is standard near $\infty_M$ and a Riemannian metric $\mu$ on $M_0$ is always a Riemannian metric on $M_0$ that is standard near $\infty_M$.
\end{Assum}

%%%%%%%%%%%%%%%%%%%%%%%%%%%%%%
\subsection{Fukaya's moduli space $\calM_\Gamma$}\label{ss:M_G}

Suppose given a sequence $\vec{f}=(f_1,f_2,\ldots,f_m)$ of Morse functions on $M_0$ and a Riemannian metric $\mu$ on $M_0$. Suppose that $(f_i,\mu)$ is Morse--Smale for each $i$, namely, all the intersections between the descending manifolds and the ascending manifolds are transversal. We choose an orientation $o(\calD_p(f_i))$ of $\calD_p(f_i)$ arbitrarily for each critical point $p$ of $f_i$ and orient $\calA_p(f_i)$ by $o(\calA_p(f_i))=*o(\calD_p(f_i))$ near $p$, where $*$ is the Hodge star operator. Let $C^{(i)}=(C^{(i)}=\Z^{P^{(i)}},\partial^{(i)})$ be the Morse complex associated to $(f_i,\mu)$, namely, $C^{(i)}$ is the free $\Z$-module generated by the (finite) set $P_*^{(i)}$ of critical points of $f_i$ and $\partial^{(i)}:C^{(i)}_{k+1}\to C^{(i)}_k$ is defined by
\[\begin{split}
	\partial^{(i)}p=\sum_{q\in P_k^{(i)}}\#\bbcalM{f_i}{p}{q}\cdot q,\quad \bbcalM{f_i}{p}{q}=(\calD_p(f_i)\pitchfork \calA_q(f_i))\pitchfork L_p,\\
\end{split}\]
where $L_p$ is a level surface of $f_i$ that lies just below the level of $p$ and $\bbcalM{f_i}{p}{q}$ is an oriented 0-manifold whose orientation is derived from those of $\calD_p(f_i)$ and $\calA_q(f_i)$. More precisely, $\calD_p(f_i)\pitchfork \calA_q(f_i)$ is a disjoint union of flow lines of $-\mathrm{grad}\,f_i$. At each point $b\in \bbcalM{f}{p}{q}$, the wedge $o^*_M(\calD_p(f_i))_b\wedge o^*_M(\calA_q(f_i))_b\in \bigwedge^{d-1}T^*_b L_p\subset \bigwedge^{d-1}T^*_b M$ defines a coorientation of the flow line passing through $b$ (see Appendix~\ref{s:ori} (\ref{eq:coori_int})). Hence there exists a sign $\ve_{f_i}(p,q)_b=\pm 1$ such that
\[ o^*_M(\calD_p(f_i))_b\wedge o^*_M(\calA_q(f_i))_b \sim \ve_{f_i}(p,q)_b \,\, \iota(-\mathrm{grad}\,f_i)o(M)_b. \]
The sign $\ve_{f_i}(p,q)_b$ does not depend on the choice of $L_p$. 

Then the incidence coefficient is defined by
\[ \#\bbcalM{f_i}{p}{q}=\sum_{b\in \bbcalM{f_i}{p}{q}} \ve_{f_i}(p,q)_b. \]
It is known that $(C^{(i)},\partial^{(i)})$ above is a chain complex called a Morse complex (e.g. \cite{Bo}, see also Corollary~\ref{cor:dd=0}). Moreover, $(C^{(i)},\partial^{(i)})$ is acyclic by Assumption~\ref{assum:infty}. We put $ \vec{C}=(C^{(1)},C^{(2)},\ldots,C^{(m)})$.

Before recalling a general definition of Fukaya's moduli space $\calM_\Gamma(\vec{f})$, we give an example. Consider the following graph.
\begin{equation}\label{eq:ex_graph}
 \Gamma=\fig{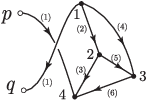}\in\calG_{4,6}^0(\vec{C}). 
\end{equation}
Let $\Phi_f^t:M_0\to M_0$, $t\in\R$, be the 1-parameter group of diffeomorphisms associated to the negative gradient $-\mathrm{grad}\,f$ considered with respect to a Riemannian metric $\mu$ on $M_0$. For $\vec{f}=(f_1,f_2,f_3,f_4,f_5,f_6)$, let $\calM_\Gamma(\vec{f})$ be the space of points $(x_1,x_2,x_3,x_4)\in M_0^4$ such that
\begin{enumerate}
\item there exist $t_2,t_3,t_4,t_5,t_6\in(0,\infty)$ such that $\Phi_{f_2}^{t_2}(x_1)=x_2$, $\Phi_{f_3}^{t_3}(x_2)=x_4$, $\Phi_{f_4}^{t_4}(x_1)=x_3$, $\Phi_{f_5}^{t_5}(x_2)=x_3$, $\Phi_{f_6}^{t_6}(x_3)=x_4$,
\item $\displaystyle\lim_{t\to -\infty}\Phi_{f_1}^{t}(x_4)=p$, $\displaystyle\lim_{t\to +\infty}\Phi_{f_1}^t(x_1)=q$ (or $x_4\in\calD_p(f_1)$, $x_1\in\calA_q(f_1)$).
\end{enumerate}

Now we give a general definition of $\calM_\Gamma(\vec{f})$, which is a straightforward generalization of the example above. For $\Gamma=(\Gamma,\alpha,\beta)\in\calG_{n,m}^0(\vec{C})$, we define the source and the target maps
\def\source{\sigma}
\def\target{\tau}
\[ \source:\{1,2,\ldots,m\}\to \{1,2,\ldots,n\},\quad \target:\{1,2,\ldots,m\}\to \{1,2,\ldots,n\} \]
as $\source(k)=\alpha^{-1}(\mbox{source of $\beta(k)$})$, $\target(k)=\alpha^{-1}(\mbox{target of $\beta(k)$})$. For each $i\in\{1,2,\ldots,n\}$, we define the subsets $\In_i(\Gamma)=\{k_1^i,k_2^i,\ldots,k_{r_i}^i\}$, $\In_i^\infty(\Gamma)=\{\bar{k}_1^i,\bar{k}_2^i,\ldots,\bar{k}_{\bar{r}_i}^i\}$, $\Out_i^\infty(\Gamma)=\{\bar{\ell}_1^i,\bar{\ell}_2^i,\ldots,\bar{\ell}_{\bar{s}_i}^i\}$ of the set of labels $\{1,2,\ldots, m\}$ of edges as the subsets consisting of labels of edges such that
\begin{align*}
 k_j^i\in \In_i(\Gamma)\Leftrightarrow \target(k_j^i)&=i
	\mbox{ and $\beta(k_j^i)\in\Comp(\Gamma)$},\\
 \bar{k}_j^i\in \In_i^\infty(\Gamma)\Leftrightarrow \target(\bar{k}_j^i)&=i
	\mbox{ and $\beta(\bar{k}_j^i)\in\Se(\Gamma)$},\\
 \bar{\ell}_j^i\in \Out_i^\infty(\Gamma)\Leftrightarrow\source(\bar{\ell}_j^i)&=i
	\mbox{ and $\beta(\bar{k}_j^i)\in\Se(\Gamma)$}.
\end{align*}
For example, $\In_i(\Gamma)$ is the subset of labels of incoming compact edges at the $i$-th vertex and $\In_i^\infty(\Gamma)$ is the subset of labels of incoming separated edges at the $i$-th vertex. See Figure~\ref{fig:in-out}.
\begin{figure}
\fig{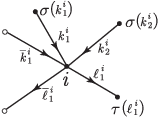}
\caption{Labels for edges incident to the $i$-th vertex. $\tau(k_1^i)=\tau(k_2^i)=i=\sigma(\ell_1^i)$.}\label{fig:in-out}
\end{figure}

\begin{Def}
For $\vec{f}=(f_1,\ldots,f_m)$ and $\Gamma$ without bivalent vertices, let $\calM_\Gamma(\vec{f})=\calM_\Gamma(\vec{f};\mu)$ be the space of points $(x_1,\ldots,x_n)\in M_0^n$ such that
\begin{enumerate}
\item for all $1\leq i\leq n$, $1\leq j\leq r_i$ such that $k_j^i\in \In_i(\Gamma)$, there exists $t_{k_j^i}\in(0,\infty)$ such that $\Phi_{f_{k_j^i}}^{t_{k_j^i}}(x_{\source(k_j^i)})=x_i$,

\item $\displaystyle\lim_{t\to -\infty}\Phi_{f_{\bar{k}_j^i}}^t(x_i)=p_{\bar{k}_j^i}$ for $1\leq i\leq n$, $1\leq j\leq \bar{r}_i$ such that $\bar{k}_j^i\in\In_i^\infty(\Gamma)$, where $p_{\bar{k}_j^i}\in P^{(\bar{k}_j^i)}$,

\item $\displaystyle\lim_{t\to +\infty}\Phi_{f_{\bar{\ell}_j^i}}^t(x_i)=q_{\bar{\ell}_j^i}$ for $1\leq i\leq n$, $1\leq j\leq \bar{s}_i$ such that $\bar{\ell}_j^i\in\Out_i^\infty(\Gamma)$, where $q_{\bar{\ell}_j^i}\in P^{(\bar{\ell}_j^i)}$.
\end{enumerate}
\end{Def}

\begin{Rem}\label{rem:cp-nonsing}
Since $\Phi_f^t(p)=p$ ($\forall t$) for a critical point $p$ of $f$, we allow for a point $(x_1,\ldots,x_n)$ of $\calM_\Gamma(\vec{f})$ that some $x_i$ coincides with a critical point of some $f_j$. We will see later that such a point is not a singular point of $\calM_\Gamma(\vec{f})$.
\end{Rem}

%%%%%%%%%%%%%%%%%%%%%%%%%%%%%%%%%%%%%%
\subsection{The count of $\calM_\Gamma$}\label{ss:count}

\begin{Prop}[page 49 of \cite{Fuk2}]\label{prop:M_mfd}
Suppose that $\Gamma\in\calG_{n,m,\vec{\eta}}^0(\vec{C})$ has no bivalent vertex, i.e. $E(\Gamma)=\Comp(\Gamma)\cup \Se(\Gamma)$. For a generic choice of $\vec{f}$, the space $\calM_\Gamma(\vec{f})$ is a $C^{r-1}$ smooth manifold of dimension $(n-m)d+\sum_{i=1}^m\eta_i$.
Moreover, $\vec{f}$ can be chosen so that this property is satisfied simultaneously for all graphs $\Gamma$ in $\calG_{n,m,{\vec{\eta}}}^0(\vec{C})$ for a fixed triple $m,n,\sum_{i=1}^m\eta_i$.
\end{Prop}
The proof of Proposition~\ref{prop:M_mfd} will be given in \S\ref{ss:transversality}. The reason for the dimension is roughly that an edge $e$ of degree $i(e)$ yields a $(d-i(e))$-dimensional constraint. Since $\dim\,M_0^n=nd$, the dimension of the moduli space should be $nd-\sum_{e\in E(\Gamma)}(d-i(e))=nd-md+\sum_ei(e)$. The reason for the class $C^{r-1}$ is that the solution for the differential equation $\dot{\gamma}(t)=-(\mathrm{grad}\,f)_{\gamma(t)}$ for a $C^r$ Morse function is of class $C^{r-1}$. 

As in \cite{Fuk2}, we will need a compactification of the moduli space $\calM_\Gamma(\vec{f})$ for $\Gamma$ with only trivalent black vertices. If $\Gamma$ has only trivalent black vertices and does not have bivalent vertices, then $n=2k$ and $m=3k$. For simplicity, we take a convenient metric for each Morse function. Namely, for $\vec{f}=(f_1,f_2,\ldots,f_{3k})$, we take a sequence $\vec{\mu}=(\mu_1,\mu_2,\ldots,\mu_{3k})$ of Riemannian metrics on $M_0$ such that for each $i$ the pair $(f_i,\mu_i)$ is Morse--Smale and that $\mu_i$ is Euclidean near $\Sigma(f_i)$ with respect to the coordinate of the Morse lemma. 

\begin{Prop}\label{prop:M_G-1-mfd}
Suppose $d=3$ and that $(\vec{f},\vec{\mu})$ is as above and is generic as in Proposition~\ref{prop:M_mfd}. Suppose that $\Gamma\in \calG_{2k,3k,\vec{\eta}}^0(\vec{C})$ is such that $|E(\Gamma)|=3k$ and such that $0\leq \dim{\calM_\Gamma(\vec{f})}=(\eta_1-1)+(\eta_2-1)+\cdots+(\eta_{3k}-1)\leq 1$. Then $\calM_\Gamma(\vec{f})$ has a natural compactification to a smooth manifold $\overline{\calM}_\Gamma(\vec{f})$ with boundary, whose boundary consists of flow graphs with a once broken trajectory or with a subgraph collapsed to a point.
\end{Prop}

The proof of Proposition~\ref{prop:M_G-1-mfd} will be given in \S\ref{ss:comp_fuk}. Proposition~\ref{prop:M_mfd} implies that for $\Gamma$ as in Proposition~\ref{prop:M_G-1-mfd} with $\eta_1=\cdots=\eta_{3k}=1$, we have $\dim\calM_\Gamma(\vec{f})=0$. In fact, $\bcalM_\Gamma(\vec{f})=\calM_\Gamma(\vec{f})$ in this case. We count points in the finite set $\calM_\Gamma(\vec{f})$ with signs as follows. Let $(x_1,\ldots,x_{2k})\in\calM_\Gamma(\vec{f})$. For each edge $e\in E(\Gamma)$, we assign a vector 
\[ v_e\in \textstyle \bigwedge^2 (T_xM_0\oplus T_yM_0), \]
where $x=x_{\sigma(i)}$, $y=x_{\tau(i)}$, $i=\beta^{-1}(e)$, as follows. 

If $e\in \Comp(\Gamma)$, let $e_1,e_2,e_3$ be an orthonormal basis of $T_xM_0$ such that $e_1\wedge e_2\wedge e_3$ gives the orientation of $M_0$ and $e_1$ is a positive multiple of $-(\mathrm{grad}\,f_i)_x$. There is $t_0>0$ such that $y=\Phi_{f_i}^{t_0}(x)$. The flow $\Phi_{f_i}^{t_0}$ induces a diffeomorphism from a neighborhood of $x$ to that of $y$. Let $e_i'=d\Phi_{f_i}^{t_0}(e_i)\in T_yM_0$ ($i=1,2,3$). Let 
\[ \begin{split}
  v_e&=n_2\wedge n_3,\,\,\mbox{where}\\
  n_2&=e_2+\frac{1}{\Delta}
  \left(
  \left|\begin{array}{cc} 
  a_{12} & a_{13} \\
  a_{32} & a_{33}
  \end{array}\right|e_1'
  -\left|\begin{array}{cc} 
  a_{11} & a_{13} \\
  a_{31} & a_{33}
  \end{array}\right|e_2'
  +\left|\begin{array}{cc} 
  a_{11} & a_{12} \\
  a_{31} & a_{32}
  \end{array}\right|e_3'
  \right),\\
  n_3&=e_3+\frac{1}{\Delta}
  \left(
  -\left|\begin{array}{cc} 
  a_{12} & a_{13} \\
  a_{22} & a_{23}
  \end{array}\right|e_1'
  +\left|\begin{array}{cc} 
  a_{11} & a_{13} \\
  a_{21} & a_{23}
  \end{array}\right|e_2'
  -\left|\begin{array}{cc} 
  a_{11} & a_{12} \\
  a_{21} & a_{22}
  \end{array}\right|e_3'
  \right),\\
  a_{ij}&=\langle e_i',e_j'\rangle,\quad \Delta=\det(a_{ij}).
\end{split}\]

If $e\in \Se(\Gamma)$, let $p$ and $q$ be the critical points of $f_i$ that are the input and the output of the $i$-th edge in the flow graph. Let $e_1,e_2,e_3$ be an orthonormal basis of $T_xM_0$ such that $T_x\calA_q(f_i)=\langle e_1,\ldots,e_r\rangle$, $T_x\calA_q(f_i)^{\perp}=\langle e_{r+1},\ldots,e_3\rangle$ and $e_1\wedge\cdots\wedge e_r$ and $e_1\wedge e_2\wedge e_3$ give the orientations of $\calA_q(f_i)$ and $M_0$ respectively. Similarly, let $e_1',e_2',e_3'$ be an orthonormal basis of $T_yM_0$ such that $T_y\calD_p(f_i)=\langle e_1',\ldots,e_s'\rangle$, $T_y\calD_p(f_i)^{\perp}=\langle e_{s+1}',\ldots,e_3'\rangle$ and $e_1'\wedge\cdots\wedge e_s'$ and $e_1'\wedge e_2'\wedge e_3'$ give the orientations of $\calD_p(f_i)$ and $M_0$ respectively. Then we define 
\[ v_e=(e_{r+1}\wedge\cdots\wedge e_3)\wedge (e_{s+1}'\wedge\cdots\wedge e_3'),\]
which belongs to $\bigwedge^2 T_{(x,y)}(M_0\times M_0)$ if $\eta_i=i(p)-i(q)=1$. 

Let $V(x_1,\ldots,x_{2k})=\bigwedge_{e\in E(\Gamma)} v_e \in \textstyle\bigwedge^{6k}T_{(x_1,\ldots,x_{2k})}(M_0^{2k})$. Since $\dim{M_0^{2k}}=6k$, there is a nonzero real number $\alpha$ such that $V(x_1,\ldots,x_{2k})=\alpha\, O_{x_1}\wedge \cdots\wedge O_{x_{2k}}$, where $O_{x_j}\in \bigwedge^3 T_{x_j}M_0$ gives the unit volume. For $(x_1,\ldots,x_{2k})\in \calM_\Gamma(\vec{f})$, we define
\[ \ve(x_1,\ldots,x_{2k})=\left\{\begin{array}{ll}
1 & \mbox{if $\alpha>0$}\\
-1 & \mbox{if $\alpha<0$}
\end{array}\right. \]
For a generic pair $(\vec{f},\vec{\eta})$ as in Proposition~\ref{prop:M_G-1-mfd}, the coefficient $\alpha$ is always nonzero for all points of $\calM_\Gamma(\vec{f})$. We define
\[ \#\calM_\Gamma(\vec{f})=\sum_{(x_1,\ldots,x_{2k})\in \calM_\Gamma(\vec{f})} \ve(x_1,\ldots,x_{2k})\in \Z. \]

%%%%%%%%%%%%%%%%%%%%%%%%%%%%%%
\subsection{Principal term $Z_{2k,3k}$}\label{ss:def_Z}

The space $\calG_{2k,3k}$ (\S\ref{ss:space_G}) is spanned by 3-valent graphs with only compact edges. Let $\calR_{2k,3k}\subset \calG_{2k,3k}$ be the subspace spanned by the {\it IHX relation} and the {\it label change relation}. The IHX relation is shown in Figure~\ref{fig:IHX} and the label change relation is generated by the following elements
\begin{enumerate}
\item $(\Gamma,o(\Gamma))+(\Gamma',o(\Gamma'))$ for labeled graphs $\Gamma$ and $\Gamma'$, where $\Gamma'$ is obtained from $\Gamma$ by a swap of a pair of vertex labels or by an inversion of the orientation of an edge. 
\item $(\Gamma,o(\Gamma))-(\Gamma',o(\Gamma'))$ for labeled graphs $\Gamma$ and $\Gamma'$, where $\Gamma'$ is obtained from $\Gamma$ by a swap of a pair of labels for compact edges. 
\end{enumerate}
We define the space $\calA_{2k,3k}$ to be the quotient space of $\calG_{2k,3k}$ by $\calR_{2k,3k}$. We denote by $[\Gamma]$ the equivalence class in $\calA_{2k,3k}$ represented by $\Gamma\in\calG^0_{2k,3k}$. 
\begin{figure}
\fig{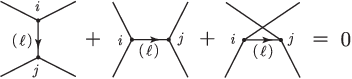}
\caption{The IHX relation. $i$ and $j$ are labels of vertices, $(\ell)$ is a label of an edge.}\label{fig:IHX}
\end{figure}

Let $\Gamma$ be a $\vec{C}$-labeled graph with $p_i\in P_*^{(i)}$ on the input and $q_i\in P_*^{(i)}$ on the output. Let $k_i=i(p_i)-i(q_i)$. For a sequence $\vec{h}=(h^{(1)},\ldots,h^{(m)})$ of degree $k_i$ endomorphisms $h^{(i)}\in\End_{k_i}(C_*^{(i)})$, $i=1,2,\ldots,m$, we define the trace of $\Gamma$ by
\[ \Tr_{\vec{h}}\left(\fig{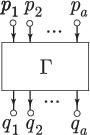}\right)
	=\prod_{i=1}^a(-h_{q_ip_i}^{(i)})\times \fig{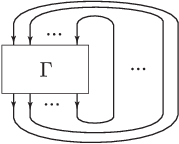}, \]
where $h^{(i)}q_i=\sum_{{{p_i\in P_*^{(i)}}\atop{i(p_i)=i(q_i)+k_i}}}h_{q_ip_i}^{(i)}p_i$. In particular, since each $(C_*^{(i)},\partial^{(i)})$ is acyclic, there exists an endomorphism $g^{(i)}:C_*^{(i)}\to C_{*+1}^{(i)}$ of homogeneous degree 1 such that $\partial^{(i)}g^{(i)}+g^{(i)}\partial^{(i)}=\mathrm{id}$. (See Appendix~\ref{s:acyclic_cpx}. Following \cite{Fuk2}, we call such an endomorphism a {\it combinatorial propagator}). As a special case of the above definition, the trace by combinatorial propagators $\vec{g}=(g^{(1)},\ldots,g^{(3k)})$ defines a linear map $\Tr_{\vec{g}}:\calG_{2k,3k}(\vec{C})\to \calA_{2k,3k}$.
\begin{Def}\label{def:Z}
We define
\[ Z_{2k,3k}(\vec{f})=\sum_{\Gamma\in\calG_{2k,3k}^0(\vec{C})}\#\calM_\Gamma(\vec{f})\,\Tr_{\vec{g}}(\Gamma)
	\in \calA_{2k,3k}, \]
where the sum is taken over all $\vec{C}$-colored graphs in $\calG_{2k,3k}^0(\vec{C})$, each equipped with canonical orientation. 
\end{Def}

%%%%%%%%%%%%%%%%%%%%%%%%%%%%%%
\subsection{Moduli space of infinitesimal flow graphs}\label{ss:infinitesimal}

In the rest of this section, we define the correction term which turns $Z_{2k,3k}$ into a topological invariant. Let $X$ be an oriented smooth Riemannian manifold and $\Gamma$ be a graph with only compact edges. Suppose that $\Gamma$ has $n$ vertices and $m$ edges. We shall consider the moduli space of linear flow graphs in an oriented linear $\R^3$-bundle $\pi:E\to X$ for such a graph $\Gamma$. Let $P\to X$ be the orthonormal $SO_3$-frame bundle associated to $\pi$ and
\[ \Conf_n^\loc(\R^3)=\Bigl\{(y_1,\ldots,y_n)\in(\R^3)^n;y_1=0,\sum_{\ell=2}^n\|y_\ell\|^2=1,y_i\neq y_j\mbox{ if }i\neq j\Bigr\}. \]
Let $\pi^\circ:E^\circ=P\times_{SO_3}(\R^3-\{0\})\to X$, $S(\pi):S(E)=P\times_{SO_3}S^2\to X$, $\Conf_n^\loc(\pi):\Conf_n^\loc(E)=P\times_{SO_3}\Conf_n^\loc(\R^3)\to X$ be the bundles associated to $\pi$.
Such a bundle appears in a boundary strata of compactified configuration space (see \S\ref{ss:FM}). The normalization $v\mapsto v/\|v\|$ induces a natural map $\nu:E^\circ \to S(E)$. The Gauss map $\phi_{ij}:\Conf_n^\loc(\R^3)\to S^2$, which takes $(y_1,\ldots,y_n)$ to $\frac{y_j-y_i}{\|y_j-y_i\|}$, induces a well-defined morphism $\widetilde{\phi}_{ij}:\Conf_n^\loc(E)\to S(E)$.

Now suppose that a section $\gamma:X\to E^\circ$ of $\pi^\circ$ is given. Then $\bar{\gamma}=\nu\circ \gamma:X\to S(E)$ is a section of $S(\pi)$. Since $\widetilde{\phi}_{ij}$ is transversal to $\bar{\gamma}(X)$ on each fiber, the subset
\[ \Theta_\ell(\gamma)=\widetilde{\phi}_{ij}^{-1}(\bar{\gamma}(X))\subset \Conf_n^\loc(E) \]
forms a smooth subbundle of $\Conf_n^\loc(\pi)$ where $\ell$ is such that $i=\sigma(\ell)$ and $j=\tau(\ell)$. 
\begin{Def}\label{def:Mloc}
For a sequence $\vec{\gamma}=(\gamma_1,\gamma_2,\ldots,\gamma_m)$ of sections of $\pi^\circ$, we define
\[
 \calM_\Gamma^\loc(\vec{\gamma})
	=\bigcap_{\ell=1}^m \Theta_\ell(\gamma_\ell)\subset \Conf_n^\loc(E)
\]
for a sequence $\vec{\gamma}=(\gamma_1,\ldots,\gamma_m)$ of sections of $\pi^\circ$. If the intersection is transversal, in other words, if $\bigwedge_{\ell=1}^m o^*_{\Conf_n^\loc(E)}(\Theta_\ell(\gamma_\ell))\neq 0$ at every point of $\calM_\Gamma^\loc(\vec{\gamma})$, this formula also defines a co-orientation of $\calM_\Gamma^\loc(\vec{\gamma})$. 
\end{Def}
There is a compactification $\bConf_n^\loc(\R^3)$ of $\Conf_n^\loc(\R^3)$, which is naturally an $SO_3$-space. See \S\ref{ss:FM}. Let $\bConf_n^\loc(\pi):\bConf_n^\loc(E)\to X$ be the $\bConf_n^\loc(\R^3)$-bundle associated to $\pi$. The interior of $\bConf_n^\loc(\R^3)$ is identified with $\Conf_n^\loc(\R^3)$. 
Let $\overline{\Theta}_\ell(\gamma)\subset \bConf_n^\loc(E)$ be the closure of $\Theta_\ell(\gamma)$. Let
\begin{equation}\label{eq:intersection}
 \bcalM_\Gamma^\loc(\vec{\gamma})=\bigcap_{\ell=1}^m \overline{\Theta}_\ell(\gamma_\ell). 
\end{equation}

\begin{Lem}\label{lem:M-local_mfd}
For a generic choice of $\vec{\gamma}$, the moduli space $\bcalM_\Gamma^\loc(\vec{\gamma})$ is a submanifold of $\bConf_n^\loc(E)$ of codimension $2m$. If $X$ is compact, then so is $\bcalM_\Gamma^\loc(\vec{\gamma})$.
\end{Lem}
\begin{proof}
Note that $\overline\Theta_{k}(\gamma)$ is a submanifold of codimension $2$. By using the transversality theorem, the set of sections $\vec{\gamma}$ can be inductively deformed in $\Gamma(\pi)^m$ slightly so that the intersection (\ref{eq:intersection}) is transversal. Thus for a generic choice of $\vec{\gamma}$, $\bcalM_\Gamma^\loc(\vec{\gamma})$ is a submanifold. The second assertion is immediate.
\end{proof}

When $\vec{\gamma}$ is generic as in Lemma~\ref{lem:M-local_mfd} and $X$ is compact and $\dim\bcalM_\Gamma^\loc(\vec{\gamma})=0$, we define $\#\bcalM_\Gamma^\mathrm{local}(\vec{\gamma})$ to be the number of components counted with signs, which are determined by the coorientations of the intersections. Here we fix the orientation $o(\bConf_n^\loc(\R^3))$ to be the one on the unit sphere induced from that of the Euclidean space $(\R^3)^{n-1}$. Then we orient $\bConf_n^\loc(E)$ by 
\[ o(\bConf_n^\loc(E))=o(X)\wedge o(\bConf_n^\loc(\R^3)). \]

%%%%%%%%%%%%%%%%%%%%%%%%%%%%%%%%%%%%%%%%%%
\subsection{Anomaly term $Z_{2k,3k}^\mathrm{anomaly}$}\label{ss:anomaly}

Here, we shall define the term $Z_{2k,3k}^\mathrm{anomaly}(\vec{\gamma}_W)$ for a sequence $\vec{\gamma}_W$ of sections of a vector bundle $T^vW$ over a spin 4-manifold $W$ with $\partial W\cong M$. To do this we shall first find a trivialization of $TW$ and consider its trivial subbundle $T^vW$. 

%%%%%
\subsubsection{Framing on spin cobordism}\label{ss:framing}

For a $k$-manifold $X$, a {\it framing} on $X$ is a trivialization $\tau_X:TX\to X\times \R^k$. More generally, we will also call a trivialization of a vector bundle a framing. We will identify a framing with a finite set of sections of a vector bundle that are fiberwise linearly independent. Here we shall fix framings on $M_0$ and on a spin 4-manifold $W$ with $\partial W=M$ in a sense compatible with each other. Recall that a spin structure on a vector bundle $E$ over a CW-complex $B$ is a homotopy class of framings on the 1-skeleton of $E$ which can be extended to the 2-skeleton (\cite{Mi1}). A spin structure on a tangent bundle of a manifold $X$ is called a spin structure on $X$. Since the group $\Omega_3^\mathrm{spin}$ of spin cobordism classes of spin 3-manifolds is trivial, one can find a compact spin 4-manifold $W$ with $\partial W=M$ and with a spin structure that is compatible with the (canonical) spin structure of $M$. 

We choose a framing $\tau_M$ on $TM_0$, which exists for any $M$. We fix $\tau_M$ such that it agrees on $U_\infty'-\{\infty_M\}$ with the pullback of the standard one $\tau_{\R^3}$ on $U_\infty-\{\infty\}$ by $d\varphi_\infty^{-1}$. One may check that such a framing really exists by the obstruction theory for extending sections. Let $\overline{U}_\infty$ be the closure of $U_\infty\subset S^3$ and let 
\[ \bM=(M-U_\infty')\cup_{\partial} ([0,1]\times \partial \overline{U}_\infty)\cup_\partial -(S^3-U_\infty), \]
where $\partial(M-U_\infty')$ is identified with $\{0\}\times \overline{U}_\infty$ by $\varphi_\infty$ and $\partial (S^3-U_\infty)=\partial\overline{U}_\infty$ is identified with $\{1\}\times \partial\overline{U}_\infty$. Then $\bM$ is diffeomorphic to $M$. We construct a rank 3 vector bundle $T^v\bM$ on $\bM$ as follows. Consider $[0,1]\times \partial \overline{U}_\infty$ as a part of $[0,1]\times \overline{U}_\infty$. Let $T^v([0,1]\times\overline{U}_\infty)$ be the pullback of $T\overline{U}_\infty$ by the projection $[0,1]\times \overline{U}_\infty\to \overline{U}_\infty$. Let $T^v([0,1]\times\partial\overline{U}_\infty)$ be the restriction of $T^v([0,1]\times\overline{U}_\infty)$ to $[0,1]\times\partial\overline{U}_\infty$. We define
\[ T^v\bM = T(M-U_\infty')\cup T^v([0,1]\times\partial\overline{U}_\infty)\cup T(-(S^3-U_\infty)). \]
The rank 4 vector bundle $T([0,1]\times \overline{U}_\infty)$ restricts on $\{0,1\}\times\partial\overline{U}_\infty$ to the restrictions of $\ve^1\oplus TM$ and $\ve^1\oplus T(-S^3)$, where $\ve^1$ denotes the trivial line bundle. Thus by extending $T([0,1]\times \overline{U}_\infty)|_{[0,1]\times\partial \overline{U}_\infty}$ by the restrictions of $\ve^1\oplus T(M-U_\infty')$ and $\ve^1\oplus T(-(S^3-U_\infty))$, we obtain a $\R^4$-bundle over $\bM$ of the form $\ve^1\oplus T^v\bM$. 

Let $n$ be a framing of $\ve^1$. The 4-framings $n\oplus \tau_M$ and $n\oplus \tau_{\R^3}$ of $\ve^1\oplus T(M-U_\infty')$ and $\ve^1\oplus T(-(S^3-U_\infty))$ respectively extend over $\ve^1\oplus T^v\bM$ by using the product structure. We denote by $\tau_M'$ the resulting 4-framing of $\ve^1\oplus T^v\bM$. 

The following lemma follows from Lemma~2.3 of \cite{KM}, Lemma~2.40 of \cite{Les1} and from the proof of \cite[Theorem~2.6]{KM}.
\begin{Lem}\label{lem:KM}
\begin{enumerate}
\item There exists a compact spin 4-manifold $W$ with corners with $\partial W=\bM$ as the spin boundary such that $\chi(W)=1$. 
\item Let $W$ be as in (1). The 4-framing $\tau_M'$ extends to a framing of $TW$ if and only if $p_1(TW;\tau_M')=0$, where $p_1(TW;\tau_M')\in \Z$ denotes the relative Pontrjagin number. Moreover, there exists a framing $\tau_M$ of $M_0$ that is standard near $\infty_M$ and that satisfies $p_1(TW;\tau_M')=0$. 
\end{enumerate}
\end{Lem}

%%%%%
\subsubsection{Generalized Morse sections}\label{ss:H-bundle}

Let $\pi:E\to X$ be a linear $\R^d$-bundle over a compact manifold $X$ possibly with corners with $\dim{X}=N\geq d$. We say that a smooth section $\gamma:X\to E$ is {\it generalized Morse (GM)} if for each point $x\in \gamma^{-1}(0)$, there is a local coordinate $(y_1,\ldots,y_N)$ around $x$ on an open neighborhood $U$ of $x$ and a trivialization $\varphi:\pi^{-1}(U)\to U\times \R^d$ such that either of the following holds\footnote{This condition is not a generic one if $N\geq d+2$. Thus this gives a stronger restriction than the transversality to the zero section. This restriction is placed to determine all the singularities.}.
\[ \begin{split}
  (1)\quad \gamma(y_1,\ldots,y_N)&=\varphi^{-1}(y_1,\ldots,y_N,\pm y_1,\ldots,\pm y_d)\quad \forall (y_1,\ldots,y_N)\in U\\
  (2)\quad \gamma(y_1,\ldots,y_N)&=\varphi^{-1}(y_1,\ldots,y_N,y_1^2-y_{d+1},\pm y_2,\ldots,\pm y_d)\quad \forall (y_1,\ldots,y_N)\in U
\end{split}\]
When $\gamma$ is GM, we call a point $x\in \gamma^{-1}(0)$ having local form (1) (resp. (2)) a {\it Morse singularity} (resp. {\it birth-death singularity}) of $\gamma$. We write $\Sigma(\gamma)=\gamma^{-1}(0)$ and let $\Sigma^1(\gamma)$ (resp. $\Sigma^2(\gamma)$) be the subset of $\Sigma(\gamma)$ consisting of Morse singularities (resp. birth-death singularities). An obvious example with $\Sigma^2(\gamma)=\emptyset$ is the section $M_0\to TM_0$ given by the gradient of a Morse function. The following lemma is an immediate consequence of results of K. Igusa (\cite[Lemma~2.8]{Ig1} and \cite[Appendix~2]{Ig2}).

\begin{Lem}\label{lem:GM_section}
Let $\pi:E\to X$ be as above. Suppose that the restriction of a smooth section $\gamma:X\to E$ to $\partial X$ is GM. Then there is a homotopy of $\gamma$ relative to $\partial X$ whose result is GM. Hence $\Sigma^2(\gamma)$ is a codimension 1 submanifold of $\Sigma(\gamma)$. 
\end{Lem}

%%%%%
\subsubsection{Definition of $Z_{2k,3k}^\mathrm{anomaly}$}\label{ss:def_anomaly}

Now let $(W,\tau_M')$ be a pair satisfying the condition of Lemma~\ref{lem:KM}. One can find a (non-unique) 4-framing of $TW$ and let $\tau_W^v$ be its sub 3-framing $\tau_W^v$ of $TW$ that extends $\tau_M$. The 3-framing $\tau_W^v$ spans a rank 3 subbundle $T^vW$ of $TW$. For each $i\in\{1,2,\ldots,3k\}$, let $\gamma_i$ be a GM section of $T^vW$ extending $-\mathrm{grad}\,f_i\in \Gamma(T(M-U_\infty'))$ and put $\vec{\gamma}_W=(\gamma_1,\ldots,\gamma_{3k})$. 
\begin{Def}\label{def:Za}
We define
\[ Z_{2k,3k}^\mathrm{anomaly}(\vec{\gamma}_W)=\sum_{\Gamma\in\calG_{2k,3k}}\#\calM_{\Gamma}^\loc(\vec{\gamma}_W)\,[\Gamma]\in\calA_{2k,3k}, \]
where the moduli space $\calM_{\Gamma}^\loc(\vec{\gamma}_W)$ is considered inside the trivial $\bConf_{2k}^\loc(\R^3)$-bundle over $\bigcap_{j=1}^{3k}(W-\Sigma(\gamma_j))$ associated to the restriction of the $\R^3$-bundle $T^vW$. 
\end{Def}

\begin{Prop}\label{prop:anomaly_welldefined}
\begin{enumerate}
\item For a generic choice of the GM extension $\vec{\gamma}_W$ of $-\mathrm{grad}\,\vec{f}=(-\mathrm{grad}\,f_1,\ldots,-\mathrm{grad}\,f_{3k})$, the number $\#\calM_\Gamma^\loc(\vec{\gamma}_W)$ is finite.

\item Let $W$ and $W'$ be compact, connected, spin 4-manifolds with corners with $\partial W=\partial W'=\bM$, $\chi(W)=\chi(W')=1$ and suppose that $\vec{\gamma}_W|_{\bM}=\vec{\gamma}_{W'}|_{\bM}$. Then for each $k\geq 1$ there exists a constant $\mu_k\in\calA_{2k,3k}$ such that
\[ Z_{2k,3k}^\mathrm{anomaly}(\vec{\gamma}_W)-\mu_k\,\mathrm{sign}\,W
	=Z_{2k,3k}^\mathrm{anomaly}(\vec{\gamma}_{W'})-\mu_k\,\mathrm{sign}\,W'. \]
Hence $Z_{2k,3k}^\mathrm{anomaly}(\vec{\gamma}_W)-\mu_k\,\mathrm{sign}\,W$ does not depend on the choice of $(W,\vec{\gamma}_W)$ such that $\partial W=\bM$, $\chi(W)=1$, $\vec{\gamma}_W|_{M-U_\infty'}=-\mathrm{grad}\,\vec{f}$.
\end{enumerate}
\end{Prop}
\begin{proof}[Proof of Proposition~\ref{prop:anomaly_welldefined} (1)]
Put $\vec{\gamma}=(\gamma_1,\ldots,\gamma_{3k})=\vec{\gamma}_W$. Since for the GM extension $\gamma_i$ the singularity set $\Sigma(\gamma_i)=\gamma_i^{-1}(0)$ is a compact smooth 1-submanifold of a 4-manifold $W$, we may assume that $\Sigma(\gamma_i)\cap\Sigma(\gamma_j)=\emptyset$ if $i\neq j$, and moreover that they are separated by small open tubular neighborhoods. We shall show that the projection of the 0-dimensional moduli space $\calM_\Gamma^\loc(\vec{\gamma})$ on $W$ is disjoint from a neighborhood of $\Sigma(\gamma_i)$ for each $i$ and hence from a neighborhood of $\coprod_{j=1}^{3k}\Sigma(\gamma_j)$. 

Let $\Gamma'$ be the graph obtained from $\Gamma$ by replacing $E(\Gamma)$ with $E(\Gamma)-\{\beta(i)\}$. According to Definition~\ref{def:Mloc}, $\calM_{\Gamma}^\loc(\vec{\gamma})$ is the intersection of $\calM_{\Gamma'}^\loc(\vec{\gamma}\setminus\{\gamma_i\})$ with $\Theta_{i}(\gamma_i)$. By Lemma~\ref{lem:M-local_mfd}, $\calM_{\Gamma'}^\loc(\vec{\gamma}\setminus\{\gamma_i\})$ is a submanifold of $(W-\bigcup_j\Sigma(\gamma_j))\times\bConf_{2k}^\loc(\R^3)$ of codimension $2(3k-1)=6k-2$, i.e., a 2-submanifold if $\vec{\gamma}$ is generic. For a generic choice of $\gamma_i$, the projection of $\calM_{\Gamma'}^\loc(\vec{\gamma}\setminus\{\gamma_i\})$ on $W$ is disjoint from a neighborhood of $\Sigma(\gamma_i)$ for a dimensional reason. Hence for a generic choice of $\gamma_i$, the projection of $\calM_\Gamma^\loc(\vec{\gamma})$ on $W$ is disjoint from a neighborhood of $\Sigma(\gamma_i)$. Here we may assume that the perturbation of $\gamma_i$ for the disjunction has support in an arbitrarily small neighborhood of $\Sigma(\gamma_i)$. Since $\Sigma(\gamma_i)\cap \Sigma(\gamma_j)=\emptyset$ for $i\neq j$, the perturbations can be done for all $i$ independently and we may assume that $\calM_\Gamma^\loc(\vec{\gamma})$ is disjoint from a tubular neighborhood of $\coprod_{j=1}^{3k}\Sigma(\gamma_j)$. 

By Lemma~\ref{lem:M-local_mfd}, the restriction of $\calM_\Gamma^\loc(\vec{\gamma})$ to the complement of the tubular neighborhood of $\coprod_{j=1}^{3k}\Sigma(\gamma_j)$ is compact. Therefore, for a generic choice of $\vec{\gamma}$, $\calM_\Gamma^\loc(\vec{\gamma})$ is a compact 0-submanifold, i.e., a finite set.
\end{proof}
The proof of Proposition~\ref{prop:anomaly_welldefined} (2) will be given in \S\ref{ss:correctionterm}.

\subsection{Main result and conjectures}
\begin{Thm}\label{thm:Z_invariant}
For $k\geq 1$, 
\[ \widehat{Z}_{2k,3k}(\vec{f})
	=Z_{2k,3k}(\vec{f})-Z_{2k,3k}^\mathrm{anomaly}(\vec{\gamma}_W)+\mu_k\,\mathrm{sign}\,W\in\calA_{2k,3k}, \]
where $\mu_k$ is the constant found in Proposition~\ref{prop:anomaly_welldefined} (2), is an invariant of diffeomorphism type of $M$. 
\end{Thm}

Proof of the theorem is given in \S\ref{s:proof_main}. Theorem~\ref{thm:Z_invariant} allows us to write $\widehat{Z}_{2k,3k}(M)=\widehat{Z}_{2k,3k}(\vec{f})$. As mentioned in the introduction and the concluding remarks of \cite{Fuk2}, the 2-loop part $Z_{2,3}(M)$ is likely to coincide with the 2-loop part of the configuration space integral of Kontsevich. The generalization of this conjecture is the following, which can be considered as a higher loop analogue of a theorem of Cheeger \cite{Ch} and M\"{u}ller \cite{Mu}.
\begin{Conj}\label{conj:1}
$\widehat{Z}_{2k,3k}(M)$ agrees with (Kuperberg--Thurston's universal expression (\cite{KT}) of) the configuration space integral invariant of Kontsevich (\cite{Ko1}).
\end{Conj}
It is known that the configuration space integral invariant of Kontsevich recovers all $\Q$-valued Ohtsuki finite type invariants (\cite{Oh,KT,Les2}). Hence it is highly nontrivial. Shortly after the author proposed Conjecture~\ref{conj:1} in an earlier version of this article, T.~Shimizu gave a proof of Conjecture~\ref{conj:1} (\cite{Sh}). Shimizu also found an explicit relation of the constant $\mu_k$ to a constant $\delta_k$ considered in \cite{KT, Les2} for configuration space integrals.

The following conjecture is a corollary of this and Conjecture~\ref{conj:1}. 

\begin{Cor}[of Conjecture~\ref{conj:1}]
$\widehat{Z}_{2k,3k}(M)$ is nontrivial. Furthermore, the sequence $\{\widehat{Z}_{2k,3k}(M)\}_k$ recovers all Ohtsuki's finite type invariants (\cite{Oh}) over $\Q$.
\end{Cor}

There are analogues of our graph complex and the moduli spaces of flow graphs for circle-valued Morse theory (\cite{No, Pa}). The generalization to circle-valued Morse function would give an invariant of 3-manifolds with the first Betti number 1. We plan to discuss this in a future paper \cite{Wa2}.

\begin{Rem}
(1) C.~Lescop independently constructed in collaboration with G.~Kuperberg (\cite{Les3}) an explicit 4-chain in the configuration space of two points in a rational homology 3-sphere $M$ by a geometric consideration about Heegaard diagrams, which is reminiscent of Heegaard Floer homology. She defined an invariant of `combings' on $M$ using the explicit 4-chain and gave a combinatorial formula for the invariant. It seems that Fukaya's spaces of gradient trajectories are also included in their 4-chain. 

(2) M.~Futaki discovered in \cite{Fut} some singular phenomena that are missed in \cite{Fuk2}. In \cite{Fuk2}, the coefficients in the linear combination of graphs are defined by contracting holonomies considered along flow graphs by $\mathfrak{g}$-invariant tensors. However, Futaki observed with a concrete computation that when the dumbbell graph contribution is nontrivial (homology 3-sphere with the trivial connection is not the case), the holonomy matrix will suddenly jump at the point on which a trivalent vertex passes through a critical point and thus the invariance fails. Since we construct an invariant via an intersection theory considering only the trivial connection contribution, the coefficients in the linear combination in our definition can be given without using holonomy matrix, so the same problem does not occur. (See also Remark~\ref{rem:cp-nonsing}.)
\end{Rem}

%\clearpage
%%%%%%%%%%%%%%%%%%%%%%%%%%%%%%
%%%%%%%%%%%%%%%%%%%%%%%%%%%%%%
\mysection{Moduli space of gradient trajectories}{s:M2}

We shall construct a compactification $\bcalM_2(f)$ of the space $\calM_2(f)$ of gradient trajectories that corresponds to a compact edge in a graph, in a fashion similar to \cite{BH}. The compactification $\bcalM_2(f)$ will play a fundamental role in defining the compactification $\bcalM_\Gamma(\vec{f})$. For a Morse function $f$ and a metric $\mu$ on $M_0$, we define
\[ \calM_2(f)=\calM_2(f;\mu)=\{(x,y)\in (M_0-\Sigma(f))^2;y=\Phi_f^t(x)\mbox{ for some }t\in(0,\infty)\}. \]
It follows from a property of solutions of ordinary differential equations that $\calM_2(f)$ is a submanifold of $(M_0-\Sigma(f))^2$ of dimension $d+1$. We shall construct a natural compactification of $\calM_2(f)$. Moreover, we obtain compactifications of $\calD_p(f)$ and $\calA_p(f)$ by using the compactification of $\calM_2(f)$.

%%%%%%%%%%%%%%%%%%%%%
\subsection{A decomposition of $\calM_2(f)$}\label{sss:covering_M2}

First we make some assumptions. In the following we assume that a Morse function $f$ is chosen as in the following lemma.

\begin{Lem}[e.g. Lemma~2.8 of \cite{Mi2}]\label{lem:ordered}
For any $C^r$ Morse function $f:M_0\to \R$ that is standard near $\infty_M$, there is an arbitrarily $C^r$-small perturbation of $f$ in the subspace of $C^r_{\varphi_\infty}(M_0)$ of Morse functions such that all the critical values of the resulting Morse function are distinct. (Such a Morse function is said to be \emph{ordered}.)
\end{Lem}

The Morse lemma gives a local coordinate description of the moduli space. Let $f$ be a Morse function on $M_0$. By the Morse lemma, one can find a local coordinate $(x_1,\ldots,x_d)$ on a neighborhood $M_p$ of a critical point $p$ of $f$ and a metric $\mu$ on $M_0$ such that $f$ agrees on $M_p$ with 
\begin{equation}\label{eq:std_quad}
 h(x)=f(p)-\frac{x_1^2}{2}-\cdots-\frac{x_i^2}{2}+\frac{x_{i+1}^2}{2}+\cdots+\frac{x_d^2}{2} 
\end{equation}
and such that $\mu$ agrees with the Euclidean metric on $M_p$ with respect to the coordinate $(x_1,\ldots,x_d)$. We say that such a metric $\mu$ is {\it Euclidean near critical points}. We call a pair of $M_p$ and the coordinate $(x_1,\ldots,x_d)$ a {\it Morse model}.

Suppose that the singular set $\Sigma(f)=\{p_1,p_2,\ldots,p_N\}$ is numbered so that $f(p_k)<f(p_{k+1})$ for each $k\leq N-1$. We put $c_k=f(p_k)$. For a small number $\eta>0$ and $1\leq k\leq N-1$, let 
\[  \begin{split}
  W_k&=f^{-1}[c_{k}-\eta,c_{k+1}-\eta]\cup\{\infty_M\},\quad L_k=f^{-1}(c_k-\eta)\cup\{\infty_M\},\\
  W_N&=f^{-1}[c_N-\eta,\infty)\cup\{\infty_M\},\quad L_N=f^{-1}(c_N-\eta)\cup\{\infty_M\},\\
  W_0&=f^{-1}(-\infty,c_1-\eta]\cup\{\infty_M\}.
\end{split} \]
See Figure~\ref{fig:Wk}. For a pair of subsets $A,B$ of $M$, let $\calM_2(f;A,B)=\calM_2(f)\cap(A\times B)$.
\begin{figure}
\cfig{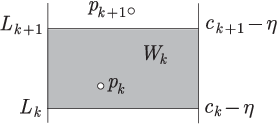}
\caption{}\label{fig:Wk}
\end{figure}
Then we have
\[ \calM_2(f)=\bigcup_{0\leq j\leq k\leq N}\calM_2(f;W_k,W_j). \]
\begin{figure}
\fig{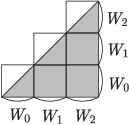}
\caption{A schematic illustration for a covering of $\calM_2(f)$. This consists of 6 squares each corresponds to $\calM_2(f;W_k,W_j)$.}\label{fig:Wk_x_Wj}
\end{figure}

For $0\leq j\leq k\leq N$, there is a natural embedding
\[ \psi_{kj}:\calM_2(f;W_k,W_j)\to W_k\times L_k\times L_{k-1}\times\cdots
\times L_{j+1}\times W_j, \]
defined by $\psi_{kj}(x,y)=(x,z_k,z_{k-1},\ldots,z_{j+1},y)$, where $z_i\in L_i$ is the unique intersection point of the flow line between $x$ and $y$ with $L_i$. Then $\calM_2(f)$ is canonically diffeomorphic to the union of the images $\psi_{kj}(\calM_2(f;W_k,W_j))$ ($0\leq j\leq k\leq N$) glued together by the diffeomorphisms
\[ \begin{split}
\psi_{kj}\circ \psi_{k+1,j}^{-1}&:
\psi_{k+1,j}(\calM_2(f;L_{k+1},W_j))\to \psi_{kj}(\calM_2(f;L_{k+1},W_j)),\\
\psi_{kj}\circ \psi_{k,j-1}^{-1}&:
\psi_{k,j-1}(\calM_2(f;W_k,L_j))\to \psi_{kj}(\calM_2(f;W_k,L_j)).
\end{split}
\]
See Figure~\ref{fig:Wk_x_Wj}. Note that $\psi_{kj}\circ \psi_{k+1,j}^{-1}$ and $\psi_{kj}\circ \psi_{k,j-1}^{-1}$ agree with the maps induced from the projections
\[ \begin{split}
\pi_{kj}:W_{k+1}\times L_{k+1}\times L_k\times \cdots\times L_{j+1}\times W_j
&\to W_k\times L_k\times L_{k-1}\times\cdots\times L_{j+1}\times W_j,\\
(x,z_{k+1},z_k,\ldots,z_{j+1},y)&\mapsto (x,z_k,z_{k-1},\ldots,z_{j+1},y),\\
\rho_{kj}:W_k\times L_k\times L_{k-1}\times \cdots \times L_{j}\times W_{j-1}
&\to W_k\times L_k\times L_{k-1}\times \cdots \times L_{j+1}\times W_j,\\
(x,z_{k},z_{k-1},\ldots,z_{j},y)&\mapsto (x,z_k,z_{k-1},\ldots,z_{j+1},y).
\end{split}\]

%%%%%%%%%%%%%%%%%%
\subsection{The definition of $\bcalM_2(f)$}\label{sss:def_M2}

Let 
\begin{equation}\label{eq:M2(k,j)}
 \bcalM_2(f;W_k,W_j)=\overline{\psi_{kj}(\calM_2(f;W_k,W_j))}\quad\mbox{(the closure)}. 
\end{equation}
Note that it is not the closure of $\calM_2(f;W_k,W_j)$ in $W_k\times W_j$ when $k>j$, but the closure in the codomain of $\psi_{kj}$. 

\begin{Lem}\label{lem:pi_rho_diffeo}
The maps $\pi_{kj}$ and $\rho_{kj}$ induce diffeomorphisms
\[ \begin{split}
&\overline{\psi_{k+1,j}(\calM_2(f;L_{k+1},W_j))}
\to \overline{\psi_{kj}(\calM_2(f;L_{k+1},W_j))},\\
&\overline{\psi_{k,j-1}(\calM_2(f;W_k,L_j))}
\to \overline{\psi_{kj}(\calM_2(f;W_k,L_j))}.
\end{split} \]
\end{Lem}
\begin{proof}
We only give a proof for $\pi_{kj}$. The smoothness of $\pi_{kj}$ is obvious. Define a smooth map $\gamma:L_{k+1}\times L_k\times\cdots\times L_{j+1}\times W_j\to L_{k+1}\times L_{k+1}\times L_k\times\cdots\times L_{j+1}\times W_j$ by $\gamma(x,z_k,\ldots,z_{j+1},y)=(x,x,z_k,\ldots,z_{j+1},y)$. The restriction of $\gamma$ to\\ $\overline{\psi_{k+1,j}(\calM_2(f;L_{k+1},W_j))}$ is a smooth inverse to $\pi_{kj}$.
\end{proof}
\begin{Def}
We define
\begin{equation}\label{eq:def_bcalM}
 \bcalM_2(f)=\bigcup_{0\leq j\leq k\leq N}\bcalM_2(f;W_k,W_j), 
\end{equation}
where the pieces are glued together by the diffeomorphisms of Lemma~\ref{lem:pi_rho_diffeo}. 
\end{Def}
It is clear from the definition that $\bcalM_2(f)$ is compact. Let
\[ \bar{b}:\bcalM_2(f)\to M\times M\]
be the continuous map that extends the natural embedding $b=\bigcup_{k,j}\psi_{kj}^{-1}:\bigcup_{j,k}\psi_{kj}(\calM_2(f;W_k,W_j))\to M\times M$ onto $\calM_2(f)$. In other words, $\bar{b}$ gives the pair of endpoints of a (possibly broken) flow line. For subsets $A$ of $W_k$ and $B$ of $W_j$, let
\begin{equation}\label{eq:bcalM(A,B)}
 \bcalM_2(f;A,B)=\overline{\psi_{kj}(\calM_2(f;A,B))}\subset A\times L_k\times\cdots \times L_{j+1}\times B. 
\end{equation}
This is consistent with (\ref{eq:M2(k,j)}). Note that this may depend on the choices of $k$ and $j$ when $A\subset L_k$ or $B\subset L_{j+1}$, but it becomes well-defined if it is considered as a subspace of $\bcalM_2(f)$. 

For a Morse pair $(f,\mu)$ and a pair $(x,y)$ of distinct points of $M-\Sigma(f)$, a {\it ($r$ times) broken flow line between $x$ and $y$} is a sequence $\gamma_0,\gamma_1,\ldots,\gamma_r$ ($r\geq 1$) of integral curves of $-\mathrm{grad}\,f$ satisfying the following conditions:
\begin{enumerate}
\item The domain of $\gamma_0$ is $[0,\infty)$, the domain of $\gamma_r$ is $(-\infty,0]$ and the domain of $\gamma_i$, $1\leq i\leq r-1$, is $\R$.
\item $\gamma_0(0)=x$, $\gamma_r(0)=y$.
\item There is a sequence $q_1,q_2,\ldots,q_r$ of distinct critical points of $f$ such that $\lim_{s\to -\infty}\gamma_i(s)=\lim_{s\to \infty}\gamma_{i-1}(s)=q_i$ ($1\leq i\leq r$).
\end{enumerate}
A broken flow line $(\gamma_0,\gamma_1,\ldots,\gamma_r)$ between $x$ and $y$ is determined by the boundary points $x,y$ and intersection points of $\gamma_i$ with level surfaces that lie between $q_i$ and $q_{i+1}$. More precisely, a broken flow line between $x\in W_k$ and $y\in W_j$ is uniquely determined by a point of $W_k\times L_k\times \cdots\times L_{j+1}\times W_j$ up to reparametrizations and conversely a broken flow line between $x\in W_k$ and $y\in W_j$ determines a point of $W_k\times L_k\times \cdots\times L_{j+1}\times W_j$. So we may identify a broken flow line between $x\in W_k$ and $y\in W_j$ with a point of $W_k\times L_k\times \cdots\times L_{j+1}\times W_j$ and call the latter a {\it broken flow sequence}.

Now the main proposition of this subsection can be stated as follows\footnote{We will not give explicit charts on every strata. The article \cite{We} of K.~Wehrheim gives a full description of the smooth structures on the compactification of $\calM_2(f)$ and explicit associative gluing maps in a similar finite dimensional fashion as \cite{BH}. Most of the results on the compactification of $\calM_2(f)$ given below would follow from results in \cite{We}.
}. 
\begin{Prop}\label{prop:bM2_mfd}
Let $(f,\mu)$ be a Morse--Smale pair such that $f$ is ordered and $\mu$ is Euclidean near critical points. Let $\Omega_M=(M\times\infty_M)\cup (\infty_M\times M)$, $\widehat{\Delta}_M=\Delta_M\cup \Omega_M$. Then $\bcalM_2(f)$ in (\ref{eq:def_bcalM}) is compact and satisfies the following conditions.
\begin{enumerate}
\item $\bcalM_2(f)-\bar{b}^{-1}(\wDelta_M)$ is a smooth manifold with corners.
\item $\bar{b}$ induces a diffeomorphism $\mathrm{Int}\,\bcalM_2(f)\to \calM_2(f)$.
\item The codimension $r$ stratum of $\bcalM_2(f)-\bar{b}^{-1}(\wDelta_M)$ consists of $r$ times broken flow sequences. The codimension $r$ stratum of $\bcalM_2(f)-\bar{b}^{-1}(\wDelta_M)$ for $r\geq 1$ is canonically diffeomorphic to
\[ \coprod_{{{q_1,\ldots,q_r\in\Sigma(f)}\atop{q_1,\ldots,q_r\,\mathrm{distinct}}}}\calA_{q_1}(f)\times \bbcalM{f}{q_1}{q_2}\times\cdots\times\bbcalM{f}{q_{r-1}}{q_r}\times\calD_{q_r}(f). \]
\end{enumerate}
\end{Prop}

The proof is divided into \S\ref{ss:mod_short}, \S\ref{ss:mod_long} and \S\ref{ss:mod_gen}.

\begin{Rem}\label{rem:M2_nomfd}
\begin{enumerate}
\item The compactification $\bcalM_2(f)$ is not a submanifold of $M\times M$ whereas $\calM_2(f)$ is a submanifold of $M\times M$. Moreover, the image of $\bar{b}$ may not be a submanifold of $M\times M$ with corners since the dimensions of some faces of the boundary decreases. 
\item In fact, $\bcalM_2(f)$ is smooth on $\bcalM_2(f)-\bar{b}^{-1}(\wDelta_{\Sigma(f)})$, where $\wDelta_{\Sigma(f)}=\{(p,p)\in M\times M; p\in\Sigma(f)\cup\{\infty_M\}\}$. The boundary of $\bcalM_2(f)$ has conic singularities on $\bar{b}^{-1}(\wDelta_{\Sigma(f)})$.
\item The definition of $\bcalM_2(f)$ depends on the choice of the level surfaces $L_k$. But its diffeomorphism type (as a manifold with corners) does not depend on the choice and it is enough for our purpose.
\end{enumerate}
\end{Rem}

%%%%%%%%%%%%%%%%%%%%%%
\subsection{Smooth structure of the moduli space of short trajectories}\label{ss:mod_short}

Let $h$ be the standard quadratic form of (\ref{eq:std_quad}). First, we describe the standard model
\[ \calM_2(h)=\{(x,y)\in(\R^d)^2 \,;\, y=\Phi_h^t(x)\mbox{ for some }t\in (0,\infty)\}. \]
The following lemma is a key lemma in the construction of the compactification.

\begin{Lem}\label{lem:M2(h)}
$\calM_2(h)=\{(\rho u,v)\times (u,\rho v);\,u\in\R^i,v\in\R^{d-i},\rho\in(0,1)\}$. Hence its closure $\bcalM_2(h)$ in $\R^d\times \R^d$ is
\[ \bcalM_2(h)=\{(\rho u,v)\times (u,\rho v);\,u\in\R^i,v\in\R^{d-i},\rho\in[0,1]\} \]
and $\bcalM_2(h)-\{0\times 0\}$ is a smooth manifold with boundary, with
\[  \partial \bcalM_2(h)=(\{0\}\times \R^{d-i})\times (\R^i\times\{0\}) \cup_{0\times 0}\Delta_{\R^d}=(\calA_0(h)\times \calD_0(h)) \cup_{0\times 0}\Delta_{\R^d}.\] 
\end{Lem}
\begin{proof}
Let $\calX=\{(\rho u,v)\times (u,\rho v);\,u\in\R^i,v\in\R^{d-i},\rho\in(0,1)\}$. Suppose that $(\rho u,v)\times (u,\rho v)\in \calX$. The solution for the differential equation
\[ \dot{\gamma}(t)=-(\mathrm{grad}\,h)_{\gamma(t)} \] 
is $\gamma(t)=(\gamma_1(0)e^t,\ldots,\gamma_i(0)e^t,\gamma_{i+1}(0)e^{-t},\ldots,\gamma_d(0)e^{-t})$. If $\gamma(0)=(\rho u,v)$, then $\gamma(t)=(\rho u e^t,v e^{-t})$. The system of equations $\rho u e^t=u$, $v e^{-t}=\rho v$ has a unique solution $t\geq 0$ proveded that $(u,v)\neq (0,0)$, in which case $(\rho u,v)\times (u,\rho v)\in \calM_2(h)$. If $(u,v)=(0,0)$, then $(\rho u,v)\times (u,\rho v)=(0,0)\times (0,0)$ and obviously this belongs $\calM_2(h)$. Conversely, for $(u_0,v_0)\times (u_0e^t,v_0e^{-t})\in \calM_2(h)$ ($t\geq 0$), put $u=u_0e^t$ and $v=v_0$. Then $(u_0,v_0)\times (u_0e^t,v_0e^{-t})=(ue^{-t},v)\times (u,ve^{-t})\in \calX$. This completes the proof of $\calX=\calM_2(h)$. 

For the latter assertion, consider the smooth map $\varphi:[0,1]\times \R^i\times \R^{d-i}\to \R^d\times \R^d$ defined by $\varphi(\rho,u,v)=(\rho u,v)\times (u,\rho v)$. Its Jacobian matrix is
\begin{equation}\label{eq:jacobian}
 J\varphi_{(\rho,u,v)}=\left(\begin{array}{ccc}
  u & \rho I & O\\
  \mathbf{0} & O & I\\
  \mathbf{0} & I & O\\
  v & O & \rho I
\end{array}\right) 
\end{equation}
whose rank is $d+1$ unless $(u,v)=(0,0)$. Namely, $\varphi$ is an immersion outside $[0,1]\times 0\times 0$. Note that $\varphi([0,1]\times 0\times 0)=\{0\times 0\}$. Moreover, it is easy to check that $\bcalM_2(h)-\{0\times 0\}$ is a submanifold with boundary. The boundary corresponds to the image from $\rho=0,1$.  
\end{proof}

\begin{Lem}\label{lem:dM(W,W)}
Let $(f,\mu)$ be as in Proposition~\ref{prop:bM2_mfd} and let $1\leq k\leq N-1$. Then
%\begin{enumerate}
%\item [(i)] 

(i) $\bcalM_2(f;W_k,W_k)-\wDelta_{W_k}$ ($\wDelta_{W_k}=W_k^2\cap \wDelta_M$) is a submanifold of $W_k\times W_k$ with corners, with
\[ \begin{split}
\partial\bcalM_2(f;W_k,W_k)=&\Bigl[(\calA_{p_k}(f)\cap W_k)\times (\calD_{p_k}(f)\cap W_k)\Bigr]\cup_{(p_k,p_k)} \Delta_{W_k}\\
&\cup \calM_2(f;W_k,L_k)\cup \calM_2(f;L_{k+1},W_k). 
\end{split}\]

%\item[(ii)] 
(ii) $\bcalM_2(f;W_k,L_k)-\{\infty_M^2\}$ is a submanifold of $W_k\times L_k$ with corners, with
\[ \begin{split}
	\partial \bcalM_2(f;W_k,L_k)=&\Bigl[(\calA_{p_k}(f)\cap W_k)\times (\calD_{p_k}(f)\cap L_k)\Bigr]\\
        &\cup \calM_2(f;L_{k+1},L_k)\cup \Delta_{L_k}.
\end{split}
\]

%\item[(iii)] 
(iii) $\bcalM_2(f;L_{k+1},L_k)-\{\infty_M^2\}$ is a submanifold of $L_{k+1}\times L_k$ with corners, with
\[ \begin{split}
	\partial \bcalM_2(f;L_{k+1},L_k)=&(\calA_{p_k}(f)\cap L_{k+1})\times (\calD_{p_k}(f)\cap L_k)
\end{split}
\]

%\item[(iv)] 
(iv) $\bcalM_2(f;L_{k+1},W_k)-\{\infty_M^2\}$ is a submanifold of $L_{k+1}\times W_k$ with corners, with
\[ \begin{split}
	\partial \bcalM_2(f;L_{k+1},W_k)=&\Bigl[(\calA_{p_k}(f)\cap L_{k+1})\times (\calD_{p_k}(f)\cap W_k)\Bigr]\\
        &\cup \calM_2(f;L_{k+1},L_k)\cup \Delta_{L_{k+1}}.
\end{split}
\]
%\end{enumerate} 
\end{Lem}
\begin{proof}
Here we only prove (i). The other cases are the restrictions of this case. The part $\calM_2(f;W_k,L_k)\cup \calM_2(f;L_{k+1},W_k)$ is the boundary of $\calM_2(f;W_k,W_k)$. To see the other ends, we choose a covering $\calU=\{U_\lambda\}$ of $W_k-\{\infty_M\}$ by small open subsets $U_\lambda$ each of which is the intersection of an open disk in $M$ and $W_k-\{\infty_M\}$. We check the assertion for $\bcalM_2(f;U_\lambda,U_\mu)$ for any $\lambda,\mu$. We choose $U_\lambda$ so small that for each $\lambda$ one of the following holds. 
\begin{enumerate}
\item $U_\lambda$ is disjoint from $\calA_{p_k}(f)\cup \calD_{p_k}(f)$.
\item $U_\lambda$ is included in a neighborhood $M_{p_k}\subset \mathrm{Int}\,W_k$ of $p_k$ of the Morse lemma. 
\item the gradient translation $\Phi_f^T(U_\lambda)$ for some $T\in\R$ is included in $M_{p_k}$. 
\end{enumerate}
The three lemmas \ref{lem:case1}, \ref{lem:case2} and \ref{lem:case3} below complete the proof. 
\end{proof}

\begin{Lem}[Case (1)]\label{lem:case1} Let $(f,\mu)$ be as in Proposition~\ref{prop:bM2_mfd}. Suppose that either $U_\lambda$ or $U_\mu$ is disjoint from $\calA_{p_k}(f)\cup \calD_{p_k}(f)$. Then
\[ \bcalM_2(f;U_\lambda,U_\mu)=\calM_2(f;U_\lambda,U_\mu). \]
\end{Lem}

\begin{proof}
We may assume without loss of generality that $U_\lambda$ is disjoint from $\calA_{p_k}(f)\cup \calD_{p_k}(f)$. Let $(x,y)\in U_\lambda\times U_\mu$ be any point such that $(x,y)\not\in \calM_2(f;U_\lambda,U_\mu)$. Since $U_\lambda$ is disjoint from $\calA_{p_k}(f)\cup \calD_{p_k}(f)$, the integral curve $\gamma_x$ that passes through $x$ intersects both $L_{k+1}$ and $L_k$. For a small number $\ve>0$, let 
\[ \widetilde{U}_\ve=W_k\cap \bigcup_{t\in\R}\Phi_f^t(U_\ve(x)), \]
where $U_\ve(x)$ is the open $\ve$-ball around $x$. If $\ve$ is sufficiently small, $\widetilde{U}_\ve$ is an open tubular neighborhood of $\mathrm{Im}\,\gamma_x$ in $W_k$. Since $\mathrm{Im}\,\gamma_x$ and $\{y\}$ are disjoint, these are separated by $\widetilde{U}_\ve$ and $U_\ve(y)$ for some $\ve$. This shows that the open set $U_\ve(x)\times U_\ve(y)\subset W_k\times W_k$ is disjoint from $\calM_2(f;U_\lambda,U_\mu)$ and that $\calM_2(f;U_\lambda,U_\mu)$ is a relatively closed subset of $U_\lambda\times U_\mu$, whose closure in $U_\lambda\times U_\mu$ is itself.
\end{proof}

\begin{Lem}[Case (2)]\label{lem:case2} Let $(f,\mu)$ be as in Proposition~\ref{prop:bM2_mfd}. Suppose that both $U_\lambda$ and $U_\mu$ are included in $M_{p_k}$. Then $\bcalM_2(f;U_\lambda,U_\mu)-\{p_k\times p_k\}$ is a smooth manifold with boundary, with
\[
 \partial \bcalM_2(f;U_\lambda,U_\mu)
=\partial\bcalM_2(h)\cap (U_\lambda\times U_\mu),
\]
where we identify $f$ with $h$ in a Morse model.
\end{Lem}
\begin{proof}
By definition, $\calM_2(f;U_\lambda,U_\mu)=\calM_2(h)\cap (U_\lambda\times U_\mu)$. Then apply Lemma~\ref{lem:M2(h)} to obtain the closure. 
\end{proof}

\begin{Lem}[Case (3)]\label{lem:case3} Let $(f,\mu)$ be as in Proposition~\ref{prop:bM2_mfd}. Suppose that $U_\lambda\cap U_\mu=\emptyset$ and that there are real numbers $S,T$ such that their gradient translations $U_\lambda'=\Phi_f^{S}(U_\lambda)$ and $U_\mu'=\Phi_f^T(U_\mu)$ are both included in $M_{p_k}$. Then $\bcalM_2(f;U_\lambda,U_\mu)$ is a smooth manifold with boundary, with
\[ \partial \bcalM_2(f;U_\lambda,U_\mu)\cong \partial\bcalM_2(h)\cap (U_\lambda'\times U_\mu'), \]
where the diffeomorphism is given by $\Phi_f^{S}\times \Phi_f^{T}:U_\lambda\times U_\mu\to U_\lambda'\times U_\mu'$. 
\end{Lem}

\begin{proof}
The diffeomorphism $\Phi_f^S\times \Phi_f^T$ induces a diffeomorphism between $\calM_2(f;U_\lambda,U_\mu)$ and $\calM_2(f;U_\lambda',U_\mu')$ and their closures.
\end{proof}

Let $\calD_{\infty}(f)=\{x\in M;\displaystyle\lim_{t\to -\infty}\Phi_f^t(x)=\infty_M\}$, $\calA_{\infty}(f)=\{x\in M;\displaystyle\lim_{t\to \infty}\Phi_f^t(x)=\infty_M\}$. Then $\calM_2(f;\infty_M,W_k)=\infty_M\times (\calD_\infty(f)\cap W_k)$, $\calM_2(f;W_k,\infty_M)=(\calA_\infty(f)\cap W_k)\times\infty_M$. The following lemma is an analogue of Lemma~\ref{lem:dM(W,W)}.

\begin{Lem}\label{lem:dM(infty,W)}
Let $(f,\mu)$ be as in Proposition~\ref{prop:bM2_mfd}. Then
%\begin{enumerate}
%\item [(i)] 

(i) $\bcalM_2(f;W_N,W_N)-\wDelta_{W_N}$ is a submanifold of $W_N\times W_N$ with corners, with
\[ \begin{split}
\partial\bcalM_2(f;W_N,W_N)=&\Bigl[(\calA_{p_N}(f)\cap W_N)\times (\calD_{p_N}(f)\cap W_N)\Bigr]\cup_{(p_N,p_N)} \Delta_{W_N}\\
&\cup \calM_2(f;W_N,L_N)\cup \calM_2(f;\infty_M,W_N). 
\end{split}\]

%\item[(ii)] 
(ii) $\bcalM_2(f;W_N,L_N)-\wDelta_{L_N}$ is a submanifold of $W_N\times L_N$ with corners, with
\[ \begin{split}
	\partial \bcalM_2(f;W_N,L_N)=&\Bigl[(\calA_{p_N}(f)\cap W_N)\times (\calD_{p_N}(f)\cap L_N)\Bigr]\\
        &\cup \calM_2(f;\infty_M,L_N)\cup \Delta_{L_N}.
\end{split}
\]

%\item [(i)] 
(iii) $\bcalM_2(f;W_0,W_0)-\wDelta_{W_0}$ is a submanifold of $W_0\times W_0$ with corners, with
\[ \begin{split}
\partial\bcalM_2(f;W_0,W_0)=\calM_2(f;W_0,\infty_M)\cup \calM_2(f;L_1,W_0). 
\end{split}\]

%\item[(ii)] 
(iv) $\bcalM_2(f;L_1,W_0)-\wDelta_{L_1}$ is a submanifold of $L_1\times W_0$ with corners, with
\[ \begin{split}
	\partial \bcalM_2(f;L_1,W_0)=\calM_2(f;L_1,\infty_M)\cup \Delta_{L_1}.
\end{split}
\]
%\end{enumerate} 
\end{Lem}

%%%%%%%%%%%%%%%%%%%%%%%%%%%
\subsection{Smooth structure of the moduli space of long trajectories}\label{ss:mod_long}

Next, we shall prove the following lemma.

\begin{Lem}\label{lem:CM_2(W,W)}
Let $(f,\mu)$ be as in Proposition~\ref{prop:bM2_mfd} and suppose that $f$ has $N$ critical points whose critical values are all distinct. Then $\bcalM_2(f;W_k,W_j)-\bar{b}^{-1}(\Omega_M)$ ($0\leq j< k\leq N$, definition in (\ref{eq:M2(k,j)})) is a submanifold of $W_k\times L_k\times L_{k-1}\times\cdots\times L_{j+1}\times W_j$ with corners, whose codimension $r$ stratum for $r\geq 1$ consists of $r-s$ times broken flow sequences $\xi$ with $s$ events in the following list happening.
\begin{itemize}
\item The initial endpoint of $\xi$ lies in $\partial W_k$.
\item The terminal endpoint of $\xi$ lies in $\partial W_j$.
\item The initial endpoint of $\xi$ agrees with $\infty_M$ (only if $k=N$).
\item The terminal endpoint of $\xi$ agrees with $\infty_M$ (only if $j=0$).
\end{itemize}
\end{Lem}

In the following, we follow convention in Appendix~\ref{s:mfd_corners} about smooth manifolds with corners. To prove Lemma~\ref{lem:CM_2(W,W)}, we shall prove the following lemma by induction on $k-j-1$.

\begin{Lem}\label{lem:induction}
Under the assumption of Lemma~\ref{lem:CM_2(W,W)}, for $k-j-1\geq 0$, the moduli space $\bcalM_2(f;W_k,L_{j+1})-\bar{b}^{-1}(\Omega_M)$ is a submanifold of $W_k\times L_k\times L_{k-1}\times\cdots\times L_{j+1}$ with corners, whose codimension $r$ stratum for $r\geq 1$ consists of $r-s$ times broken flow sequences  $\xi$ with $s$ events in the following list happening.
\begin{itemize}
\item The initial endpoint of $\xi$ lies in $\partial W_k$.
\item The initial endpoint of $\xi$ agrees with $\infty_M$ (if $k=N$).
\end{itemize}
\end{Lem}

 For $k-j-1=0$, Lemma~\ref{lem:induction} has been proved in Lemma~\ref{lem:dM(W,W)}. Let us consider the case $k-j-1=1$, i.e. $\bcalM_2(f;W_k,L_{k-1})$. The moduli space $\calM_2(f;W_k,L_{k-1})$ is identified with the fiber product $\calM_2(f;W_k,L_k)\times_{L_k}\calM_2(f;L_k,L_{k-1})$ that is the limit (pullback) of the diagram:
\[ \calM_2(f;L_k,L_{k-1})\stackrel{i_1}{\longrightarrow}L_k \stackrel{i_2}{\longleftarrow}\calM_2(f;W_k,L_k), \]
where $i_2:\calM_2(f;W_k,L_k)\to L_k$ and $i_1:\calM_2(f;L_k,L_{k-1})\to L_k$ are maps induced from projections $\mathrm{pr}_2:W_k\times L_k\to L_k$ and $\mathrm{pr}_1:L_k\times L_{k-1}\to L_k$ respectively. It is easy to see that $i_2$ and $i_1$ are transversal and hence by Proposition~\ref{prop:BT} the fiber product is a smooth manifold with boundary. 
\begin{Lem}\label{lem:glue_M2}
Let $(f,\mu)$ be as in Proposition~\ref{prop:bM2_mfd}. Then the smooth extensions $\bar{i}_2:\bcalM_2(f;W_k,L_k)\to L_k$ and $\bar{i}_1:\bcalM_2(f;L_k,L_{k-1})\to L_k$ of the projections $i_2$ and $i_1$ respectively are strata transversal on the complement of $\bar{b}^{-1}(\Omega_M)$. Hence the complement of $\infty_M\times L_k\times L_k\times L_{k-1}$ in the fiber product
\[ \bcalM_2(f;W_k,L_k)\times_{L_k}\bcalM_2(f;L_k,L_{k-1})\subset W_k\times L_k\times L_k\times L_{k-1}\]
is a smooth manifold with corners, whose strata are as follows.
\begin{enumerate}
\item[(0)] The codimension 0 stratum is $\calM_2(f;\mathrm{Int}\,W_k,L_k)\times_{L_k}\calM_2(f;L_k,L_{k-1})$.
\item[(1)] The codimension 1 stratum is the union of $\partial_1 \bcalM_2(f;W_k,L_k)\times_{L_k}\calM_2(f;L_k,L_{k-1})$ and $\calM_2(f;\mathrm{Int}\,W_k,L_k)\times_{L_k}\partial_1\bcalM_2(f;L_k,L_{k-1})$, where $\partial_r$ denotes the codimension $r$ stratum of the complement of $\bar{b}^{-1}(\Omega_M)$.
\item[(2)] The codimension 2 stratum is $\partial_1\bcalM_2(f;W_k,L_k)\times_{L_k}\partial_1\bcalM_2(f;L_k,L_{k-1})$.
\end{enumerate}
\end{Lem}
\begin{proof}
If either $z_k\in i_2(\calM_2(f;W_k,L_k))$ or $z_k\in i_1(\calM_2(f;L_k,L_{k-1}))$, then $z_k$ is a regular value of one of $i_2$ and $i_1$. Indeed, if for example $z_k=i_2(x,z_k)\in i_2(\calM_2(f;W_k,L_k))$, then there is a small open neighborhood $O$ of $z_k$ in $L_k$ such that $T_{z_k}O$ and the tangent space of the gradient line at $x$ parametrizes $T_{(x,z_k)}\calM_2(f;W_k,L_k)$. Obviously, $di_2:T_{(x,z_k)}\calM_2(f;W_k,L_k)\to T_{z_k}L_k=T_{z_k}O$ is surjective. This shows that $\bar{i}_2$ and $\bar{i}_1$ are transversal between a codimension 0 stratum and any strata. 

If $z_k\in \bar{i}_2(\partial \bcalM_2(f;W_k,L_k)-\calM_2(f;W_k,L_k))$ and $z_k\in \bar{i}_1(\partial \bcalM_2(f;L_k,L_{k-1}))$, then the images of the normal bundles of $\bar{i}_2^{-1}(z_k)$ in $\partial\bcalM_2(f;W_k,L_k)$ and of $\bar{i}_1^{-1}(z_k)$ in $\partial\bcalM_2(f;L_k,L_{k-1})$ under the differentials $d\bar{i}_2$ and $d\bar{i}_1$ agree with $T_{z_k}(\calD_{p_k}(f)\cap L_k)$ and $T_{z_k}(\calA_{p_{k-1}}(f)\cap L_k)$ respectively. Then by the Morse--Smale condition for $(f,\mu)$, these images span $T_{z_k}L_k$. This shows that $\bar{i}_2$ and $\bar{i}_1$ are transversal between codimension 1 strata. Now the lemma follows by applying Proposition~\ref{prop:BT}.
\end{proof}

The following lemma proves Lemma~\ref{lem:induction} for $k-j-1=1$.
\begin{Lem}\label{lem:hyp1}
Let $(f,\mu)$ be as in Proposition~\ref{prop:bM2_mfd}. Then 
\begin{equation}\label{eq:prMM}
 \bcalM_2(f;W_k,L_{k-1})=\mathrm{pr}\Bigl[
	\bcalM_2(f;W_k,L_k)\times_{L_k} \bcalM_2(f;L_k,L_{k-1})\Bigr],
\end{equation}
where $\mathrm{pr}:\bcalM_2(f;W_k,L_k)\times_{L_k} \bcalM_2(f;L_k,L_{k-1})\to W_k\times L_k\times L_{k-1}$ is the embedding $(x,z_k,z_k,z_{k-1})\mapsto (x,z_k,z_{k-1})$. Hence $\bcalM_2(f;W_k,L_{k-1})-\bar{b}^{-1}(\Omega_M)$ is a smooth manifold with corners, whose strata are as follows.
\begin{enumerate}
\item[(0)] The codimension 0 stratum is $\mathrm{pr}\Bigl[\calM_2(f;\mathrm{Int}\,W_k,L_k)\times_{L_k}\calM_2(f;L_k,L_{k-1})\Bigr]$.
\item[(1)] The codimension 1 stratum is the union of 
\[ \begin{split}
&\mathrm{pr}\Bigl[\partial_1 \bcalM_2(f;W_k,L_k)\times_{L_k}\calM_2(f;L_k,L_{k-1})\Bigr]\mbox{ and }\\
&\mathrm{pr}\Bigl[\calM_2(f;\mathrm{Int}\,W_k,L_k)\times_{L_k}\partial_1\bcalM_2(f;L_k,L_{k-1})\Bigr].
\end{split}\]
\item[(2)] The codimension 2 stratum is $\mathrm{pr}\Bigl[\partial_1\bcalM_2(f;W_k,L_k)\times_{L_k}\partial_1\bcalM_2(f;L_k,L_{k-1})\Bigr]$.
\end{enumerate}
\end{Lem}
\begin{proof}
We first show that $\mathrm{pr}$ takes $\bcalM_2(f;W_k,L_k)\times_{L_k} \bcalM_2(f;L_k,L_{k-1})$ diffeomorphically onto its image. But it is analogous to the proof of Lemma~\ref{lem:pi_rho_diffeo}, namely, the map $\mathrm{pr}$ is smooth and there is a smooth section $\gamma:W_k\times L_k\times L_{k-1}\to W_k\times L_k\times L_k\times L_{k-1}$ of $\mathrm{pr}$. 

Since $\bcalM_2(f;W_k,L_{k-1})$ is the closure of $\psi_{k,k-1}(\calM_2(f;W_k,L_{k-1}))$ in $W_k\times L_k\times L_{k-1}$ and since $\gamma$ gives a homeomorphism $W_k\times L_k\times L_{k-1}\approx W_k\times\Delta_{L_j}\times L_{k-1}$, it suffices to show that the closure of $\gamma(\psi_{k,k-1}(\calM_2(f;W_k,L_{k-1})))=\calM_2(f;W_k,L_k)\times_{L_k} \calM_2(f;L_k,L_{k-1})$ agrees with $\bcalM_2(f;W_k,L_k)\times_{L_k} \bcalM_2(f;L_k,L_{k-1})$ to see (\ref{eq:prMM}). This follows from Proposition~\ref{prop:closure_corners}. 
\end{proof}

The following lemma completes the induction and proves Lemma~\ref{lem:induction}.

\begin{Lem}\label{lem:hyp_p}
Under the assumption of Lemma~\ref{lem:CM_2(W,W)}, suppose that Lemma~\ref{lem:induction} holds true for $k-j-1=p\leq N-3$. Then Lemma~\ref{lem:induction} holds true for $k-j-1=p+1$.
\end{Lem}
\begin{proof}
By assumption, the moduli space $\bcalM_2(f;W_k,L_{j+1})-\bar{b}^{-1}(\Omega_M)$ is a smooth manifold with corners, whose strata are as described in Lemma~\ref{lem:induction}. Then by exactly the same argument as in Lemmas~\ref{lem:glue_M2} and \ref{lem:hyp1}, one may see the following.

(1) By Proposition~\ref{prop:BT}, the complement of $\bar{b}^{-1}(\Omega_M)\times L_{j+1}\times L_j$ in the fiber product $\bcalM_2(f;W_k,L_{j+1})\times_{L_{j+1}}\bcalM_2(f;L_{j+1},L_j)\subset(W_k\times L_k\times L_{k-1}\times\cdots\times L_{j+1})\times (L_{j+1}\times L_j)$ has the structure of a smooth manifold with corners, whose codimension $r$ stratum is the union of $\partial_r\bcalM_2(f;W_k,L_{j+1})\times_{L_{j+1}}\calM_2(f;L_{j+1},L_j)$ and $\partial_{r-1}\bcalM_2(f;W_k,L_{j+1})\times_{L_{j+1}}\partial_1\bcalM_2(f;L_{j+1},L_j)$.

(2) By Proposition~\ref{prop:closure_corners}, 
\[ \bcalM_2(f;W_k,L_j)=\mathrm{pr}\Bigl[\bcalM_2(f;W_k,L_{j+1})\times_{L_{j+1}}\bcalM_2(f;L_{j+1},L_j)\Bigr] \]
where $\mathrm{pr}:(W_k\times L_k\times L_{k-1}\times\cdots\times L_{j+1})\times (L_{j+1}\times L_j)\to W_k\times L_k\times L_{k-1}\times\cdots\times L_{j+1}\times L_j$ is the projection $(x,z_k,z_{k-1},\ldots,z_{j+1},z_{j+1},z_j)\mapsto (x,z_k,z_{k-1},\ldots,z_{j+1},z_j)$, which embeds $\bcalM_2(f;W_k,L_{j+1})\times_{L_{j+1}}\bcalM_2(f;L_{j+1},L_j)$.

These observations complete the proof.
\end{proof}

\begin{proof}[Proof of Lemma~\ref{lem:CM_2(W,W)}]
By replacing $\bcalM_2(f;L_{j+1},L_j)$ in the proof of Lemma~\ref{lem:hyp_p} with $\bcalM_2(f;L_{j+1},W_j)$, one may see by Proposition~\ref{prop:closure_corners} that $\bcalM_2(f;W_k,W_j)$ agrees with the projection of the fiber product $\bcalM_2(f;W_k,L_{j+1})\times_{L_{j+1}}\bcalM_2(f;L_{j+1},W_j)$ whose complement of $\bar{b}^{-1}(\Omega_M)$ is a smooth manifold with corners as desired.
\end{proof}

%%%%%%%%%%%%%%%%%
\subsection{Moduli space of general trajectories}\label{ss:mod_gen}

\begin{proof}[Proof of Proposition~\ref{prop:bM2_mfd}]
  Now we know from Lemma~\ref{lem:CM_2(W,W)} that $\bcalM_2(f)$ is the union of moduli spaces $\bcalM_2(f;W_k,W_j)$ ($0\leq j\leq k\leq N$) that are smooth manifolds with corners, glued together by diffeomorphisms of Lemma~\ref{lem:pi_rho_diffeo}. The result is, away from $\bar{b}^{-1}(\widehat{\Omega}_M)$, a smooth manifold with corners (see Lemma~\ref{lem:dM(W,W)} for the reason of exclusion of the diagonal). This proves the property (1). The property (2) is immediate from the definition of $\bcalM_2(f)$.

Since the diffeomorphisms of Lemma~\ref{lem:pi_rho_diffeo} are strata preserving (Appendix~\ref{s:mfd_corners}) in both directions, no new corners will appear under the gluing. The diffeomorphisms induce gluings between strata of the same codimensions and of the same type. For example, the component of $r$ times broken flow sequences in $\bcalM_2(f,W_k,W_j)$ is glued together along $\bcalM_2(f;L_{k+1},W_j)$ with the component of $r$ times broken flow sequences in $\bcalM_2(f;W_{k+1},W_j)$. This proves the property (3). 
\end{proof}

%%%%%
\subsection{Compactifications of descending and ascending manifolds}

Let $(f,\mu)$ be a Morse pair as in Proposition~\ref{prop:bM2_mfd}. For a critical point $p$ of $f$, let
\[ \bcalD_p(f)=\bar{b}^{-1}(p\times M),\quad \bcalA_p(f)=\bar{b}^{-1}(M\times p). \]
We obtain the following well-known result (e.g., \cite[Theorem~1]{BH}).

\begin{Prop}\label{prop:compactification_D}
Let $(f,\mu)$ be as in Proposition~\ref{prop:bM2_mfd} and let $p$ be a critical point of $f$. Then $\bcalD_p(f)$ (resp. $\bcalA_p(f)$) is a compactification of the descending manifold $\calD_p(f)$ (resp. ascending manifold $\calA_p(f)$) to a smooth manifold with corners whose codimension $r$ stratum of $\bcalD_p(f)-\bar{b}^{-1}(p\times \infty_M)$ (resp. $\bcalA_p(f)-\bar{b}^{-1}(\infty_M\times p)$) for $r\geq 1$ is canonically diffeomorphic to
\[ \begin{split}
	&\coprod_{{{q_1,\ldots,q_r\in\Sigma(f)}\atop{p,q_1,\ldots,q_r\,\mathrm{distinct}}}}\bbcalM{f}{p}{q_1}\times\bbcalM{f}{q_1}{q_2}\times\cdots\times\bbcalM{f}{q_{r-1}}{q_r}\times\calD_{q_r}(f)\\
	&(\mbox{resp. }\coprod_{{{q_1,\ldots,q_r\in\Sigma(f)}\atop{p,q_1,\ldots,q_r\,\mathrm{distinct}}}}\calA_{q_r}(f)\times \bbcalM{f}{q_r}{q_{r-1}}\times\cdots\times \bbcalM{f}{q_2}{q_1}\times\bbcalM{f}{q_1}{p}).
\end{split}\]
\end{Prop}
\begin{proof}
Suppose that the singular set $\Sigma(f)=\{p_1,\ldots,p_N\}$ is numbered as in \S\ref{sss:covering_M2} and suppose that $p=p_k\in W_k\cap\Sigma(f)$ for some $k$. It follows from the definition of $\bcalM_2(f)$ that $\bcalD_p(f)\cap \bar{b}^{-1}(W_k\times W_k)=\bcalM_2(f;W_k,W_k)\cap (\{p\}\times W_k)$. By Lemma~\ref{lem:dM(W,W)}, the right hand side is equal to $\{p\}\times(\calD_p(f)\cap W_k)$ since $\calM_2(f;W_k,W_k)\cap(\{p\}\times W_k)=\emptyset$. Similarly, we have
\[ \begin{split}
\bcalD_p(f)\cap \bar{b}^{-1}(W_k\times W_j)&=\bcalM_2(f;W_k,W_j)\cap (\{p\}\times L_k\times \cdots \times L_{j+1}\times W_j)\\
&=\mathrm{pr}\Bigl[(\{p\}\times(\calD_p(f)\cap L_k))\times_{L_k}\bcalM_2(f;L_k,W_j)\Bigr].
\end{split} \]
The descriptions of the strata in the statement follow from these identities and from Lemma~\ref{lem:dM(W,W)}, \ref{lem:dM(infty,W)} and \ref{lem:CM_2(W,W)}. The result for $\bcalA_p(f)$ is analogous.
\end{proof}

%\clearpage
%%%%%%%%%%%%%%%%%%%%%%%%%%%%%%
%%%%%%%%%%%%%%%%%%%%%%%%%%%%%%
\mysection{Moduli space of flow graphs}{}

%%%%%%%%%%%%%%%%%%%%%%%%%%%%%%
\subsection{Transversality for $\calM_\Gamma$}\label{ss:transversality}

Let $\Gamma\in\calG_{n,m,\vec{\eta}}^0(\vec{C})$ be a $\vec{C}$-colored graph with $a$ inputs (and $a$ outputs) and without bivalent vertices. For simplicity, we assume that the noncompact edges in $\Se(\Gamma)$ are labeled (via $\beta$) by $\{1,2,\ldots,a\}$. Let $p_i$ (resp. $q_i$) be a basis element attached on the input (resp. output) of the edge labeled $i$. We define a $C^{r-1}$-smooth map $\Phi_{\vec{f}}:\prod_{j=1}^a(\calA_{q_j}(f_j)\times\calD_{p_j}(f_j))
	\times M_0^n\times \R_{>0}^{m-a}\to M_0^{n+m+a}$ by
\[\begin{split}
 &\Phi_{\vec{f}}(u_1,v_1,\ldots,u_a,v_a;x_1,\ldots,x_n;t_{a+1},\ldots,t_m)\\
	&=\prod_{i=1}^n(x_i,y_{k_1^i},\ldots,y_{k_{r_i}^i},
		v_{\bar{k}_1^i},\ldots,v_{\bar{k}_{\bar{r}_i}^i},
		u_{\bar{\ell}_1^i},\ldots,u_{\bar{\ell}_{\bar{s}_i}^i}),
\end{split} \]
where $y_k=\Phi_{f_k}^{t_k}(x_{\source(k)})$ and $\R_{>0}=(0,\infty)$ (see \S\ref{ss:M_G} for the symbols). Let $\Delta\subset M_0^{n+m+a}$ be the subset consisting of all the points of the form
$\prod_{i=1}^n(x_i,\overbrace{x_i,\ldots,x_i}^{r_i+\bar{r}_i+\bar{s}_i})$
for $(x_1,\ldots,x_n)\in M_0^n$. Then $\calM_\Gamma(\vec{f})$ is the image of $\Phi_{\vec{f}}^{-1}(\Delta)$ under the projection onto $M_0^n$ and the projection induces an embedding.

\begin{Exa}\label{ex:Phi}
We consider the graph in (\ref{eq:ex_graph}), for example. The map $\Phi_{\vec{f}}:(\calA_q(f_1)\times\calD_p(f_1))\times M_0^4\times \R_{>0}^5\to M_0^{11}$ is defined by
\[ \begin{split}
	\Phi_{\vec{f}}&(u,v;x_1,x_2,x_3,x_4;t_2,t_3,t_4,t_5,t_6)\\
	&=(x_1,u,x_2,\Phi_{f_2}^{t_2}(x_1),x_3,\Phi_{f_4}^{t_4}(x_1),\Phi_{f_5}^{t_5}(x_2),
		x_4,\Phi_{f_3}^{t_3}(x_2),\Phi_{f_6}^{t_6}(x_3),v).
\end{split}\]
Then $\calM_\Gamma(\vec{f})$ is the image of $\Phi_{\vec{f}}^{-1}(\Delta)$ under the projection onto $M_0^4$, where
\[ \Delta=\{(x_1,x_1,x_2,x_2,x_3,x_3,x_3,x_4,x_4,x_4,x_4)\,;\,(x_1,x_2,x_3,x_4)\in M_0^4\}. \]
\qed
\end{Exa}

Let $\calU_j$ be a $C^r$-small neighborhood of a Morse function in the Banach manifold $C^r_{\varphi_\infty}(M_0)$ such that the cardinality of the set of critical points is constant on $\calU_j$. By considering $\Phi_{\vec{f}}$ for all $\vec{f}\in \prod_{j=1}^m\calU_j$ for a fixed Riemannian metric $\mu$ on $M_0$, we get a smooth map
\[ \Phi:\prod_{j=1}^a\bigcup_{f_j\in\calU_j}(\calA_{q_j}(f_j)\times\calD_{p_j}(f_j))\times M_0^n\times\R_{>0}^{m-a}\times \prod_{j=a+1}^m\calU_j\to M_0^{n+m+a}, \]
where we consider $\bigcup_{f_j\in\calU_j}(\calA_{q_j}(f_j)\times\calD_{p_j}(f_j))$ as a subspace of $\calU_j\times M_0^2$. 

The proof of the following lemma is almost the same as \cite[Proposition~12.5]{FO}.
\begin{Lem}\label{lem:phi_transversal}
The smooth map $\Phi$ is transversal to $\Delta$.
\end{Lem}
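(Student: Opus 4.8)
The plan is to prove transversality by the parametrized transversality theorem (Thom–Sard): I will show that the "big" map $\Phi$, where the Morse functions $f_j$ (for $j=a+1,\ldots,m$) and the pairs $(u_j,v_j)\in\bigcup_{f_j}(\calD_{p_j}(f_j)\times\calA_{q_j}(f_j))$ are allowed to vary, is a submersion at every point of $\Phi^{-1}(\Delta)$, which immediately gives $\Phi\pitchfork\Delta$. Actually it suffices to check, at each $x\in\Phi^{-1}(\Delta)$, that the image of $d\Phi_x$ together with $T_{\Phi(x)}\Delta$ spans $T_{\Phi(x)}M^{n+m+a}$; and I expect to get much more, namely surjectivity of $d\Phi_x$ onto each "non-diagonal" factor, by exploiting the freedom in the $f_j$'s and in the $u_j,v_j$'s.

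First I would fix a point in $\Phi^{-1}(\Delta)$, so that each relevant pair of entries of $\Phi$ is forced to be equal; write $x_i\in M$ for the common value at the $i$-th block. The tangent space $T_{\Phi(x)}M^{n+m+a}$ decomposes according to the $n$ blocks, each block being $T_{x_i}M\oplus\bigoplus(\text{copies of } T_{x_i}M)$, and $T\Delta$ is the small diagonal inside each block. The "configuration" directions $\partial/\partial x_i$ (moving $x_i\in\Conf_n(M)$) already hit the diagonal summand and, via the differential of the flow $d\Phi_{f_k}^{t_k}$, propagate to the entries $y_k=\Phi_{f_k}^{t_k}(x_{\sigma(k)})$; the time variables $t_k$ add the gradient direction $-\mathrm{grad}\,f_k(y_k)$ at each such entry. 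The remaining directions to fill in are: (i) the entries $y_k$ for compact edges $e=\beta(k)\in\Comp(\Gamma)$, and (ii) the entries $u_j,v_j$ for separated edges. For (i), the key input is Morse–Smale transversality for $(f_k,\mu)$: deforming $f_k$ inside $\calU_k$ lets us move the trajectory of $x_{\sigma(k)}$ under $\Phi_{f_k}^t$ essentially freely (away from the critical points the gradient flow lines of a varying Morse function sweep out all of $M$ locally, and near a critical point one uses the fact that $x_i$ itself need not be the critical point — cf. Remark~\ref{rem:cp-nonsing}), so $d\Phi_x$ surjects onto the $y_k$-factor modulo the contribution already present from $\partial/\partial x_i$ and $t_k$. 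For (ii), varying $(u_j,v_j)$ along $\calD_{p_j}(f_j)\times\calA_{q_j}(f_j)$ and varying $f_j$ itself over $\calU_j$ makes the pair $(u_j,v_j)$ range over a neighborhood of the diagonal point $(x_i,x_i)$ in $M\times M$ (the descending/ascending manifolds of a varying Morse function cover $M$), which is exactly the normal direction to $\Delta$ in that block. Assembling these block by block shows $\mathrm{Im}\,d\Phi_x+T_{\Phi(x)}\Delta=T_{\Phi(x)}M^{n+m+a}$.

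The step I expect to be the main obstacle is (i): showing that deforming $f_k$ within $\calU_k$ really does move the time-$t$ image $\Phi_{f_k}^{t}(x_{\sigma(k)})$ surjectively, uniformly for all $t$ in the relevant (bounded) range and even when $x_{\sigma(k)}$ or the endpoint $y_k$ lies near a critical point of $f_k$. Away from $\Sigma(f_k)$ this is a standard flowbox argument: an infinitesimal perturbation of the vector field supported near a short arc of the trajectory produces, to first order, any prescribed displacement of the endpoint transverse to the flow, and the $t_k$-variation supplies the flow direction itself. Near a critical point one has to be slightly more careful, but since the configuration condition keeps the $x_i$ distinct and the graph has no self-loops, the arcs involved are honest (non-constant, or constant at a point that is not a critical point in the cases that matter), and the linearized flow of the Morse function, combined with the freedom to perturb $f_k$ in the Morse stratum, still yields full rank; I would cite the standard Morse–Smale genericity machinery here rather than redo it. The signs/orientations play no role in the statement, so I would leave those to \S\ref{ss:ori_MG}. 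Finally, once $\Phi\pitchfork\Delta$ is established, it is a standard consequence (Sard, applied to the projection of $\Phi^{-1}(\Delta)$ onto $\prod_j\calU_j$) that $\Phi_{\vec f}\pitchfork\Delta$ for a residual set of $\vec f$, which is how this lemma will be used downstream; but that is a separate statement and I would not prove it here.
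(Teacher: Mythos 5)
Your overall strategy---decompose the target space by blocks, and produce tangent directions hitting each off-diagonal entry by flowbox-style perturbations of the $f_j$---is the same one the paper uses, and for the compact-edge entries it is correct for exactly the reason you give (the source and target vertices are distinct in $\Conf_n(M)$, and a finite-time trajectory cannot terminate at a critical point). Two points, however, need to be addressed.

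First, the parenthetical claim that the arcs are always ``honest (non-constant, or constant at a point that is not a critical point in the cases that matter)'' is false, and Remark~\ref{rem:cp-nonsing} in the paper explicitly flags the opposite: the moduli space $\calM_\Gamma(\vec f)$ is allowed to contain configurations with $x_i$ equal to a critical point of some $f_\ell$. This happens precisely when $\beta(\ell)$ is a separated edge incident to $x_i$ and $x_i=p_\ell$ (resp.\ $q_\ell$); then the relevant entry $u_\ell$ (resp.\ $v_\ell$) is the critical point itself and the ``arc'' from $p_\ell$ is constant. In that situation there is no flowbox chart for $-\mathrm{grad}\,f_\ell$ near $x_i$, so your main argument simply does not apply. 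The paper treats this case by a different deformation: perturb $f_\ell$ in a small neighborhood of $p_\ell$ so that the position of the critical point $p_\ell$ itself moves in a prescribed direction, which moves the degenerate entry $u_\ell=p_\ell$ and restores surjectivity. Since Lemma~\ref{lem:phi_transversal} asserts transversality over all of $\Phi^{-1}(\Delta)$ (not just away from critical loci), this sub-case is essential and cannot be waved away.

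Second, invoking ``Morse--Smale transversality for $(f_k,\mu)$'' as the key input is a misdirection: the lemma neither assumes nor uses the Morse--Smale condition, and what is needed is not transversality of ascending and descending manifolds but the ability to perturb the gradient flow so that the endpoint of a prescribed trajectory moves arbitrarily. The paper does not cite genericity machinery for this; it constructs, in the flowbox chart, an explicit bump deformation $h_\ell=f_\ell\circ\psi_U+s\,g_{\varepsilon,\kappa}(t_d)$ together with a compensating reparametrization $t_\ell\mapsto t_\ell+s\lambda_0$, and checks by hand that $\tfrac{d}{ds}\Phi_{f_\ell+sg_{\varepsilon,\kappa}}^{t_\ell+s\lambda_0}\big|_{s=0}=V$. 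That computation is exactly the content you propose to outsource, so it cannot be replaced by a citation if the lemma is to have a self-contained proof. You are right, on the other hand, that the Sard--Smale/genericity consequence belongs in Proposition~\ref{prop:M_mfd} and not here.
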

\begin{proof}
For points $\bvec{x}$ of $\Delta\cap\mathrm{Im}\,\Phi$, we show that every normal vector $v=(v_1,\ldots,v_{m+n+a})\in T_{\bvec{\scriptstyle x}} M_0^{m+n+a}=\bigoplus_{i=1}^{m+n+a}T_{x_i}M_0$ to $\Delta$ lies in the image of the differential $d\Phi:T\Dom(\Phi)\to TM_0^{m+n+a}$. Now suppose that a point $\bvec{x}=\prod_{i=1}^n(x_i,\ldots,x_i)\in\Delta$ lies in the image of $\Phi$. We first prove the claim in the case where each $x_i$ does not belong to the singular set $\Sigma(f_1)\cup\cdots\cup\Sigma(f_m)$. Suppose that there exist $x\in M_0$ and $t_\ell\in (0,\infty)$ such that $\Phi_{f_\ell}^{t_\ell}(x)=x_i$ for some $\ell$. Then there exists a local coordinate $\psi_U:V\stackrel{\approx}{\to} U$ from an open neighborhood $V$ of $0\in\R^d$ to a neighborhood $U$ of $x_i$ in $M_0$, such that $-\mathrm{grad}_{\psi_U(q)}\,f_\ell=\lambda(q)d\psi_{U}(v_0)$, $q\in V$, where $\lambda:V\to \R$ is a positive smooth function and $v_0\in\R^d$ is a constant vector. Here, we may assume without loss of generality that $(d\psi_{U})^{-1}(v_i)$ agrees with $(0,\ldots,0,b)$ for some $b<0$. 

Now for small constants $\varepsilon>0$, $\kappa>0$ and $\beta>0$, we define a $C^\infty$-function $g_{\varepsilon,\kappa}:\R\to \R$ by
\[ g_{\varepsilon,\kappa}(t)=\left\{\begin{array}{ll}
	\kappa e^{-\frac{\beta t^2}{\varepsilon^2-t^2}} & -\varepsilon<t<\varepsilon\\
	0 & \mbox{otherwise}
	\end{array}\right. \]
This is a cloche function. We assume without loss of generality that on $U$, the tangent vector $-\mathrm{grad}_{\psi_U(q)}f_\ell$ is transversal to the image of $\psi_U$ of the plane $\{(t_1,\ldots,t_{d-1},0)\}$. We define a $C^\infty$-function $h_{\ell,\ve,\kappa}:\R^d\to \R$ by
\[ h_{\ell,\ve,\kappa}(t_1,\ldots,t_d)=
	f_\ell\circ \psi_U(t_1,\ldots,t_d)- s g_{\varepsilon,\kappa}(t_d). 
\]
for a constant $s>0$. By this perturbation, the negative gradient after passing through the part $-\varepsilon<t_d<\varepsilon$ shifts by a multiple of $(0,\ldots,0,1)$. Therefore, for some $\varepsilon, \kappa$ and $\lambda_0\in\R$, we have $\left.\frac{d\Phi_{h_{\ell,\varepsilon,\kappa}}^{t_\ell+s\lambda_0}}{ds}\right|_{s=0}=v_i$. Of course, one must adjust $h$ in $V$ slightly so that the perturbed function extends to a smooth function on $M_0$. This shows that the differential of $\Phi$ is surjective at any point of $\Phi^{-1}(\bvec{x})$, $\bvec{x}\in\Delta\cap\mathrm{Im}\,\Phi$, provided that $x_i$ avoids singular set.

If $x_i$ agrees with $p_\ell\in \Sigma(f_\ell)$ for some $\ell$ and if the $\ell$-th edge of $\Gamma$ has $p_\ell$ as the input or the output, then by a perturbation of $f_\ell$ on a small neighborhood of $p_\ell$ the position of $p_\ell$ shifts in an arbitrary direction. This shows the transversality for the case $x_i=p_\ell$. 
\end{proof}

\begin{proof}[Proof of Proposition~\ref{prop:M_mfd}]
It follows from Lemma~\ref{lem:phi_transversal} that $\Phi^{-1}(\Delta)$ is a (infinite dimensional) submanifold of codimension $(n+m+a-n)d=(m+a)d$. Let $\pi:\Phi^{-1}(\Delta)\to \prod_{j=1}^m\calU_j$ be the restriction of the projection. Since the projection 
\[ \prod_{j=1}^a\bigcup_{f_j\in\calU_j}(\calA_{q_j}(f_j)\times \calD_{p_j}(f_j))\times M_0^n\times \R_{>0}^{m-a}\times \prod_{j=a+1}^m\calU_j\to \prod_{j=1}^m\calU_j \]
is a Fredholm map of index $\sum_{j=1}^a(i(p_j)+(d-i(q_j)))+nd+(m-a)=nd+ad+m-a+\sum_{j=1}^a\eta_j$, the index of the projection $\pi$ is
\[ nd+ad+m-a+\sum_{j=1}^a\eta_j-(m+a)d=(n-m)d+\sum_{j=1}^m\eta_j. \]
Hence for a regular value $\vec{f}\in\prod_{j=1}^m\calU_j$ of $\pi$, the fiber of $\pi$ is a smooth manifold of dimension $(n-m)d+\sum_{j=1}^m\eta_j$. By the Sard--Smale theorem (\cite{Sm}), the set of regular values of $\pi$ is residual. The second statement follows from the fact that there are only finitely many graphs in $\calG_{m,n,\vec{\eta}}(\vec{C})$ for a fixed triple $m,n,\sum_{j=1}^m \eta_j$ and that a finite intersection of residual subsets is residual too. 
\end{proof}

%%%%%%%%%%%%%%%%%%%%%%%%%%%%%%
\subsection{Compactification of $\Conf_n(M)$ of Fulton--MacPherson}\label{ss:FM}

For a closed $d$-manifold $M$, the configuration space $\Conf_n(M)$ is a submanifold of $M^n$, that is the complement of the closed subset
\[ \Sigma=\{(x_1,\ldots,x_n)\in M^n\,;\,\mbox{$x_i=x_j$ for some $i\neq j$ or $x_i=\infty_M$ for some $i$}\}\subset M^n. \]
There is a natural filtration $\Sigma=\Sigma_n\supset\cdots\supset\Sigma_2\supset \Sigma_1$ with
\[ \Sigma_j=\{(x_1,\ldots,x_n)\in M^n\,;\,\#\{x_1,\ldots,x_n,\infty_M\}\leq j\}. \]
The difference $\Sigma_{i+1}-\Sigma_i$ is a disjoint union of submanifolds of $M^n-\Sigma_i$. This property allows one to iterate (real) blow-ups along the filtration from the deepest one: First, one can consider the blow-up $B\ell(M^n,\Sigma_1)$ along the 0-submanifold $\Sigma_1=\{(\infty_M,\ldots,\infty_M)\}$ of $M^n$. Recall that a blow-up replaces a submanifold with its normal sphere bundle. Since the closure of $\Sigma_2-\Sigma_1$ in $B\ell(M^n,\Sigma_1)$ is also a disjoint union of smooth submanifolds (with boundaries), one can apply another blow-up along it, and so on. After the blow-ups along all the strata of $\Sigma$ of codimension $\geq 1$, one obtains a smooth compact manifold with corners $\bConf_n(M)$. 

We will need a precise description of the boundary of $\bConf_n(M)$ in the proof of invariance of $\widehat{Z}_{2k,3k}$, so we shall briefly recall it here. The space $\bConf_n(M)$ has a natural stratification corresponding to bracketings of the $n+1$ letters $1,2,\ldots,n,\infty$, e.g., $((137)(25))46\infty$ (see \cite{FM, Ko1, BT}). Roughly speaking, a pair of brackets corresponds to a face created by one blow-up. For example, the face corresponding to $((137)(25))46\infty$ is obtained by a sequence of blow-ups corresponding to a sequence $1234567\infty\to (12357)46\infty\to ((137)(25))46\infty$. 

The codimension one (boundary) strata of $\bConf_n(M)$ correspond to bracketings of the form $(\cdots)\cdots$, with only one pair of brackets. For example, the stratum $\partial_{\{1,\ldots,j\}}\bConf_n(M)$ of $\partial\bConf_n(M)$ corresponding to the bracketings $(12\cdots j)j+1\cdots n$ is the face created by the blow-up along the closure of the submanifold
\[ \Delta_j=\{(x_1,\ldots,x_n)\in M^n\,;\,x_1=\cdots=x_j,\mbox{otherwise distinct}\}\subset M^n \]
in the result of the previous blow-ups. More precisely, $\partial_{\{1,\ldots,j\}}\bConf_n(M)$ can be naturally identified with the blow-ups of the total space of the normal $S^{(j-1)d-1}$-bundle of $\Delta_j\subset M^n$ along the intersection with the closures of deeper diagonals that correspond to deeper bracketings. The fiber of the normal $S^{(j-1)d-1}$-bundle over a point $(x_j,\ldots,x_n)\in\Delta_j$ is $(\{(0,y_2,\ldots,y_j)\in(\R^d)^j\}-\{0\})/\mbox{(dilation)}\cong S^{(j-1)d-1}$, where the coordinate $y_i$ corresponds to $x_i-x_1$ (where it makes sense) under the geodesic coordinate from a framing of $T_{x_j}M$. The stratum $\partial_{\{1,\ldots,j\}}\bConf_n(M)$ is a fiber bundle over $\Delta_j$. We denote the fiber of $\partial_{\{1,\ldots,j\}}\bConf_n(M)$ over a point of $\Delta_j$ by $\Conf_j^\mathrm{local}(\R^d)$. As done in \S\ref{ss:infinitesimal}, we identify $\Conf_j^\mathrm{local}(\R^d)$ with the subset of $\Conf_j(\R^d)$, as
\[ \Conf_j^\mathrm{local}(\R^d)=\Bigl\{(y_1,\ldots,y_j)\in \Conf_j(\R^d)\,;\, y_1=0, \sum_{\ell=2}^j\|y_\ell\|^2=1\Bigr\}. \]
 We denote by $\bConf_j^\mathrm{local}(\R^d)$ the closure of the image of the inclusion $\Conf_j^\mathrm{local}(\R^d)\hookrightarrow \bConf_j(\R^d)$, which is compact. The base space $\Delta_j$ is naturally diffeomorphic to $\bConf_{n-j+1}(M)$ and we denote by $\mathrm{pr}_j:\Delta_j\to M$ the projection $(x_j,\ldots,x_n)\mapsto x_j$. So $\partial_{\{1,\ldots,j\}}\bConf_n(M)$ has the structure of the pullback of the associated $\bConf_j^\mathrm{local}(\R^d)$-bundle of $TM$ ($\bConf_j^\mathrm{local}(\R^d)$ is an $SO_d$-space) pulled back by $\mathrm{pr}_j$. The definition of $\partial_A\bConf_n(M)$ for general subset $A\subset \{1,\ldots,n,\infty\}$ corresponding to the bracketing $(A)A^c$ is similar. 

It will turn out that the faces of $\partial\bConf_n(M)$ corresponding to coincidence of two points are special among the codimension one strata of $\bConf_n(M)$. We denote by $\partial^\mathrm{pri}\bConf_n(M)$ (`pri' for principal) the union of the faces corresponding to coincidence of two points and we denote by $\partial^\mathrm{hi}\bConf_n(M)$ (`hi' for hidden) the union of all the faces corresponding to coincidence of at least three points. Among the hidden faces, we call the face $\partial_{\{1,2,\ldots,n\}}\bConf_n(M)$ the anomalous face and denote it by $\partial^a\bConf_n(M)$.

%%%%%%%%%%%%%%%%%%%%%%%%%%%%%%
\subsection{Compactification of the moduli space $\calM_\Gamma$}\label{ss:comp_fuk}

\begin{proof}[Proof of Proposition~\ref{prop:M_G-1-mfd}] Let $\vec{f}=(f_1,f_2,\ldots,f_{3k})$ and $\vec{\mu}=(\mu_1,\mu_2,\ldots,\mu_{3k})$ be sequences of Morse functions and metrics on $M_0$ respectively such that for each $i$ the pair $(f_i,\mu_i)$ is Morse--Smale and that $\mu_i$ is Euclidean near $\Sigma(f_i)$ with respect to the coordinates of the Morse lemma. We assume that the gradients of $f_i$ are taken with respect to $\mu_i$. We shall construct a compactification $\bcalM_{\Gamma}(\vec{f})$ of $\calM_\Gamma(\vec{f})$.

For $p,q\in\Sigma(f)$, let 
\[ \calN_{pq}(f)=\calA_q(f)\times \calD_p(f),\quad \bcalN_{pq}(f)=\bcalA_q(f)\times \bcalD_p(f). \]
For $j\in \{1,2,\ldots,3k\}$, let
\[ Q_j=\left\{\begin{array}{ll}
\bcalM_2(f_j) & \mbox{if $\beta(j)\in\Comp(\Gamma)$}\\
\bcalN_{pq}(f_j) & \mbox{if $\beta(j)\in\Se(\Gamma)$ with input $p$, output $q$}
\end{array}\right. \]
For $i\in \{1,2,\ldots,2k\}$, let $j_1,j_2,j_3\in\{1,2,\ldots,3k\}$ be the labels of the edges which are incident to the $i$-th vertex of $\Gamma$. We define a smooth map $G_i:Q_{j_1}\times Q_{j_2}\times Q_{j_3}\to M\times M\times M$ as follows. Let $\bar{b}_{ji}:Q_j\to M$ be defined by 
\[ \bar{b}_{ji}=\left\{\begin{array}{ll}
\mathrm{pr}_1\circ \bar{b} & \mbox{if $i=\sigma(j)$}\\
\mathrm{pr}_2\circ \bar{b} & \mbox{if $i=\tau(j)$}
\end{array}\right. \]
Then we define $G_i=\bar{b}_{j_1i}\times \bar{b}_{j_2i}\times \bar{b}_{j_3i}$. Let $\widehat{G}_i:\prod_{j=1}^{3k}Q_j\to M^3$ be the composition $G_i\circ\mathrm{proj}:\prod_{j=1}^{3k}Q_j\to Q_{j_1}\times Q_{j_2}\times Q_{j_3}\to M\times M\times M$. Let $\Delta_3=\{(x,x,x);x\in M\}$. We define
\[ \bcalM_\Gamma^\times(\vec{f})=\bigcap_{i=1}^{2k}\widehat{G}_i^{-1}(\Delta_3). \]
The restriction of $\bcalM_\Gamma^\times(\vec{f})$ to $\prod_{j=1}^{3k}\mathrm{Int}\,Q_j$ is canonically identified with $\calM_\Gamma(\vec{f})$, which is a smooth manifold. Let $\bar{b}_\Gamma:\bcalM_\Gamma^\times(\vec{f})\to M^{2k}$ be the map that assigns the positions of $2k$ trivalent vertices of a flow graph in $M$. 

Now we consider the case where $\dim\calM_\Gamma(\vec{f})\leq 1$. 

If $\dim\calM_\Gamma(\vec{f})=0$, then $\bcalM_\Gamma^\times(\vec{f})=\calM_\Gamma(\vec{f})\subset\prod_{j=1}^{3k}\mathrm{Int}\,Q_j$ is a finite set. In this case $\bcalM_\Gamma(\vec{f})=\bcalM_\Gamma^\times(\vec{f})$ is as desired. 

If $\dim\calM_\Gamma(\vec{f})=1$, then $\bcalM_\Gamma^\times(\vec{f})$ may have nonempty intersection with $\partial(\prod_{j=1}^{3k}Q_j)$. By Proposition~\ref{prop:bM2_mfd}, the intersection of $\bcalM_\Gamma^\times(\vec{f})$ with $\partial(\prod_{j=1}^{3k}Q_j)$ consists of flow graphs of the following forms.
\begin{enumerate}
\item There is one edge that is a once broken flow line.
\item A set of edges is collapsed to a finite subset of $M$. 
\end{enumerate}
Here, we may assume that the intersection has no flow graphs broken more than once by perturbing the function $f_j$ for the broken edge slightly. On a neighborhood of a point of $\bcalM_\Gamma^\times(\vec{f})$ of type (1), $\bcalM_\Gamma^\times(\vec{f})$ restricts to a smooth 1-manifold with boundary. On a neighborhood of a point of $\bcalM_\Gamma^\times(\vec{f})$ of type (2), $\bcalM_\Gamma^\times(\vec{f})$ may have singularities on the boundary and may not be a smooth manifold. In fact, it is the cone over finitely many points whose cone point lies on $\partial(\prod_{j=1}^{3k}Q_j)$ by the strata transversality near the boundary. 

The conic singularity can be resolved by a sequence of blow-ups of $\bcalM_\Gamma^\times(\vec{f})$ analogous to the compactification of $\Conf_{2k}(M)$ in \S\ref{ss:FM} as follows. Let $N_{\Sigma_1}\subset M^{2k}$ be a small tubular neighborhood of the highest codimension stratum $\Sigma_1$ of $\Sigma$. Its preimage $\widehat{N}_{\Sigma_1}=\bar{b}^{-1}_\Gamma(N_{\Sigma_1})\subset \bcalM_\Gamma^\times(\vec{f})$ is a subspace of small graphs concentrated near $\infty_M$. The restriction of $\bar{b}_\Gamma$ to $\widehat{N}_{\Sigma_1}$ is a topological embedding of a cone. Hence the blow-up of $M^{2k}$ along $\Sigma_1$ replaces $\bar{b}_\Gamma(\widehat{N}_{\Sigma_1})$ with a smooth 1-manifold $B\ell(\bar{b}_\Gamma(\widehat{N}_{\Sigma_1}),\Sigma_1)$ with boundary. By identifying $\widehat{N}_{\Sigma_1}-\bar{b}^{-1}_\Gamma(\Sigma_1)$ with $\mathrm{Int}\,B\ell(\bar{b}_\Gamma(\widehat{N}_{\Sigma_1}),\Sigma_1)$ through $\bar{b}_\Gamma$, we obtain a space
\[ \bcalM_\Gamma^\times(\vec{f})[1]=(\bcalM_\Gamma^\times(\vec{f})-\bar{b}^{-1}_\Gamma(\Sigma_1))\cup_{\bar{b}_\Gamma} B\ell(\bar{b}_\Gamma(\widehat{N}_{\Sigma_1}),\Sigma_1). \]
The singularities on $\bar{b}_\Gamma^{-1}(\Sigma_1)$ have been resolved. Next, we resolve the singularities on $\bar{b}_\Gamma^{-1}(\Sigma_2)$. Let $N_{\Sigma_2}\subset M^{2k}$ be a small tubular neighborhood of $\Sigma_2-\Sigma_1$. We may assume that there is no edge of broken flow line in the flow graphs of $\bcalM_\Gamma^\times(\vec{f})[1]\cap \bar{b}^{-1}(N_{\Sigma_2})$. Its preimage $\widehat{N}_{\Sigma_2}=\bar{b}^{-1}_\Gamma(N_{\Sigma_2})\subset \bcalM_\Gamma^\times(\vec{f})[1]$ is a subspace with a small subgraph with $2k-1$ vertices. The restriction of $\bar{b}_\Gamma$ to $\widehat{N}_{\Sigma_2}$ is a topological embedding. Hence the blow-up of $B\ell(M^{2k},\Sigma_1)$ along the closure of $\Sigma_2-\Sigma_1$ replaces $\bar{b}_\Gamma(\widehat{N}_{\Sigma_2})$ with a smooth manifold $B\ell(\bar{b}_\Gamma(\widehat{N}_{\Sigma_2}),\Sigma_2)$ with corners. By identifying $\widehat{N}_{\Sigma_2}-\bar{b}^{-1}_\Gamma(\Sigma_2)$ with $\mathrm{Int}\,B\ell(\bar{b}_\Gamma(\widehat{N}_{\Sigma_2}),\Sigma_2)$ through $\bar{b}_\Gamma$, we obtain
\[ \bcalM_\Gamma^\times(\vec{f})[2]=(\bcalM_\Gamma^\times(\vec{f})[1]-\bar{b}^{-1}_\Gamma(\Sigma_2))\cup_{\bar{b}_\Gamma} B\ell(\bar{b}_\Gamma(\widehat{N}_{\Sigma_2}),\Sigma_2). \]
Repeating in this way for $\Sigma_3,\ldots,\Sigma_{2k-1}$, we obtain spaces $\bcalM_\Gamma^\times(\vec{f})[3]$, $\ldots$, $\bcalM_\Gamma^\times(\vec{f})[2k-1]$. We set $\bcalM_\Gamma(\vec{f})=\bcalM_\Gamma^\times(\vec{f})[2k-1]$. By definition this is a compactification of $\calM_\Gamma(\vec{f})$ as desired in Proposition~\ref{prop:M_G-1-mfd}.
\end{proof}
\begin{Rem}\label{rem:b_emb}
By abuse of notation, we denote by $\bar{b}_\Gamma:\bcalM_\Gamma(\vec{f})\to \bConf_{2k}(M)$ the natural map that assigns the positions of $2k$ trivalent vertices of a flow graph in $M$. In general, $\bar{b}_\Gamma$ may not be an embedding but only an immersion if $\dim\bcalM_\Gamma(\vec{f})=1$. 
\end{Rem}
%\clearpage

%%%%%%%%%%%%%%%%%%%%%%%%%%%%%%
%%%%%%%%%%%%%%%%%%%%%%%%%%%%%%
\mysection{(Co)orientations of the moduli spaces}{s:coori}

Let $f:M\to \R$ be a Morse function and $\mu$ be a metric on $M$ that is Morse--Smale and that is Euclidean near critical points with respect to the local coordinate of the Morse lemma. We shall fix (co)orientations of the trajectory spaces and describe the induced coorientations at the boundaries. The results in this section will be used in \S\ref{s:indep_g}, \S\ref{s:indep_4-cob} and \S\ref{ss:ddd}.

%%%%%%%%%%%%%%%%%%%%%%%%%%%%%%
\subsection{Convention for (co)orientations of trajectory spaces}\label{ss:convention_coori}

In the following we follow the orientation convention of Appendix~\ref{s:ori}. Let $o(M)\in\Gamma(\bigwedge^d T^*M)$ denote a $d$-form representing the orientation of $M$. The trajectory space $\calM_2(f)$ is the image of the embedding $\varphi:(M-\Sigma(f))\times (0,\infty)\to (M-\Sigma(f))^2$ given by $\varphi(x,t)=(x,\Phi_f^t(x))$. The Jacobian matrix of $\varphi$ at $(x,t)\in (M-\Sigma(f))\times (0,\infty)$ is 
\begin{equation}\label{eq:Jphi}
 J\varphi_{(x,t)}=\left(\begin{array}{cc}
  I & \mathbf{0}\\
  (d\Phi_f^t)_x & -(\mathrm{grad}\,f)_y
  \end{array}\right). 
\end{equation}
If $e_1,e_2,\ldots,e_d$ is an orthonormal basis of $T_xM$, then $T_{(x,y)}\calM_2(f)$, $y=\Phi_f^t(x)$, is spanned by $e_1+Ae_1,e_2+Ae_2,\ldots,e_d+Ae_d,-(\mathrm{grad}\,f)_y$, where $A=(d\Phi_f^t)_x$. With this in mind, we orient $\calM_2(f)$ as
\[ o(\calM_2(f))_{(x,y)}=(-df)_y\wedge (dx_1+A_*dx_1)\wedge (dx_2+A_*dx_2)\wedge \cdots \wedge (dx_d+A_*dx_d), \]
where we assume that $o(M)_x=dx_1\wedge \cdots \wedge dx_d$ for an orthonormal basis $\{dx_1,\ldots,dx_d\}$ of $T_x^*M$ and $A_*=(d\Phi_f^t)_{x*}:T_x^*M\to T_y^*M$. 

We orient $M\times M$ by $o(M\times M)_{(x,y)}=o(M)_y\wedge o(M)_x$. Then the coorientation $o^*_{M\times M}(\calM_2(f))$ of $\calM_2(f)$ in $M\times M$ is determined by 
\[ o^*_{M\times M}(\calM_2(f))_{(x,y)}=* o(\calM_2(f))_{(x,y)}. \]
We orient $\calN_{pq}(f)=\calA_{q}(f)\times\calD_{p}(f) \subset M\times M$ by giving the coorientation
\[ o^*_{M\times M}(\calN_{pq}(f))_{(x,x')}=o^*_M(\calA_q(f))_{x}\wedge o^*_M(\calD_p(f))_{x'}, \]
where $o^*_M(\calA_q(f))_{x}$ and $o^*_M(\calD_p(f))_{x'}$ are the ones determined by $o(\calD_p(f))$ and $o(\calA_q(f))$ respectively fixed in \S\ref{ss:M_G}.

%%%%%%%%%%%%%%%%%%%%%%%%%%%%%%
\subsection{(Co)orientations induced on the boundaries of descending and ascending manifolds}\label{ss:oriDA}

For a Morse--Smale pair $(f,\mu)$ and its critical points $p,q$, we shall describe the induced (co)orientations of the faces $\calF_r\bcalD_p(f)$ (resp. $\calF_r\bcalA_q(f)$) of $\partial_1\bcalD_p(f)$ (resp. $\partial_1\bcalA_q(f)$) of flow lines broken at a critical point $r$, which are induced from the (co)orientation of $\bcalD_p(f)$ (resp. $\bcalA_p(f)$).

Let $\bar{b}:\bcalD_p(f)\to M$ be the map that assigns to each (possibly broken) flow sequence the terminal endpoint. If $i(p)-i(r)=1$ and if $a$ is a point of $M$ that is the image of $\bar{b}$ from a once broken flow sequence $\hat{a}$ in $\partial_1\bcalD_p(f)$ broken at a critical point $r\in\Sigma(f)$, then by Proposition~\ref{prop:compactification_D} there is an open neighborhood $N_a$ of $a$ in $M$ such that $\bar{b}^{-1}(N_a)$ is a disjoint union of finitely many half-disks whose set of components naturally corresponds to the finite set $\bbcalM{f}{p}{r}$. Let $\widehat{N}_{\hat{a}}$ be the component of $\bar{b}^{-1}(N_a)$ on which $\hat{a}$ lies. The restriction of $\bar{b}$ to $\widehat{N}_{\hat{a}}$ is an embedding and hence the coorientation $o^*_M(\partial_1\bcalD_p(f))_a$ makes sense by identifying $\widehat{N}_{\hat{a}}$ with $\bar{b}(\widehat{N}_{\hat{a}})$. The same is also true for $\partial_1\bcalA_q(f)$ at a once broken flow sequence broken at $r\in\Sigma(f)$ such that $i(r)-i(q)=1$. 

Note that $\mathrm{Int}\,\bar{b}(\widehat{N}_{\hat{a}})$ is an open subset of $\calD_p(f)$ and its closure in $N_a$ is $\bar{b}(\widehat{N}_{\hat{a}})$. Hence the (co)orientation of $\calD_p(f)$ induces a (co)orientation of the boundary $\partial \bar{b}(\widehat{N}_{\hat{a}})$ at $a$. We define $o^*_M(\partial_1\bcalD_p(f))_a$ to be the one induced in this way. We also define $o^*_M(\partial_1\bcalA_q(f))_a$ similarly. 

\begin{Lem}\label{lem:ind_ori_D}
Under the assumption above, let $p,r$ be critical points of $f$ such that $f(p)>f(r)$ and $i(p)-i(r)=1$. Let $N_a$ and $a\in \bar{b}(\widehat{N}_{\hat{a}})$ be as above. Let $b$ be a point of $\bbcalM{f}{p}{r}$ such that $\widehat{N}_{\hat{a}}$ corresponds to $b$. Then the following identity in $\bigwedge^\bullet T_a^*M$ holds.
\[ o^*_M(\partial_1\bcalD_p(f))_a=(-1)^{i(r)+1}\ve_f(p,r)_b\, o^*_M(\calD_r(f))_{a}.\]
\end{Lem}
\begin{proof}
Let $i=i(r)$. By assumptions $f(p)>f(r)$ and $i(p)-i(r)=1$, the index of $r$ is in $0\leq i(r)\leq d-1$. It suffices to check the assertion for one broken flow line. By Morse Lemma there is a local coordinate $(x_1,\ldots,x_d)$ around $r$ on which $f$ agrees with $\displaystyle f(r)-\frac{x_1^2}{2}-\cdots-\frac{x_i^2}{2}+\frac{x_{i+1}^2}{2}+\cdots+\frac{x_d^2}{2}$. In this coordinate, $\calD_r(f)$ agrees with $\{(x_1,\ldots,x_d)\in\R^d;x_{i+1}=\cdots=x_d=0\}$ and $\calA_r(f)$ agrees with $\{(x_1,\ldots,x_d)\in\R^d;x_1=\cdots=x_i=0\}$. We may put 
\[ o(\calD_r(f))=\beta\,dx_1\cdots dx_i\quad (\beta=\pm 1). \]
We may assume without loss of generality that $\calD_p(f)$ agrees with $\{(x_1,\ldots,x_d)\in\R^d; x_{i+1}=\cdots=x_{d-1}=0,\,x_d>0\}$ in a neighborhood of $r$ and we may put
\[ o(\calD_p(f))=\alpha\, dx_1\cdots dx_i dx_d \quad (\alpha=\pm 1). \]
Moreover we may assume that $a=(a_1,0,\ldots,0)$ for some $a_1\geq 0$. Then $\bar{b}(\widehat{N}_{\hat{a}})$ agrees with $\{(x_1,\ldots,x_d)\in\R^d;x_{i+1}=\cdots=x_{d-1}=0,\,x_d\geq 0\}\cap N_a$ on $N_a$ and
\[ o(\partial \bar{b}(\widehat{N}_{\hat{a}}))_a = \iota\left(\frac{\partial}{\partial x_d}\right)\alpha\,dx_1\cdots dx_idx_d=(-1)^i\alpha\,dx_1\cdots dx_i=(-1)^i\alpha\beta\,o(\calD_r(f))_a. \]

On the other hand, by assumption we have
\[ \begin{split}
	o^*_M(\calD_p(f))_b&=(-1)^{d-1-i}\alpha\,dx_{i+1}\cdots dx_{d-1},\quad\\
	o^*_M(\calA_r(f))_b&=*\beta\,dx_{i+1}\cdots dx_d=(-1)^{i(d-i)}\beta\,dx_1\cdots dx_i
\end{split} \]
for $b=(0,\ldots,0,b_d)$, $b_d>0$. Hence
\[ \begin{split}
	&o^*_M(\calD_p(f))_b\wedge o^*_M(\calA_r(f))_b=(-1)^{di+d-1}\alpha\beta\,dx_{i+1}\cdots dx_{d-1}dx_1\cdots dx_i\\
	&=(-1)^{d-1}\alpha\beta\,dx_1\cdots dx_{d-1}=-\alpha\beta\,\iota\Bigl(-\frac{\partial}{\partial x_d}\Bigr)dx_1\cdots dx_d
\end{split} \]
and we have $\ve_f(p,r)_b=-\alpha\beta$. This together with the equality above, we obtain the desired identity $o(\partial \bar{b}(\widehat{N}_{\hat{a}}))_a = (-1)^{i+1}\ve_f(p,r)_b\,o(\calD_r(f))_a$.
\end{proof}

\begin{Lem}\label{lem:ind_ori_A}
Under the assumption above, let $q,r$ be critical points of $f$ such that $f(q)<f(r)$ and $i(r)-i(q)=1$. Let $N_a$ and $a\in \bar{b}(\widehat{N}_{\hat{a}})$ be as above. Let $b$ be a point of $\bbcalM{f}{r}{q}$ such that $\widehat{N}_{\hat{a}}$ corresponds to $b$. Then the following identity in $\bigwedge^\bullet T_a^*M$ holds. 
\[ o^*_M(\partial_1\bcalA_q(f))_a=\ve_f(r,q)_b\,o^*_M(\calA_r(f))_a. \] 
\end{Lem}
\begin{proof}
Let $i=i(r)$. By assumptions $f(r)>f(q)$ and $i(r)-i(q)=1$, the index of $r$ is in $1\leq i(r)\leq d$. It suffices to check the assertion for one broken flow line. By Morse Lemma there is a local coordinate $(x_1,\ldots,x_d)$ around $r$ on which $f$ agrees with $\displaystyle f(r)-\frac{x_1^2}{2}-\cdots-\frac{x_i^2}{2}+\frac{x_{i+1}^2}{2}+\cdots+\frac{x_d^2}{2}$. In this coordinate, $\calD_r(f)$ agrees with $\{(x_1,\ldots,x_d)\in\R^d;x_{i+1}=\cdots=x_d=0\}$ and $\calA_r(f)$ agrees with $\{(x_1,\ldots,x_d)\in\R^d;x_1=\cdots=x_i=0\}$. We may put 
\[ o(\calA_r(f))=\beta\,dx_{i+1}\cdots dx_d\quad (\beta=\pm 1). \]
We may assume without loss of generality that $\calA_q(f)$ agrees with $\{(x_1,\ldots,x_d)\in\R^d; x_2=\cdots=x_i=0,\,x_1>0\}$ in a neighborhood of $r$ and we may put
\[ o(\calA_q(f))=\alpha\, dx_1dx_{i+1}\cdots dx_d \quad (\alpha=\pm 1). \]
Moreover we may assume that $a=(0,\ldots,0,a_d)$ for some $a_d\geq 0$. Then $\bar{b}(\widehat{N}_{\hat{a}})$ agrees with $\{(x_1,\ldots,x_d)\in\R^d;x_2=\cdots=x_i=0,\,x_1\geq 0\}\cap N_a$ on $N_a$ and 
\[ o(\partial \bar{b}(\widehat{N}_{\hat{a}}))_a = \iota\left(\frac{\partial}{\partial x_1}\right)\alpha\,dx_1dx_{i+1}\cdots dx_d=\alpha\,dx_{i+1}\cdots dx_d=\alpha\beta\,o(\calA_r(f))_a. \]

On the other hand, by assumption we have
\[ \begin{split}
	o^*_M(\calA_q(f))_b&=(-1)^{(i-1)(d-i)}\alpha\,dx_2\cdots dx_i,\quad \\
	o^*_M(\calD_r(f))_b&=*\beta\,dx_1\cdots dx_i=\beta\,dx_{i+1}\cdots dx_d
\end{split} \]
for $b=(b_1,0,\ldots,0)$, $b_1>0$. Hence
\[ \begin{split}
	o^*_M(\calD_r(f))_b\wedge o^*_M(\calA_q(f))_b&=(-1)^{(i-1)(d-i)}\alpha\beta\,dx_{i+1}\cdots dx_d dx_2\cdots dx_i\\
	&=\alpha\beta\,dx_2\cdots dx_d=\alpha\beta \,\iota\Bigl(\frac{\partial}{\partial x_1}\Bigr)dx_1\cdots dx_d
\end{split} \]
and we have $\ve_f(r,q)_b=\alpha\beta$. This together with the equality above, we obtain the desired identity $o(\partial \bar{b}(\widehat{N}_{\hat{a}}))_a = \ve_f(r,q)_b\,o(\calA_r(f))_a$.
\end{proof}

The following corollary shows that the boundary operator $\partial$ of the Morse complex satisfies $\partial\circ \partial(p)=\sum_q\sum_r\#\bbcalM{f}{p}{r}\cdot \#\bbcalM{f}{r}{q}\,q=0$. 
\begin{Cor}\label{cor:dd=0}
Let $p,r,q$ be critical points of $f$ such that $f(q)<f(r)<f(p)$ and $i(p)-i(r)=i(r)-i(q)=1$. Let $N_a$ and $a\in \bar{b}(\widehat{N}_{\hat{a}})$ be as above for $\hat{a}\in \partial_1\bcalD_p(f)$. Let $b$ be a point of $\bbcalM{f}{p}{r}$ such that $\widehat{N}_{\hat{a}}$ corresponds to $b$. Then the following identity in $\bigwedge^\bullet T_a^*M$ holds. 
\[ o^*_M(\partial \bcalD_p(f)\pitchfork \calA_q(f))_a=\ve_f(p,r)_b\, \ve_f(r,q)_a\,\iota(-\mathrm{grad}\,f)\,o(M)_a. \] 
\end{Cor}
\begin{proof}
By Lemma~\ref{lem:ind_ori_D} and (\ref{eq:ind_ori_bd_int}), $o^*_M(\partial \bcalD_p(f)\pitchfork \calA_q(f))_a$ is given as follows.
\[ \begin{split}
  &(-1)^{\mathrm{deg}\,o^*_M(\calA_q(f))_a}(-1)^{i(r)+1}\ve_f(p,r)_b
  \, o^*_M(\calD_r(f))_a\wedge o^*_M(\calA_q(f))_a\\
  &=(-1)^{i(q)+i(r)+1}\ve_f(p,r)_b\, \ve_f(r,q)_a\,\iota(-\mathrm{grad}\,f)\,o(M)_a.
\end{split}\]
\end{proof}

%%%%%%%%%%%%%%%%%%%%%%%%%%%%%%%%%%%%%%%%

%%%%%%%%%%%%%%%%%%%%%%%%%%%%%%
\subsection{(Co)orientation induced on the boundary of $\bcalM_2(f)$}

Let $f:M_0\to \R$ be a Morse function and $\mu$ be a metric on $M_0$ that is Morse--Smale and that is Euclidean near critical points. For a critical point $r$ of $f$, we shall describe the induced orientations of the face $\calF_r\bcalM_2(f)$ of $\partial_1\bcalM_2(f)$ of flow lines broken at a critical point $r$, that are induced from the orientation of $\bcalM_2(f)$. In the following we again follow the orientation convention of Appendix~\ref{s:ori}. 

Let $\bar{b}:\bcalM_2(f)\to M\times M$ be the map that assigns to each (possibly broken) flow sequence the pair of initial and terminal endpoints. If $a\in \calA_r(f)$ and $a'\in\calD_r(f)$, then there are open neighborhoods $N_a$ and $N_{a'}$ of $a$ and $a'$ in $M_0$ respectively such that $\bar{b}:\bar{b}^{-1}(N_a\times N_{a'})\to N_a\times N_{a'}$ is an embedding. Let $\widehat{N}_{(a,a')}=\bar{b}^{-1}(N_a\times N_{a'})$. Then the coorientation $o^*_{M\times M}(\partial_1\bcalM_2(f))_{(a,a')}$ makes sense by identifying $\widehat{N}_{(a,a')}$ with $\bar{b}(\widehat{N}_{(a,a')})$. 

Note that $\mathrm{Int}\,\bar{b}(\widehat{N}_{(a,a')})$ is an open subset of $\calM_2(f)$ and its closure in $N_a\times N_{a'}$ is $\bar{b}(\widehat{N}_{(a,a')})$. Hence the coorientation of $\calM_2(f)$ induces a coorientation of the boundary $\partial \bar{b}(\widehat{N}_{(a,a')})$ at $(a,a')$. We define $o^*_{M\times M}(\partial_1\bcalM_2(f))_{(a,a')}$ to be the one induced in this way.

\begin{Lem}\label{lem:ind_ori_M2}
Under the assumption above, let $a\in \calA_r(f)$ and $a'\in \calD_r(f)$. Then
\[ o^*_{M\times M}(\partial_1 \bcalM_2(f))_{(a,a')}=(-1)^{(i(r)+1)d+1}o^*_M(\calA_r(f))_a\wedge o^*_M(\calD_r(f))_{a'}. \]
\end{Lem}
\begin{proof}
Let $i=i(r)$. By Morse lemma, it suffices to check the assertion for the standard form $h(x_1,\ldots,x_d)=\displaystyle-\frac{x_1^2}{2}-\cdots-\frac{x_i^2}{2}+\frac{x_{i+1}^2}{2}+\cdots+\frac{x_d^2}{2}$ in place of $f$ and for $r=(0,\cdots,0)$. By convention, 
\[ o^*_{\R^d}(\calD_r(h))_x=\beta dx_{i+1}\cdots dx_d,\quad o^*_{\R^d}(\calA_r(h))_x=(-1)^{i(d-i)}\beta dx_1\cdots dx_i \]
for some $\beta=\pm 1$. 

First, assume $i\geq 1$. We assume without loss of generality that $a=(0,\ldots,0,a_d)$, $a'=(a_1',0,\ldots,0)$ for some $a_d\geq 0$ and $a_1'>0$. Recall that $\bcalM_2(h)$ is the set of points $(\rho u, v)\times (u,\rho v)$ for $u\in\R^i$, $v\in \R^{d-i}$, $\rho\in [0,1]$ (Lemma~\ref{lem:M2(h)}). Since the Jacobian matrix of $\varphi(\rho,u,v)=(\rho u, v)\times (u,\rho v)$ at $u=a'\in \R^i$, $v=a\in \R^{d-i}$ is (\ref{eq:jacobian}), $T_{\varphi(\rho,u,v)}^*\bcalM_2(h)$ is spanned by $a_1'dx_1+a_ddy_d$, $\rho dx_1+dy_1$, $\ldots$ , $\rho dx_i+dy_i$, $dx_{i+1}+\rho dy_{i+1}$, $\ldots$ , $dx_d+\rho dy_d$ if $dx_1,\ldots,dx_d$ (resp. $dy_1,\ldots,dy_d$) is the standard basis of $T_a^*\R^d$ (resp. $T_{a'}^*\R^d$). In fact, if $\rho>0$ and small, 
\begin{equation}\label{eq:goodori}
 o(\calM_2(h))_{\varphi(\rho,u,v)}\sim -(a_1'dx_1+a_ddy_d)\wedge \bigwedge_{k=1}^i(\rho dx_k+dy_k)\wedge \bigwedge_{k=i+1}^d(dx_k+\rho dy_k). 
\end{equation}
Indeed, by convention 
\[ o(\calM_2(h))_{(u,v)\times (e^t u,e^{-t}v)}=dy_1\wedge \bigwedge_{k=1}^i(dx_k+e^t dy_k)\wedge \bigwedge_{k=i+1}^d(dx_k+e^{-t} dy_k) \]
for $u\neq 0$, $v\neq 0$, $t> 0$. See (\ref{eq:Jphi}) in \S\ref{ss:convention_coori}. Then
\[ o(\calM_2(h))_{(u,v)\times (e^t u,e^{-t}v)}\wedge dy_2\cdots dy_d=dy_1\cdots dy_d\wedge dx_1\cdots dx_d. \]
On the other hand, 
\[ \begin{split}
	&-(a_1'dx_1+a_ddy_d)\wedge \bigwedge_{k=1}^i(\rho dx_k+dy_k)\wedge \bigwedge_{k=i+1}^d(dx_k+\rho dy_k)\wedge dy_2\cdots dy_d\\
	&=(a_1'-\rho a_d)\rho^{i-1}\,dy_1dx_1dx_2\cdots dx_d dy_2\cdots dy_d\\
	&=(a_1'-\rho a_d)\rho^{i-1}\,dy_1\cdots dy_d\wedge dx_1\cdots dx_d.
\end{split} \]
The coefficient $(a_1'-\rho a_d)\rho^{i-1}$ is positive if $\rho$ is small. 

The expression (\ref{eq:goodori}) is convenient because it extends smoothly to an orientation of $\bcalM_2(h)$ except the point from $(u,v)=(0,0)$. At the boundary point 
\[ (a,a')=\varphi(0,(a_1',0,\ldots,0),(0,\ldots,0,a_d))\in \partial_1 \bcalM_2(h), \quad a_1'>0,\quad a_d\geq 0,\]
the dual of the inward normal vector at $(a,a')$ is given by $a_1'dx_1+a_d dy_d$. Hence
\[ \begin{split}
	o(\partial_1\bcalM_2(h))_{(a,a')}&=-dy_1\cdots dy_idx_{i+1}\cdots dx_d,\\
	o^*_{\R^d\times \R^d}(\partial_1\bcalM_2(h))_{(a,a')}&= (-1)^{d-i+1} dx_1\cdots dx_idy_{i+1}\cdots dy_d\\
		&=(-1)^{(i+1)d+1}o^*_{\R^d}(\calA_r(h))_a\wedge o^*_{\R^d}(\calD_r(h))_{a'}.
\end{split} \]
\end{proof}
%%%%%%%%%%%%%%%%%%%%

%%%%%%%%%%%%%%%%%%%%%%%%%%%%%%
\subsection{Orientations of some faces of $\partial\bConf_{n}(M)$}\label{ss:ori_face}

Now assume that $d=3$. We shall describe the orientation of the face $\partial_{ij}\bConf_n(M):=\partial_{\{i,j\}}\bConf_n(M)$ induced from the standard orientation $o(M)_{x_1}\wedge o(M)_{x_2}\wedge\cdots\wedge o(M)_{x_{n}}$ of $\Conf_n(M)$. Let 
\[
  \Delta_{ij}=\{(x_1,x_2,\ldots,x_n)\in M^n; x_i=x_j\}.
 \]
The interior of the face $\partial_{ij}\bConf_n(M)$ is an open subset of $\partial B\ell_{\Delta_{ij}}(M^n)$. By definition of blow-up, the boundary of $B\ell_{\Delta_{ij}}(M^n)$ is the normal sphere bundle of the submanifold $\Delta_{ij}$. More precisely, let $N_{\Delta_{ij}}$ be the total space of the normal bundle of $\Delta_{ij}$. By identifying a small tubular neighborhood of $\Delta_{ij}$ with that of the zero section of $N_{\Delta_{ij}}$, we may identify a small collar neighborhood of $\partial B\ell_{\Delta_{ij}}(M^n)$ with that of $\partial B\ell_0(N_{\Delta_{ij}})$. 

A framing $\tau:TM\to \R^3\times M$ induces a trivialization $\phi_{ij}:N_{\Delta_{ij}}\to \R^3\times \Delta_{ij}$. This is smoothly extended to a trivialization $\overline{\phi}_{ij}:B\ell_0(N_{\Delta_{ij}})\to B\ell_0(\R^3)\times \Delta_{ij}$. Let $\omega_{p-1}$ denote the closed $(p-1)$-form on $B\ell_0(\R^p)$ that is the pullback of the $SO_p$-invariant volume form $\sum_{i=1}^p (-1)^{i-1}x_i\,dx_1\wedge\cdots\wedge \widehat{dx_i}\wedge\cdots\wedge dx_p$ on $S^{p-1}$ by the natural map $B\ell_0(\R^p)\to S^{p-1}$ (see Appendix~\ref{s:blow-up}). If $i<j$, let 
\[ o(\Delta_{ij})_{\vec{x}}=(-1)^{j-1}o(M)_{x_1}\wedge \cdots \wedge o(\Delta_M)_{(x_i,x_j)}\wedge \cdots \wedge 
\widehat{o(M)}_{x_j}\wedge\cdots\wedge o(M)_{x_n}.\]
Now we orient $\partial B\ell_{\Delta_{ij}}(M^n)=\partial B\ell_0(N_{\Delta_{ij}})$ as follows.
\[ 
  o(\partial B\ell_0(N_{\Delta_{ij}})) = \overline{\phi}^*_{ij}(\omega_2 \wedge o(\Delta_{ij})).
\]
This is the one induced from the standard orientation $o(M)_{x_1}\wedge o(M)_{x_2}\wedge\cdots\wedge o(M)_{x_{n}}$ of $\Conf_n(M)$. Indeed, if $x_i=x_j$, we have
\[ 
  (du_j^{(1)}-du_i^{(1)})\wedge(du_j^{(2)}-du_i^{(2)})\wedge(du_j^{(3)}-du_i^{(3)})\wedge o(\Delta_{ij})_{\vec{x}}= 2^3 o(M^n)_{\vec{x}},
\]
where $(u_k^{(1)},u_k^{(2)},u_k^{(3)})$ is a Euclidean local coordinate around $x_k$ and $o(M)_{x_k}=du_k^{(1)}du_k^{(2)}du_k^{(3)}$ and $o(\Delta_M)_{(x_i,x_j)}=(du_i^{(1)}+du_j^{(1)})\wedge(du_i^{(2)}+du_j^{(2)})\wedge(du_i^{(3)}+du_j^{(3)})$. The left hand side of the above expression also gives an orientation of $N_{\Delta_{ij}}$ and the orientation induced from it on the unit sphere bundle of $N_{\Delta_{ij}}$ is $\omega_2\wedge o(\Delta_{ij})$. 

%%%%%%%%%%%%%%%%%%%%%%%%%%%%%%
\subsection{Standard co-orientation of $\calM_\Gamma$ from graph orientation}\label{sss:std_ori}

We shall first give another definition of $\calM_\Gamma(\vec{f})$ using $\calM_2(f)$ and $\calN_{pq}(f)$. For a graph $\Gamma$ without bivalent vertices, the space $\calM_\Gamma(\vec{f})$ can be defined as the intersection of submanifolds of $\Conf_n(M)$, as follows. Suppose for simplicity that the separated edges of $\Gamma$ are labeled $1,2,\ldots,a$. Let $\pi_{ij}:\Conf_n(M)\to \Conf_2(M)$ be the projection $(x_1,\ldots,x_n)\mapsto (x_i,x_j)$ and let $\Theta_\ell$ and $H_\ell$ be the submanifolds of $\Conf_n(M)$ defined by
\[	\Theta_\ell=\pi_{ij}^{-1}(\calM_2(f_\ell)),\quad
	H_\ell=\pi_{ij}^{-1}(\calN_{p_\ell q_\ell}(f_\ell)),
\]
where $i=\sigma(\ell),j=\tau(\ell)$. Their codimensions are $\codim\Theta_\ell=2$, $\codim H_\ell=3-i(p_\ell)+i(q_\ell)=3-\eta_\ell$. Then we have
\[ \calM_\Gamma(\vec{f})=\bigcap_{j=1}^a H_{j}\cap\bigcap_{j=a+1}^m\Theta_j, \]
where the intersection is transversal. 

Let $o^*_{\Conf_2(M)}(\Theta_\ell)\in\Omega^2_{\mathrm{dR}}(B)$ and $o^*_{\Conf_2(M)}(H_\ell)\in\Omega^{3-\eta_\ell}_\mathrm{dR}(B)$ be differential forms on a neighborhood $B$ of a point on the crossing $\calM_\Gamma(\vec{f})$ in $\Conf_n(M)$, defined by
\[ o^*_{\Conf_n(M)}(\Theta_\ell)=\pi_{ij}^* o^*_{\Conf_2(M)}(\calM_2(f_\ell)),\quad o^*_{\Conf_n(M)}(H_\ell)=\pi_{ij}^* o^*_{\Conf_2(M)}(\calN_{p_\ell q_\ell}(f_\ell)).\]
We represent co-orientation of $\calM_\Gamma(\vec{f})$ by a wedge product of $o^*_{\Conf_n(M)}(H_j)$'s and $o^*_{\Conf_n(M)}(\Theta_j)$'s. We now define the coorientations for the graphs that are relevant.

\subsubsection{Graphs in $\calG_{2k,3k}^0(\vec{C})$, $\calG_{2k,3k,(2,1,\ldots,1)}^0(\vec{C})$, $\calG_{2k,3k,(0,1,\ldots,1)}^0(\vec{C})$}
Now we assume that $\Gamma\in\calG_{2k,3k}^0(\vec{C})$, so that $\codim\Theta_j=\codim{H_j}=2$. We shall define a standard co-orientation of $\calM_\Gamma(\vec{f})$ in a product of $M$ from the labels and the edge orientations of $\Gamma$, as follows. The labels of trivalent vertices determine the correspondence between $V(\Gamma)$ and the coordinate $(x_1,x_2,\ldots,x_{2k})$. The edge orientation determine which of $\pi_{ij}$ and $\pi_{ji}$ is used to define $\Theta_\ell$ or $H_\ell$. Then we define the standard co-orientation of $\calM_\Gamma(\vec{f})$ by the formula
\[ o^*_{\Conf_{2k}(M)}(\calM_\Gamma(\vec{f}))=\bigwedge_{j=1}^ao^*_{\Conf_{2k}(M)}(H_j)\wedge\bigwedge_{j=a+1}^{3k} o^*_{\Conf_{2k}(M)}(\Theta_j)\in\Omega^{6k}_\mathrm{dR}(B). \]
Since $\codim\Theta_j$ and $\codim H_j$ are even, the order of wedge product does not matter. This depends only on the orientation $o(\Gamma)$ of $\Gamma$. This gives $\#\calM_\Gamma(\vec{f})$ in \S\ref{ss:count}.

The same rule equally works for graphs in $\calG_{2k,3k,(2,1,\ldots,1)}^0(\vec{C})$, $\calG_{2k,3k,(0,1,\ldots,1)}^0(\vec{C})$ etc. without bivalent vertices, since in that case only one $H_j$ is odd codimensional, so again the coorientation of $\calM_\Gamma(\vec{f})$ is canonically determined from the graph orientation by the same formula. 

\subsubsection{Graphs in $\calG_{2k,3k,\vec{\eta}}^0(\vec{C})$, $\eta_{j_2}=2$, $\eta_{j_0}=0$}
For a graph in $\calG_{2k,3k,\vec{\eta}}^0(\vec{C})$ without bivalent vertices such that there is exactly one $j$ with $\eta_j=2$, exactly one $j$ with $\eta_j=0$ and otherwise $\eta_j=1$, let $j_2$ and $j_0$ be such that $\eta_{j_2}=2$, $\eta_{j_0}=0$. Then we define a standard co-orientation $o^*_{\Conf_{2k}(M)}(\calM_\Gamma(\vec{f}))$ of $\calM_\Gamma(\vec{f})$ by
\[ o^*_{\Conf_{2k}(M)}(H_{j_0})\wedge o^*_{\Conf_{2k}(M)}(H_{j_2})\wedge\bigwedge_{{{1\leq j\leq a}\atop{j\neq j_0,j_2}}}o^*_{\Conf_{2k}(M)}(H_j)\wedge\bigwedge_{j=a+1}^{3k} o^*_{\Conf_{2k}(M)}(\Theta_j).
 \]

\subsubsection{Graphs in $\calG_{2k,3k,(1,1,\ldots,1)}^0(\vec{C})$ with one bivalent vertex}
We also consider co-orientations of (not yet defined) $\calM_\Gamma(\vec{f})$ for graphs $\Gamma\in\calG_{2k,3k,(1,1,\ldots,1)}^0(\vec{C})$ with only one bivalent vertex. For three possibilities for the position of the bivalent vertex in $\Gamma$, we define $\calM_\Gamma(\vec{f})$ and its standard co-orientation as follows.

%%%
{(1)} $\Gamma=\fig{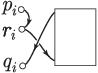}$, $i(p_i)=i(r_i)+1=i(q_i)+2$. Let $\Gamma'$ be the graph obtained from $\Gamma$ by removing the segment $\partial^{(i)}(p_i,r_i)$. In this case, we define 
\[ \calM_\Gamma(\vec{f})=\bbcalM{f_i}{p_i}{r_i}\times \calM_{\Gamma'}(\vec{f}). \]
If a point $b\in \bbcalM{f_i}{p_i}{r_i}$ is specified, we may consider a coorientation of $\{b\}\times \calM_{\Gamma'}(\vec{f})$ in $\Conf_{2k}(M)$ by identifying it with $\calM_{\Gamma'}(\vec{f})$. Then we define
\[ o^*_{\Conf_{2k}(M)}(\calM_\Gamma(\vec{f}))_{b\times (x_1,\ldots,x_{2k})}
	=\ve_f(p_i,r_i)_b\, o^*_{\Conf_{2k}(M)}(\calM_{\Gamma'}(\vec{f}))_{(x_1,\ldots,x_{2k})}. \]

%%%
{(2)}  $\Gamma=\fig{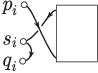}$, $i(q_i)=i(s_i)-1=i(p_i)-2$. Let $\Gamma'$ be the graph obtained from $\Gamma$ by removing the segment $\partial^{(i)}(s_i,q_i)$. In this case, we define
\[ \calM_\Gamma(\vec{f})=\bbcalM{f_i}{s_i}{q_i}\times \calM_{\Gamma'}(\vec{f}). \]
If a point $b\in \bbcalM{f_i}{s_i}{q_i}$ is specified, we may consider a coorientation of $\{b\}\times \calM_{\Gamma'}(\vec{f})$ in $\Conf_{2k}(M)$ by identifying it with $\calM_{\Gamma'}(\vec{f})$. Then we define
\[ o^*_{\Conf_{2k}(M)}(\calM_\Gamma(\vec{f}))_{b\times (x_1,\ldots,x_{2k})}
	=\ve_f(s_i,q_i)_b\, o^*_{\Conf_{2k}(M)}(\calM_{\Gamma'}(\vec{f}))_{(x_1,\ldots,x_{2k})}. \]

%%%
{(3)}  $\Gamma=\fig{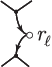}$. In this case, $o^*_{\Conf_{2k}(M)}(\calM_\Gamma(\vec{f}))$ is determined by the intersection of $H'_\ell=\pi_{ij}^{-1}(\calA_{r_\ell}(f_\ell)\times \calD_{r_\ell}(f_\ell))$ (codimension $i(r_\ell)+(3-i(r_\ell))=3$) with the intersection of $\Theta_j$'s and $H_j$'s of codimension 2. We define $o^*_{\Conf_{2k}(M)}(\calM_\Gamma(\vec{f}))$ by
\[o^*_{\Conf_{2k}(M)}(H_{j_2})\wedge \bigwedge_{{{1\leq j\leq a}\atop{j\neq j_2}}} o^*_{\Conf_{2k}(M)}(H_j)\wedge o^*_{\Conf_{2k}(M)}(H_\ell')\wedge \bigwedge_{{{a+1\leq j\leq 3k}\atop{j\neq \ell}}} o^*_{\Conf_{2k}(M)}(\Theta_j).\]

%%%%%%%%%%%%%%%%%%%%%%%
\subsection{Induced coorientation on $\partial\bcalM_\Gamma$} 

Now we define boundary operators, which formally describe the boundary of moduli space of flow graphs. 
We define a linear map $d:\calG_{n,m,\vec{\eta}}(\vec{C})\to 
	\bigoplus_{j=1}^{\ell}\calG_{n-1,m-1,(\eta_1,\ldots,\widehat{\eta_j},\ldots,\eta_\ell)}
		(C_*^{(1)},\ldots,\widehat{C}_*^{(j)},\ldots,C_*^{(\ell)})$ by
\[ d(\Gamma,o)=\sum_{e\in\Comp(\Gamma)}(\Gamma/e,\mathrm{induced\ ori}), \]
where for $e_j=(u,v)$ ($u,v$: vertices) the induced orientation of $\Gamma/e_j$ is formally given by $\iota(v^*)(v_1\wedge\cdots\wedge v_n)
	\wedge (e_1^+\wedge e_1^-)\wedge\stackrel{j}{\hat{\cdots}}\wedge (e_m^+\wedge e_m^-)$, where $v^*$ is the dual of $v$ with respect to the standard inner product and $\iota$ is the interior product. Also, let $d'\Gamma=\sum_{e\in\Comp(\Gamma)\cup\Se(\Gamma)}d'_e\Gamma$, where
\begin{align*}
d_e'\fig{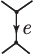}&=\sum_{r_i\in P_*^{(i)}}(-1)^{i(r_i)+1}\fig{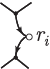}\qquad (\beta(i)=e),\\
d_e'\fig{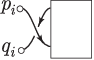}&=\sum_{{{r_i\in P_*^{(i)}}\atop{i(r_i)=i(p_i)-1}}}(-1)^{i(q_i)+1}\,\fig{d_4.eps}
	+\sum_{{{s_i\in P_*^{(i)}}\atop{i(s_i)=i(q_i)+1}}}(-1)^{i(s_i)+1}\fig{d_5.eps},
\end{align*}
$(\beta(i)=e)$, where the orientation of $d_e'\Gamma$ is the naturally induced one. 

Let $\calG_{n,m,\Sigma(1,1,\ldots,1)}^0(\vec{C})=\calG_{n,m,(2,1,\ldots,1)}^0(\vec{C})\cup \calG_{n,m,(1,2,1,\ldots,1)}^0(\vec{C})\cup \cdots\cup \calG_{n,m,(1,1,\ldots,2)}^0(\vec{C})$. Let $\bar{b}_\Gamma:\bcalM_\Gamma(\vec{f})\to \bConf_{2k}(M)$ be the map which gives the positions of the $2k$ trivalent vertices in a flow-graph (Remark~\ref{rem:b_emb}). Let $\bcalM_\Gamma^\mathrm{hi}(\vec{f})=\bar{b}_\Gamma^{-1}\partial^\mathrm{hi}\bConf_{2k}(M)$. 

\begin{Prop}\label{prop:dbM}
Suppose $d=3$ and $(\vec{f},\vec{\mu})$ is generic as in Proposition~\ref{prop:M_G-1-mfd}. For $\Gamma\in\calG_{2k,3k,\Sigma(1,\ldots,1)}^0(\vec{C})$ without bivalent vertices, there is a natural diffeomorphism
\[ \partial\bcalM_\Gamma(\vec{f})\cong \bcalM_{(d+d')\Gamma}(\vec{f})
	\tcoprod \bcalM_\Gamma^\mathrm{hi}(\vec{f}) \]
of oriented 0-manifolds (for some orientation of $\bcalM_\Gamma^\mathrm{hi}(\vec{f})$), where $\bcalM$ of a sum of graphs means the sum of $\bcalM$ for graphs in the sum.
\end{Prop}

\begin{proof}
We shall compare the co-orientation of a face of $\partial\bcalM_\Gamma(\vec{f})$ induced from $o^*_{\Conf_{2k}(M)}(\calM_\Gamma(\vec{f}))$ and the standard one of the same face of $\partial\bcalM_\Gamma(\vec{f})$ fixed in \S\ref{sss:std_ori}. 

Suppose that $\Gamma\in\calG_{2k,3k,\vec{\eta}}^0(\vec{C})$ has no bivalent vertex and that there is only one number $j$ with $\eta_j=2$ and $\eta_\ell=1$ for $\ell \neq j$. Let $j_2$ be such that $\eta_{j_2}=2$. As in \S\ref{sss:std_ori}, we consider $\calM_\Gamma(\vec{f})$ as the intersection of the chains $H_j$'s and $\Theta_j$'s in $\Conf_{2k}(M)$. 

Choose a number $\ell$ such that $1\leq \ell\leq 3k$ and let 
\[ \Sigma_\ell=\left\{\begin{array}{ll}
	\bigcap_{{{1\leq j\leq a}\atop{j\neq \ell}}}H_j\cap\bigcap_{j=a+1}^{3k}\Theta_j
		& \mbox{if $1\leq \ell\leq a$}\\
	\bigcap_{j=1}^a H_j\cap \bigcap_{{{a+1\leq j\leq 3k}\atop{j\neq \ell}}}\Theta_j
		& \mbox{if $a+1\leq \ell\leq 3k$}
	\end{array}\right. \]
Then by Proposition~\ref{prop:M_mfd}, $\codim\Sigma_\ell=\codim\calM_\Gamma(\vec{f})-\codim H_\ell=(2kd-(2k-3k)d-3k-1)-(d+\eta_\ell)=6k-4-\eta_\ell$.

First, we consider the contribution of $\partial^\mathrm{pri}\bConf_{2k}(M)$. We check that the contribution of the principal face is $\#\calM_{d\Gamma}(\vec{f})$. We consider the principal face corresponding to the collapse of the $\ell$-th edge of $\Gamma$. By convention, 
\[ o(\bcalM_2(f_\ell))_{(x,y)}=(-df_\ell)_y\wedge o(\Delta_M)_{(x,x)}+O(d(x,y)), \]
where $d(x,y)$ is the geodesic distance. Let $\xi=-\mathrm{grad}\,f_\ell$. The orientation induced on the face $\Delta_M$ of $\partial\bcalM_2(f_\ell)$ is
\[ \iota(-\xi_x\oplus \xi_y)(-df_\ell)_y\wedge o(\Delta_M)_{(x,x)}=o(\Delta_M)_{(x,x)}\quad (\mbox{if }x=y). \]
This implies that the coorientation of the boundary of $\bcalM_2(f_\ell)$ on $\partial B\ell_{\Delta_M}(M^2)$ is given by $\omega_2$, showing that the principal face contribution is $\#\calM_{d\Gamma}(\vec{f})$.

Let $X$ be one of the graphs that appear in the sum $d_{\beta(k)}'\Gamma$. We shall describe the co-orientation of the face $\calS_X$ of $\partial\bcalM_\Gamma(\vec{f})$ corresponding to $X$ induced from the standard co-orientation of $\calM_\Gamma(\vec{f})$ in $\Conf_{2k}(M)$.

(1) $X=\fig{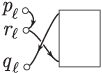}$. By Lemma~\ref{lem:ind_ori_D}, (\ref{eq:ind_ori_boundary}) and (\ref{eq:ind_ori_bd_int}), the co-orientation of $\calS_X$ induced from the standard one
\begin{equation}\label{eq:std_ori_1}
 o^*_{\Conf_{2k}(M)}(\calM_\Gamma(\vec{f}))
	=o^*_M(\calA_{q_\ell}(f_\ell))\wedge o^*_M(\calD_{p_\ell}(f_\ell))\wedge o^*_{\Conf_{2k}(M)}(\Sigma_\ell) 
\end{equation}
is given by
\[ \begin{split}
	&(-1)^{d-1}(-1)^{2kd-1}(-1)^{i(r_\ell)+1+\eta_\ell}\ve_f(p_\ell,r_\ell)_b\, o^*_M(\calA_{q_\ell}(f_\ell))\, o^*_M(\calD_{r_\ell}(f_\ell))\, o^*_{\Conf_{2k}(M)}(\Sigma_\ell)\\
	&=(-1)^{i(q_\ell)+1}\ve_f(p_\ell,r_\ell)_b\, o^*_{\Conf_{2k}(M)}(\calM_{\Gamma'}(\vec{f}))=(-1)^{i(q_\ell)+1}\,o^*_{\Conf_{2k}(M)}(\calM_X(\vec{f})).
	\end{split}\]

(2) $X=\fig{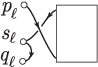}$. By Lemma~\ref{lem:ind_ori_A}, (\ref{eq:ind_ori_boundary}) and (\ref{eq:ind_ori_bd_int}), the co-orientation of $\calS_X$ induced from the standard one (\ref{eq:std_ori_1}) is given by
\[ \begin{split}
	&(-1)^{d-1}(-1)^{2kd-1}(-1)^{3-i(p_\ell)+\eta_\ell}\ve_f(s_\ell,q_\ell)_b\, o^*_M(\calA_{s_\ell}(f_\ell))\, o^*_M(\calD_{p_\ell}(f_\ell))\, o^*_{\Conf_{2k}(M)}(\Sigma_\ell)\\
	&=(-1)^{i(s_\ell)+1}\ve_f(s_\ell,q_\ell)_b\, o^*_{\Conf_{2k}(M)}(\calM_{\Gamma'}(\vec{f}))=(-1)^{i(s_\ell)+1}o^*_{\Conf_{2k}(M)}(\calM_X(\vec{f})).
\end{split}\]

(3) $X=\fig{d_2_2.eps}$. The induced co-orientation on the boundary is as in Lemma~\ref{lem:ind_ori_M2}, which differs from the standard coorientation by $(-1)^{i(r_\ell)+\eta_\ell}=(-1)^{i(r_\ell)+1}$.

Now we have seen that the signs in the formula of the definition of $d'$ are consistent with the induced co-orientations on the boundary of $\bcalM_\Gamma(\vec{f})$.\end{proof}

%\clearpage

%%%%%%%%%%%%%%%%%%%%%%%%%%%%%%
%%%%%%%%%%%%%%%%%%%%%%%%%%%%%%
\mysection{Independence of combinatorial propagator}{s:indep_g}

In the definition of $Z_{2k,3k}(\vec{f})$, a sequence $\vec{g}=(g_1,\ldots,g_{3k})$ of combinatorial propagators for $\vec{C}$ is chosen. In this section, we shall prove that $Z_{2k,3k}(\vec{f})$ does not depend on the choice of $\vec{g}$. Recall that $Z_{2k,3k}(\vec{f})=\Tr_{\vec{g}}(\widetilde{\gamma}_{2k,3k})$ for $\widetilde{\gamma}_{2k,3k}=\sum_{\Gamma\in\calG_{2k,3k}^0(\vec{C})}\#\calM_\Gamma(\vec{f})\,\Gamma\in\calG_{2k,3k}(\vec{C})$. We shall prove the following lemma.
\begin{Lem}\label{lem:indep_g}
$Z_{2k,3k}(\vec{f})=\Tr_{\vec{g}}(\widetilde{\gamma}_{2k,3k})$ does not depend on the choice of $\vec{g}$.
\end{Lem}

%%%%%%%%%%%%%%%%%%%%%%%%%%%%%%
\subsection{Boundary strata of $\bcalM_\Gamma$}

\begin{Prop}\label{prop:dM=Md}
Let $\Gamma$ be a graph in $\calG_{2k,3k,\Sigma(1,\ldots,1)}^0(\vec{C})$ without bivalent (i.e. white) vertices. For a permutation $\sigma\in\mathfrak{S}_{3k}$ and for a subset $\tau\subset E(\Gamma)$, let $\Gamma_\sigma^\tau$ denote the labeled graph obtained from $\Gamma$ by permuting the labels of edges by $\sigma$ and reversing the orientations of all the edges in $\tau$. For $\vec{f}$ generic, we have
\[ \sum_{{\sigma\in\mathfrak{S}_{3k}}}\sum_{{\tau\subset E(\Gamma)}}\#\bcalM_{(d+d')\Gamma_\sigma^\tau}(\vec{f})
	=\sum_{{\sigma\in\mathfrak{S}_{3k}}}\sum_{{\tau\subset E(\Gamma)}}\#\partial\bcalM_{\Gamma_\sigma^\tau}(\vec{f})
	=0. \]
\end{Prop}

For the proof of Proposition~\ref{prop:dM=Md}, we need two lemmas, which are analogues of Kontsevich's lemma \cite[Lemma~2.2]{Ko1}. In the following $H$ is a subgraph $\Gamma\in \calG_{2k,3k,\Sigma(1,\ldots,1)}^0(\vec{C})$ with only compact edges.

\begin{Lem}\label{lem:symmetry}
Suppose that $H$ has a bivalent black vertex $a$ and that the edges of $H$ including the vertex $a$ are $(b,a)$ and $(a,c)$ where $b$ and $c$ are both black vertices. Let $H'$ be the labeled graph obtained from $H$ by exchanging labels for edges $(b,a)$ and $(a,c)$. Suppose that $\dim\calM_H^\loc(\vec{\gamma})=\dim\calM_{H'}^\loc(\vec{\gamma})=0$. Then 
\[ \#\calM_H^\loc(\vec{\gamma})+\#\calM_{H'}^\loc(\vec{\gamma})=0. \]
\end{Lem}
\begin{proof}
Let $n=|V(H)|$ and let $(x_1,\ldots,x_n)\in \bConf_n^\loc(\R^3)$ be a point on $\calM_H^\loc(\vec{\gamma})$. Suppose that the vertices $a$, $b$ and $c$ of $H$ correspond to $x_\alpha$, $x_\beta$ and $x_\gamma$ respectively. Then consider the automorphism $s:\bConf_n^\loc(\R^3)\to \bConf_n^\loc(\R^3)$ which sends $x_\alpha$ to $x_\beta+x_\gamma-x_\alpha$ and fixes other points. Then $s$ exchanges $\calM_H^\loc(\vec{\gamma})$ and $\calM_{H'}^\loc(\vec{\gamma})$. Put $y_\alpha=x_\beta+x_\gamma-x_\alpha$ and $\theta(\ell)_{(x,y)}=o^*_{\R^3\times \R^3}(\Theta_\ell(\gamma_\ell))_{(x,y)}$ (see Definition~\ref{def:Mloc}). Let $V_\alpha\in \bigwedge^3 T_{x_\alpha}\R^3$ and $V_{\alpha}'\in \bigwedge^3 T_{y_\alpha}\R^3$ be nontrivial elements that give the orientation of $\R^3$. Then the evaluation with $V_\alpha$ (resp. with $V_{\alpha}'$) gives a map $\bigwedge^k (T_{x_\alpha}\R^3\oplus T_{x_\beta}\R^3\oplus T_{x_\gamma}\R^3)\to \bigwedge^{k-3}(T_{x_\beta}\R^3\oplus T_{x_\gamma}\R^3)$ (resp. $\bigwedge^k (T_{y_\alpha}\R^3\oplus T_{x_\beta}\R^3\oplus T_{x_\gamma}\R^3)\to \bigwedge^{k-3}(T_{x_\beta}\R^3\oplus T_{x_\gamma}\R^3)$) and we have
\[ \begin{split}
  &\langle \theta(\ell)_{(x_\beta,x_\alpha)}\wedge \theta(\ell')_{(x_\alpha,x_\gamma)}, V_\alpha\rangle =\langle s^*(\theta(\ell)_{(y_\alpha,x_\gamma)}\wedge \theta(\ell')_{(x_\beta,y_\alpha)}), V_{\alpha}\rangle\\
  =&\,\langle \theta(\ell)_{(y_\alpha,x_\gamma)}\wedge \theta(\ell')_{(x_\beta,y_\alpha)}, ds_* V_{\alpha}\rangle=-\langle \theta(\ell')_{(x_\beta,y_\alpha)}\wedge \theta(\ell)_{(y_\alpha,x_\gamma)}, V_{\alpha}'\rangle.
\end{split} \]
This induces the desired identity.
\end{proof}

The following lemma can be proved in the same way as Lemma~\ref{lem:symmetry}. 
\begin{Lem}\label{lem:symmetry2}
Suppose that $H$ has a bivalent black vertex $a$ and that the edges of $H$ including the vertex $a$ are $(a,b)$ and $(a,c)$ where $b$ and $c$ are both black vertices. Let $H'$ be the labeled graph obtained from $H$ by exchanging labels for edges $(a,b)$ and $(a,c)$ and reversing the orientations of both edges. Suppose that $\dim\calM_H^\loc(\vec{\gamma})=\dim\calM_{H'}^\loc(\vec{\gamma})=0$. Then 
\[ \#\calM_H^\loc(\vec{\gamma})+\#\calM_{H'}^\loc(\vec{\gamma})=0. \]
\end{Lem}

\begin{Lem}\label{lem:dilation}
Suppose that $H$ has a univalent black vertex $a$ and $|V(H)|\geq 3$. Then $\calM_H^\loc(\vec{\gamma})$ admits a smooth free $\R$-action. 
\end{Lem}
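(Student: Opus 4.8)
The plan is to write down the obvious scaling action, descend it to the local configuration space, and then check it preserves the moduli space and is free. Since $a$ is univalent it lies on a unique edge of $\Gamma$; let $b$ be its other endpoint, so $b\neq a$, and since $|V(\Gamma)|\geq 3$ (so $n\geq 3$) there is at least one further vertex. On the interior $\Conf_n^\loc(\R^d)$ I would define, for $s\in\R$,
\[ \Phi_s(t;x_1,\dots,x_n)=\bigl(t;\,x_1,\dots,x_{a-1},\,x_b+e^s(x_a-x_b),\,x_{a+1},\dots,x_n\bigr), \]
i.e.\ dilate the $a$-th point about its neighbour $x_b$ by the factor $e^s$ and leave the base point and the other points fixed. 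First I would observe that this formula is equivariant for the group $G$ of affine maps $x\mapsto\lambda x+v$ of $\R^d$ with $\lambda>0$: translating or dilating $(x_1,\dots,x_n)$ changes $x_b+e^s(x_a-x_b)$ by the same translation or dilation. Since $\Conf_n^\loc(\R^d)$ is a global slice for the free $G$-action on $\Conf_n(\R^d)$ (for $n\geq 2$), composing the formula above with the renormalisation back into the slice yields a well-defined smooth $\R$-action on $\Conf_n^\loc(\R^d)$, with $\Phi_s\circ\Phi_{s'}=\Phi_{s+s'}$.

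Next I would check that $\Phi_s$ preserves $\calM_\Gamma^\loc(\vec{\gamma})=\bigcap_{k=1}^m\Theta_k(\gamma_k)$. The Gauss map $\phi_{ij}(x)=(x_j-x_i)/\|x_j-x_i\|$ attached to an edge $k$ with endpoints $\{i,j\}$ depends only on the ray spanned by $x_j-x_i$. For the edge $k_0$ containing $a$ we have $\{i,j\}=\{a,b\}$, and $\Phi_s$ multiplies $x_a-x_b$ by $e^s>0$, hence fixes this ray, hence fixes $\widetilde{\phi}_{ij}$ along it; so $\Phi_s$ preserves $\Theta_{k_0}(\gamma_{k_0})$. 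For every other edge $k$, univalence of $a$ gives $a\notin\{i,j\}$, so $\phi_{ij}$ does not involve the moved coordinate and $\Phi_s$ trivially preserves $\Theta_k(\gamma_k)$. Thus $\Phi$ restricts to a smooth $\R$-action on $\calM_\Gamma^\loc(\vec{\gamma})$. For freeness, suppose $\Phi_s$ fixes a point; unwinding through the slice, there are $\lambda>0$ and $v\in\R^d$ with $\lambda x_j+v=x_j$ for all $j\neq a$ and $\lambda x_a+v=x_b+e^s(x_a-x_b)$. Because $n\geq 3$ there are two indices $j\neq j'$ distinct from $a$ with $x_j\neq x_{j'}$, so $\lambda(x_j-x_{j'})=x_j-x_{j'}$ forces $\lambda=1$, then $v=0$, and finally $e^s(x_a-x_b)=x_a-x_b$ with $x_a\neq x_b$ forces $s=0$. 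This is precisely where the hypothesis $|V(\Gamma)|\geq 3$ enters; the same computation with screen coordinates in place of macroscopic ones gives freeness on each Fulton--MacPherson boundary stratum.

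The point that still requires care --- and which I expect to be the real obstacle --- is that $\Phi$ extend to a \emph{globally defined} smooth $\R$-action on the compact manifold $\calM_\Gamma^\loc(\vec{\gamma})$, not merely a local flow on its interior; this amounts to understanding the orbits near $\partial\bConf_n^\loc(\R^d)$. The two ``ends'' $x_a\to x_b$ and ``$V(\Gamma)\setminus\{a\}$ collapses to a point'' are only approached as $s\to-\infty$ and $s\to+\infty$, so they are harmless; the delicate cases are possible finite-time coincidences of $x_a$ with a third point and, more generally, the trace of the flow on the blown-up diagonals. I would treat these exactly as in the proofs of Lemma~\ref{lem:M-local_mfd} and Proposition~\ref{prop:bM2_mfd}: in the local normal form of each blow-up, $\Phi$ is identified with the analogous scaling flow on a lower-dimensional local configuration space, so the local descriptions glue to a smooth $\R$-action, and the transversality built into $\calM_\Gamma^\loc(\vec{\gamma})$ ensures the generating vector field is everywhere tangent to it. Verifying this compatibility across all corners of $\bConf_n^\loc(\R^d)$ is the bulk of the work.
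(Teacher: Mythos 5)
Your proposal is correct and takes essentially the same approach as the paper, whose proof is only a two-sentence sketch: it names the edge $(a,b)$ at the univalent vertex, observes that dilation of that edge is independent of the overall dilation quotiented out in $\bConf_n^\loc(\R^d)$ precisely because $|V(\Gamma)|\geq 3$, and declares the resulting $\R$-action free. You have spelled out the same construction carefully — descent to the slice, preservation of each $\Theta_k(\gamma_k)$ using univalence of $a$, freeness from the presence of a third vertex — and you correctly flag that extending the flow smoothly across the Fulton--MacPherson boundary strata is the genuinely nontrivial point, which the paper's proof silently elides.
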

\begin{proof}
Suppose that the edge of $H$ including the vertex $a$ is $(a,b)$ where $b$ is a black vertex. There is a dilation of the linear trajectory corresponding to $(a,b)$ in $\calM_H^\loc(\vec{\gamma})$, which fixes points in $V(H)-\{a\}$. Since $|V(H)|\geq 3$, this $\R$-action is free in $\bConf_n^\loc(\R^3)$ and gives a desired $\R$-action.
\end{proof}

\begin{proof}[Proof of Proposition~\ref{prop:dM=Md}]
The assumption $\Gamma\in\calG_{2k,3k,\Sigma(1,\ldots,1)}^0(\vec{C})$ implies that $\bcalM_\Gamma(\vec{f})$ is 1-dimensional by Proposition~\ref{prop:M_mfd}. For a subset $A$ of $V(\Gamma)$, let $\Gamma_A$ denote the subgraph of $\Gamma$ such that $V(\Gamma_A)=A$ and $E(\Gamma_A)$ consists of all edges of $E(\Gamma)$ between points in $A$. Suppose $\Gamma$ and $A$ are such that $E(\Gamma_A)=\Comp(\Gamma_A)$. Let $\vec{f}_A\subset \vec{f}$ be the subsequence corresponding to the subset $E(\Gamma_A)\subset E(\Gamma)$ and let $-\mathrm{grad}\,\vec{f}_A=(-\mathrm{grad}\,f_{i_1},\ldots,-\mathrm{grad}\,f_{i_{|A|}})$, where $i_1,\ldots,i_{|A|}$ are the labels of the edges in $\Gamma_A$. According to the description of $\partial\bConf_n(M)$ in \S\ref{ss:FM}, the face of $\partial\bcalM_\Gamma(\vec{f})$ coming from the face $\partial_A\bConf_n(M)$ is diffeomorphic, through the map induced from a trivialization $TM\stackrel{\approx}{\to} M\times\R^3$, to
\[ \bcalM_{\Gamma/\Gamma_A}(\vec{f}\setminus\vec{f}_A)\times \calM_{\Gamma_A}^\loc(-\mathrm{grad}\,\vec{f}_A), \]
which is at most an oriented 0-manifold. Here, we consider $\calM_{\Gamma_A}^\loc(-\mathrm{grad}\,\vec{f}_A)$ as a subspace of the $\bConf_{|A|}^\loc(\R^3)$-bundle over $M-\bigcup_{i\in A}\Sigma(f_i)$. Since $\Gamma$ is a graph in $\calG_{2k,3k,\Sigma(1,\ldots,1)}^0(\vec{C})$, the subgraph $\Gamma_A$ must have bivalent or univalent black vertices or none of them. If $|A|\geq 3$ and $\Gamma_A$ has a bivalent black vertex and if $\dim\calM_{\Gamma_A}^\loc(-\mathrm{grad}\,\vec{f}_A)=0$, then we have $\#\calM_{\Gamma_A}^\loc(-\mathrm{grad}\,\vec{f}_A)+\#\calM_{\Gamma_A'}^\loc(-\mathrm{grad}\,\vec{f}_A)=0$ by Lemma~\ref{lem:symmetry}. Hence by taking the sum over $\sigma$ and $\tau$, the contributions of $\Gamma_A$ with bivalent vertex cancel with each other. 
If $|A|\geq 3$ and if $\Gamma_A$ has a univalent black vertex, then $\calM_{\Gamma_A}^\loc(-\mathrm{grad}\,\vec{f}_A)\cong \emptyset\times\R=\emptyset$ by Lemma~\ref{lem:dilation}. 

If $E(\Gamma_A)=\emptyset$, then $\Gamma/\Gamma_A$ has a vertex of valence $\geq 6$. In this case, one can see by Proposition~\ref{prop:M_mfd} that $\bcalM_{\Gamma/\Gamma_A}(\vec{f}\setminus\vec{f}_A)=\bcalM_{\Gamma/\Gamma_A}(\vec{f})=\emptyset$ if $\vec{f}$ is generic. 

Finally, we must check that there are no contribution of $\partial_{A\cup\{\infty\}}\bConf_{2k}(M)$ for generic $\vec{f}$. This has been checked in \cite[Lemma~6.6, 6.7]{Sh}. We outline the proof with our notations. We may identify the interior of $\partial_{A\cup\{\infty\}}\bConf_{2k}(M)$ with $\Conf_{2k-j}(M)\times \Conf_j^\infty(\R^3)$, where
\[ \Conf_j^\infty(\R^3)=\Bigl\{(y_1,\ldots,y_j)\in\Conf_j(\R^3);\sum_{\ell=1}^j\|y_\ell\|^2=1\Bigr\}. \]
Let $f_j^\infty:\R^3\to \R$ ($j=1,2,\ldots,3k$) be the linear map such that $\varphi_\infty^*f_j^\infty$ agrees with $f_j$ near $\infty_M$. Let $\vec{f}^\infty=(f_1^\infty,\ldots,f_{3k}^\infty)$ and let $B=V(\Gamma)\setminus A$. Suppose that $E(\Gamma/\Gamma_B)=\Comp(\Gamma/\Gamma_B)$. Let $\calM_{\Gamma/\Gamma_B}^\infty(\vec{f}^\infty\setminus\vec{f}_B^\infty)$ be the space of linear graphs $(\Gamma,\Gamma_B)\to (\R^3,\{0\})$ modulo the dilation of $\R^3$ whose edge not in $E(\Gamma_B)$ labeled $\ell$ follows the negative gradient of $f_\ell^\infty$. Then (the interior of) the face of $\partial\bcalM_\Gamma(\vec{f})$ coming from $\partial_{A\cup\{\infty\}}\bConf_{2k}(M)$ is diffeomorphic to $\calM_{\Gamma_B}(\vec{f}_B)\times \calM_{\Gamma/\Gamma_B}^\infty(\vec{f}^\infty\setminus\vec{f}_B^\infty)$.
The configuration space $\Conf_j^\infty(\R^3)$ is $(3j-1)$-dimensional. If the number of edges in $E(\Gamma)$ that intersect both $V(\Gamma_A)$ and $V(\Gamma_B)$ is $m$, then the codimension of $\calM_{\Gamma/\Gamma_B}^\infty(\vec{f}^\infty\setminus\vec{f}_B^\infty)$ is $2\times \frac{3j+m}{2}=3j+m$. Since $m$ is non-negative, the codimension exceeds $\dim \Conf_j^\infty(\R^3)=3j-1$ and the moduli space $\calM_{\Gamma/\Gamma_B}^\infty(\vec{f}^\infty\setminus\vec{f}_B^\infty)$ must be empty. Hence there are no face of $\partial \bcalM_\Gamma(\vec{f})$ in $\partial_{A\cup\{\infty\}}\bConf_{2k}(M)$. 

These together with Proposition~\ref{prop:dbM} imply the proposition.
\end{proof}

%%%%%%%%%%%%%%%%%%%%%%%%%%%%%%
\subsection{Independence of combinatorial propagator}

Let $\partial^{(i)}(x,y)$ denote the graph \fig{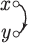} where $x,y\in P_*^{(i)}$. Let $\Gamma$ be a $\vec{C}$-colored graph having a separated edge $\beta(i)\in \Se(\Gamma)$ such that basis elements $x$ and $y\in P_*^{(i)}$ attached on the input and output white vertex, respectively. To specify that $\Gamma$ has such an edge, we will write $\Gamma=\Gamma(x,y)_i$. This notation allows us to express the graph obtained from $\Gamma$ by replacing $x$ and $y$ with $x'$ and $y'$ respectively, as $\Gamma(x',y')_i$. We will write $\Gamma(\emptyset,\emptyset)_i$ the graph obtained from $\Gamma$ by replacing the separated edge $\beta(i)$ with a compact edge. We denote by $\partial^{(i)}(p_i,r_i)*\Gamma(r_i,q_i)_i$ or $\Gamma(p_i,r_i)*\partial^{(i)}(r_i,q_i)$ the composition (one point union) of two graphs at the univalent vertices on which $r_i$ is attached, and $\partial_{p_ir_i}^{(i)}\in \Z$ is the coefficient in $\partial^{(i)}p_i=\sum_{r_i}\partial_{p_ir_i}^{(i)}r_i$.

\begin{proof}[Proof of Lemma~\ref{lem:indep_g}]
We prove the assertion for $\vec{g}=(g,g^{(2)},\ldots,g^{(m)})$ and $\vec{g}'=(g',g^{(2)},\ldots,g^{(m)})$, where $g$ and $g'$ are two combinatorial propagators for $(C_*^{(1)},\partial^{(1)})$. As mentioned in \S\ref{s:acyclic_cpx}, there exists an endomorphism $h\in\End_2(C_*^{(1)})$ such that $\partial^{(1)}h-h\partial^{(1)}=g-g'$. Then the difference $\Tr_{g,\ldots}(\widetilde{\gamma}_{2k,3k})-\Tr_{g',\ldots}(\widetilde{\gamma}_{2k,3k})$ equals
\begin{equation}\label{eq:Tr-Tr}
\begin{split}
	&\Tr_{g-g',\ldots}
		\Bigl[\sum_{{{\Gamma(p_1,q_1)_1}\atop{i(p_1)=i(q_1)+1}}}\#\calM_{\Gamma(p_1,q_1)_1}(\vec{f})\,\Gamma(p_1,q_1)_1\Bigr]\\
	&=\Tr_{\partial^{(1)}h-h\partial^{(1)},\ldots}
		\Bigl[\sum_{{{\Gamma(p_1,q_1)_1}}}\#\calM_{\Gamma(p_1,q_1)_1}(\vec{f})\,\Gamma(p_1,q_1)_1\Bigr]\\
	&=\Tr_{h,\ldots}
		\Bigl[\sum_{{{\Gamma(p_1,q_1)_1}}}\sum_{{{x_1\in P^{(1)}_*}\atop{i(x_1)=i(p_1)+1}}}
		\partial_{x_1p_1}^{(1)}\cdot\#\calM_{\Gamma(p_1,q_1)_1}(\vec{f})\,\Gamma(x_1,q_1)_1\\
	&\hspace{11mm}-\sum_{{{\Gamma(p_1,q_1)_1}}}\sum_{{{y_1\in P^{(1)}_*}\atop{i(y_1)=i(q_1)-1}}}
		\partial_{q_1y_1}^{(1)}\cdot\#\calM_{\Gamma(p_1,q_1)_1}(\vec{f})\,\Gamma(p_1,y_1)_1\Bigr]\\
	&=\Tr_{h,\ldots}
		\Bigl[\sum_{{{\Gamma(p_1,q_1)_1}}}\sum_{{{x_1}}}
		\#\calM_{\partial^{(1)}(x_1,p_1)*\Gamma(p_1,q_1)_1}(\vec{f})\,\Gamma(x_1,q_1)_1\\
	&\hspace{11mm}-\sum_{{{\Gamma(p_1,q_1)_1}}}\sum_{{{y_1}}}
		\#\calM_{\Gamma(p_1,q_1)_1*\partial^{(1)}(q_1,y_1)}(\vec{f})\,\Gamma(p_1,y_1)_1
		\Bigr].
\end{split}
\end{equation}
We show that (\ref{eq:Tr-Tr}) vanishes. We write $\vec{C}[j]=(C_*^{(1)},\ldots,\widehat{C}_*^{(j)},\ldots,C_*^{(m)})$ and $\vec{\eta}[j]=(\eta_1,\ldots,\widehat{\eta}_j,\ldots,\eta_m)$ for simplicity. If we define $d^*:\bigoplus_{j=1}^m\calG_{n-1,m-1,\vec{\eta}[j]}(\vec{C}[j])\to \calG_{n,m,\vec{\eta}}(\vec{C})$ by the coefficient of $1\otimes\Gamma$ in $\sum_{\Gamma'\in\calG_{n,m,\vec{\eta}}^0(\vec{C})}\Gamma'\otimes d\Gamma'$, then $\mathrm{Im}\,d^*$ is the span of the IHX-relation. 

Let $p_1',q_1'\in P^{(1)}_*$ be such that $i(p_1')=\ell$ and $i(q_1')=\ell-2$. By Proposition~\ref{prop:dM=Md}, the following expression vanishes:
\[ \begin{split}
	&\Tr_{h,\ldots}\Bigl[
		\kern-3mm\asum{\Gamma(p_1',q_1')_1\in}{\calG_{n,m,(2,1,\ldots,1)}^0(\vec{C})}
		\kern-6mm\#\calM_{(d+d')\Gamma(p_1',q_1')_1}(\vec{f})\,\Gamma(p_1',q_1')_1 \Bigr]\\
	&=\Tr_{h,\ldots}\Bigl[
		\kern-10mm\asum{\Gamma'(p_1',q_1')_1\in}{\prod_j\calG_{n-1,m-1,(2,1,\ldots,1)[j]}^0(\vec{C}[j])}
		\kern-12mm\#\calM_{\Gamma'(p_1',q_1')_1}(\vec{f})\,d^*\Gamma'(p_1',q_1')_1+
		\kern-5mm\asum{\Gamma(p_1',q_1')_1\in}{\calG_{n,m,(2,1,\ldots,1)}^0(\vec{C})}
		\kern-6mm\#\calM_{d'\Gamma(p_1',q_1')_1}(\vec{f})\,\Gamma(p_1',q_1')_1\Bigr]\\
	&=\Tr_{h,\ldots}\Bigl[
		\sum_{\Gamma(p_1',q_1')_1}\Bigl(\sum_{r_1'\in P^{(1)}_{\ell-1}}
                (-1)^{\ell-1}\#\calM_{\partial^{(1)}(p_1',r_1')*\Gamma(r_1',q_1')_1}(\vec{f})\\
        &\hspace{37mm}+(-1)^{\ell}\#\calM_{\Gamma(p_1',r_1')_1*\partial^{(1)}(r_1',q_1')}(\vec{f})
                \Bigr)\,\Gamma(p_1',q_1')_1\\
	&\hspace{11mm}+
		\sum_{\Gamma(p_1',q_1')_1}\asum{i\neq 1}{i\notin \Comp(\Gamma(p_1',q_1')_1)}\sum_{j=1}^3\asum{p_i\in P^{(i)}_j}{q_i\in P^{(i)}_{j-1}}\Bigl(\sum_{r_i\in P^{(i)}_{j-1}}(-1)^{j}\#\calM_{\partial^{(i)}(p_i,r_i)*\Gamma(p_1',q_1')_1(r_i,q_i)_i}(\vec{f})\\
        &\hspace{25mm}+\sum_{r_i\in P^{(i)}_j}(-1)^{j+1}\#\calM_{\Gamma(p_1',q_1')_1(p_i,r_i)_i*\partial^{(i)}(r_i,q_i)}(\vec{f})
		\Bigr)\,\Gamma(p_1',q_1')_1(p_i,q_i)_{i}\\
	&\hspace{11mm}+
		\sum_{\Gamma(p_1',q_1')_1}
		\asum{e\in\Comp(\Gamma(p_1',q_1')_1)}{e\neq 1}
                \#\calM_{d_e'\Gamma(p_1',q_1')_1}(\vec{f})\,\Gamma(p_1',q_1')_1\Bigr].
\end{split}\]
In this expression, the first line agrees with $(-1)^{\ell-1}$ times the part of (\ref{eq:Tr-Tr}) of $i(x_1)=i(y_1)=\ell-1$. The vanishing of the last two lines can be shown as follows: for each $p_1',q_1'\in P_*^{(1)}$ with $i(p_1')=i(q_1')+2$ and for $i\neq 1$, we have
\[\begin{split}
	&\Tr_{h,\ldots,g^{(i)},\ldots}\Bigl[\,\,\sum_{j=1}^3
		\asum{p_i\in P^{(i)}_j}{q_i\in P^{(i)}_{j-1}}
                \Bigl(\sum_{r_i\in P^{(i)}_{j-1}}(-1)^{j}\#\calM_{\partial^{(i)}(p_i,r_i)*\Gamma(p_1',q_1')_1(r_i,q_i)_i}(\vec{f})\\
        &\hspace{25mm}+\sum_{r_i\in P^{(i)}_j}(-1)^{j+1}\#\calM_{\Gamma(p_1',q_1')_1(p_i,r_i)_i*\partial^{(i)}(r_i,q_i)}(\vec{f})
		\Bigr)\,\Gamma(p_1',q_1')_1(p_i,q_i)_{i}\Bigr]\\
%%%%%%%%
	&=\Tr_{h,\ldots,\partial^{(i)}g^{(i)}+g^{(i)}\partial^{(i)},\ldots}\Bigl[\,\,\sum_{j=0}^3
		(-1)^{j+1}\kern-3mm\sum_{p_i',q_i'\in P^{(i)}_j}
		\#\calM_{\Gamma(p_1',q_1')_1(p_i',q_i')_i}(\vec{f})\,\Gamma(p_1',q_1')_1(p_i',q_i')_{i}\Bigr]\\
%%%%%%%%
	&=\Tr_{h,\ldots,\mathrm{id},\ldots}\Bigl[\,\,\sum_{j=0}^3
                (-1)^{j+1}\sum_{r_i'\in P^{(i)}_j}
                \#\calM_{\Gamma(p_1',q_1')_1(r_i',r_i')_i}(\vec{f})\,\Gamma(p_1',q_1')_1(r_i',r_i')_{i}\Bigr].
\end{split}\]
This cancels with the corresponding term in 
\[\Tr_{h,\ldots}\Bigl[
		\asum{e\in\Comp(\Gamma(p_1',q_1')_1)}{e\neq 1}
                \#\calM_{d_e'\Gamma(p_1',q_1')_1}(\vec{f})\,\Gamma(p_1',q_1')_1
\Bigr].\]
This completes the proof.
\end{proof}
%\clearpage

%%%%%%%%%%%%%%%%%%%%%%%%%%%%%%
%%%%%%%%%%%%%%%%%%%%%%%%%%%%%%
%%%%%%%%%%%%%%%%%%%%%%%%%%%%%%
\mysection{Independence of 4-cobordism and sections on it}{s:indep_4-cob}

%%%%%%%%%%%%%%%%%%%%%%%%%%%%%%%%%%%%%%%%%%%%
\subsection{Spin cobordism invariance of $Z_{2k,3k}^\mathrm{anomaly}$}\label{ss:spincob}

In the rest of this section we assume that $M$ is a $\Z$-homology 3-sphere. We say that two compact spin 4-manifolds $W$ and $W'$ with $\partial W=\partial W'=\bM$ are {\it relatively spin cobordant} if there is a compact spin 5-manifold $V$ with corners with $\partial V=(-W)\cup_{\partial}([0,1]\times \bM)\cup_\partial W'$ whose spin structure is an extension of those of $-W$ and $W'$. 

\begin{Prop}\label{prop:spin-cob_inv}
Let $W$ and $W'$ be two compact spin 4-manifolds with $\partial W=\partial W'=\bM$ and $\chi(W)=\chi(W')=1$ as in Lemma~\ref{lem:KM} (1). If $W$ and $W'$ are relatively spin cobordant, then the following assertions hold.
\begin{enumerate}
\item There exists a framing $\tau_M$ of $TM_0$ such that $\tau_M'$ can be extended to 4-framings of both $TW$ and $TW'$. Hence one can find sequences of GM sections $\vec{\gamma}_W\in\Gamma(T^vW)^{3k}$ and $\vec{\gamma}_{W'}\in\Gamma(T^vW')^{3k}$ with $\vec{\gamma}_W|_{M-U_\infty'}=\vec{\gamma}_{W'}|_{M-U_\infty'}=-\mathrm{grad}\,\vec{f}$. 
\item For any such extensions $\vec{\gamma}_W$ and $\vec{\gamma}_{W'}$, which are generic in the sense of Proposition~\ref{prop:anomaly_welldefined} (1), we have
\begin{equation}\label{eq:spin-cob_inv}
 Z_{2k,3k}^\mathrm{anomaly}(\vec{\gamma}_W)
	=Z_{2k,3k}^\mathrm{anomaly}(\vec{\gamma}_{W'}). 
\end{equation}
\end{enumerate}
\end{Prop}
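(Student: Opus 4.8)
The plan is to prove (1) by a characteristic-number computation and (2) by a cobordism argument over the relative spin cobordism $V$, running the same ``a compact oriented $1$-manifold has signed boundary $0$'' mechanism that produced Proposition~\ref{prop:dM=Md}. For (1): by Lemma~\ref{lem:KM}, $\tau_M'$ extends to a $4$-framing of $TW$ iff $p_1(TW;\tau_M')=0$ and $\chi(W)=1$, and likewise for $W'$; since $\chi(W)=\chi(W')=1$ is given, it suffices to find one framing $\tau_M$ with $p_1(TW;\tau_M')=p_1(TW';\tau_M')=0$. By Lemma~\ref{lem:KM2} the integer $p_1(TW;\tau_M')$ is always even and a single fiberwise $SO_3$-change of $\tau_M$ shifts it by $\pm 2$, so choose $\tau_M$ with $p_1(TW;\tau_M')=0$. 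For this $\tau_M$ I would glue the relative Pontrjagin classes of $W$ and $-W'$ along $M$ using $\tau_M'$, obtaining
\[ p_1\bigl(T(W\cup_M(-W'))\bigr)\bigl[W\cup_M(-W')\bigr]=p_1(TW;\tau_M')-p_1(TW';\tau_M'); \]
the left side vanishes because the closed $4$-manifold $W\cup_M(-W')=\partial V$ bounds the spin $5$-manifold $V$, hence has signature zero. So $p_1(TW';\tau_M')=0$ as well, Lemma~\ref{lem:KM} extends $\tau_M'$ over both $TW$ and $TW'$, and the induced sub $3$-framings give $\vec\rho_W,\vec\rho_{W'}$ with common boundary value $\theta_M(j^3\vec f)$, as in \S\ref{ss:correction}. (The relative-spin hypothesis is used here to make the spin structures induced on $M$ from $W$ and $W'$ coincide, so that $\tau_M'$ can be taken compatible with both.)

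For (2), the first step is to extend the boundary data over $V$: choose---possibly after modifying $V$ within its relative spin cobordism class---a rank-$3$ subbundle $T^vV\subset TV$ with a vertical $3$-framing $\tau_V^v$ restricting to $(T^vW,\tau_W^v)$ and $(T^vW',\tau_{W'}^v)$ on the two ends; then weak contractibility of the framed version of $\calH$ (Igusa) extends $\theta(j^3\vec f)$ over $V$ to a section $\vec\rho_V$ of $\calH(T^vV)^{3k}$ restricting to $\vec\rho_W,\vec\rho_{W'}$ on $\partial V$, and shows that any two generic such extensions are homotopic rel $\partial V$ (which already gives independence of the generic choice on a single end). Next I would analyse $\calM_\Gamma^\loc(-\mathrm{grad}_0 j^1\vec\rho_V)$, formed inside the trivial $\bConf_{2k}^\loc(\R^3)$-bundle associated to $T^vV$ over $\bigcap_j(V\setminus\Sigma(\rho_j))$. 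For generic $\vec\rho_V$ it should be a compact $1$-manifold with corners---compactness near the singular sets $\Sigma(\rho_j)$ being forced as in the proof of Lemma~\ref{lem:M_finite}, now using the Morse or birth/death local form of $\rho_j$ granted by the $\calH$-condition.

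Its codimension-one faces are then of three kinds. The two faces over $\partial V$ are, by construction, $\calM_\Gamma^\loc(\vec\rho_W)$ and $-\calM_\Gamma^\loc(\vec\rho_{W'})$. The ``hidden'' faces, where a subgraph $\Gamma_A$ with $|A|\ge 3$ collapses to a point, vanish exactly as in the discussion preceding Proposition~\ref{prop:dM=Md}: any $\Gamma_A$ contributing in codimension one must have a univalent or bivalent black vertex (a $3$-regular $\Gamma_A$ with all half-edges internal would give $\dim\calM_{\Gamma_A}^\loc<0$), and is killed by the dilation lemma (Lemma~\ref{lem:dilation}, univalent case) or by the orientation-reversing involution of Lemma~\ref{lem:symmetry} (bivalent case). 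The ``principal'' faces, where a single compact edge $e$ collapses, assemble---by the orientation analysis of \S\ref{ss:ori_MG}, and with only $d$ and no $d'$ occurring since $\Gamma$ has only compact edges and the infinitesimal model has no critical points---into $\sum_{e\in\Comp(\Gamma)}\#\calM_{\Gamma/e}^\loc$, abbreviated $\#\calM_{d\Gamma}^\loc$, where each term uses $\vec\rho_V$ with the $e$-th component omitted. Hence for every $\Gamma\in\calG_{2k,3k}$,
\[ 0=\#\partial\calM_\Gamma^\loc(\vec\rho_V)=\#\calM_\Gamma^\loc(\vec\rho_W)-\#\calM_\Gamma^\loc(\vec\rho_{W'})+\#\calM_{d\Gamma}^\loc, \]
and summing $[\Gamma]$ times this over $\calG_{2k,3k}$ and using $\sum_\Gamma[\Gamma]\otimes d\Gamma=0$ in $\calA_{2k,3k}\otimes\calG_{2k-1,3k-1}$ (the $(*)$-relation defining $\calA_{2k,3k}$, equivalently the $(1\otimes d)$-cycle property of $\widetilde{\gamma}_{2k,3k}$) kills the last term, leaving $Z^{\mathrm{anomaly}}_{2k,3k}(\vec\rho_W)=Z^{\mathrm{anomaly}}_{2k,3k}(\vec\rho_{W'})$.

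The step I expect to be the main obstacle is the construction of $(T^vV,\tau_V^v)$ over the $5$-dimensional $V$: the naive obstructions to extending the rank-$3$ subbundle and then its framing sit in $H^3(V,\partial V;\Z/2)$ and $H^5(V,\partial V;\Z)$, and showing they can be killed---using $w_2(TV)=0$ together with the freedom to alter $V$ by connected sums with parallelizable spin pieces, in the spirit of the proof of \cite[Theorem~2.6]{KM}---is where the relative-spin and $\chi=1$ hypotheses genuinely enter. A secondary delicate point is the compactness of $\calM_\Gamma^\loc(\vec\rho_V)$ near $\Sigma(\rho_j)$, which over a $5$-manifold is dimensionally tighter than in Lemma~\ref{lem:M_finite} and needs the explicit local form of $-\mathrm{grad}_0 j^1\rho_j$ near its zero set.
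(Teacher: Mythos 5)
Your part (1) is essentially the paper's proof: glue $W$ and $-W'$, note that the closed spin $4$-manifold bounds so its signature and hence its Pontrjagin number vanish, conclude $p_1(TW;\tau_M')=p_1(TW';\tau_M')$, and then use Lemmas~\ref{lem:KM2} and~\ref{lem:KM} to adjust $\tau_M$ and extend. Part (2) follows the right overall plan (frame a relative spin cobordism $V$, possibly after modification, extend $\vec\rho$ over it, and run a boundary-counting argument for $\calM_\Gamma^\loc$), and your treatment of the hidden faces via Lemmas~\ref{lem:symmetry} and~\ref{lem:dilation} and of the principal faces via the $(*)$-relation matches the paper.

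However, there is a genuine gap in part (2): your claim that $\calM_\Gamma^\loc(\vec\rho_V)$ is compact ``as in Lemma~\ref{lem:M_finite}'' is false over the $5$-manifold $V$, and the flaw is exactly the ``secondary delicate point'' you flag at the end -- except it is not secondary, it is the heart of the proof. In Lemma~\ref{lem:M_finite} the $2$-dimensional $\calM_{\Gamma'}^\loc(\vec\gamma\setminus\{\gamma_j\})$ is made disjoint from the $1$-dimensional $\Sigma(\gamma_j)$ inside the $4$-manifold $W$ by a dimension count. Over $V$ the analogous moduli space $\calM_{\Gamma'}^\loc$ is $3$-dimensional in the $(6k+1)$-dimensional $V\times\bConf_{2k}^\loc(\R^3)$, while $\pi'^{-1}(\Sigma(\gamma_j))$ has codimension $3$, so for generic $\vec\gamma$ they intersect transversally in finitely many points and $\calM_\Gamma^\loc(\vec\rho_V)$ has non-compact ends running into $\Sigma(\gamma_j)$. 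Your list of codimension-one faces therefore omits a fourth kind. The paper deals with it by blowing up along $U_x\cap\Sigma(\gamma_j)$ near each such intersection point $x$, thereby producing a compact $1$-manifold $\bcalM_\Gamma^\loc(\vec\gamma;U_x)$ whose new boundary lies in the normal $2$-sphere $S_x^2$ of $\Sigma(\gamma_j)$. It then shows, using the local Morse model at $\pi'(x)$, that the two boundary points contributed by $\Gamma$ and by $\Gamma^*$ (the same graph with the $j$-th edge reversed) are antipodal in $S_x^2$ with opposite inward normals, so that $[\Gamma]\cdot\#\partial\bcalM_\Gamma^\loc(\vec\gamma;U_x)+[\Gamma^*]\cdot\#\partial\bcalM_{\Gamma^*}^\loc(\vec\gamma;U_x)=0$ in $\calA_{2k,3k}$ (Figure~\ref{fig:involution_ij}). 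This cancellation is what lets the boundary sum close up; without it the identity $\sum_\Gamma[\Gamma]\cdot\#\partial\bcalM_\Gamma^\loc(\vec\gamma)=0$ does not reduce to $Z^{\mathrm{anomaly}}(\vec\rho_X)-\sum_\Gamma[\Gamma]\cdot\#\calM_{d\Gamma}^\loc(\vec\gamma)=0$, and the proof does not go through. (A smaller mismatch: the paper handles the framing obstruction over $V$ by stabilizing to $n\oplus\tau_X$ on $\ve^1\oplus TX$ and extending to a $5$-framing on $TV$, with obstructions in $H^4(V,\partial V;\pi_3(SO_5))\cong H_1(V;\Z)$, killed by surgery along embedded circles using spinness, and $H^5(V,\partial V;\pi_4(SO_5))\cong\Z_2$, killed by replacing $V$ with $V\#V$ and using additivity. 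Your obstruction groups $H^3(V,\partial V;\Z/2)$ and $H^5(V,\partial V;\Z)$ do not match this stabilized computation and would need justification.)
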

\begin{proof}
(1) Put $X=(-W)\cup_{g}([0,1]\times \bM)\cup_{g'}W'$, where the gluing maps $g:-\partial W=-\bM\to \{1\}\times \bM$ and $g':\partial W'=\bM\to \{0\}\times \bM$ are the natural ones. By assumption, we have $[X]=0\in\Omega_4^\mathrm{spin}$. Take a 5-dimensional compact spin manifold $V$ with corners with $\partial V=X$ whose smooth structure near $[0,1]\times \bM$ is isomorphic to that of $[0,1]\times W$. Then $TV$ restricts on the boundary to a vector bundle that is isomorphic to $\ve^1\oplus TX$. 

By the isomorphism $\mathrm{sign}:\Omega_4^\mathrm{spin}\stackrel{\sim}{\to}16\Z$ and by Hirzebruch's signature theorem $\mathrm{sign}\,X=\frac{1}{3}\langle p_1(TX),[X]\rangle$ for $X$ closed, it follows that
\[ -p_1(TW;\tau_M')+p_1(TW';\tau_M')=\langle p_1(\ve^1\oplus TX),[X]\rangle=3\,\mathrm{sign}\,X=0 \]
for any choice of $\tau_M$. By choosing $\tau_M$ suitably, we may assume that $p_1(TW;\tau_M')=p_1(TW';\tau_M')=0$ by Lemma~\ref{lem:KM} (2). By Lemma~\ref{lem:KM} (2), such a 4-framing $\tau_M'$ extends to 4-framings on both $W$ and $W'$. 

(2) Since the 4-framings $\tau_W$ and $\tau_W'$ obtained in (1) above are extensions of $\tau_M'$, they can be trivially extended to a sub 4-framing $\tau_X$ of $\ve^1\oplus TX$ by the product structure of $[0,1]\times \bM$. The sub 3-framing of $\tau_X$ whose restriction to $\{1\}\times(M- U_\infty')$ agrees with $\tau_M$ spans a rank 3 subbundle $T^vX$ of $\ve^1\oplus TX$. Then there is a piecewise smooth GM sections $\vec{\gamma}_X$ of $T^vX$, which is a gluing of $\vec{\gamma}_W$, $\vec{\gamma}_{W'}$ and $\mathrm{pr}^{-1}\vec{\gamma}_W|_{M-U_\infty'}\in\Gamma(T^v([0,1]\times (M-U_\infty')))^{3k}$ together at the boundary. By definition of $Z_{2k,3k}^\mathrm{anomaly}$, 
\begin{equation}\label{eq:Z(rho)}
 Z_{2k,3k}^\mathrm{anomaly}(\vec{\gamma}_X)=
	-Z_{2k,3k}^\mathrm{anomaly}(\vec{\gamma}_W)
	+Z_{2k,3k}^\mathrm{anomaly}(\vec{\gamma}_{W'}). 
\end{equation}
Then Lemma~\ref{lem:Z(rho)=0} below completes the proof.
\end{proof}

\begin{Lem}\label{lem:Z(rho)=0}
Let $X$ and $\vec{\gamma}_X$ be as in the proof of Proposition~\ref{prop:spin-cob_inv} (2). Then we have $Z_{2k,3k}^\mathrm{anomaly}(\vec{\gamma}_X)=0$.
\end{Lem}

We use the following lemma in the proof of Lemma~\ref{lem:Z(rho)=0}.
\begin{Lem}\label{lem:Vpara}
Let $X$ be as in the proof of Proposition~\ref{prop:spin-cob_inv} (2) and $\tau_X$ be as above. Then $X\tcoprod X$ bounds a compact connected parallelizable 5-manifold $V$ on which the stabilization of the 4-framing $\tau_X\tcoprod \tau_X$ extends as a 5-framing.
\end{Lem}
\begin{proof}
Since $X$ is spin null-cobordant, there exists a compact connected spin 5-manifold $V$ with corners with $\partial V=X$. We first consider the obstruction to extending the stable framing $n\oplus\tau_X$ of $\ve^1\oplus TX$ to a 5-framing on $TV$, where $n$ is the unit vector field normal to the span of $\tau_X$ with respect to a metric of $V$. 

Since $V$ is spin and since $\pi_2(SO_5)=0$ and $\pi_3(SO_5)\cong\Z$, the first obstruction $\mathfrak{o}_1(V;n\oplus\tau_X)$ to the extension lies in the group $H^4(V,\partial V;\pi_3(SO_5))\cong H_1(V;\Z)$. We shall see that we may assume that this group is trivial after changing $V$ by surgery. It is easy to see that any class in $H_1(V;\Z)$ can be realized by an embedding $f:S^1\to \mathrm{Int}\,V$. Since $V$ is spin, the normal bundle $N_f$ of the image of $f$ is trivial. By a surgery along a framed embedding $(f,\tau_f)$, i.e., attaching of a 6-dimensional 2-handle along a tubular neighborhood of $\mathrm{Im}\,f$ through the trivialization, the homology class $[f]$ can be eliminated. Moreover, by replacing the 4-framing $\tau_f$ suitably, we may assume that the resulting 5-manifold of the surgery is spin since $\pi_1(SO_4)\to \pi_1(SO_5)$ and $\pi_1(SO_4)\to \pi_1 (SO_6)$ are isomorphisms. Namely, choose a 5-framing $\tau_2$ on an open neighborhood $U$ of the 2-skeleton of a CW structure on $V$. We may assume after an isotopy that the image of $f$ is included in $U$. Since $\pi_1(SO_5,SO_4)=0$, $\tau_2$ can be deformed to a 5-framing $\tau_2'$ whose restriction to $\mathrm{Im}\,f$ consists of tangent vectors of $f$ and a normal 4-framing of $\mathrm{Im}\,f$. The obstruction to extending a stabilization of $\tau_2'$ to a 6-framing on the 2-handle $D^2\times D^4$ lies in $H^2(D^2,\partial D^2;\pi_1(SO_6))\cong \Z_2$, which can be removed by a $\pi_1(SO_4)$-twist of the attaching map. Since $\pi_2(SO_6,SO_5)=0$, the 6-framing on the 2-handle can be modified so that the restriction to a 2-skeleton of the boundary is a stabilization of a 5-framing. Now the 2-skeleton of the result of the surgery is framed. Hence the result of the surgery is spin again.
 
Now we assume $H_1(V;\Z)=0$ by doing surgeries as above if necessary. Then the next obstruction $\mathfrak{o}_2(V;n\oplus\tau_X)$ for the extension lies in the group $H^5(V,\partial V;\pi_4(SO_5))\cong \Z_2$ since $\pi_4(SO_5)=\Z_2$. To eliminate $\mathfrak{o}_2(V;n\oplus\tau_X)$, we consider the connected sum $V'=V\# V$ taken between the interiors. Then one can check that the obstruction $\mathfrak{o}_2(V';n\oplus\tau_X\tcoprod n\oplus\tau_X)\in H^5(V',\partial V';\pi_4(SO_5))$ vanishes in any case. This completes the proof.
\end{proof}

\begin{proof}[Proof of Lemma~\ref{lem:Z(rho)=0}]
We prove $Z_{2k,3k}^\mathrm{anomaly}(\vec{\gamma}_X)=0$ by constructing cobordisms of moduli spaces. Let $V$ be a compact parallelizable 5-manifold with $\partial V=X\tcoprod X$ as in Lemma~\ref{lem:Vpara}. Roughly, we will construct 1-dimensional moduli spaces $\calM_\Gamma^\loc(\vec{\gamma})$ in a fiber bundle over $V$ for each 3-valent graph $\Gamma$ and we will see that
\[ 2Z_{2k,3k}^\mathrm{anomaly}(\vec{\gamma}_X)=\sum_{\Gamma\in\calG_{2k,3k}}\#\partial \bcalM_\Gamma^\loc(\vec{\gamma})\,[\Gamma]=0. \]
Since the replacement of $X$ with $X\tcoprod X$ and $V$ with $V\# V$ changes $Z_{2k,3k}^\mathrm{anomaly}(\vec{\gamma}_X)$ just by a multiple of $2$, it is enough for our purpose to assume that the obstruction $\mathfrak{o}_2(V;n\oplus \tau_X)$ vanishes in advance. Because of this we assume for simplicity that we have a framed 5-manifold $(V,\tau_V)$ that extends $(X,n\oplus \tau_X)$. 

We shall now define the moduli space $\calM_\Gamma^\loc$ extended over $V$. Let $\Gamma$ be a labeled graph in $\calG_{2k,3k}^0$. Since we assume that the 5-framing $\tau_V$ extends $n\oplus\tau_X$, we have a sub 3-framing of $\tau_V$ that is an extension of the 3-framing of $T^vX$ and it spans a rank 3 subbundle $T^vV$ of $TV$. Moreover by Lemma~\ref{lem:GM_section} there is a GM extension $\vec{\gamma}=(\gamma_1,\ldots,\gamma_{3k})\in\Gamma(T^vV)^{3k}$ of $\vec{\gamma}_X$. 
Since for each $j$, $\Sigma(\gamma_j)$ is a compact 2-submanifold of $V$, we may arrange that $\Sigma(\gamma_j)$'s are disjoint from each other by a general position argument. Then we consider the blow-up $q:\overline{V}\to V$, where
\[ \overline{V}=B\ell(V,\tcoprod_{\ell=1}^{3k}\Sigma(\gamma_\ell)). \]
We identify $\mathrm{Int}\,\overline{V}$ with $V- \tcoprod_\ell \Sigma(\gamma_\ell)$ by $q$. Consider the pullback bundle $q^*TV$ over $\overline{V}$ and we set $T^v\overline{V}=q^*T^vV$. Note that $T^v\overline{V}$ is not a subbundle of $T\overline{V}$. We identify the total space of the associated $\bConf_{2k}^\loc(\R^3)$-bundle to $T^v\overline{V}$ with $\overline{V}\times\bConf_{2k}^\loc(\R^3)$ via the trivialization $\tau_V$. The nowhere zero sections $\gamma_1,\ldots,\gamma_{3k}$ of $T^v(V-\tcoprod_\ell\Sigma(\gamma_\ell))$ extends smoothly to nowhere zero sections of $T^v\overline{V}$. We denote by $\overline{\Theta}_\ell(\gamma_\ell)$ the closure of $\Theta_\ell(\gamma_\ell)$ in $\overline{V}\times\bConf_{2k}^\loc(\R^3)$, which is a compact oriented submanifold with boundary. Then we may define the compact moduli space
\begin{equation}\label{eq:bMlocal}
 \bcalM_\Gamma^\loc(\vec{\gamma})=\bigcap_{\ell=1}^{3k}\overline{\Theta}_{\ell}(\gamma_\ell)\subset \overline{V}\times \bConf_{2k}^\loc(\R^3).
\end{equation}
\par\medskip
%%%%%%%%%%%%%%%%%%%
\noindent{\bf Claim~1.}\quad {\it After a $C^0$-small perturbation of $\vec{\gamma}$ in $\Gamma(T^vV)^{3k}$ without affecting the general positions for $\Sigma(\gamma_j)$'s, we may arrange that $\bcalM_\Gamma^\loc(\vec{\gamma})$ is a compact smooth 1-submanifold of $\overline{V}\times\bConf_{2k}^\loc(\R^3)$ and that the 1-manifold $\bcalM_\Gamma^\loc(\vec{\gamma})$ is transversal to $\partial\overline{V}\times\bConf_{2k}^\loc(\R^3)$. \par\medskip

\noindent{Proof.}} The restriction for the singularities of GM sections $\gamma_j$ given in \S\ref{ss:H-bundle} is used here. Let $\Gamma'$ be the graph obtained from $\Gamma$ by replacing $E(\Gamma)$ with $E(\Gamma)-\{\beta(j)\}$, let $V_j=V- \tcoprod_{\ell\neq j}\Sigma(\gamma_\ell)$ and $\overline{V}_j=B\ell(V,\tcoprod_{\ell\neq j}\Sigma(\gamma_\ell))$. Let $\pi':\overline{V}\times \bConf_{2k}^\loc(\R^3)\to \overline{V}$ and $\pi_j':\overline{V}_j\times\bConf_{2k}^\loc(\R^3)\to \overline{V}_j$ be the projections. Then as mentioned in the proof of Proposition~\ref{prop:anomaly_welldefined} (1), $\calM_{\Gamma'}^\loc(\vec{\gamma}\setminus\{\gamma_j\})$ is a submanifold of $V_j\times\bConf_{2k}^\loc(\R^3)$ of codimension $6k-2$, i.e., 3-dimensional, and we may define its compactification $\bcalM_{\Gamma'}^\loc(\vec{\gamma}\setminus\{\gamma_j\})$ as the closure of $\calM_{\Gamma'}^\loc(\vec{\gamma}\setminus\{\gamma_j\})$ in $\overline{V}_j\times\bConf_{2k}^\loc(\R^3)$. We denote by $\bcalM_{\Gamma'}^\loc(\vec{\gamma}\setminus\{\gamma_j\};\overline{V})$ the closure of $\calM_{\Gamma'}^\loc(\vec{\gamma}\setminus\{\gamma_j\})\cap\pi'^{-1}(V-\tcoprod_{\ell=1}^{3k}\Sigma(\gamma_\ell))$ in $\overline{V}\times \bConf_{2k}^\loc(\R^3)$. Then we have 
\[ \bcalM_\Gamma^\loc(\vec{\gamma})=\overline{\Theta}_j(\gamma_j)\cap \bcalM_{\Gamma'}^\loc(\vec{\gamma}\setminus\{\gamma_j\};\overline{V})\subset \overline{V}\times \bConf_{2k}^\loc(\R^3).\]
$\bcalM_\Gamma^\loc(\vec{\gamma})$ may have boundary points on $\pi'^{-1}q^{-1}(\Sigma(\gamma_j))$. Such boundary points can not be avoided since the 3-manifold $\bcalM_{\Gamma'}^\loc(\vec{\gamma}\setminus\{\gamma_j\})$ may intersect the codimension 3 submanifold $\pi_j'^{-1}(\Sigma(\gamma_j))$ of $\overline{V}_j\times\bConf_{2k}^\loc(\R^3)$. After a small perturbation of $\gamma_j$ in a small neighborhood of $\Sigma(\gamma_j)$, we may arrange that the intersection of the two submanifolds is transversal and that $\pi_j'(\bcalM_{\Gamma'}^\loc(\vec{\gamma}\setminus\{\gamma_j\}))$ and $\Sigma(\gamma_j)$ are transversal.

We shall give a local description of $\bcalM_{\Gamma'}^\loc(\vec{\gamma}\setminus\{\gamma_j\};\overline{V})$ near the transversal intersection. Take a point $x\in \bcalM_{\Gamma'}^\loc(\vec{\gamma}\setminus\{\gamma_j\})\pitchfork \pi_j'^{-1}(\Sigma(\gamma_j))$ and a small open neighborhood $U_x'$ of $x$ in $\overline{V}_j\times\bConf_{2k}^\loc(\R^3)$ so that $U_x'$ contains exactly one intersection point. Let $U_x=\pi_j'(U_x')$. After a suitable $C^0$-small perturbation of $\vec{\gamma}\setminus\{\gamma_j\}$ in a small neighborhood of $\Sigma(\gamma_j)$, we may arrange that
\begin{enumerate}
\item[(i)] $\pi_j'(x)$ is a Morse singularity of $\gamma_j$ and $U_x\cap \Sigma^2(\gamma_j)=\emptyset$,
\item[(ii)] $\pi_j'(\bcalM_{\Gamma'}^\loc(\vec{\gamma}\setminus\{\gamma_j\}))$ is tangent to $T^v\overline{V}_j$ at $\pi_j'(x)$. (This is possible since $\Sigma(\gamma_j)$ is transversal to both $T^v\overline{V}_j$ and $\pi_j'(\bcalM_{\Gamma'}^\loc(\vec{\gamma}\setminus\{\gamma_j\}))$.)
\end{enumerate}
\begin{figure}
\fig{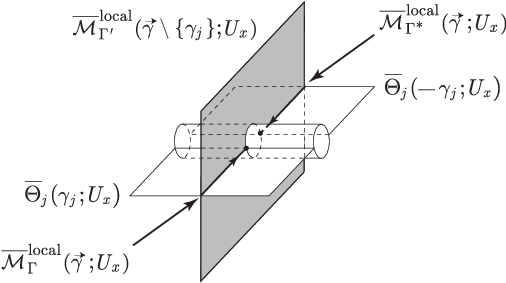}
\caption{The intersection in $\pi_j'^{-1}(B\ell(U_x,U_x\cap\Sigma(\gamma_i)))$}\label{fig:blow_up}
\end{figure}
We consider the blow-up $B\ell(U_x,U_x\cap\Sigma(\gamma_j))$ and let 
\[ \bcalM_{\Gamma'}^\loc(\vec{\gamma}\setminus\{\gamma_j\};U_x)=\overline{\calM_{\Gamma'}^\loc(\vec{\gamma}\setminus\{\gamma_j\})\cap \pi_j'^{-1}(U_x-\Sigma(\gamma_j))}\quad \mbox{(the closure)} \]
in $B\ell(U_x,U_x\cap\Sigma(\gamma_j))\times\bConf_{2k}^\loc(\R^3)$. Since $\bcalM_{\Gamma'}^\loc(\vec{\gamma}\setminus\{\gamma_j\})$ is transversal to $\pi_j'^{-1}(\Sigma(\gamma_j))\cap U_x'$, $\bcalM_{\Gamma'}^\loc(\vec{\gamma}\setminus\{\gamma_j\};U_x)$ is a submanifold of $B\ell(U_x,U_x\cap\Sigma(\gamma_j))\times\bConf_{2k}^\loc(\R^3)$ with boundary that meets $\partial B\ell(U_x,U_x\cap\Sigma(\gamma_j))\times\bConf_{2k}^\loc(\R^3)$ transversally. 

On the other hand, $\Theta_{j}(\gamma_j)\subset (V-\coprod_{\ell=1}^{3k}\Sigma(\gamma_\ell))\times\bConf_{2k}^\loc(\R^3)$ has the closure $\overline{\Theta}_{j}(\gamma_j;U_x)$ in $B\ell(U_x,U_x\cap \Sigma(\gamma_j))\times\bConf_{2k}^\loc(\R^3)$ that is a submanifold with boundary that meets $\partial B\ell(U_x,U_x\cap\Sigma(\gamma_j))\times\bConf_{2k}^\loc(\R^3)$ transversally since $U_x\cap\Sigma(\gamma_j)$ consists only of Morse singularities. By the assumption (ii), the intersection of $\overline{\Theta}_{j}(\gamma_j;U_x)$ and $\bcalM_{\Gamma'}^\loc(\vec{\gamma}\setminus\{\gamma_j\};U_x)$ is transversal even on the boundary and forms a 1-submanifold of $B\ell(U_x,U_x\cap\Sigma(\gamma_j))\times\bConf_{2k}^\loc(\R^3)$ with boundary. Let
\[ \bcalM_{\Gamma}^\loc(\vec{\gamma};U_x)=
	\overline{\Theta}_{j}(\gamma_j;U_x)\pitchfork\bcalM_{\Gamma'}^\loc(\vec{\gamma}\setminus\{\gamma_j\};U_x).\]
See Figure~\ref{fig:blow_up} for a schematic illustration. $\bcalM_\Gamma^\loc(\vec{\gamma};U_x)$ is a local model of $\bcalM_\Gamma^\loc(\vec{\gamma})$. Clearly $\bcalM_\Gamma^\loc(\vec{\gamma};U_x)$ is transversal to $\partial B\ell(U_x,U_x\cap \Sigma(\gamma_j))\times\bConf_{2k}^\loc(\R^3)$. By similar arguments for other intersection points $x$ and for other $j$, we may arrange that $\bcalM_\Gamma^\loc(\vec{\gamma})$ is transversal to the boundary.\qed
\par\medskip
%%%%%%%%%%%%%%%%%%
\noindent{\bf Claim~2.}\quad {\it If $\vec{\gamma}$ is as in Claim~1, then the boundary contribution of $\bcalM_\Gamma^\loc(\vec{\gamma})$ at the `inner' boundary $(\partial\overline{V}-\partial V)\times\bConf_{2k}^\loc(\R^3)$ is cancelled with that of some other graph $\Gamma^*$ by symmetry.\par\medskip
\noindent{Proof.}} By the assumption (ii) in the proof of Claim~1, the boundary of $\pi_j'(\bcalM_{\Gamma}^\loc(\vec{\gamma};U_x))$ lies in the fiber $S^2_x$ of the unit sphere bundle $S(T^vV)$ at $\pi_j'(x)$. Let $\Gamma^*$ denote the graph obtained from $\Gamma$ by reversing the orientation of the edge labeled $j$. Notice that there are individual terms for $\Gamma$ and $\Gamma^*$ in the formula of $Z_{2k,3k}^\mathrm{anomaly}(\vec{\gamma}_X)$ in Definition~\ref{def:Za}. Since $\bcalM_{\Gamma}^\loc(\vec{\gamma};U_x)\tcoprod \bcalM_{\Gamma^*}^\loc(\vec{\gamma};U_x)$ is transversal to $\partial B\ell(U_x,U_x\cap\Sigma(\gamma_j))\times \bConf_{2k}^\loc(\R^3)$ by Claim 1 and since on a neighborhood of $\Sigma(\gamma_j)$ there is a symmetry between the moduli spaces $\bcalM_\Gamma^\loc(\vec{\gamma};U_x)$ and $\bcalM_{\Gamma^*}^\loc(\vec{\gamma};U_x)$ by the assumption (i) and by the symmetry of the standard model around a Morse point, the intersection of $\pi_j'(\bcalM_{\Gamma}^\loc(\vec{\gamma};U_x)\tcoprod\bcalM_{\Gamma^*}^\loc(\vec{\gamma};U_x))$ with $\partial B\ell(U_x,U_x\cap\Sigma(\gamma_j))$ consists of two points in $S_x^2$ that are precisely in an antipodal position. Hence one may see that
\[ \begin{split}
&\#\partial\bcalM_{\Gamma}^\loc(\vec{\gamma};U_x)\,[\Gamma]+\#\partial\bcalM_{\Gamma^*}^\loc(\vec{\gamma};U_x)\,[\Gamma^*]\\
=&\Bigl(\#\partial\bcalM_{\Gamma}^\loc(\vec{\gamma};U_x)-\#\partial\bcalM_{\Gamma^*}^\loc(\vec{\gamma};U_x)\Bigr)[\Gamma]\\
=&\Bigl(\#\partial\bcalM_{\Gamma}^\loc(\vec{\gamma};U_x)-\#\partial\bcalM_{\Gamma}^\loc(\vec{\gamma};U_x)\Bigr)[\Gamma]=0.
\end{split} \]
Here, the second equality follows by the facts that the symmetry reverses the orientation of $\overline{\Theta}_j$, and that the inward normal vectors at $\partial\bcalM_{\Gamma}^\loc(\vec{\gamma};U_x)$ and $\partial\bcalM_{\Gamma^*}^\loc(\vec{\gamma};U_x)$ are opposite. See Figure~\ref{fig:blow_up} and \ref{fig:involution_ij}.\qed
\begin{figure}
\fig{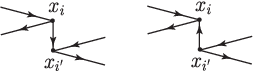}
\caption{}\label{fig:involution_ij}
\end{figure}
\par\bigskip
We continue the proof of Lemma~\ref{lem:Z(rho)=0}. Now by Claims~1 and 2, 
\[ 0=\sum_{\Gamma\in\calG_{2k,3k}}\#\partial\bcalM_\Gamma^\loc(\vec{\gamma})\,[\Gamma]=Z_{2k,3k}^\mathrm{anomaly}(\vec{\gamma}_X)-\sum_{\Gamma\in\calG_{2k,3k}}\#\bcalM_{d\Gamma}^\loc(\vec{\gamma})\,[\Gamma].
\]
The second term in the RHS vanishes by the IHX relation of $\calA_{2k,3k}$.
\end{proof}

\begin{Rem}
Proposition~\ref{prop:spin-cob_inv} shows that $Z_{2k,3k}^\mathrm{anomaly}(\vec{\gamma}_W)$ does not depend on the GM extension $\vec{\gamma}_W$ of $-\mathrm{grad}\,\vec{f}$. However, we fixed a diffeomorphism $\varphi_\infty:U'_\infty\to U_\infty$ and extension $\tau_M'$ of $n\oplus \tau_M$, so we must check that $Z_{2k,3k}^\mathrm{anomaly}(\vec{\gamma}_W)$ does not depend on these choices. It will be checked in Lemma~\ref{lem:inv_ordered}.
\end{Rem}

%%%%%%%%%%%%%%%%%%%%%%%%%%%%%%%%%%%%%
\subsection{Well-definedness of the correction term}\label{ss:correctionterm}

To prove Proposition~\ref{prop:anomaly_welldefined} (2), we consider general pairs of spin 4-manifolds $W$ and $W'$ with $\partial W=\partial W'=M$, $\chi(W)=\chi(W')=1$ which may not be relatively spin cobordant. We choose 3-framings $\sigma_M$ and $\tau_M$ on $TM_0$ so that 
\[ p_1(TW;\tau_M')=p_1(TW';\sigma_M')=0, \]
which are canonical up to homotopy. Then by Lemma~\ref{lem:KM}, $\tau_M'$ extends to a 4-framing of $W$ and $\sigma_M'$ extends to a 4-framing of $W'$. But $\tau_M$ may not be homotopic to $\sigma_M$, so we may not have a stable framing of $\ve^1\oplus TX$, $X=(-W)\cup_{g}([0,1]\times M)\cup_{g'}W'$, namely, $X$ may be just almost parallelizable. Although we do not have a stable framing of $\ve^1\oplus TX$, we have a rank 3 (possibly nontrivial) subbundle $T^vX$ of $\ve^1\oplus TX$ that agrees with $\mathrm{pr}_1^*TM_0$ on $[0,1]\times M_0$, which extends those spanned by $\sigma_M$ and $\tau_M$. By choosing a generic GM sections $\vec{\gamma}_X\in\Gamma(T^vX)^{3k}$ extending $-\mathrm{grad}\,\vec{f}$, one can define $Z_{2k,3k}^\mathrm{anomaly}(\vec{\gamma}_X)\in\calA_{2k,3k}$. 

More generally, one can also define $Z_{2k,3k}^\mathrm{anomaly}(\vec{\gamma}_X)$ for any almost parallelizable, closed, connected, spin 4-manifold\footnote{Note that any compact connected spin 4-manifold is almost parallelizable. Thus the assumption of almost parallelizability is unnecessary.} with $\chi(X)=2$. Namely, by a straightforward analogue of \cite[Theorem~2.2]{KM}, the restriction of a framing on $X-\mathrm{Int}\,([0,1]\times D^3)$ to $\partial ([0,1]\times D^3)$ can be deformed to a framing of the form $\mathrm{pr}_1^{-1}\tau_{D^3}\oplus\mathrm{pr}_2^{-1}\tau_{[0,1]}$ if and only if $\chi(X)=2$. 

Let $\Omega_4^\mathrm{spin}(2)$ denote the set of spin cobordism classes of closed, connected, spin 4-manifolds $X$ with $\chi(X)=2$. By the same argument as in the proof of Lemma~\ref{lem:Z(rho)=0}, one may see that the assignment $X\mapsto Z_{2k,3k}^\mathrm{anomaly}(\vec{\gamma}_X)$ for generic $\vec{\gamma}_X$ defines a well-defined map
\[ Z_{2k,3k}^\mathrm{anomaly}:\Omega_4^\mathrm{spin}(2)\to \calA_{2k,3k}. \]
The set $\Omega_4^\mathrm{spin}(2)$ has a group structure given by connected sum. More precisely, if $X$ is a closed, connected, spin 4-manifold with $\chi(X)=2$, then there is a framing on $X-[0,1]\times D^3$. If $X'$ is another closed, connected, spin 4-manifold with $\chi(X')=2$, then by forming the boundary connected sum $\overline{X-[0,1]\times D^3}\,\natural\, \overline{X'-[0,1]\times D^3}$ and capping by $[0,1]\times D^3$ along the boundary in a natural way, we will obtain an almost parallelizable, closed, connected, spin 4-manifold $X''$ with $\chi(X'')=2$ that is diffeomorphic to $X\#X'$. This defines an abelian group structure on $\Omega_4^\mathrm{spin}(2)$ on which the inverse of $X$ is given by $-X$. 

\begin{Lem}\label{lem:Z_hom}
The map $Z_{2k,3k}^\mathrm{anomaly}:\Omega_4^\mathrm{spin}(2)\to \calA_{2k,3k}$ is a group homomorphism. 
\end{Lem}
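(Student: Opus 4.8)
The plan is to use Proposition~\ref{prop:spin-cob_inv}, which tells us that $Z_{2k,3k}^\mathrm{anomaly}$ does not depend on the chosen generic section, so that I may compute $Z_{2k,3k}^\mathrm{anomaly}(X)$, $Z_{2k,3k}^\mathrm{anomaly}(X')$ and $Z_{2k,3k}^\mathrm{anomaly}(X\#X')$ with sections especially adapted to a connected-sum picture of $X\#X'$. The key elementary input is a locality/dimension observation: by Lemma~\ref{lem:M-local_mfd}, $\calM_\Gamma^\loc(\vec{\gamma})$ has codimension $(d-1)m=6k$ in the $(\dim X+6k-4)$-dimensional $\bConf_{2k}^\loc(\R^3)$-bundle over $X$, so $\dim\calM_\Gamma^\loc(\vec{\gamma})=\dim X-4$. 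Hence if, over a region $R\subset X$, the framing and section are chosen so that the Gauss data $\bar{\gamma}_k=-\mathrm{grad}_0j^1\rho_k$ all factor through a map $R\to R_0$ with $\dim R_0\le 3$, then the intersection \eqref{eq:intersection} defining $\calM_\Gamma^\loc(\vec{\gamma})|_R$ is the pullback along $R\to R_0$ of the subset $Z_0\subset R_0\times\bConf_{2k}^\loc(\R^3)$ cut out by the $3k$ equations $\phi_{\sigma(k)\tau(k)}(x)=\bar{\gamma}_k(r_0)$, each of codimension $d-1=2$; for generic data one has $\dim Z_0=\dim R_0-4<0$, so $Z_0=\emptyset$ and therefore $\calM_\Gamma^\loc(\vec{\gamma})|_R=\emptyset$.

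Next I would choose the adapted data. Since $X$ is almost parallelizable, $X^\circ=X\setminus\mathrm{int}\,D^4$ is parallelizable; fix a framing, the associated rank $3$ subbundle, and a generic section $\vec{\rho}_X^\circ\in\Gamma(\calH(T^vX^\circ))$ which near $\partial X^\circ\cong S^3$ equals a fixed non-degenerate $1$-jet (case (1) of $\calH$) with generic constant gradient; do the same for $X'$. By the observation above $\calM_\Gamma^\loc(-\mathrm{grad}_0j^1\vec{\rho}_X^\circ)$ is empty near $\partial X^\circ$, hence is a compact $0$-manifold contained in $\mathrm{Int}\,X^\circ$. Realize $X\#X'$ as $X^\circ\cup(S^3\times[-1,1])\cup X'^\circ$ glued along the boundary spheres; since rank $3$ oriented bundles over $S^3$ and over $S^3\times[-1,1]$ are trivial ($\pi_2(SO_3)=0$), the subbundles glue to a rank $3$ subbundle $T^v(X\#X')$ of $T(X\#X')$, and I extend $\vec{\rho}_X^\circ,\vec{\rho}_{X'}^\circ$ across the collar by a section pulled back from a generic non-critical path $[-1,1]\to\calH(\R^3)$, obtaining a generic $\vec{\rho}_{X\#X'}\in\Gamma(\calH(T^v(X\#X')))$. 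Likewise, extending $\vec{\rho}_X^\circ$ over the capping $D^4$ of $X$ by a section pulled back from the interval factor on a collar annulus and by a constant jet on an inner ball yields a generic $\vec{\rho}_X\in\Gamma(\calH(T^vX))$, and similarly $\vec{\rho}_{X'}$.

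Finally I would decompose the moduli spaces. Every point of $\calM_\Gamma^\loc$ projects to a single point of the base, so as oriented $0$-manifolds $\calM_\Gamma^\loc(-\mathrm{grad}_0j^1\vec{\rho}_{X\#X'})$ is the disjoint union of its parts lying over $X^\circ$, over $S^3\times[-1,1]$, and over $X'^\circ$. The middle part is empty by the locality observation (the section there is pulled back from the $1$-dimensional interval); the part over $X^\circ$ coincides with $\calM_\Gamma^\loc(-\mathrm{grad}_0j^1\vec{\rho}_X^\circ)$ since the defining equations agree there, and the locality observation applied to the capping $D^4$ shows that $\calM_\Gamma^\loc(-\mathrm{grad}_0j^1\vec{\rho}_X)$ has no point over $D^4$, so $\#\calM_\Gamma^\loc(-\mathrm{grad}_0j^1\vec{\rho}_X^\circ)=\#\calM_\Gamma^\loc(-\mathrm{grad}_0j^1\vec{\rho}_X)$; likewise for $X'$. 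Hence $\#\calM_\Gamma^\loc(-\mathrm{grad}_0j^1\vec{\rho}_{X\#X'})=\#\calM_\Gamma^\loc(-\mathrm{grad}_0j^1\vec{\rho}_X)+\#\calM_\Gamma^\loc(-\mathrm{grad}_0j^1\vec{\rho}_{X'})$ for every $\Gamma\in\calG_{2k,3k}$; multiplying by $[\Gamma]$ and summing gives $Z_{2k,3k}^\mathrm{anomaly}(X\#X')=Z_{2k,3k}^\mathrm{anomaly}(X)+Z_{2k,3k}^\mathrm{anomaly}(X')$, which is the homomorphism property. The main obstacle here is not conceptual but one of bookkeeping: arranging the framings, the rank $3$ subbundles and the $\calH$-sections to be mutually compatible along the gluing spheres while keeping them generic and non-critical there, and checking that the orientation conventions of \S\ref{ss:ori_MG} (and of $\calM_\Gamma^\loc$) make these signed counts restrict additively --- which they do, since one is merely restricting a single fixed oriented $0$-manifold to open-and-closed pieces.
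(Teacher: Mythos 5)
Your proof is correct, but it takes a genuinely different route from the paper's. The paper argues in two lines: $Z_{2k,3k}^\mathrm{anomaly}$ is a spin-cobordism invariant (by the argument in the proof of Proposition~\ref{prop:spin-cob_inv}), it is trivially additive over disjoint unions (the moduli spaces split over connected components), and $X\sqcup X'$ is spin cobordant to $X\#X'$; the vanishing on the zero class $[X]=0$ is obtained separately by observing that such an $X$ is stably parallelizable, hence framed null-cobordant, so the $5$-dimensional cobordism argument of Proposition~\ref{prop:spin-cob_inv} applies directly. You instead prove additivity over connected sum without passing through the $5$-manifold: your locality observation (that $\calM_\Gamma^\loc$ is empty over any region where the Gauss data factor through a space of dimension $\le 3$, by the codimension count of Lemma~\ref{lem:M-local_mfd}) lets you choose sections adapted to a connected-sum decomposition $X^\circ\cup(S^3\times[-1,1])\cup X'^\circ$ so that the neck and the capping disk contribute no flows, and the signed count visibly splits. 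What the paper's route buys is brevity, since it re-uses Proposition~\ref{prop:spin-cob_inv} wholesale; what your route buys is a more elementary and more local explanation of additivity, and it makes explicit the compatibility of framings, rank-$3$ subbundles and $\calH$-sections under the group operation that the paper handles only implicitly through the cobordism $X\sqcup X'\sim X\#X'$. One point worth making sharper in your write-up: the genericity of your adapted sections needs a word, namely that over the special regions the moduli space is already empty for \emph{any} generic choice of the low-dimensional data (constants or one-parameter paths), and genericity over the interiors of $X^\circ$ and $X'^\circ$ can then be arranged by perturbing there alone, without disturbing the boundary normalization; you gesture at this in the last sentence but it is the one place where a reader might ask for detail.
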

\begin{proof}
If $[X]=0\in\Omega_4^\mathrm{spin}(2)$, then we have $0=\mathrm{sign}\,X=\frac{1}{3}\langle p_1(\ve_X^1\oplus TX),[X]\rangle$ and $\chi(X)=2$, thus by Lemma~\ref{lem:KM}, the stabilization of the 4-framing on $\partial\overline{X-[0,1]\times D^3}$ induced from that of $X-[0,1]\times D^3$ extends over $X$. Namely, $X$ is stably parallelizable. Then by the same argument as in the proof of Lemma~\ref{lem:Z(rho)=0}, we have $Z_{2k,3k}^\mathrm{anomaly}(\vec{\gamma}_{X})=0$ for any generic GM sections $\vec{\gamma}_{X}\in(\Gamma(T^vX))^{3k}$. The additivity of $Z_{2k,3k}^\mathrm{anomaly}$ follows from the fact that $Z_{2k,3k}^\mathrm{anomaly}$ is invariant under spin cobordism as shown in Proposition~\ref{prop:spin-cob_inv}, and that $X\tcoprod X'$ and $X\# X'$ are spin cobordant. Hence $Z_{2k,3k}^\mathrm{anomaly}$ is a homomorphism.
\end{proof}

\begin{proof}[Proof of Proposition~\ref{prop:anomaly_welldefined} (2)]
By Lemma~\ref{lem:Z_hom}, $Z_{2k,3k}^\mathrm{anomaly}$ is a restriction of a group homomorphism $\Omega_4^\mathrm{spin}\to \calA_{2k,3k}$. So there exists a constant $\mu_k\in\calA_{2k,3k}$ such that
\[ Z_{2k,3k}^\mathrm{anomaly}(\vec{\gamma}_X)=\mu_k\,\mathrm{sign}\,X, \]
for $X=(-W)\cup_g([0,1]\times M)\cup_{g'}W'$. By (\ref{eq:Z(rho)}) and by the additivity of the signature, 
\[ \begin{split}
-Z_{2k,3k}^\mathrm{anomaly}(\vec{\gamma}_W)+Z_{2k,3k}^\mathrm{anomaly}(\vec{\gamma}_{W'})
	&=\mu_k\,\mathrm{sign}\,X=-\mu_k\,\mathrm{sign}\,W+\mu_k\,\mathrm{sign}\,W'.
\end{split} \]
This completes the proof.
\end{proof}

%\clearpage

%%%%%%%%%%%%%%%%%%%%%%%%%%%%%%
%%%%%%%%%%%%%%%%%%%%%%%%%%%%%%
%%%%%%%%%%%%%%%%%%%%%%%%%%%%%%
%%%%%%%%%%%%%%%%%%%%%%%%%%%%%%
\mysection{Moduli space of gradient flow graphs in 1-parameter family}{}

The next two sections contain preliminaries for the proof of Theorem~\ref{thm:Z_invariant}, which are 1-parameter analogues of the results in \S\ref{s:M2} to \S\ref{s:coori}. We consider generic 1-parameter families of smooth functions $f_s:M_0\to \R$ and metrics $\mu_s$ on $M_0$ parametrized by $s\in [0,1]$, and see what happens to the moduli spaces of flow graphs during the homotopy $\{(f_s,\mu_s)\}_{s\in [0,1]}$. We shall extend the definition of the moduli spaces $\calM_2(f)$ and $\calM_\Gamma(\vec{f})$ to those for 1-parameter families (\S\ref{ss:M_Gamma_1-para}) and give their compactifications to smooth manifolds with corners. 
%%%%%%%%%%%%%%%%%%%%%%%%%%%%%%
\subsection{Bifurcations in 1-parameter family of smooth functions and metrics}\label{ss:bifurcations}

Let $f,f':M_0\to \R$ be two Morse functions. Then there exists a smooth 1-parameter family $\{f_s:M_0\to \R\}_{s\in[0,1]}$ of functions on $M_0$ such that $f_0=f$ and $f_1=f'$ and $f_s$ is standard near $\infty_M$ with respect to a chart $\varphi_{\infty s}:U_{\infty s}'\to U_\infty$ ($\infty_M\in U_{\infty s}'$), where we say that a 1-parameter family $\{f_s\}_{s\in[0,1]}$ is smooth if the map $F:[0,1]\times M_0\to \R$, $F(s,x)=f_s(x)$ is smooth. It is known that $F$ can be chosen so that for all $s\in[0,1]$, $f_s$ does not have higher singularities. 

\begin{Lem}[\cite{Ce}]\label{lem:cerf}
Two Morse functions on a manifold can be connected by a smooth 1-parameter family of smooth functions with only Morse or birth-death ($A_2$) singularities.
\end{Lem}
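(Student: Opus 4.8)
The plan is to obtain the path as a \emph{generic} path in the space $\mathcal{F}=C^\infty(M)$ of smooth functions, exploiting the natural stratification of $\mathcal{F}$ by singularity type. First I would record the two basic facts: the subset $\mathcal{F}^{\mathrm{Morse}}\subset\mathcal{F}$ of Morse functions is open and dense, and the two given functions lie in it. I would then single out the ``codimension one'' stratum $\mathcal{F}^{A_2}\subset\mathcal{F}$ consisting of functions all of whose critical points are nondegenerate except for exactly one, which is a birth/death ($A_2$) point. Everything else --- a function with a critical point of corank $\ge 2$, an $A_k$ point with $k\ge 3$, or two distinct degenerate critical points occurring simultaneously --- forms a ``bad set'' $\mathcal{B}\subset\mathcal{F}$ which I want to show has codimension $\ge 2$, so that a generic one-parameter family avoids it altogether.

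To make these codimension counts precise I would pass to jets. Choosing $k$ large enough to detect $A_{\le 3}$ singularities (e.g.\ $k=4$), I would consider the parametrized jet map $J\colon [0,1]\times M\to J^k(M,\mathbb{R})$, $(s,x)\mapsto j^k f_s(x)$, associated to a smooth path $\{f_s\}_{s\in[0,1]}$ from $f_0$ to $f_1$. In $J^k(M,\mathbb{R})$ the locus of critical jets has fibre-codimension $\dim M$; within it, the $A_2$-stratum has fibre-codimension $\dim M+1$, while the corank-$\ge 2$ locus and the $A_{\ge 3}$ loci have fibre-codimension $\ge \dim M+2$. By the Thom (multijet) transversality theorem, after an arbitrarily $C^\infty$-small perturbation of the path rel the endpoints $s=0,1$, the map $J$ can be made transverse to each of these strata and to the relevant pairwise products of strata over distinct points of $M$. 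Since $\dim\big([0,1]\times M\big)=\dim M+1$, transversality forces $J$ to miss the corank-$\ge2$ and $A_{\ge3}$ loci, to miss the ``two simultaneous degeneracies'' locus, and to meet the $A_2$-stratum in a $0$-dimensional set, i.e.\ at finitely many parameters $0<s_1<\dots<s_N<1$. At each $s_j$ the family has exactly one degenerate critical point, and transversality to the $A_2$-stratum is precisely the statement that $s$ unfolds that singularity versally; hence on a neighbourhood of the degenerate point the family is, after a smooth change of coordinates, the standard birth/death model, so a cancelling pair of critical points is created or destroyed as $s$ passes $s_j$, and for every other $s$ the function $f_s$ is Morse. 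This $\{f_s\}$ is the desired family.

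The step I expect to be the main obstacle is the transversality/genericity argument itself, which takes place in the infinite-dimensional space of functions. The clean route is to invoke a parametrized version of the Thom--Boardman multijet transversality theorem with the two endpoint fibres $\{0\}\times M$ and $\{1\}\times M$ held fixed; alternatively, following Cerf, one performs a finite-dimensional Liapunov--Schmidt reduction near each (potentially) degenerate critical point, reducing the perturbation to a transversality problem for a map between finite-dimensional manifolds where Sard--Smale applies, and then globalizes with a partition of unity over the compact interval $[0,1]$. A secondary point requiring care is verifying that the $A_2$ normal form genuinely holds \emph{with $s$ as the unfolding parameter} --- i.e.\ that the versal-unfolding theorem applies --- rather than merely that an isolated $A_2$ singularity occurs; this is where one uses that the $A_2$-stratum has codimension exactly one and that the path crosses it transversally. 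The remaining verifications (openness and density of Morse functions, the explicit birth/death model, finiteness of $\{s_j\}$ by compactness) are standard.
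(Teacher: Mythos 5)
Your proposal is correct and is essentially Cerf's own argument, rephrased in the modern language of jet/multijet transversality: the paper does not prove the lemma but cites Cerf, whose proof likewise rests on the codimension-one nature of the $A_2$ stratum in the natural stratification of $C^\infty(M)$ and a genericity argument for paths crossing it transversally. The codimension counts, the use of a relative (endpoint-preserving) transversality theorem, and the observation that transversal crossing of the $A_2$ stratum yields the versal birth/death normal form are all accurate, and your fallback via Liapunov--Schmidt reduction plus Sard--Smale is exactly the device Cerf used to sidestep the infinite-dimensionality of the function space.
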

The proof of the lemma can be found in \cite[\S{4.3}]{Lau}. 

In the following, we will often identify a smooth 1-parameter family $\{f_s\}_{s\in [a,b]}$ of functions on $M_0$ with the smooth map $F:[0,1]\times M_0\to \R$, $F(x,s)=f_s(x)$. Under this identification, we consider $f_s$ as both a map $M_0\to\R$ and a map $\{s\}\times M_0\to \R$. We will consider $\calD_p(f_s)$ etc as subsets of $M_0$ or $M_0\times\{s\}$, depending on the context.

Let $\{(f_s,\mu_s)\}_{s\in[0,1]}$ be a smooth 1-parameter family of smooth functions and metrics such that $(f_0,\mu_0)$ and $(f_1,\mu_1)$ are Morse--Smale. Here we say that the family $\{\mu_s\}_{s\in[0,1]}$ of metrics is smooth if it is the restriction of a smooth metric on $[0,1]\times M_0$ that is standard near $[0,1]\times\infty_M$. We will sometimes call $s\in[0,1]$ a {\it time} and we say that a time $s_0\in[0,1]$ is a {\it bifurcation} if $(f_{s_0},\mu_{s_0})$ is not Morse--Smale or not ordered. 

\begin{Lem}[\cite{HW}(p.~42), Lemma~2.11 of \cite{Hu}]\label{lem:generic_1-para}
After a perturbation of $\{(f_s,\mu_s)\}_{s\in[0,1]}$ fixing endpoints, we may arrange that there are finitely many bifurcation times in $[0,1]$ each of which is one of the following. 
\begin{enumerate}
\item Level exchange, i.e., a time where the order of the critical values changes.
\item Birth-death bifurcation, i.e., a time $s$ where $\Sigma(f_s)$ consists of Morse singularities and one birth-death singularity.
\item $i/i$-intersection (\cite{HW}), i.e., a time where a family of descending manifolds and a family of ascending manifolds of the same index $i$ intersect transversally in $[0,1]\times M$.
\item A time where the intersection of a descending manifold and an ascending manifold is not transversal.
\end{enumerate}
We may assume that no two different bifurcations overlap on a single time. (We will call such a 1-parameter family a \emph{generic 1-parameter family}.) 
\end{Lem}
At a bifurcation, the topologies of the moduli spaces $\calM'(f_s;p_s,q_s)$, $p_s,q_s\in\Sigma(f_s)$, may change. 

Lemma~\ref{lem:generic_1-para} can be proved as follows. By Lemma~\ref{lem:cerf} and by definition of bifurcations, it is enough to prove (3) and (4) of the lemma in the case where $f_s$ is ordered Morse for all $s\in [0,1]$ (see Lemma~\ref{lem:ordered} for the definition of ordered Morse function). Put $J=[0,1]$. It suffices to prove that for a pair of critical loci $p=\{p_s\}_{s\in J}$ and $q=\{q_s\}_{s\in J}$, the submanifolds $\widetilde{\calA_p}(f_{J})=\bigcup_{s\in J}\calA_{p_s}(f_s)$ and $\widetilde{\calD_q}(f_{J})=\bigcup_{s\in J}\calD_{q_s}(f_s)$ of $J\times M_0$ can be made transversal. They are indeed submanifolds of $J\times M_0$ for a similar reason as the descending and ascending manifolds are submanifolds of $M_0$. Namely, by the parametrized Morse lemma (\cite[Appendix]{Ig2}) one may see that they are submanifolds on a neighborhood of the critical locus and then extended by the gradient flow without changing its diffeomorphism type. By modifying the 1-parameter family $\{\mu_s\}_{s\in J}$ of metrics on $M_0$ suitably, one can show, by a similar argument as the proof of the genericity of the Morse--Smale condition (see e.g. \cite{Pe}), that $\widetilde{\calA_p}(f_{J})$'s and $\widetilde{\calD_q}(f_{J})$'s intersect mutually transversal in the trivial $M_0$-bundle over $J$ after a fiber preserving small perturbation of the metrics. Note that even if so, it may not be true that $\calA_{p_s}(f_s)$ and $\calD_{q_{s}}(f_s)$ are transversal for every $s$. If the transversality of $\calA_{p_s}(f_s)$ and $\calD_{q_{s}}(f_s)$ for $i(p_s)=i(q_s)$ fails, then $s$ is of type (3). For other indices, the intersection $\widetilde{\calA}_p(f_J)\cap \widetilde{\calD}_q(f_J)$ is a submanifold of $J\times M_0$. We may assume that the map $\mathrm{pr}:\widetilde{\calA}_p(f_J)\cap \widetilde{\calD}_q(f_J)\to J$ induced from the projection $J\times M_0\to J$ is Morse for every pair $(p,q)$ of distinct critical loci\footnote{If $\widetilde{\calA}_p(f_J)$ and $\widetilde{\calD}_q(f_J)$ are transversal, then that $\calA_{p_s}(f_s)$ and $\calD_{q_s}(f_s)$ are transversal is equivalent to that $s$ is a regular value of $\mathrm{pr}:\widetilde{\calA}_p(f_J)\cap \widetilde{\calD}_q(f_J)\to J$. This can be checked by applying the formula $\dim\,V+W=\dim\,V+\dim\,W-\dim\,V\cap W$ for vector spaces twice.}. There are finitely many\footnote{The finiteness is proved by using compactifications of $\widetilde{\calA}_p(f_J)$ and $\widetilde{\calD}_q(f_J)$ given later. Although we use Lemma~\ref{lem:generic_1-para} in the construction of the compactification, there is no problem in this because we do not use the finiteness of the bifurcations for the compactifications. } critical values of $\mathrm{pr}$, which are bifurcations of type (4).

We say that a 1-parameter family $(f_J,\mu_J)=\{(f_s,\mu_s)\}_{s\in J}$ of Morse pairs satisfies the {\it parametrized Morse--Smale condition} if for every pair $(p,q)$ of critical loci of $f_J$ the intersection of $\widetilde{\calA_p}(f_{J})$ and $\widetilde{\calD_q}(f_{J})$ is transversal. 

It is convenient to represent bifurcations in a 1-parameter family by the graph of critical values, equipped with the information of $i/i$-intersections. See Figure~\ref{fig:graphic} for an example. Such a diagram is called Cerf's {\it graphic} (\cite{Ce}). In a graphic, a level exchange corresponds to a crossing of two curves, an $i/i$-intersection between a pair of critical points is represented by a dotted arrow, and a birth-death bifurcation corresponds to beaks.

\begin{figure}
\fig{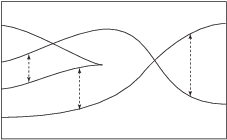}
\caption{Cerf's graphic}\label{fig:graphic}
\end{figure}
%%%%%%%%%%%%%%%%%%%%%%%%%%%%%%
\subsection{Moduli space $\calM_\Gamma$ in 1-parameter family and transversality}\label{ss:M_Gamma_1-para}

Let $\{(f_s,\mu_s)\}_{s\in [0,1]}$ be a generic 1-parameter family. Let $J=[s_0,s_1]$ be a closed interval in $[0,1]$ on which $\{(f_s,\mu_s)\}_{s\in J}$ does not have birth-death bifurcation. We consider a 1-parameter family $\vec{f}_s=(f_s,f_2,\ldots,f_m)$, $s\in J$, and extend the definition of the moduli space $\calM_\Gamma(\vec{f})$ to the family $\vec{f}_J=\{\vec{f}_s\}_{s\in J}$. 

The moduli space $\calM_\Gamma(\vec{f}_s)$ for a generic parameter $s\in J$ is defined similarly as $\calM_\Gamma(\vec{f})$ by replacing $f_1$ in the definition of $\calM_\Gamma(\vec{f})$ (\S\ref{ss:M_G}) with $f_s$, $\mu_1$ with $\mu_s$ and critical points with critical loci. For graphs $\Gamma$ with $\mathrm{dim}\,\calM_\Gamma<0$ with respect to the formula of Proposition~\ref{prop:M_mfd}, the moduli space $\calM_\Gamma(\vec{f}_s)$ is empty at a generic parameter $s$, but we will see that $\calM_\Gamma(\vec{f}_s)$ may be non-empty at finitely many non-generic parameters in $J$ if the formula of Proposition~\ref{prop:M_mfd} gives $\dim\calM_\Gamma(\vec{f}_s)=-1$. 

\begin{Prop}\label{prop:Ms_mfd}
Let $\{(f_s,\mu_s)\}_{s\in J}$ be as above and let $\vec{C}$ be the sequence\\ $(C^{(s_0)},C^{(2)},C^{(3)},\ldots,C^{(m)})$ of acyclic complexes, where $C^{(s_0)}$ is the Morse complex for $(f_{s_0},\mu_{s_0})$. Suppose that $\Gamma\in\calG_{n,m,\vec{\eta}}^0(\vec{C})$ has no bivalent vertex. For a generic choice of $\{(f_s,\mu_s)\}_{s\in J}$, the space $\calM_\Gamma(\vec{f}_J)=\bigcup_{s\in J}\calM_\Gamma(\vec{f}_s)$, $\vec{f}_J=\{(f_s,f_2,\ldots,f_m)\}_{s\in J}$, is a smooth submanifold of $J\times \Conf_n(M)$ of dimension $(n-m)d+\sum_{i=1}^m\eta_i+1$.
\end{Prop}

For simplicity, we only check the transversality on the moduli space $\calM_\Gamma(\vec{f}_J)$ for the special graph $\Gamma$ of (\ref{eq:ex_graph}) since other cases are similar. Suppose that $\vec{f}_{s_0}=(f_{s_0},f_2,\ldots,f_6)\in (C^r_{\varphi_\infty}(M_0))^6$ is generic in the sense of Proposition~\ref{prop:M_mfd}. We decompose $\Gamma$ into two parts:
\[ \Gamma'=\fig{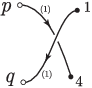},\mbox{ and } \Gamma''=\fig{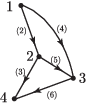}. \]
The moduli space $\calM_{\Gamma''}(\vec{f''})\subset \Conf_4(M)$, $\vec{f''}=(f_2,\ldots,f_6)$, is given by 
$\calM_{\Gamma''}(\vec{f''})=\mathrm{pr}_1(\Phi_{\vec{f''}}^{-1}(\Delta''))$, 
where $\Phi_{\vec{f''}}:\Conf_4(M)\times\R_+^5\to M_0^9$ is defined by 
\[ \begin{split}
	\Phi_{\vec{f''}}&(x_1,\ldots,x_4,t_2,\ldots,t_6)\\
		&=(x_1,x_2,\Phi_{f_2}^{t_2}(x_1),x_3,\Phi_{f_4}^{t_4}(x_1),\Phi_{f_5}^{t_5}(x_2),x_4,\Phi_{f_3}^{t_3}(x_2),\Phi_{f_6}^{t_6}(x_3)),
		\end{split}\]
and $\Delta''=\{(x_1,x_2,x_2,x_3,x_3,x_3,x_4,x_4,x_4)\,;\,x_1,x_2,x_3,x_4\in M_0\}$. The genericity of $\vec{f}_{s_0}$ implies that $\calM_{\Gamma''}(\vec{f''})$ is a submanifold of $\Conf_4(M)$ of dimension $(4d+5)+4d-9d=5-d$. On the other hand, the moduli space $\calM_{\Gamma'}(f_J)=\bigcup_{s\in J}\calM_{\Gamma'}(f_s)$ is given by the $(d+\eta_1+1)$-dimensional manifold
$\calN_{pq}(f_J)=\bigcup_{s\in J}\calN_{pq}(f_s)\subset J\times \Conf_2(M)$. Then we have $\calM_{\Gamma}(\vec{f}_J)=\widetilde{\pi}_{14}^{-1}(\calM_{\Gamma'}(f_J))\cap (J\times\calM_{\Gamma''}(\vec{f''}))$, where $\widetilde{\pi}_{14}:J\times\Conf_4(M) \to J\times\Conf_2(M)$ is the projection $(s,x_1,x_2,x_3,x_4)\mapsto (s,x_1,x_4)$. By the transversality theorem, we may assume after a small perturbation of the family $\{(f_s,\mu_s)\}_{s\in J}$ that the intersection is transversal, and hence $\calM_\Gamma(\vec{f}_J)=\bigcup_{s\in J}\calM_{\Gamma}(\vec{f}_s)$ is a submanifold of dimension $(d+\eta_1+1+2d)+(5-d+1)-(4d+1)=-2d+(\eta_1+5)+1$. If the first edge of $\Gamma$ were a compact edge, then $\calM_{\Gamma'}(f_J)$ would be replaced with $\calM_2(f_J)=\bigcup_{s\in J}\calM_2(f_s)$.

%%%%%%%%%%%%%%%%%%%%%%%%%%%%%%
\subsection{Compactification of the moduli space $\calM_2$ of trajectories in 1-parameter family of Morse pairs}\label{ss:compact_M2}

We construct compactifications of the spaces 
\[ 
  \calM_2(f_J)=\bigcup_{s\in J}\calM_2(f_s),\quad \calN_{pq}(f_J)=\bigcup_{s\in J} \calN_{pq}(f_s)\quad\subset J\times M_0^2,
 \]  
where $J\subset[0,1]$ is a compact interval that does not have birth-death bifurcations for the family $(f_J,\mu_J)=\{(f_s,\mu_s)\}_{s\in J}$, i.e., $f_J$ is a 1-parameter family of Morse functions. The goal of this subsection is to prove the following proposition.

\begin{Prop}\label{prop:bM_2(f_J)}
Let $(f_J,\mu_J)=\{(f_s,\mu_s)\}_{s\in J}$ be a generic 1-parameter family of Morse pairs that satisfies the parametrized Morse--Smale condition. There is a natural compactification $\bcalM_2(f_J)$ of $\calM_2(f_J)=\bigcup_{s\in J}\calM_2(f_s)$ such that the complement of $\bar{b}\,{}^{-1}(\widehat{\Delta}_M)$ in $\bcalM_2(f_J)$, where $\bar{b}:\bcalM_2(f_J)\to M\times M$ is the smooth extension of the evaluation map $\calM_2(f_J)\to M_0\times M_0$, is a smooth manifold with corners whose codimension $k$ stratum for $k\geq 1$ consists of families of $k$ times broken trajectories and $\partial_{k-1}\bcalM_2(f_{\partial J})$, the codimension $k-1$ stratum of $\bcalM_2(f_{\partial J})-\bar{b}^{-1}(\widehat{\Delta}_M)$ in $\partial J\times M$.
\end{Prop}

Let $\bar{b}_J:\bcalM_2(f_J)\to (J\times M)\times (J\times M)$ be the evaluation map with time, which is defined for a possibly broken trajectory $\gamma$ in $\{s\}\times M$ with $\bar{b}(\gamma)=(x,y)$ to be $\bar{b}_J(\gamma)=(s,x)\times(s,y)$. For a critical locus $p=\{p_s\}_{s\in J}$ of $f_J$, we write
\[ 
  \calC\widetilde{\calD}_p(f_J)=\bar{b}_J^{-1}(p\times (J\times M)),\quad  \calC\widetilde{\calA}_p(f_J)=\bar{b}_J^{-1}((J\times M)\times p).
\]
Let $\bar{b}_{\calA}:\calC\wcalA_q(f_J)\to J\times M$ (resp. $\bar{b}_{\calD}:\calC\wcalD_p(f_J)\to J\times M$) be the map that assigns the initial endpoint (resp. terminal endpoint) of a possibly broken flow line. Let $\Delta_J\times M^2$ be the subset of $(J\times M)^2$ consisting of points of the form $(s,x)\times (s,y)$ and let
\[ \bcalN_{pq}(f_J)=(\bar{b}_\calA\times \bar{b}_\calD)^{-1}(\Delta_J\times M^2)\subset \calC\wcalA_q(f_J)\times \calC\wcalD_p(f_J). \]
Let $\overline{bb}:\calC\wcalA_q(f_J)\times \calC\wcalD_p(f_J)\to M\times M$ be the composition of $\bar{b}_\calA\times \bar{b}_\calD$ and the projection $(J\times M)^2\to M\times M$. For subsets $\widetilde{A},\widetilde{B}\subset J\times M$, let $\widetilde{A}\times_J\widetilde{B}=(\widetilde{A}\times\widetilde{B})\cap (\Delta_J\times M^2)$ and let 
\[ \calM_2(f_J;\widetilde{A},\widetilde{B})=\calM_2(f_J)\cap (\widetilde{A}\times_J\widetilde{B}). \]
The following corollaries are immediate consequences (analogue of Proposition~\ref{prop:compactification_D}) of Proposition~\ref{prop:bM_2(f_J)}.
\begin{Cor}\label{cor:compactification_tD}
Let $(f_J,\mu_J)=\{(f_s,\mu_s)\}_{s\in J}$ be a generic 1-parameter family of Morse pairs as in Proposition~\ref{prop:bM_2(f_J)} and let $p$ be a critical locus of $f_J$. Then $\calC\widetilde{\calD}_p(f_J)$ (resp. $\calC\widetilde{\calA}_p(f_J)$) is a compactification of $\widetilde{\calD}_p(f_J)$ (resp. $\widetilde{\calA}_p(f_J)$) such that the complement of $\bar{b}\,{}^{-1}(\widehat{\Delta}_M)$ in $\calC\widetilde{\calD}_p(f_J)$ (resp. $\calC\widetilde{\calA}_p(f_J)$) is a smooth manifold with corners whose codimension $k$ stratum for $k\geq 1$ consists of families of $k$ times broken trajectories and $\partial_{k-1}\bcalD_p(f_{\partial J})$ (resp. $\partial_{k-1}\bcalA_p(f_{\partial J})$).
\end{Cor}

\begin{Cor}\label{cor:compactification_tN}
Let $(f_J,\mu_J)=\{(f_s,\mu_s)\}_{s\in J}$ be a generic 1-parameter family of Morse pairs as in Proposition~\ref{prop:bM_2(f_J)} and let $p$, $q$ be critical loci of $f_J$. Then $\bcalN_{pq}(f_J)$ is a compactification of $\calN_{pq}(f_J)$ such that the complement of $\overline{bb}\,{}^{-1}(\widehat{\Delta}_M)$ in $\bcalN_{pq}(f_J)$ is a smooth manifold with corners whose codimension $k$ stratum for $k\geq 1$ consists of families of $k$ times broken trajectories and $\partial_{k-1}\bcalN_{pq}(f_{\partial J})$.
\end{Cor}

%%%%%
\subsubsection{The moduli space $\bcalM_2(f_J)$ around a level exchange bifurcation}\label{ss:mod_level_ex}

We first construct the compactification of $\calM_2(f_J)$ around level exchange bifurcations and then extend to whole of $J$. In the construction of $\bcalM_2(f)$ (in \S\ref{ss:mod_short}), we assumed that the critical values of $f$ are all distinct (Lemma~\ref{lem:ordered}). However, this is not the case for a 1-parameter family due to level exchange bifurcations. We consider the space of `semi-short' trajectories that are close to an exchanging pair of critical loci to construct a compact space of trajectories around the level exchange bifurcation. Let $u\in J$ be a level exchange bifurcation and choose a small compact interval $J_u=[u-\ve,u+\ve]$ so that there are no other bifurcations over $J_u$. We shall prove the following lemma.

\begin{Lem}\label{lem:bM_2_level_exchg}
Let $J_u$ be as above and suppose that $\mu_{J_u}$ is such that $\mu_s$ is Euclidean near $\Sigma(f_s)$ for each $s\in J_u$. If $\ve$ is sufficiently small, then there is a natural compactification $\bcalM_2(f_{J_u})$ of $\calM_2(f_{J_u})$ such that $\bcalM_2(f_{J_u})-\bar{b}^{-1}(\widehat{\Delta}_M)$ is a smooth manifold with corners whose codimension $k$ stratum for $k\geq 1$ consists of families of $k$ times broken trajectories and $\partial_{k-1}\bcalM_2(f_{u\pm \ve})$. 
\end{Lem}

Let $p=\{p_s\}_{s\in J_u},q=\{q_s\}_{s\in J_u}$ be the pair of critical loci of $f_{J_u}=\{f_s\}_{s\in J_u}$ that are in a level exchange position. Then there exist smooth functions $\gamma_a,\gamma_b:J_u\to \R$ such that 
\begin{enumerate}
\item $\gamma_a(s)<\gamma_b(s)$ for all $s\in J_u$,
\item $f_s(p_s), f_s(q_s)\in (\gamma_a(s),\gamma_b(s))$ for all $s\in J_u$,
\item for each $s\in J_u$, there are no critical points of $f_s$ in $f_s^{-1}[\gamma_a(s),\gamma_b(s)]$ except $p_s$ and $q_s$. 
\end{enumerate}
We put $\widetilde{L}_a=\bigcup_{s\in J_u}f^{-1}_s(\gamma_a(s))$, $\widetilde{L}_b=\bigcup_{s\in J_u}f^{-1}_s(\gamma_b(s))$, $W_{pq}(s)=f^{-1}_s[\gamma_a(s),\gamma_b(s)]$, $\widetilde{W}_{pq}=\bigcup_{s\in J_u}W_{pq}(s)$, all considered as subsets of $J_u\times M_0$. We define
\[ \begin{split}
  &\bcalM_2(f_{J_u};\widetilde{L}_b,\widetilde{L}_a)=\mathrm{Closure}(\calM_2(f_{J_u};\widetilde{L}_b,\widetilde{L}_a))\subset \widetilde{L}_b\times_{J_u}\widetilde{L}_a,\\
  &\bcalM_2(f_{J_u};\widetilde{L}_b,\widetilde{W}_{pq})=\mathrm{Closure}(\calM_2(f_{J_u};\widetilde{L}_b,\widetilde{W}_{pq}))\subset \widetilde{L}_b\times_{J_u}\widetilde{W}_{pq},\\
  &\bcalM_2(f_{J_u};\widetilde{W}_{pq},\widetilde{L}_a)=\mathrm{Closure}(\calM_2(f_{J_u};\widetilde{W}_{pq},\widetilde{L}_a))\subset \widetilde{W}_{pq}\times_{J_u} \widetilde{L}_a,\\
  &\bcalM_2(f_{J_u};\widetilde{W}_{pq},\widetilde{W}_{pq})=\mathrm{Closure}(\calM_2(f_{J_u};\widetilde{W}_{pq},\widetilde{W}_{pq}))\subset \widetilde{W}_{pq}\times_{J_u} \widetilde{W}_{pq}.
\end{split}\]
\begin{Lem} Suppose that $\widetilde{\calD}_p(f_{J_u})\cap \widetilde{\calA}_q(f_{J_u})=\emptyset$. Then the following hold.
\begin{enumerate}
\item[(i)] $\bcalM_2(f_{J_u};\widetilde{L}_b,\widetilde{L}_a)-\{\Delta_J\times \infty_M^2\}$ is a submanifold of $\widetilde{L}_b\times_{J_u}\widetilde{L}_a$ with boundary whose boundary consists of once broken flow sequences and the moduli spaces at endpoints.
\item[(ii)] $\bcalM_2(f_{J_u};\widetilde{L}_b,\widetilde{W}_{pq})-\{\Delta_J\times \infty_M^2\}$ is a submanifold of $\widetilde{L}_b\times_{J_u}\widetilde{W}_{pq}$ with corners whose boundary consists of once broken flow sequence and of points in $\widetilde{L}_b\times_{J_u}\partial\widetilde{W}_{pq}$ and the moduli spaces at endpoints.
\item[(iii)] $\bcalM_2(f_{J_u};\widetilde{W}_{pq},\widetilde{L}_a)-\{\Delta_J\times \infty_M^2\}$ is a submanifold of $\widetilde{W}_{pq}\times_{J_u}\widetilde{L}_a$ with corners whose boundary consists of once broken flow sequences and of points in $\partial\widetilde{W}_{pq}\times_{J_u}\widetilde{L}_a$ and the moduli spaces at endpoints.
\item[(iv)] $\bcalM_2(f_{J_u};\widetilde{W}_{pq},\widetilde{W}_{pq})-\widehat{\Delta}_{\widetilde{W}_{pq}}$ is a submanifold of $\widetilde{W}_{pq}\times_{J_u}\widetilde{W}_{pq}$ with corners whose boundary consists of once broken flow sequences and of points in $\partial(\widetilde{W}_{pq}\times_{J_u}\widetilde{W}_{pq})\cup \widehat{\Delta}_{\widetilde{W}_{pq}}$ and the moduli spaces at endpoints.
\end{enumerate}
\end{Lem}
\begin{proof}
Let $\widetilde{K}_p=\bigcup_{s\in J_u}(\calD_{p_s}(f_s)\cup \calA_{p_s}(f_s))\cap W_{pq}(s)$ and $\widetilde{K}_q=\bigcup_{s\in J_u}(\calD_{q_s}(f_s)\cup \calA_{q_s}(f_s))\cap W_{pq}(s)$. 
\begin{figure}%
\fig{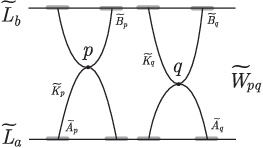}%
\caption{}\label{fig:KpKq}%
\end{figure}%
See Figure~\ref{fig:KpKq}. Take small compact neighborhoods $\widetilde{B}_p$ and $\widetilde{B}_q$ of $\widetilde{K}_p\cap \widetilde{L}_b$ and $\widetilde{K}_q\cap \widetilde{L}_b$ respectively in $\widetilde{L}_b$. Let $\widetilde{A}_p\subset \widetilde{L}_a$ be the union of $\widetilde{K}_p\cap\widetilde{L}_a$ and the subset of $\widetilde{L}_a$ consisting of points $(s,\ell)$ such that $\ell=\Phi_{f_s}^{t}(x)$ for a point $x\in\widetilde{B}_p\cap (\{s\}\times M_0)$ and for some $t>0$. In other words, $\widetilde{A}_p$ be the union of $\widetilde{K}_p\cap \widetilde{L}_a$ and the image of the negative gradient flow from $\widetilde{B}_p\cap \widetilde{L}_b$. $\widetilde{A}_q\subset \widetilde{L}_a$ is defined similarly with respect to $\widetilde{K}_q\cap\widetilde{L}_a$. Let $\widetilde{C}_p$ be the subset of $\widetilde{W}_{pq}$ consisting of points $(s,x)$ such that either $(s,x)\in \widetilde{K}_p$ or such that the integral curve $\gamma_x$ of $\mathrm{grad}_{\mu_s}f_s$ in $\{s\}\times M_0$ with $\gamma_x(0)=(s,x)$ intersects $\widetilde{B}_p$. $\widetilde{C}_q\subset \widetilde{W}_{pq}$ is defined similarly with respect to $\widetilde{K}_q$ and $\widetilde{B}_q$.

Since $\widetilde{K}_p\cap\widetilde{K}_q=\emptyset$, we may assume that any trajectory starting from $\widetilde{B}_p$ (resp. $\widetilde{B}_q$) are disjoint from trajectories starting from the complement of $\widetilde{B}_p$ (resp. $\widetilde{B}_q$). Thus we have
\begin{equation}\label{eq:M2(B,L)} 
	\calM_2(f_{J_u};\widetilde{B}_p,\widetilde{L}_a)=\calM_2(f_{J_u};\widetilde{B}_p,\widetilde{A}_p),\quad
	\calM_2(f_{J_u};\widetilde{B}_q,\widetilde{L}_a)=\calM_2(f_{J_u};\widetilde{B}_q,\widetilde{A}_q),
\end{equation}
and in particular, the two moduli spaces are disjoint in $\calM_2(f_{J_u};\widetilde{L}_b,\widetilde{L}_a)$. Since each of $\widetilde{C}_p$ and $\widetilde{C}_q$ has only one critical locus, the compactifications
$\bcalM_2(f_{J_u};\widetilde{B}_p,\widetilde{A}_p)$, $\bcalM_2(f_{J_u};\widetilde{B}_q,\widetilde{A}_q)$
can be defined in a similar way as Lemma~\ref{lem:dM(W,W)} (using parametrized Morse lemma \cite[Appendix]{Ig2}, assuming $\mu_s$ is Euclidean with respect to the local coordinate). They are smooth manifolds with boundary and are closures of $\calM_2(f_{J_u};\widetilde{B}_p,\widetilde{A}_p)$ and $\calM_2(f_{J_u};\widetilde{B}_q,\widetilde{A}_q)$ in $\widetilde{L}_b\times_{J_u}\widetilde{L}_a$. In accordance with (\ref{eq:M2(B,L)}), we define
\[ \bcalM_2(f_{J_u};\widetilde{B}_p,\widetilde{L}_a)=\bcalM_2(f_{J_u};\widetilde{B}_p,\widetilde{A}_p),\quad
	\bcalM_2(f_{J_u};\widetilde{B}_q,\widetilde{L}_a)=\bcalM_2(f_{J_u};\widetilde{B}_q,\widetilde{A}_q).
\]
We construct an extension of $\bcalM_2(f_{J_u};\widetilde{B}_p,\widetilde{L}_a)\tcoprod\bcalM_2(f_{J_u};\widetilde{B}_q,\widetilde{L}_a)$ to $\bcalM_2(f_{J_u};\widetilde{L}_b,\widetilde{L}_a)$ as follows. Let $\widetilde{X}\subset \widetilde{L}_b$ be the closure of the complement of $\widetilde{B}_p\cup\widetilde{B}_q$. Since there is no critical loci except $p$ and $q$ in $\widetilde{W}_{pq}$, the negative gradient flow carries $\widetilde{X}$ diffeomorphically onto a compact subset $\widetilde{Y}$ of $\widetilde{L}_a$, where $\widetilde{Y}$ is the closure of the complement of $\widetilde{A}_p\cup\widetilde{A}_q$. Hence $\calM_2(f_{J_u};\widetilde{X},\widetilde{L}_a)=\calM_2(f_{J_u};\widetilde{X},\widetilde{Y})\approx\widetilde{X}$, which is compact. The union
$\bcalM_2(f_{J_u};\widetilde{B}_p,\widetilde{L}_a)\cup
	\calM_2(f_{J_u};\widetilde{X},\widetilde{L}_a)\cup
	\bcalM_2(f_{J_u};\widetilde{B}_q,\widetilde{L}_a)$
is a smooth manifold with boundary and is the closure of $\calM_2(f_{J_u};\widetilde{L}_b,\widetilde{L}_a)$ in $\widetilde{L}_b\times_{J_u}\widetilde{L}_a$, namely, agrees with $\bcalM_2(f_{J_u};\widetilde{L}_b,\widetilde{L}_a)$.

For the compactifications $\bcalM_2(f_{J_u};\widetilde{L}_b,\widetilde{W}_{pq})$, $\bcalM_2(f_{J_u};\widetilde{W}_{pq},\widetilde{L}_a)$, $\bcalM_2(f_{J_u};\widetilde{W}_{pq},\widetilde{W}_{pq})$ etc. we consider $\bcalM_2(f_{J_u};\widetilde{L}_b,\widetilde{C}_p)$, $\bcalM_2(f_{J_u};\widetilde{C}_p,\widetilde{L}_a)$, $\bcalM_2(f_{J_u};\widetilde{C}_p,\widetilde{C}_p)$ etc. by a similar way as the unparametrized case and extend them as previous paragraph. 
\end{proof}

\begin{proof}[Proof of Lemma~\ref{lem:bM_2_level_exchg}]
We may assume that all the critical loci except $p$ and $q$ are ordered over the interval $J_u$ and according to Lemma~\ref{lem:generic_1-para}, we may assume that $(f_s,\mu_s)$ is Morse--Smale for all $s\in J_s$ if $\ve$ is sufficiently small. Thus fiber-product construction similar to Lemma~\ref{lem:hyp_p} can be applied and we will finally get a compactification $\bcalM_2(f_{J_u})$ of $\calM_2(f_{J_u})$. Then straightforward analogues of Lemma~\ref{lem:glue_M2}, \ref{lem:hyp1} and \ref{lem:hyp_p} show that $\bcalM_2(f_{J_u})-\bar{b}^{-1}(\widehat{\Delta}_M)$ is a smooth manifold with corners.
\end{proof}

By the same construction at all the level exchange points $u_1,u_2,\ldots,u_r\in J_u$, we will obtain a compactification $\bcalM_2$ on $\coprod_{j=1}^r J_{u_j}$.

\begin{Rem}
We assumed in Lemma~\ref{lem:bM_2_level_exchg} that $\mu_s$ is Euclidean near critical loci with respect to the local coordinate of parametrized Morse lemma. However, this assumption is not essential because if $\mu_s$ is not Euclidean near critical loci, then the flow lines near a critical locus are the images of flow lines in $J_u\times \R^3$ for the standard quadratic form with respect to the Euclidean metric of $\R^3$ under a fiber-preserving diffeomorphism defined on a neighborhood of $J_u\times\{0\}$. This remark will be taken into account to make sure that the compactification $\overline{\calM}_2(f_J)$ in Proposition~\ref{prop:bM_2(f_J)} is consistent with that at a birth-death bifurcation. 
\end{Rem}

%%%%%
\subsubsection{The moduli space $\bcalM_2(f_J)$ on ordered 1-parameter family of Morse pairs}
Next, we extend the compactifications of moduli spaces on $\coprod_{j=1}^r J_{u_j}$, given in \S\ref{ss:mod_level_ex}, over the whole of $J$. We assume $u_1<u_2<\cdots<u_r$. Let $I_j\subset J$, $j=0,1,2,\ldots,r$ be a sequence of mutually disjoint compact intervals such that 
\begin{enumerate}
\item $\bigcup_{j=0}^r I_j \cup \bigcup_{j=1}^r J_{u_j}=J$,
\item $\mathrm{Int}\,I_j\cap \mathrm{Int}\,J_{u_j}\neq \emptyset$ if $j>0$, and $\mathrm{Int}\, I_j\cap \mathrm{Int}\,J_{u_{j+1}}\neq\emptyset$ if $j<r$,
\item $(\coprod_{j=0}^r I_j)\cap \{u_1,\ldots,u_r\} =\emptyset$,
\item $I_j\subset (u_j,u_{j+1})$ if $1\leq j<r$.
\end{enumerate}
See Figure~\ref{fig:graphic_2}. We shall construct a compactification $\bcalM_2(f_{I_j})$ of $\calM_2(f_{I_j})$, which connects $\bcalM_2(f_{J_{u_j}})$ and $\bcalM_2(f_{J_{u_{j+1}}})$. 

\begin{Lem}\label{lem:bM_2_ordered}
Let $I_j$ be as above. Then there is a natural compactification $\bcalM_2(f_{I_j})$ of $\calM_2(f_{I_j})$ such that $\bcalM_2(f_{I_j})-\bar{b}^{-1}(\widehat{\Delta}_M)$ is a smooth manifold with corners whose codimension $k$ stratum for $k\geq 1$ consists of families of $k$ times broken trajectories and $\partial_{k-1}\bcalM_2(f_{\partial I_j})$. 
\end{Lem}
\begin{proof}
For each $j$, the critical values are consistently ordered over $I_j$, so we can separate critical loci by families of level surfaces. The compactification of the moduli space of trajectories that lie in a piece between level surfaces can be done as before, by means of the parametrized Morse lemma (e.g., \cite[Appendix]{Ig2}) and by the same argument as \S\ref{ss:mod_short}. 

Recall that in Lemma~\ref{lem:glue_M2}, the Morse--Smale condition is required. However, the Morse--Smale condition may not be satisfied for all $s\in I_j$. For example, it fails at an $i/i$-intersection bifurcation, as we have seen at Lemma~\ref{lem:generic_1-para}. Instead, we require the parametrized Morse--Smale condition and this suffices for the moduli space to be a smooth submanifold of a fiber bundle over $I_j$ (with fiber $\Conf_2(M)$), though the moduli space may not be a subbundle. Using the parametrized Morse--Smale condition in the fiber-product constructions, we may get a compactification $\overline{\calM}_2(f_{I_j})$ as desired. 
\end{proof}

\begin{proof}[Proof of Proposition~\ref{prop:bM_2(f_J)}]
\begin{figure}
\fig{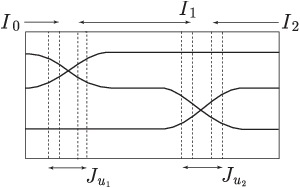}
\caption{}\label{fig:graphic_2}
\end{figure}
It remains to check that the compactifications obtained on $J_{u_j}$ and $I_j$ in Lemma~\ref{lem:bM_2_level_exchg} and \ref{lem:bM_2_ordered} respectively can be glued smoothly on the overlapping intervals $I_j\cap J_{u_j}$ and $I_j\cap J_{u_{j+1}}$. Let $\widetilde{L}_1,\ldots,\widetilde{L}_{N-2}$ be the loci of level surfaces for $f_{J_{u_{j}}}$ that are used to define $\bcalM_2(f_{u_j})$ and let $\widetilde{L}_1',\ldots,\widetilde{L}_{N-1}'$ be the loci of level surfaces for $f_{I_j}$ that are used to define $\bcalM_2(f_{I_j})$. We may assume without loss of generality that $\widetilde{L}_i$ and $\widetilde{L}_j'$ are disjoint for any $i,j$ (see Remark~\ref{rem:M2_nomfd}(3)). Let $\bcalM_2^\cap(f_{J_{u_j}\cap I_j})$ be the compactification of $\calM_2(f_{J_{u_j}\cap I_j})$ defined by using the loci of level surfaces $\widetilde{L}_1,\ldots,\widetilde{L}_{N-2},\widetilde{L}_1',\ldots,\widetilde{L}_{N-1}'$. Then there are natural embeddings
\[ \bcalM_2^\cap(f_{J_{u_j}\cap I_j})\to \bcalM_2(f_{J_{u_j}}),\quad
 \bcalM_2^\cap(f_{J_{u_j}\cap I_j})\to \bcalM_2(f_{I_j}) \]
which gives a strata preserving gluing map between $\bcalM_2(f_{J_{u_j}})$ and $\bcalM_2(f_{I_j})$. We consider $\bcalM_2^\cap(f_{J_{u_j}\cap I_j})$ as a subspace of both $\bcalM_2(f_{J_{u_j}})$ and $\bcalM_2(f_{I_j})$. Let
\[ \bcalM_2(f_{J_{u_j}\cup I_j})=\bcalM_2(f_{J_{u_j}})\cup_{\bcalM_2^\cap(f_{J_{u_j}\cap I_j})}\bcalM_2(f_{I_j}). \]
For other overlapping intervals, we also glue compactifications similarly.
\end{proof}

%%%%%%%%%%%%%%%%%%%%%%%%%%%%%%
\subsection{Gluing of a separated trajectory at birth-death bifurcation}\label{ss:gluing_separate}

Let $s_0\in [0,1]$ be a birth-death bifurcation in a generic 1-parameter family $(f_I,\mu_I)=\{(f_s,\mu_s)\}_{s\in [0,1]}$. Let $p_+$ and $p_-$ be the critical loci of $f_I$ that are involved in the birth or death bifurcation $s_0$, such that $i(p_+)=i(p_-)+1$. The space $\calN_{p_+p_-}(f_I)\subset [0,1]\times \Conf_2(M)$ can be considered as the moduli space of `separated' trajectories. In this subsection we shall see that $\calN_{p_+p_-}(f_I)$ and $\calM_2(f_I)$ are smoothly glued together at the time $s=s_0$. Here, we shall only study a death point since a birth point is symmetric. 

Let $s_0\in [0,1]$ be a death parameter in a generic 1-parameter family and let $J_{s_0}\subset [0,1]$ be a small open interval including $s_0$. Let $v\in M_0$ be the death point at $s_0$. By the normal form lemma for an unfolding of a birth-death singularity (e.g., \cite[Appendix]{Ig2}, \cite{Ce}), there is a local coordinate on a neighborhood $M_v$ of $v$ in $J_{s_0}\times M$ on which $f_s$ agrees with
\[ h_u(x)=c(u)+\frac{x_1^3}{3}+ux_1-\frac{x_2^2}{2}-\cdots-\frac{x_i^2}{2}
	+\frac{x_{i+1}^2}{2}+\cdots+\frac{x_d^2}{2},\quad u\in\R, \]
where $u$ is a reparametrization of $s$ and $c(u)$ is a smooth function of $u$, and one can choose a metric on $J_{s_0}\times M_0$ whose restriction on $M_v$ agrees with the restriction of the standard metric on $\R\times\R^d$. The negative gradient of $h_u$ with respect to the standard metric is
\[ -\mathrm{grad}\,h_u=(-x_1^2-u,x_2,\ldots,x_i,-x_{i+1},\ldots,-x_d). \]
On $u>0$, there are no critical points of $h_u$. At $u=0$, there is only one critical point of $h_u$ at the origin, and on $u<0$, there are exactly two critical points $p_\pm=(\pm\sqrt{|u|},0,\ldots,0)$ of $h_u$. From now on we shall describe how a pair of trajectories going from/to critical points of $h_u$ on $u<0$ are glued together into a single trajectory on $u>0$. It gives a gluing of a moduli space of a separated edge and that of a compact edge.

%%%%%
\subsubsection{Gradient trajectories of $h_u$ in $u>0$}

Here, we assume for simplicity that $c(u)=0$ for $u\in \R$, which does not affect the gradients. The integral curve $\gamma:\R\to \R^d$, $\gamma(t)=(\gamma_1(t),\ldots,\gamma_d(t))$ of $-\mathrm{grad}\,h_u$ is determined by the differential equations:
\begin{equation}\label{eq:ode}
\begin{split}
\dot{\gamma}_1(t)&=-\gamma_1(t)^2-u,\ \dot{\gamma}_2(t)=\gamma_2(t),\ \ldots,\ \dot{\gamma}_i(t)=\gamma_i(t),\\
	\dot{\gamma}_{i+1}(t)&=-\gamma_{i+1}(t),\ \ldots,\ \dot{\gamma}_d(t)=-\gamma_d(t),
	\end{split}
\end{equation}
for each given initial point $(\gamma_1(0),\ldots,\gamma_d(0))$. In $u>0$, the solution of (\ref{eq:ode}) is given explicitly by
\begin{equation}\label{eq:solution}
 \begin{split}
\gamma_1(t)&=\frac{\sqrt{u}\,\gamma_1(0)-u\tan{\sqrt{u}\,t}}{\sqrt{u}+\gamma_1(0)\tan{\sqrt{u}\,t}},\ \gamma_2(t)=\gamma_2(0)e^t,\ \ldots,\ \gamma_i(t)=\gamma_i(0)e^t,\\
	\gamma_{i+1}(t)&=\gamma_{i+1}(0)e^{-t},\ \ldots,\ \gamma_d(t)=\gamma_d(0)e^{-t}. 
\end{split}
\end{equation}

For a small number $\varepsilon>0$, let $L_\varepsilon$ and $L_{-\varepsilon}$ be the subsets of $\R^d$ given by
\[ \begin{split}
	L_\varepsilon&=\{(\varepsilon,x_2,\ldots,x_d)\in\R^d\,;\,x_2,\ldots,x_d\in\R\},\\
	L_{-\varepsilon}&=\{(-\varepsilon,x_2,\ldots,x_d)\in\R^d\,;\, x_2,\ldots,x_d\in\R\}.
	\end{split}
\]
These are approximations of level surfaces at the levels $\pm \ve$ in a neighborhood of the origin. Since $-\mathrm{grad}_{(0,x_2,\ldots,x_d)}\,h_u$ $=$ $(-u,x_2,\ldots,x_i,-x_{i+1},\ldots,-x_d)$, one may see that any trajectory of $h_u$ in $u>0$ and in $M_v$ intersects both $L_\varepsilon$ and $L_{-\varepsilon}$. Conversely, for any point $a$ of $L_\varepsilon\cap M_v$ (resp. $L_{-\varepsilon}\cap M_v$), there exists a unique (shift equivalence class of) gradient trajectory of $h_u$ which intersects $L_\varepsilon$ (resp. $L_{-\varepsilon}$) at $a$. So there is a one-to-one correspondence between a point on $L_\varepsilon$ or $L_{-\varepsilon}$ and a gradient trajectory of $h_u$ that is close to the origin. We identify a gradient trajectory with the pair of its intersection points with $L_{-\ve}\tcoprod L_\ve$.

Now suppose that an integral curve $\gamma(t)$ of $-\mathrm{grad}\,h_u$ starts at a point of $L_\ve$. We shall describe the point $\mathrm{Im}\,\gamma \cap L_{-\varepsilon}$. If $\gamma(t_{-\varepsilon})\in L_{-\varepsilon}$ at $t_{-\varepsilon}>0$, then by (\ref{eq:ode}),
\[ t_{-\varepsilon}=-\int_{\ve}^{-\varepsilon}\frac{dx}{x^2+u}
	=\frac{2}{\sqrt{u}}\Tan^{-1}\frac{\ve}{\sqrt{u}}
\]
for $0<u<\varepsilon^2$. We put $\tau_\varepsilon(u)=\frac{2}{\sqrt{u}}\Tan^{-1}\frac{\ve}{\sqrt{u}}$. Then $\tau_\varepsilon(u)$ has the following expansion (convergent on $0<u<\ve^2$):
\begin{equation}\label{eq:tau}
 \tau_\varepsilon(u)=\frac{\pi}{\sqrt{u}}-2\sum_{k=0}^\infty \frac{(-1)^k}{(2k+1)\varepsilon^{2k+1}}u^k. 
\end{equation}
Indeed, by the identity $\frac{1}{\tan{\alpha}}=-\tan(\alpha+\frac{\pi}{2})$, we have 
\[ \Tan^{-1}\frac{\ve}{\sqrt{u}}+\frac{\pi}{2}=-\Tan^{-1}\frac{\sqrt{u}}{\ve}+\pi=\pi-\sum_{k=0}^\infty \frac{(-1)^k}{2k+1}\left(\frac{\sqrt{u}}{\ve}\right)^{2k+1}. \]
The point $\gamma(t_{-\varepsilon})$ can be expressed by using $\tau_\varepsilon(u)$ as follows.
\[ \gamma(t_{-\varepsilon})
	=\bigl(
	-\varepsilon,
	\gamma_2(0)e^{\tau_\varepsilon(u)},\ldots,\gamma_i(0)e^{\tau_\varepsilon(u)},
	\gamma_{i+1}(0)e^{-\tau_\varepsilon(u)},\ldots,\gamma_d(0)e^{-\tau_\varepsilon(u)}	
	\bigl).
\]
If we put $\varepsilon_2=\gamma_2(0)e^{\tau_\varepsilon(u)},\ldots,\varepsilon_i=\gamma_i(0)e^{\tau_\varepsilon(u)},\varepsilon_{i+1}=\gamma_{i+1}(0),\ldots,\varepsilon_d=\gamma_d(0)$, then the integral curve starting at the point
\begin{equation}\label{eq:start}
 \bigl(\varepsilon,\varepsilon_2 e^{-\tau_\varepsilon(u)},\ldots,\varepsilon_i e^{-\tau_\varepsilon(u)},\varepsilon_{i+1},\ldots,\varepsilon_d\bigr)\in L_\varepsilon 
\end{equation}
intersects $L_{-\varepsilon}$ at the point
\begin{equation}\label{eq:goal}
 \bigl(-\varepsilon,\varepsilon_2,\ldots,\varepsilon_i,
	\varepsilon_{i+1}e^{-\tau_\varepsilon(u)},\ldots,\varepsilon_d e^{-\tau_\varepsilon(u)}\bigr)\in L_{-\varepsilon}. 
\end{equation}
This observation motivates the gluing formula below.

%%%%%
\subsubsection{Gradient trajectories of $h_u$ going from/to critical points in $u\leq 0$}

In $u<0$, the ascending and descending manifolds of $h_u$ are described as follows. 
\[\begin{split}
	\calA_{p_+}(h_u)&=\{x_2=\cdots=x_i=0,x_1\geq -\sqrt{|u|}\},\\
        \calD_{p_+}(h_u)&=\{x_{i+1}=\cdots=x_d=0,x_1=\sqrt{|u|}\},\\
	\calA_{p_-}(h_u)&=\{x_2=\cdots=x_i=0,x_1=-\sqrt{|u|}\},\\
	\calD_{p_-}(h_u)&=\{x_{i+1}=\cdots=x_d=0,x_1\leq\sqrt{|u|}\}.
\end{split}
\]
See Figure~\ref{fig:p_pm}. Hence
\begin{equation}\label{eq:LALD}
 \begin{split}
L_\varepsilon\cap \calA_{p_+}(h_u)&=
	\{(\varepsilon,0,\ldots,0,\varepsilon_{i+1},\ldots,\varepsilon_d)\in\R^d\,;\,\varepsilon_{i+1},\ldots,\varepsilon_d\in\R\},\\
L_{-\varepsilon}\cap \calD_{p_-}(h_u)&=
	\{(-\varepsilon,\varepsilon_2,\ldots,\varepsilon_i,0,\ldots,0)\in\R^d\,;\,\varepsilon_2,\ldots,\varepsilon_i\in\R\}.
\end{split}
\end{equation}
\begin{figure}%
\fig{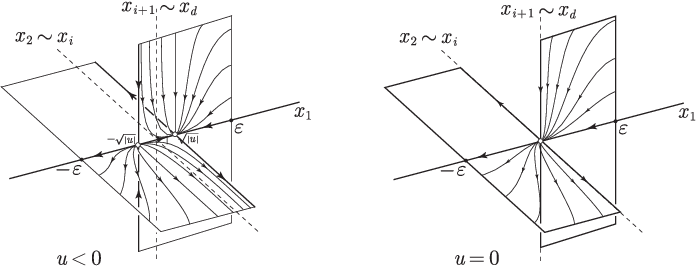}%
\caption{}\label{fig:p_pm}%
\end{figure}%
One may check that this also holds for $u=0$, in which case $p_+=p_-=v$.

%%%%%
\subsubsection{Gluing formula at $u=0$}

We define an injective map
\[ \varphi:(-\varepsilon^2,\varepsilon^2)\times\R^{d-1}\to (-\varepsilon^2,\varepsilon^2)
	\times \R^{d-1}\times \R^{d-1}\quad \mbox{by}\]
\[ \varphi(u;\ve_2,\ldots,\ve_d)=\left\{\begin{array}{ll}
	u\times\bigl(\ve_2 e^{-\tau_\ve(u)},\ldots,\ve_i e^{-\tau_\ve(u)},\ve_{i+1},\ldots,\ve_d\bigr)&\\
	\hspace{5mm}
	\times\bigl(\ve_2,\ldots,\ve_i,\ve_{i+1}e^{-\tau_\ve(u)},\ldots,\ve_d e^{-\tau_\ve(u)}\bigr),&
		\mbox{if $u>0$}\\
	u\times\bigl(0,\ldots,0,\ve_{i+1},\ldots,\ve_d\bigr)\times\bigl(\ve_2,\ldots,\ve_i,0,\ldots,0\bigr),&	\mbox{if $u\leq 0$}\\	
	\end{array}\right. \]
For any $u$ we may identify the space of gradient trajectories of $h_u$ or pairs of gradient trajectories of $h_u$ intersecting both $L_\ve$ and $L_{-\ve}$, with a subspace of $L_\ve\times L_{-\ve}$ through $\varphi$. By (\ref{eq:LALD}), the non-positive part $\varphi((-\ve^2,0]\times \R^{d-1})$ is the space of pairs $(\gamma_{p_+}(t),\gamma_{p_-}(t))$ of integral curves of $-\mathrm{grad}\,h_u$ satisfying the conditions
\begin{equation}\label{eq:g_pm}
	\lim_{t\to\infty}\gamma_{p_+}(t)=p_+,\ \gamma_{p_+}(0)\in L_{\ve},\quad \lim_{t\to -\infty}\gamma_{p_-}(t)=p_-,\ \gamma_{p_-}(0)\in L_{-\ve}.
\end{equation}
On the other hand, by (\ref{eq:start}) and (\ref{eq:goal}), the positive part $\varphi((0,\ve^2)\times\R^{d-1})$ is the space of negative gradient trajectories of $h_u$, $u>0$, near the origin. In other words,
\[ \begin{split}
  \varphi((-\ve^2,0]\times \R^{d-1})&=\calN_{p_-p_+}(\{h_u\}_{u\in(-\ve^2,0]})\cap (-\ve^2,0]\times(L_{\ve}\times L_{-\ve}),\\
\varphi((0,\ve^2)\times\R^{d-1})&=\calM_2(\{h_u\}_{u\in(0,\ve^2)})\cap (0,\ve^2)\times(L_{\ve}\times L_{-\ve}).
\end{split}\]

The following proposition gives a gluing of moduli spaces of short trajectories. 
\begin{Prop}\label{prop:gluing}
The map $\varphi$ is smooth and is an embedding. Hence $\mathrm{Im}\,\varphi$ is a smooth submanifold of $(-\ve^2,\ve^2)\times(L_\ve\times L_{-\ve})$ without boundary.
\end{Prop}
\begin{proof}
Let $\sigma_\ve:(-\ve^2,\ve^2)\to \R$ be the function defined by
\[ \sigma_\ve(u)=\left\{\begin{array}{ll}
	e^{-\tau_\ve(u)} & \mbox{if $u>0$}\\
	0 & \mbox{if $u\leq 0$}
	\end{array}\right. \]
The map $\varphi$ can be rewritten as
\[ \begin{split}
\varphi(u;\ve_2,\ldots,\ve_d)
	&=u\times (\ve_2\sigma_\ve(u),\ldots,\ve_i\sigma_\ve(u),\ve_{i+1},\ldots,\ve_d)\\
	&\hspace{15mm}
	\times(\ve_2,\ldots,\ve_i,\ve_{i+1}\sigma_\ve(u),\ldots,\ve_d\sigma_\ve(u)).
\end{split}
\]
We will see in the two lemmas below that $\sigma_\ve$ is $C^\infty$ differentiable. Hence $\varphi$ is $C^\infty$ differentiable. That the Jacobian matrix has full rank is obvious from the definition of $\varphi$ on $u\leq 0$. Hence $\varphi$ is an embedding.
\end{proof}

\begin{Lem}For any integer $n\geq 1$, there exist power series $P_n(u),Q_n(u)\in \R[[u]][\sqrt{u}]$, which are well-defined as $C^\infty$ differentiable functions on $(0,\ve^2)$ such that $P_n(u)=\bigl(\sqrt{u}(2u^2+2\ve^2 u)\bigr)^n$, $\lim_{u\to 0}Q_n(u)=0$ and
\[ \frac{d^n}{du^n}e^{-\tau_\ve(u)}
	=\frac{Q_n(u)+\pi^n\ve^{2n}}{P_n(u)}e^{-\tau_\ve(u)}.\]
\end{Lem}
\begin{proof}
We prove the lemma by induction on $n$. The case $n=0$ is obvious. Suppose the assertion holds true for $n$. Then on $(0,\ve^2)$, $\displaystyle\frac{d^{n+1}}{du^{n+1}}e^{-\tau_\ve(u)}$ equals 
\begin{equation}\label{eq:dn}
 \begin{split}
&\frac{d}{du}\left(\frac{Q_n(u)+\pi^n\ve^{2n}}{P_n(u)}e^{-\tau_\ve(u)}\right)=-\frac{e^{-\tau_\ve(u)}}{\sqrt{u}(2u^2+2\ve^2 u)P_n(u)}\\
	&\hspace{7mm}\times\left[
		\sqrt{u}\Bigl\{\Bigl((2u+2\ve^2)\frac{uP_n'(u)}{P_n(u)}-2\ve\Bigr)\pi^n\ve^{2n}
			+(-2u-2\ve^2)uQ_n'(u)\right.\\
	&\hspace{45mm}
		+(2u+2\ve^2)Q_n(u)\frac{uP'_n(u)}{P_n(u)}-2\ve Q_n(u)\Bigr\}\\
	&\hspace{18mm}
		+(-2u-2\ve^2)(Q_n(u)+\pi^n\ve^{2n}\left.)\Tan^{-1}\frac{\ve}{\sqrt{u}}\right].
\end{split}
\end{equation}
By (\ref{eq:tau}), one may see that 
\[ \begin{split}
	&\Tan^{-1}\frac{\ve}{\sqrt{u}}\in\R[[u]][\sqrt{u}],\quad
		\frac{uP_n'(u)}{P_n(u)}\in\R[[u]][\sqrt{u}],\\
	&\lim_{u\to 0}\Tan^{-1}\frac{\ve}{\sqrt{u}}=\frac{\pi}{2},\quad
		\lim_{u\to 0}uQ'_n(u)=0,\quad \lim_{u\to 0}\frac{uP'_n(u)}{P_n(u)}=\frac{3n}{2}.
\end{split}
\]
Indeed, putting $Q_n(u)=b_{\frac{1}{2}}\sqrt{u}+b_1u+b_{\frac{3}{2}}u\sqrt{u}+b_2u^2+\cdots$, we have $uQ_n'(u)=\frac{b_{\frac{1}{2}}}{2}\sqrt{u}+b_1u+\frac{3}{2}b_{\frac{3}{2}}u\sqrt{u}+2b_2u^2+\cdots$ and 
\[ \begin{split}
\frac{uP_n'(u)}{P_n(u)}&=\frac{n\sqrt{u}(5u+3\ve^2)(\sqrt{u}(2u^2+2\ve^2u))^{n-1}}{(\sqrt{u}(2u^2+2\ve^2u))^n}=\frac{n(5u+3\ve^2)}{2(u+\ve^2)}. 
\end{split}\]
This implies that the right hand side of (\ref{eq:dn}) is of the form
\[ \frac{Q_{n+1}(u)+\pi^{n+1}\ve^{2n+2}}{\sqrt{u}(2u^2+2\ve^2 u)P_n(u)}e^{-\tau_\ve(u)} \]
for a $C^\infty$ function $Q_{n+1}(u)\in\R[[u]][\sqrt{u}]$ with $\displaystyle\lim_{u\to 0}Q_{n+1}(u)=0$ that is well-defined on $(0,\ve^2)$. The proof completes if we put $P_{n+1}(u)=\sqrt{u}(2u^2+2\ve^2 u)P_n(u)$.
\end{proof}

\begin{Lem}\label{lem:dn=0}
For all $n\geq 0$, $\displaystyle\lim_{u\to 0}\frac{d^n}{du^n}e^{-\tau_\ve(u)}=0$.
\end{Lem}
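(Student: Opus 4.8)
The plan is to combine the explicit expression for the derivatives produced in the preceding lemma with a crude asymptotic bound on $e^{-\tau_\ve(u)}$ as $u\to 0^+$, using the fact that exponential decay dominates any polynomial growth.

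First I would unwind the recursion appearing in the proof of the preceding lemma, $P_0(u)=1$ and $P_{n+1}(u)=\sqrt{u}(2u^2+2\ve^2 u)P_n(u)$, to obtain the closed form $P_n(u)=\bigl(2u^{3/2}(u+\ve^2)\bigr)^n$. Since $u+\ve^2\geq \ve^2$ for $u\geq 0$, this gives $|P_n(u)^{-1}|\leq (2\ve^2 u^{3/2})^{-n}$ for $u>0$. The numerator $Q_n(u)+\pi^n\ve^{2n}$ is bounded on some interval $(0,\delta)$ because $Q_n(u)\to 0$. Hence the rational prefactor in the identity
\[ \frac{d^n}{du^n}e^{-\tau_\ve(u)}=\frac{Q_n(u)+\pi^n\ve^{2n}}{P_n(u)}\,e^{-\tau_\ve(u)} \]
is, near $u=0$, bounded in absolute value by a constant times $u^{-3n/2}$.

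Next I would bound $e^{-\tau_\ve(u)}$ using the expansion (\ref{eq:tau}): writing $\tau_\ve(u)=\frac{\pi}{\sqrt{u}}-2S(u)$ with $S$ the convergent power series $\sum_{k\geq 0}\frac{(-1)^k}{(2k+1)\ve^{2k+1}}u^k$, which is bounded by some $B<\infty$ on a neighborhood of $0$, one gets $\tau_\ve(u)\geq \frac{\pi}{\sqrt{u}}-2B$ and therefore $e^{-\tau_\ve(u)}\leq e^{2B}\,e^{-\pi/\sqrt{u}}$ there. Combining the two estimates yields, for $u$ near $0$,
\[ \Bigl|\,\frac{d^n}{du^n}e^{-\tau_\ve(u)}\,\Bigr|\leq C_n\, u^{-3n/2}\,e^{-\pi/\sqrt{u}}, \]
and the substitution $t=1/\sqrt{u}$ turns the right-hand side into $C_n\,t^{3n}e^{-\pi t}$, which tends to $0$ as $t\to\infty$. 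This gives $\lim_{u\to 0}\frac{d^n}{du^n}e^{-\tau_\ve(u)}=0$.

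There is no genuinely hard step; the only point requiring a little care is that the auxiliary series $P_n$, $Q_n$ from the previous lemma are only controlled on $(0,\ve^2)$, so one must use the explicit product formula for $P_n$ rather than merely the fact that $P_n(u)\to 0$. (Alternatively one can write $e^{-\tau_\ve(u)}=e^{-g(u)}e^{-\pi/\sqrt{u}}$ with $g$ real-analytic at $0$, note that $u\mapsto e^{-\pi/\sqrt{u}}$ extended by $0$ for $u\leq 0$ is $C^\infty$ and flat at $0$, and conclude that the product is flat at $0$; but the estimate above is more self-contained given what has already been established.)
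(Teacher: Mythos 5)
Your proof is correct, and it takes a genuinely different route from the paper's. The paper writes the ratio as $e^{-\tau_\ve(u)}/P_n(u)=\exp(s_n(u))$ with $s_n(u)=-\tau_\ve(u)-\log P_n(u)$, computes $u\,s_n'(u)$ from the explicit formulas for $\tau_\ve$ and $P_n$, shows $\lim_{u\to 0}u\,s_n'(u)=\infty$, and concludes $s_n(u)\to-\infty$ (this last inference genuinely needs the $u\,s_n'$ estimate rather than just $s_n'\to\infty$, but once $u\,s_n'(u)\geq M$ one integrates $s_n'\geq M/u$ to get the logarithmic divergence). You instead unwind the recursion $P_{n+1}=2u^{3/2}(u+\ve^2)P_n$ to a closed form, bound $1/P_n(u)\leq(2\ve^2)^{-n}u^{-3n/2}$, bound $e^{-\tau_\ve(u)}\leq e^{2B}e^{-\pi/\sqrt u}$ from the expansion of $\tau_\ve$, and finish with the substitution $t=1/\sqrt u$ and the elementary fact that $t^{3n}e^{-\pi t}\to 0$. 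Your approach is more concrete and self-contained: it replaces the paper's derivative-at-the-exponent argument with a direct quantitative bound, and your parenthetical alternative (write $e^{-\tau_\ve(u)}=e^{-g(u)}e^{-\pi/\sqrt u}$ with $g$ analytic at $0$ and observe that $e^{-\pi/\sqrt u}$, extended by $0$, is smooth and flat at $0$) bypasses the preceding lemma entirely and is arguably the cleanest proof of the flatness being asserted. The paper's version, by working with $u\,P_n'/P_n$ rather than the closed form, would adapt more readily if the recursion for $P_n$ were changed, but for the statement at hand your argument is simpler.
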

\begin{proof}
Since $\lim_{u\to 0}(Q_n(u)+\pi^n\ve^{2n})=\pi^n\ve^{2n}$, it suffices to show that 
\[ \lim_{u\to 0}\frac{e^{-\tau_\ve(u)}}{P_n(u)}=\lim_{u\to 0}\frac{\exp(-\frac{\pi}{\sqrt{u}})\exp(2\sum_{k=0}^\infty \frac{(-1)^k}{(2k+1)\ve^{2k+1}}u^{k})}{(\sqrt{u}(2u^2+2\ve^2u))^n}=0. \]
Since $\lim_{u\to 0}\exp(2\sum_{k=0}^\infty \frac{(-1)^k}{(2k+1)\ve^{2k+1}}u^{k})=\frac{2}{\ve}$, the result follows by 
\[ \lim_{u\to 0}\frac{\exp(-\frac{\pi}{\sqrt{u}})}{(\sqrt{u}(2u^2+2\ve^2u))^n}=0. \]
\end{proof}

%%%%%%%%%%%%%%%%%%%%%%%%%%%%%%%%%%%%%%%%%%%%%%%%%%%%%%%%%%%%%%%%
\subsection{Compactification of $\calM_\Gamma(\vec{f}_I)$ in generic 1-parameter family}

%%%%%
\subsubsection{Compactification of the moduli space $\calM_\Gamma(\vec{f}_I)$ in 1-parameter family of Morse pairs}

By using the compactification $\bcalM_2(f_J)$ and $\bcalN_{pq}(f_J)$ given in \S\ref{ss:compact_M2}, one can also define the compactification $\bcalM_\Gamma(f_J)$ of $\calM_\Gamma(f_J)$ in a similar way as \S\ref{ss:comp_fuk}. We have the following proposition.
\begin{Prop}\label{prop:comp_M_1-para}
Suppose $d=3$, $\Gamma\in\calG_{2k,3k}^0(\vec{C})$ and that $\Gamma$ does not have a bivalent vertex. After a small perturbation of the family $(f_J,\mu_J)=\{(f_s,\mu_s)\}_{s\in J}$ of Morse pairs fixing the endpoints, we may arrange that $\bcalM_\Gamma(\vec{f}_J)$ is a compact smooth 1-manifold with boundary. The boundary consists of flow graphs with a once broken trajectory or with a subgraph collapsed to a point.
\end{Prop}

The proof of Proposition~\ref{prop:comp_M_1-para} is analogous to Proposition~\ref{prop:M_G-1-mfd} (proof in \S\ref{ss:comp_fuk}). Namely, we construct a singular compactification $\bcalM_\Gamma^\times(\vec{f}_J)$ in $\prod_{j=1}^{3k}\widetilde{Q}_j$, where $\widetilde{Q}_j$ is either $\bcalM_2(\vec{f}_J)$ or $\bcalN_{pq}(\vec{f}_J)$. Then a sequence of blowing-ups along the diagonals yields $\bcalM_\Gamma(\vec{f}_J)$.

%%%%%
\subsubsection{Gluing of $\bcalM_\Gamma(\vec{f}_I)$ at birth-death point}

Let $s_0\in[0,1]$ be a death parameter in a generic 1-parameter family $\{(f_s,\mu_s)\}_{s\in[0,1]}$. For sufficiently small number $\ve'>0$, let $(p_1,q_1)$ be the pair of critical points of $f_{s_0-\ve'}$, such that $i(p_1)=i(q_1)+1$ and such that they are eliminated on $s>s_0$ after passing through the death point $v$. Then we have the following proposition.

\begin{Prop}\label{prop:glue_bd}
Suppose $d=3$ and that $s_0$ is as above. Let $\Gamma(p_1,q_1)_1\in\calG_{2k,3k}(\vec{C}^{(s_0-\ve')})$ be a graph with no bivalent vertices and let $\Gamma(\emptyset,\emptyset)_1$ be the graph obtained from $\Gamma(p_1,q_1)_1$ by replacing the edge $\beta(1)$ with a compact edge. If $\ve'$ is sufficiently small, then the embedding $\varphi$ of Proposition~\ref{prop:gluing} induces a smooth compact 1-dimensional cobordism between 
\[ \bcalM_{\Gamma(\emptyset,\emptyset)_1}(\vec{f}_{s_0+\ve'})\mbox{ and }\bcalM_{\Gamma(p_1,q_1)_1}(\vec{f}_{s_0-\ve'})\tcoprod \bcalM_{\Gamma(\emptyset,\emptyset)_1}(\vec{f}_{s_0-\ve'}).\]
\end{Prop}
\begin{proof}
If $d=3$, then by Proposition~\ref{prop:M_mfd}, $\dim\bcalM_\Gamma(\vec{f}_s)=0$ for $\Gamma\in\calG_{2k,3k}^0(\vec{C}^{(s)})$. If $\ve'$ is sufficiently small, then there exists $\ve>0$ such that the pair of half trajectories that converge to $p_1$ and $q_1$ intersects $M_v\cap L_{-\ve}$ and $M_v\cap L_{\ve}$ respectively, since the broken trajectory at the limit $s=s_0$ satisfies this property. Thus we may use $\mathrm{Im}\,\varphi$ of Proposition~\ref{prop:gluing} to construct the desired cobordism by a fiber-product construction similar to Lemma~\ref{lem:glue_M2}, \ref{lem:hyp1} and \ref{lem:hyp_p}.
\end{proof}
%\clearpage

%%%%%%%%%%%%%%%%%%%%%%%%%%%%%%%%%%%%%%%%%%%%5
%%%%%%%%%%%%%%%%%%%%%%%%%%%%%%%%%%%%%%%%%%%%
\mysection{(Co)orientation of the moduli spaces in 1-parameter family}{s:coori_1-para}

%%%%%%%%%%%%%%%%%%%%%%%%%%%%%%
\subsection{Convention for (co)orientations in 1-parameter family}

Let $J=[s_0,s_1]$ and let $(f_J,\mu_J)$ be a 1-paremeter family of Morse pairs. In this section, we assume without loss of generality that $f_s=f_{s_0}$ for all $s\in [s_0,s_0+\ve)$ ($\ve>0$ small) and $f_s=f_{s_1}$ for all $s\in (s_1-\ve,s_1]$. We orient $J\times M$ and $J\times \bConf_{2k}(M)$ by
\[ o(J\times M)_{(s,x)}=ds\wedge o(M)_x,\quad o(J\times M^{2k})_{(s,\vec{x})}=ds\wedge o(M^{2k})_{\vec{x}}. \]
We define the coorientations $o^*_{J\times M}(\wcalD_p(f_J))$ and $o^*_{J\times M}(\wcalA_p(f_J))$ so that their restrictions to $\{s_0\}\times M$ are equivalent to $o^*_{\{s_0\}\times M}(\calD_p(f_{s_0}))$ and $o^*_{\{s_0\}\times M}(\calA_p(f_{s_0}))$ respectively. Similarly, we define the coorientations $o^*_{J\times M^2}(\calM_2(f_J))$ and $o^*_{J\times M^2}(\calN_{pq}(f_J))$ so that their restrictions to $\{s_0\}\times M^2$ are equivalent to $o^*_{\{s_0\}\times M^2}(\calM_2(f_{s_0}))$ and $o^*_{\{s_0\}\times M^2}(\calN_{pq}(f_{s_0}))$ respectively. Thus
\[ o^*_{J\times M^2}(\calN_{pq}(f_J))=o^*_{J\times M}(\wcalA_q(f_J))\wedge o^*_{J\times M}(\wcalD_p(f_J)). \]
For $\Gamma\in\calG_{2k,3k}^0(\vec{C})$, we define the coorientation $o^*_{J\times M^{2k}}(\calM_\Gamma(\vec{f}_J))$ so that its restriction to $\{s_0\}\times M^{2k}$ is equivalent to $o^*_{\{s_0\}\times M^{2k}}(\calM_\Gamma(\vec{f}_{s_0}))$. 

If $p$ and $r$ are critical loci of $f_J$ such that $i(p)=i(r)$, then the moduli space
\[ \bbcalM{f_J}{p}{r}=(\wcalD_p(f_J)\pitchfork \wcalA_r(f_J))\pitchfork \widetilde{L}, \]
where $\widetilde{L}$ is the level surface locus that lies just below $p$, is a compact 0-manifold in $\mathrm{Int}\,J\times M$ for a generic family $f_J$. At each point $b\in \bbcalM{f_J}{p}{r}$, the wedge product $o^*_{J\times M}(\wcalD_p(f_J))_b\wedge o^*_{J\times M}(\wcalA_r(f_J))_b\in \bigwedge^d T^*_b\widetilde{L}\subset \bigwedge^d T^*_b(J\times M)$ defines a coorientation of the flow line passing through $b$ (see Appendix~\ref{s:ori} (\ref{eq:coori_int})). We define the sign $\ve_{f_J}(p,r)_b=\pm 1$ so that the following equivalence holds.
\[ o^*_{J\times M}(\wcalD_p(f_J))_b\wedge o^*_{J\times M}(\wcalA_r(f_J))_b \sim \ve_{f_J}(p,r)_b\,\, \iota(-\mathrm{grad}\,f_{s_0})\,o(J\times M)_b. \]

%%%%%%%%%%%%%%%%%%%%%%%%%%%%%%
\subsection{(Co)orientations induced on the boundaries of $\wcalD$, $\wcalA$ at $i/i$-intersection}

Suppose that an $i/i$-intersection occurs at $s=u$. For a small number $\ve>0$, let $J=[u-\ve,u+\ve]$. 
For a parametrized Morse--Smale pair $(f_J,\mu_J)$ and its critical loci $p,q$, we shall describe the induced (co)orientations of the faces $\calF_r\calC\wcalD_p(f_J)$ (resp. $\calF_r\calC\wcalA_q(f_J)$) of $\partial_1\calC\wcalD_p(f_J)$ (resp. $\partial_1\calC\wcalA_q(f_J)$) of flow lines broken at a critical locus $r$, which are induced from the (co)orientation of $\calC\wcalD_p(f_J)$ (resp. $\calC\wcalA_p(f_J)$). 

Let $\bar{b}:\calC\wcalD_p(f_J)\to J\times M$ be the map that assigns to each (possibly broken) flow sequence the terminal endpoint. If $i(p)-i(r)=0$ and if $a$ is a point of $J\times M$ that is the image of $\bar{b}$ from a once broken flow sequence $\hat{a}$ in $\partial_1\calC\wcalD_p(f_J)$ broken at a critical locus $r$, then by Corollary~\ref{cor:compactification_tD} there is an open neighborhood $N_a$ of $a$ in $J\times M$ such that $\bar{b}^{-1}(N_a)$ is a disjoint union of finitely many half-disks whose set of components naturally corresponds to the finite set $\bbcalM{f_J}{p}{r}$. Let $\widehat{N}_{\hat{a}}$ be the component of $\bar{b}^{-1}(N_a)$ on which $\hat{a}$ lies. The restriction of $\bar{b}$ to $\widehat{N}_{\hat{a}}$ is an embedding and hence the coorientation $o^*_{J\times M}(\partial_1\calC\wcalD_p(f_J))_a$ makes sense by identifying $\widehat{N}_{\hat{a}}$ with $\bar{b}(\widehat{N}_{\hat{a}})$. The same is also true for $\partial_1\calC\wcalA_q(f_J)$ at a once broken flow sequence broken at $r$ such that $i(r)-i(q)=0$. 

Note that $\mathrm{Int}\,\bar{b}(\widehat{N}_{\hat{a}})$ is an open subset of $\wcalD_p(f_J)$ and its closure in $N_a$ is $\bar{b}(\widehat{N}_{\hat{a}})$. Hence the (co)orientation of $\wcalD_p(f_J)$ induces a (co)orientation of the boundary $\partial \bar{b}(\widehat{N}_{\hat{a}})$ at $a$. We define $o^*_{J\times M}(\partial_1\calC\wcalD_p(f_J))_a$ to be the one induced in this way. We also define $o^*_{J\times M}(\partial_1\calC\wcalA_q(f_J))_a$ similarly. 

\begin{Lem}\label{lem:ind_ori_CD}
Under the assumption above, let $p,r$ be critical loci of $f_J$ such that $f_J(p)>f_J(r)$ and $i(p)-i(r)=0$. Let $N_a$ and $a\in \bar{b}(\widehat{N}_{\hat{a}})$ be as above. Let $b$ be a point of $\bbcalM{f_J}{p}{r}$ such that $\widehat{N}_{\hat{a}}$ corresponds to $b$. Then the following identity in $\bigwedge^\bullet T_a^*(J\times M)$ holds.
\[ o^*_{J\times M}(\partial_1\calC\wcalD_p(f_J))_a=(-1)^{i(r)+1}\ve_{f_J}(p,r)_b\,ds\wedge  o^*_{J\times M}(\wcalD_r(f_J))_{a}.\]
\end{Lem}
\begin{proof}
Let $i=i(r)$. By assumptions $f_J(p)>f_J(r)$ and $i(p)-i(r)=0$, the index of $r$ is in $1\leq i(r)\leq d-1$. It suffices to check the assertion for one broken flow line. By parametrized Morse Lemma there is a local coordinate $(x_1,\ldots,x_d)$ around $r$ on which $f_s$ agrees with $\displaystyle f_s(r)-\frac{x_1^2}{2}-\cdots-\frac{x_i^2}{2}+\frac{x_{i+1}^2}{2}+\cdots+\frac{x_d^2}{2}$. In this coordinate, $\wcalD_r(f_J)$ agrees with $\{(s,x_1,\ldots,x_d)\in J\times \R^d;x_{i+1}=\cdots=x_d=0\}$ and $\wcalA_r(f_J)$ agrees with $\{(s,x_1,\ldots,x_d)\in J\times \R^d;x_1=\cdots=x_i=0\}$. We may put 
\[ o(\wcalD_r(f_J))=\beta\,ds\,dx_1\cdots dx_i\quad (\beta=\pm 1). \]
We may assume that the intersection of $\wcalD_p(f_J)$ with the plane $\{(s,x_1,\ldots,x_d)\in J\times\R^d;x_d=1\}$ agrees with the set
\[ \{(s,(s-u)\lambda,a_2,\ldots,a_i,0,\ldots,0,1);s\in J,a_2,\ldots,a_i\in\R\} \]
for some $\lambda\neq 0$. Hence $\wcalD_p(f_J)$ agrees locally with the set of points
\[ (s,(s-u)\lambda e^t,a_2e^t,\ldots,a_i e^t,0,\ldots,0,e^{-t}),\quad t\in \R. \]
By putting $a_1'=(s-u)\lambda e^t$, $a_2'=a_2 e^t,\ldots,a_i'=a_i e^t$, $s'=(s-u)/a_1'$, one may see that the closure of this agrees with the set of points 
\[ (s'a_1'+u,a_1',a_2',\ldots,a_i',0,\ldots,0,s'\lambda),\quad a_1',a_2',\ldots,a_i'\in\R,s'\in [-\ve/a_1',\ve/a_1']. \]
Hence for $a=(u,0,\ldots,0)\in J\times \R^d$, we may put
\[ o(\bar{b}(\widehat{N}_{\hat{a}}))_a=\alpha\lambda\, dx_1dx_2\cdots dx_idx_d\quad(\alpha=\pm 1). \]
Then 
\[ o(\partial \bar{b}(\widehat{N}_{\hat{a}}))_a = \iota\left(\frac{\partial}{\partial x_d}\right)\alpha\lambda\,dx_1\cdots dx_idx_d=(-1)^i\alpha\lambda\,dx_1\cdots dx_i=(-1)^i\alpha\beta\lambda\,o(\calD_r(f_{u}))_a. \]

On the other hand, by assumption we have
\[ \begin{split}
	o^*_{J\times M}(\wcalD_p(f_J))_b&=(-1)^{d}\alpha\,ds\,dx_{i+1}\cdots dx_{d-1},\quad\\
	o^*_{J\times M}(\wcalA_r(f_J))_b&=(-1)^{i(d-i)}\beta\,dx_1\cdots dx_i
\end{split} \]
for $b=(0,\ldots,0,1)$. Hence
\[ \begin{split}
	o^*_{J\times M}(\wcalD_p(f_J))_b\wedge o^*_{J\times M}(\wcalA_r(f_J))_b&=(-1)^{i+d}\alpha\beta\,ds\,dx_1\wedge\cdots dx_{d-1}\\
	&=(-1)^{i+1}\alpha\beta\, \iota\Bigl(-\frac{\partial}{\partial x_d}\Bigr)o(J\times M)
\end{split} \]
and we have $\ve_{f_J}(p,r)_b=(-1)^{i+1}\alpha\beta$. This together with the equality above, we obtain
\[ \begin{split}
  o(\partial_1\calC\wcalD_p(f_J))_a &= o(\partial \bar{b}(\widehat{N}_{\hat{a}}))_a = -\ve_{f_J}(p,r)_b\,o(\calD_r(f_{u}))_a,\\
  o^*_{J\times M}(\partial_1\calC\wcalD_p(f_J))_a &= -\ve_{f_J}(p,r)_b\,o^*_{J\times M}(\calD_r(f_{u}))_a\\
  &=(-1)^{i+1}\ve_{f_J}(p,r)_b\, ds\wedge o^*_{J\times M}(\wcalD_r(f_J))_a.
\end{split} \]
\end{proof}

\begin{Lem}\label{lem:ind_ori_CA}
Under the assumption above, let $q,r$ be critical points of $f$ such that $f_J(q)<f_J(r)$ and $i(r)-i(q)=0$. Let $N_a$ and $a\in \bar{b}(\widehat{N}_{\hat{a}})$ be as above. Let $b$ be a point of $\bbcalM{f}{r}{q}$ such that $\widehat{N}_{\hat{a}}$ corresponds to $b$. Then the following identity in $\bigwedge^\bullet T_a^*(J\times M)$ holds. 
\[ o^*_{J\times M}(\partial_1\calC\wcalA_q(f_J))_a=(-1)^{i(r)+d}\ve_{f_J}(r,q)_b\,ds\wedge o^*_{J\times M}(\wcalA_r(f_J))_a. \] 
\end{Lem}
\begin{proof}
Let $i=i(r)$. By assumptions $f_J(r)>f_J(q)$ and $i(r)-i(q)=0$, the index of $r$ is in $1\leq i(r)\leq d-1$. It suffices to check the assertion for one broken flow line. By parametrized Morse Lemma there is a local coordinate $(x_1,\ldots,x_d)$ around $r$ on which $f_s$ agrees with $\displaystyle f_s(r)-\frac{x_1^2}{2}-\cdots-\frac{x_i^2}{2}+\frac{x_{i+1}^2}{2}+\cdots+\frac{x_d^2}{2}$. In this coordinate, $\wcalD_r(f_J)$ agrees with $\{(s,x_1,\ldots,x_d)\in J\times \R^d;x_{i+1}=\cdots=x_d=0\}$ and $\wcalA_r(f_J)$ agrees with $\{(s,x_1,\ldots,x_d)\in J\times \R^d;x_1=\cdots=x_i=0\}$. We may put 
\[ o(\wcalA_r(f_J))=\beta\,ds\,dx_{i+1}\cdots dx_d\quad (\beta=\pm 1). \]
We may assume that the intersection of $\wcalA_q(f_J)$ with the plane $\{(s,x_1,\ldots,x_d)\in J\times\R^d;x_1=1\}$ agrees with the set
\[ \{(s,1,0,\ldots,0,a_{i+1},\ldots,a_{d-1},(s-u)\lambda);s\in J,a_{i+1},\ldots,a_{d-1}\in\R\} \]
for some $\lambda\neq 0$. Hence $\wcalA_q(f_J)$ agrees locally with the set of points
\[ (s,e^{-t},0,\ldots,0,a_{i+1}e^t,\ldots,a_{d-1}e^t,(s-u)\lambda e^t),\quad t\in \R. \]
By putting $a_{i+1}'=a_{i+1} e^t,\ldots,a_{d-1}'=a_{d-1} e^t$, $a_d'=(s-u)\lambda e^t$, $s'=(s-u)/a_d'$, one may see that the closure of this agrees with the set of points 
\[ (s'a_d'+u,s'\lambda,0,\ldots,0,a_{i+1}',\ldots,a_d'),\quad a_{i+1}',\ldots,a_d'\in\R,s'\in [-\ve/a_d',\ve/a_d']. \]
Hence for $a=(u,0,\ldots,0)\in J\times \R^d$, we may put
\[ o(\bar{b}(\widehat{N}_{\hat{a}}))_a=\alpha\lambda\, dx_1\,dx_{i+1}\cdots dx_d\quad(\alpha=\pm 1). \]
Then 
\[ o(\partial \bar{b}(\widehat{N}_{\hat{a}}))_a = \iota\left(\frac{\partial}{\partial x_1}\right)\alpha\lambda\,dx_1\,dx_{i+1}\cdots dx_d=\alpha\lambda\,dx_{i+1}\cdots dx_d=\alpha\beta\lambda\,o(\calA_r(f_{u}))_a. \]

On the other hand, by assumption we have
\[ \begin{split}
	o^*_{J\times M}(\wcalA_q(f_J))_b&=(-1)^{di+i+1}\alpha\,dx_2\cdots dx_i,\quad \\
	o^*_{J\times M}(\wcalD_r(f_J))_b&=\beta\,dx_{i+1}\cdots dx_d
\end{split} \]
for $b=(b_1,0,\ldots,0)$, $b_1>0$. Hence
\[ \begin{split}
	o^*_{J\times M}(\wcalD_r(f_J))_b\wedge o^*_{J\times M}(\wcalA_q(f_J))_b&=(-1)^{di+i+1}\alpha\beta\,dx_{i+1}\cdots dx_d\, ds \,dx_2\cdots dx_i\\
	&=-\alpha\beta\,ds\,dx_2\cdots dx_d=\alpha\beta\, \iota\Bigl(\frac{\partial}{\partial x_1}\Bigr)ds\,dx_1\cdots dx_d
\end{split} \]
and we have $\ve_{f_J}(r,q)_b=\alpha\beta$. This together with the equality above, we obtain
\[ \begin{split}
  o(\partial_1\calC\wcalA_q(f_J))_a &= o(\partial \bar{b}(\widehat{N}_{\hat{a}}))_a = \ve_{f_J}(r,q)_b\,o(\calA_r(f_{u}))_a,\\
  o^*_{J\times M}(\partial_1\calC\wcalA_q(f_J))_a &= \ve_{f_J}(r,q)_b\,o^*_{J\times M}(\calA_r(f_{u}))_a\\
  &=(-1)^{i+d}\ve_{f_J}(r,q)_b\,ds\wedge o^*_{J\times M}(\wcalA_r(f_J))_a.
\end{split} \]
\end{proof}

\subsection{Change of combinatorial propagator at $i/i$-intersection}\label{ss:change_g}

Suppose that an $i/i$-intersection between critical points (loci) $p$ and $q$ occurs at $s=u$. For a small number $\ve>0$, we may assume that the underlying $\Z$-modules of $C_*^{(u-\ve)}$ and $C_*^{(u+\ve)}$ are the same and we identify critical points and critical loci. We put $J=[u-\ve,u+\ve]$, $C_*^{(J)}=C_*^{(u-\ve)}=C_*^{(u+\ve)}$ and $P_*^{(J)}=P_*^{(u-\ve)}=P_*^{(u+\ve)}$. Let $h:C_*^{(J)}\to C_*^{(J)}$ be the homomorphism of homogeneous degree 0, defined for each critical point (locus) $x\in P_i^{(J)}$ by
\[ h(x)=\sum_{y\in P_i^{(J)}}\#\bbcalM{f_J}{x}{y}\cdot y,\quad \#\bbcalM{f_J}{x}{y}=\sum_{b\in\bbcalM{f_J}{x}{y}}\ve_{f_J}(x,y)_b.\]
Since the moduli space $\bbcalM{f_J}{x}{y}$ corresponds to an $i/i$-intersection, $h$ is non-zero only if $x=p$. Then for $b\in \bbcalM{f_J}{p}{q}$, we have $h(p)=\ve_{f_J}(p,q)_b\cdot q$. We denote the boundary operators of $C_*^{(u-\ve)}$ and $C_*^{(u+\ve)}$ by $\partial$ and $\partial'$ respectively. The following lemma describes the bifurcation of Morse complex at the $i/i$-intersection and is stated in several papers (e.g. \cite{Lau, Hu} and \cite[Lemma~5.1]{Fuk2}).

\begin{Lem}\label{lem:d-d'} Under the assumption above, we have
\[ \partial-\partial'=\partial h-h\partial'=\partial' h - h \partial, \]
or equivalently, $(1-h)\circ \partial'=\partial \circ (1-h)$ and $(1+h)\circ \partial=\partial'\circ (1+h)$, or $1+h:C^{(u-\ve)}_*\to C^{(u+\ve)}_*$ is a chain map. 
\end{Lem}
\begin{proof}
Let $p,q$ be critical loci of $f_J$ such that $i(p)-i(q)=0$. We check the identities
\[ \partial - \partial' -\partial h + h\partial' =0,\quad 
\partial - \partial' -\partial' h + h\partial =0.  \]
 We consider the boundary of the moduli spaces $\bbcalM{f_J}{p}{r}$ and $\bbcalM{f_J}{r'}{q}$ compactified using $\calC\wcalD$ and $\calC\wcalA$. The contribution of $\partial J$ is $\partial-\partial'$. The other contributions come from the broken flow lines of the $i/i$-intersection at $s=u$. For a critical locus $r$ with $i(r)=i(p)-1$, the broken flow line from $p$ to $r$ broken at $q$ contributes as $-\ve_{f_J}(p,q)_b\,\ve_{f_{u}}(q,r)_a$. Indeed, the coorientation of the boundary of $\bbcalM{f_J}{p}{r}$ is   
\[ \begin{split}
  &(-1)^{i(r)}o^*_{J\times M}(\partial\calC\wcalD_p(f_J))_a\wedge o^*_{J\times M}(\wcalA_r(f_J))_a\\
  &=(-1)^{i(r)}(-1)^{i(q)+1}\ve_{f_J}(p,q)_b\,ds\wedge o^*_{J\times M}(\wcalD_q(f_J))_a\wedge o^*_{J\times M}(\wcalA_r(f_J))_a\\
  &=(-1)^{i(r)+i(q)+1}\ve_{f_J}(p,q)_b\,ds\wedge \ve_{f_{u}}(q,r)_a\,\iota(-\mathrm{grad}\,f_{u})\,o(M)_a\\
  &=(-1)^{i(r)+i(q)}\ve_{f_J}(p,q)_b\,\ve_{f_{u}}(q,r)_a\,\iota(-\mathrm{grad}\,f_{u})\,o(J\times M)_a\\
  &=-\ve_{f_J}(p,q)_b\,\ve_{f_{u}}(q,r)_a\,\iota(-\mathrm{grad}\,f_{u})\,o(J\times M)_a.
\end{split}\]
Here we have used (\ref{eq:ind_ori_bd_int}) and Lemma~\ref{lem:ind_ori_CD}. This gives rise to $-\partial h$ ($=-\partial' h$). For a critical locus $r'$ with $i(r')=i(q)+1$, the broken flow line from $r'$ to $q$ broken at $p$ contributes as $\ve_{f_{u}}(r',p)_a\,\ve_{f_J}(p,q)_b$. Indeed, the coorientation of the boundary of $\bbcalM{f_J}{r'}{q}$ is 
\[ \begin{split}
  &o^*_{J\times M}(\wcalD_{r'}(f_J))_a\wedge o^*_{J\times M}(\partial\calC\wcalA_p(f_J))_a\\
  &=(-1)^{i(p)+d}\ve_{f_J}(p,q)_b\,o^*_{J\times M}(\wcalD_{r'}(f_J))_a\wedge ds\wedge o^*_{J\times M}(\wcalA_p(f_J))_a\\
  &=(-1)^{i(p)+d}(-1)^{d-i(r')}\ve_{f_J}(p,q)_b\,ds\wedge o^*_{J\times M}(\wcalD_{r'}(f_J))_a\wedge o^*_{J\times M}(\wcalA_p(f_J))_a\\
  &=(-1)^{i(r')+i(p)}\ve_{f_{u}}(r',p)_a\,ds\wedge \ve_{f_J}(p,q)_b\,\iota(-\mathrm{grad}\,f_{u})\,o(M)_a\\
  &=(-1)^{i(r')+i(p)+1}\ve_{f_{u}}(r',p)_a\,\ve_{f_J}(p,q)_b\,\iota(-\mathrm{grad}\,f_{u})\,o(J\times M)_a\\
  &=\ve_{f_{u}}(r',p)_a\,\ve_{f_J}(p,q)_b\,\iota(-\mathrm{grad}\,f_{u})\,o(J\times M)_a.
\end{split} \]
Here we have used (\ref{eq:ind_ori_bd_int}) and Lemma~\ref{lem:ind_ori_CA}. This gives rise to $+h\partial'$ ($=+h\partial$).
\end{proof}

The following corollary follows immediately from Lemma~\ref{lem:d-d'}. 
\begin{Cor}[Lemma~5.7 of \cite{Fuk2}]\label{cor:g'-g}
Let $g$ be a combinatorial propagator for $(C_*^{(u-\ve)},\partial)$. Then the endomorphism
\[ g'=(1+h)\circ g\circ(1-h)\in\End_1(C_*^{(u+\ve)}) \]
is a combinatorial propagator for $(C_*^{(u+\ve)},\partial')$. Moreover, by $hgh=hg'h=0$, 
\[g'-g=hg-gh=hg'-g'h.\]
\end{Cor}
%\clearpage

%%%%%%%%%%%%%%%%%%%%%%%%%%%%%%
\subsection{Orientations of some faces of $J\times \partial \bConf_{2k}(M)$}

The orientations of the principal face $J\times \partial_{ij}\bConf_{2k}(M)$ and the anomalous face $J\times \partial^a\bConf_{2k}(M)$ induced from the standard orientation $ds\wedge o(M)_{x_1}\wedge\cdots\wedge o(M)_{x_{2k}}$ of $J\times M^{2k}$ are as follows.
\begin{equation}\label{eq:o(JBl)}
 \begin{split}
  o(J\times \partial B\ell_{\Delta_{ij}}(M^{2k}))&=-\omega_2\wedge ds\wedge o(\Delta_{ij}),\\
  o(J\times \partial B\ell_{\Delta_a}(M^{2k}))&=\omega_{6k-4}\wedge ds\wedge o(\Delta_a).
\end{split}
\end{equation}
This can be checked as follows. Let $\Delta_a=\{(x_1,\ldots,x_{2k})\in M^{2k};x_1=\cdots=x_{2k}\}$. 
For $\vec{x}=(x_1,x_2,\ldots,x_{2k})\in \Delta_a$,
\[\begin{split}
&ds \wedge o(\Delta_a)_{\vec{x}}
\wedge \bigwedge_{i=2}^{2k}(du_i^{(1)}-du_1^{(1)})\wedge(du_i^{(2)}-du_1^{(2)})\wedge(du_i^{(3)}-du_1^{(3)})\\
&=(2k)^3 \, ds \wedge o(M^{2k})_{(s,\vec{x})},
\end{split}\]
where $(u_i^{(1)},u_i^{(2)},u_i^{(3)})$ is a local coordinate around $x_i$ and $o(\Delta_a)_{\vec{x}}=\bigwedge_{\ell=1}^3(du_1^{(\ell)}+du_2^{(\ell)}+\cdots+du_{2k}^{(\ell)})$. The part $\bigwedge_{i=2}^{2k}(du_i^{(1)}-du_1^{(1)})\wedge(du_i^{(2)}-du_1^{(2)})\wedge(du_i^{(3)}-du_1^{(3)})$ gives an orientation of the fiber of the normal bundle $N_{\Delta_a}\to \Delta_a$ and the part $ds \wedge o(\Delta_a)_{\vec{x}}$ is a 4-form. Hence the orientation of the unit sphere bundle of $N_{\Delta_a}$ induced from the left hand side of the above expression is $ds\wedge o(\Delta_a)\wedge \omega_{6k-4}$. The orientation of $J\times \partial B\ell_{\Delta_{ij}}(M^{2k})$ is similar to that of $\partial B\ell_{\Delta_{ij}}(M^{2k})$ given in \S\ref{ss:ori_face}. 

Note that if $n^*$ is the metric dual of an inward normal vector field on a face of $J\times \partial\bConf_{2k}(M)$ of the type considered above, then by (\ref{eq:o(JBl)}), the products $n^*\wedge o(J\times \partial B\ell_{\Delta_{ij}}(M^{2k}))$ and $n^*\wedge o(J\times \partial B\ell_{\Delta_a}(M^{2k}))$ are both equivalent to the standard orientation of $J\times \bConf_{2k}(M)$. 

Now the integer $\#\calM_\Gamma^\loc(-\mathrm{grad}\,\vec{f}_J)$ is defined by the sum of signs determined by exterior products of coorientations of submanifolds of the $\bConf_{2k}^\loc(\R^3)$-bundle over $J\times M_0$ as in Definition~\ref{def:Mloc} and by (\ref{eq:o(JBl)}).

%%%%%%%%%%%%%%%%%%%%%%%%%%%%%%
\subsection{Standard co-orientations of $\calM_\Gamma$ in 1-parameter family}

Let $\Gamma$ be a trivalent graph with $2k$ vertices and without bivalent vertices such that $0\leq 3(2k-3k)+\sum_{i=1}^{3k}\eta_i +1\leq 1$. In a generic 1-parameter family $(f_J,\mu_J)$, the moduli space $\calM_\Gamma(\vec{f}_J)$ is a smooth manifold of dimension $3(2k-3k)+\sum_{i=1}^{3k}\eta_i +1$ and is the transversal intersection of the preimages of $\calM_2(f_j)$'s and $\calN_{pq}(f_j)$'s in $J\times \Conf_{2k}(M)$. We may define $o^*_{J\times \Conf_{2k}(M)}(\calM_\Gamma(\vec{f}_J))$ by the exterior product of coorientations of the preimages of $\calM_2(f_j)$'s and $\calN_{pq}(f_j)$'s in $J\times \Conf_{2k}(M)$ as in \S\ref{sss:std_ori}.

%%%%%%%%%%%%%%%%%%%%%%%%%%%%%%
\subsection{(Co)orientations induced on $\partial \bcalM_\Gamma$}\label{ss:ddd}

Let $(f_J,\mu_J)=\{(f_s,\mu_s)\}_{s\in J}$ be a generic 1-parameter family of Morse pairs. Let $\vec{f}_J$ be a sequence of 1-parameter families of Morse pairs that is obtained from $\vec{f}$ by replacing $f_1$ with $f_J$. For a graph $\Gamma\in\calG_{2k,3k}^0(\vec{C})$, we consider the co-orientation of $\partial \bcalM_\Gamma(\vec{f}_J)$ induced from $o^*_{J\times M^{2k}}(\calM_\Gamma(\vec{f}_J))$ defined above. Let $d''\Gamma=\sum_{e\in\Se(\Gamma)}d_e''\Gamma$, where

\begin{align*}
d_e''\fig{d_3.eps}&=-\sum_{{{r_i\in P_*^{(i)}}\atop{i(r_i)=i(p_i)}}}\fig{d_4.eps}
	+\sum_{{{s_i\in P_*^{(i)}}\atop{i(s_i)=i(q_i)}}}\fig{d_5.eps}
	\qquad (\beta(i)=e)
\end{align*}
and $\mathrm{or}(d_e''\Gamma)$ is the induced one. 

\begin{Prop}\label{prop:dbM_1-para}
Suppose that $d=3$ and that $(\vec{f}_J,\vec{\mu}_J)$ is generic as in Proposition~\ref{prop:comp_M_1-para}. Let $\Gamma$ be a graph in $\calG_{2k,3k}^0(\vec{C})$. We have
\[\begin{split}
 & \sum_{\sigma\in\mathfrak{S}_{3k}}\sum_{\tau\subset E(\Gamma)}(\#\calM_{\Gamma_\sigma^\tau}(\vec{f}_{s_1})-\#\calM_{\Gamma_\sigma^\tau}(\vec{f}_{s_0}))\\
&=\left\{\begin{array}{ll}
  \displaystyle\sum_{\sigma,\tau}(\#\calM_{(-d+d'+d'')\Gamma_\sigma^\tau}(\vec{f}_J)+\#\calM_{\Gamma_\sigma^\tau}^\loc(-\mathrm{grad}\,\vec{f}_J)) & \mbox{if $E(\Gamma)=\Comp(\Gamma)$}\\
  \displaystyle\sum_{\sigma,\tau}\#\calM_{(-d+d'+d'')\Gamma_\sigma^\tau}(\vec{f}_J) & \mbox{if $E(\Gamma)\neq \Comp(\Gamma)$}
  \end{array}\right. 
\end{split}\]
\end{Prop}
\begin{proof}
By Proposition~\ref{prop:comp_M_1-para}, we know the types of the graphs that may occur at the boundary of $\bcalM_\Gamma(\vec{f}_J)$. We check that $0=\#\partial\bcalM_\Gamma(\vec{f}_J)$ is the sum of $-\#\calM_{\Gamma_\sigma^\tau}(\vec{f}_{s_1})+\#\calM_{\Gamma_\sigma^\tau}(\vec{f}_{s_0})+\#\calM_{(-d+d'+d'')\Gamma_\sigma^\tau}(\vec{f}_J)$ and the contribution of $J\times \partial^\mathrm{hi}\bConf_{2k}(M)$. 

Suppose for simplicity that separated edges of $\Gamma$ are labeled $1,2,\ldots,a$. For a number $\ell$ in $1\leq \ell\leq 3k$, put
\[ \begin{split}
	\widetilde{\Sigma}_\ell&=\left\{\begin{array}{ll}
	\bigcap_{{{1\leq j\leq a}\atop{j\neq \ell}}}\widetilde{H}_j\cap\bigcap_{j=a+1}^{3k}\widetilde{\Theta}_j
		& \mbox{if $1\leq \ell\leq a$}\\
	\bigcap_{j=1}^a\widetilde{H}_j\cap\bigcap_{{{a+1\leq j\leq 3k}\atop{j\neq \ell}}}\widetilde{\Theta}_j
		& \mbox{if $a+1\leq \ell\leq 3k$}
	\end{array}\right. 
\end{split}\]
Then $\codim\widetilde{\Sigma}_\ell=\codim\calM_\Gamma(\vec{f}_J)-\codim\widetilde{H}_\ell=6k-4\equiv 0$ (mod 2). 

First, we consider the contribution of $J\times \partial\bConf_{2k}(M)$. The vanishing of the contributions of the hidden faces $\partial_A\bConf_{2k}(M)$ with $A\subsetneqq \{1,2,\ldots,2k\}$ follows from Lemmas~\ref{lem:symmetry}, \ref{lem:symmetry2} and \ref{lem:dilation}. The contributions of the principal face and of the anomalous face are $-\#\calM_{d\Gamma}(\vec{f}_J)$ and $\#\calM_{\Gamma}^\loc(-\mathrm{grad}\,\vec{f}_J)$ respectively. This is immediate from the sign convention and from (\ref{eq:o(JBl)}). The contribution of the hidden faces $\partial_{A\cup\{\infty\}}\bConf_{2k}(M)$ are as follows. Recall that the interior of the hidden face $\partial_{A\cup\{\infty\}}\bConf_{2k}(M)$ is diffeomorphic to the space $\Conf_{2k-j}(M)\times \Conf_j^\infty(\R^3)$ (Proof of Proposition~\ref{prop:dM=Md}). Let $f_j^\infty:\R^3\to \R$ ($j=2,\ldots,3k$) be the linear map such that $\varphi_\infty^*f_j^\infty$ agrees with $f_j$ near $\infty_M$ and let $f_s^\infty:\R^3\to \R$ ($s\in J$) be the linear map such that $\varphi_{\infty s}^*f_s^\infty$ agrees with $f_s$ near $\infty_M$. Let $\vec{f}_s^\infty=(f_s^\infty,f_2^\infty,\ldots,f_{3k}^\infty)$, $\vec{f}_J^\infty=\{\vec{f}_s^\infty\}_s$ and let $B=V(\Gamma)\setminus A$. Suppose that $E(\Gamma/\Gamma_B)=\Comp(\Gamma/\Gamma_B)$. Let $\calM_{\Gamma/\Gamma_B}^\infty(\vec{f}_J^\infty\setminus(\vec{f}_J^\infty)_B)$ be the space of linear graphs in $J\times \R^3$ modulo the dilation of $\R^3$ whose edge labeled $\ell\neq 1$ (resp. $\ell=1$) follows the negative gradient of $f_\ell^\infty$ (resp. $f_s^\infty$). Let $\pi_1:J\times \Conf_{2k-j}(M)\times \Conf_j^\infty(\R^3)\to J\times \Conf_{2k-j}(M)$ and $\pi_2:J\times \Conf_{2k-j}(M)\times \Conf_j^\infty(\R^3)\to J\times \Conf_j^\infty(\R^3)$ be the projections. Then the face of $\partial\bcalM_\Gamma(\vec{f}_J)$ coming from $\partial_{A\cup\{\infty\}}\bConf_{2k}(M)$ is diffeomorphic to
\[ \pi_1^{-1}\calM_{\Gamma_B}((\vec{f}_J)_B) \cap \pi_2^{-1}\calM_{\Gamma/\Gamma_B}^\infty(\vec{f}_J^\infty\setminus(\vec{f}_J^\infty)_B). \]
If the number of edges in $E(\Gamma)$ that intersect both $V(\Gamma_A)$ and $V(\Gamma_B)$ is $m$, then the codimension of $\calM_{\Gamma/\Gamma_B}^\infty(\vec{f}_J^\infty\setminus(\vec{f}_J^\infty)_B)$ is $3j+m$. Since $\dim{J\times \Conf_j^\infty(\R^3)}=3j$, $m$ must be zero if $\calM_{\Gamma/\Gamma_B}^\infty(\vec{f}_J^\infty\setminus(\vec{f}_J^\infty)_B)\neq \emptyset$. That $m=0$ implies that $A=\{1,2,\ldots,2k\}$. But in such a case $\Gamma_B$ is empty and the translation in $\R^3$ acts on $\calM_{\Gamma/\Gamma_B}^\infty(\vec{f}_J^\infty\setminus(\vec{f}_J^\infty)_B)$ freely. By a dimensional reason, this shows that $\calM_{\Gamma/\Gamma_B}^\infty(\vec{f}_J^\infty\setminus(\vec{f}_J^\infty)_B)$ must be empty. 

Next, we consider the contributions of the inner boundaries. Let $X$ be a graph obtained from $\Gamma$ by replacing an edge labeled 1 with a broken edge such that $\calM_X(\vec{f}_J)$ is 0-dimensional. We shall describe the co-orientation of the face $\calS_X$ of $\partial \bcalM_\Gamma(\vec{f}_J)$ corresponding to $X$ induced from the standard co-orientation of $\calM_\Gamma(\vec{f}_J)$ using (\ref{eq:ind_ori_boundary}) and (\ref{eq:ind_ori_bd_int}). In the following, we let $\ell=1$. 

(1) $X=\fig{d_4_1.eps}$. For $\calM_X(\vec{f}_J)$ to be 0-dimensional, $i(r_\ell)=i(p_\ell)-1$ or $i(r_\ell)=i(p_\ell)$. When $i(r_\ell)=i(p_\ell)$, the co-orientation of $\calF_{r_\ell}\bcalM_\Gamma(\vec{f}_J)$ induced from the standard one
\begin{equation}\label{eq:std_ori_2}
 o^*_{J\times M^{2k}}(\calM_\Gamma(\vec{f}_J))
	=o^*_{J\times M}(\widetilde{\calD}_{p_\ell}(f_\ell))\wedge o^*_{J\times M}(\widetilde{\calA}_{q_\ell}(f_\ell))\wedge o^*_{J\times M^{2k}}(\widetilde{\Sigma}_\ell) 
\end{equation}
is given by
\[ \begin{split}
	&(-1)^{(d+1)-1}(-1)^{(2kd+1)-1} o^*_{J\times M}(\widetilde{\calA}_{q_\ell}(f_J))\\
	&\hspace{10mm}\wedge (-1)^{i(r_\ell)+1}\ve_{f_J}(p_\ell,r_\ell)\,ds\wedge o^*_{J\times M}(\widetilde{\calD}_{r_\ell}(f_J))\wedge o^*_{J\times M^{2k}}(\widetilde{\Sigma}_\ell)\\
	&=(-1)^{i(q_\ell)+i(r_\ell)+d+1}\ve_{f_J}(p_\ell,r_\ell)\,ds\wedge o^*_{J\times M}(\widetilde{\calA}_{q_\ell}(f_J))
		\wedge o^*_{J\times M}(\widetilde{\calD}_{r_\ell}(f_J))\wedge o^*_{J\times M^{2k}}(\widetilde{\Sigma}_\ell)\\
	&=-\ve_{f_J}(p_\ell,r_\ell)\,ds\wedge o^*_{J\times M}(\widetilde{\calA}_{q_\ell}(f_J))
		\wedge o^*_{J\times M}(\widetilde{\calD}_{r_\ell}(f_J))\wedge o^*_{J\times M^{2k}}(\widetilde{\Sigma}_\ell).
	\end{split}\]
Here we used Lemma~\ref{lem:ind_ori_CD}. This is opposite to the standard co-orientation $o^*_{J\times M^{2k}}(\calM_X(\vec{f}_J))$.

When $i(r_\ell)=i(p_\ell)-1$, the co-orientation of $\calF_{r_\ell}\bcalM_\Gamma(\vec{f}_J)$ induced from the standard one is given by
\[ \begin{split}
	&(-1)^{d-1}(-1)^{(2kd+1)-1} \ve_{f_{s_0}}(p_\ell,r_\ell)\,o^*_{J\times M}(\widetilde{\calA}_{q_\ell}(f_J))\\
	&\hspace{10mm}\wedge (-1)^{i(r_\ell)+1} o^*_{J\times M}(\widetilde{\calD}_{r_\ell}(f_J))\wedge o^*_{J\times M^{2k}}(\widetilde{\Sigma}_\ell)\\
	&=(-1)^{i(r_\ell)+1} \ve_{f_{s_0}}(p_\ell,r_\ell)\, o^*_{J\times M}(\widetilde{\calA}_{q_\ell}(f_J))
		\wedge o^*_{J\times M}(\widetilde{\calD}_{r_\ell}(f_J))\wedge o^*_{J\times M^{2k}}(\widetilde{\Sigma}_\ell)\\
	&=(-1)^{i(q_\ell)+1} \ve_{f_{s_0}}(p_\ell,r_\ell)\, o^*_{J\times M}(\widetilde{\calA}_{q_\ell}(f_J))
		\wedge o^*_{J\times M}(\widetilde{\calD}_{r_\ell}(f_J))\wedge o^*_{J\times M^{2k}}(\widetilde{\Sigma}_\ell).
	\end{split}\]
Here we used Lemma~\ref{lem:ind_ori_D}.

(2) $X=\fig{d_5_1.eps}$. For $\calM_X(\vec{f}_J)$ to be 0-dimensional, $i(s_\ell)=i(q_\ell)+1$ or $i(s_\ell)=i(q_\ell)$. When $i(s_\ell)=i(q_\ell)$, the co-orientation of $\calF_{s_\ell}\bcalM_\Gamma(\vec{f}_J)$ induced from the standard one (\ref{eq:std_ori_2}) is given by
\[ \begin{split}
	&(-1)^{(d+1)-1}(-1)^{(2kd+1)-1}(-1)^{d-i(p_\ell)}(-1)^{i(s_\ell)+d} \ve_{f_J}(s_\ell,q_\ell)\,ds\wedge o^*_{J\times M}(\widetilde{\calA}_{s_\ell}(f_J))\\
	&\hspace{10mm}\wedge o^*_{J\times M}(\widetilde{\calD}_{p_\ell}(f_J))\wedge o^*_{J\times M^{2k}}(\widetilde{\Sigma}_\ell)\\
	&=(-1)^{i(p_\ell)+i(s_\ell)+d} \ve_{f_J}(s_\ell,q_\ell)\,ds\wedge o^*_{J\times M}(\widetilde{\calA}_{s_\ell}(f_J))
	\wedge o^*_{J\times M}(\widetilde{\calD}_{p_\ell}(f_J))\wedge o^*_{J\times M^{2k}}(\widetilde{\Sigma}_\ell)\\
	&= \ve_{f_J}(s_\ell,q_\ell)\,ds\wedge o^*_{J\times M}(\widetilde{\calA}_{s_\ell}(f_J))
	\wedge o^*_{J\times M}(\widetilde{\calD}_{p_\ell}(f_J))\wedge o^*_{J\times M^{2k}}(\widetilde{\Sigma}_\ell).
\end{split}\]
Here we used Lemma~\ref{lem:ind_ori_CA}. This agrees with the standard co-orientation. 

When $i(s_\ell)=i(q_\ell)+1$, the co-orientation of $\calF_{s_\ell}\bcalM_\Gamma(\vec{f}_J)$ induced from the standard one is given by
\[ \begin{split}
	&(-1)^{d-1}(-1)^{(2kd+1)-1}(-1)^{d-i(p_\ell)} \ve_{f_{s_0}}(s_\ell,q_\ell)\, o^*_{J\times M}(\widetilde{\calA}_{s_\ell}(f_J))\\
	&\hspace{10mm}\wedge o^*_{J\times M}(\widetilde{\calD}_{p_\ell}(f_J))\wedge o^*_{J\times M^{2k}}(\widetilde{\Sigma}_\ell)\\
	&=(-1)^{i(s_\ell)+1} \ve_{f_{s_0}}(s_\ell,q_\ell)\, o^*_{J\times M}(\widetilde{\calA}_{s_\ell}(f_J))
	\wedge o^*_{J\times M}(\widetilde{\calD}_{p_\ell}(f_J))\wedge o^*_{J\times M^{2k}}(\widetilde{\Sigma}_\ell).
\end{split}\]
Here, we used Lemma~\ref{lem:ind_ori_A}.

(3) $X=\fig{d_2.eps}$. The induced co-orientation on the boundary is as in Lemma~\ref{lem:ind_ori_M2}, which differs from the standard co-orientation by $(-1)^{2d-1}(-1)^{(2kd+1)-1}(-1)^{i(r_\ell)}=(-1)^{i(r_\ell)+1}$. 

Now we have seen that the signs in the formula of the definitions of $d'$ and $d''$ are consistent with the induced co-orientations on the boundary of $\bcalM_\Gamma(\vec{f}_J)$.
\end{proof}
%\clearpage

%%%%%%%%%%%%%%%%%%%%%%%%%%%%%%
%%%%%%%%%%%%%%%%%%%%%%%%%%%%%%
\mysection{Proof of main theorem}{s:proof_main}

We shall prove that $\widehat{Z}_{2k,3k}(\vec{f})$ is invariant under bifurcations of types (1), (2), (3), (4) in Lemma~\ref{lem:generic_1-para} and complete the proof of Theorem~\ref{thm:Z_invariant}.

%%%%%%%%%%%%%%%%%%%%%%%%%%%%%%
\subsection{Invariance on ordered 1-parameter family without $i/i$-intersections}

We check the invariance of $\widehat{Z}_{2k,3k}(\vec{f})$ with respect to bifurcations of type (4) for different Morse indices in Lemma~\ref{lem:generic_1-para}.

\begin{Lem}\label{lem:inv_ordered}
Suppose that a generic 1-parameter family $\{(f_s,\mu_s)\}_{s\in J}$, $J=[s_0,s_1]$, of Morse pairs is ordered and has no $i/i$-intersections over $J$. Then
\[ \widehat{Z}_{2k,3k}(\vec{f}_{s_0})=\widehat{Z}_{2k,3k}(\vec{f}_{s_1}).\]
\end{Lem}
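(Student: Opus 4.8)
The plan is to show that when $\{f_s\}_{s\in J}$ is an ordered family with no $i/i$-intersections, the only bifurcations that can occur are level exchanges and birth/death points, and each of these leaves $\widehat{Z}_{2k,3k}$ unchanged. Since the anomaly correction term $-Z_{2k,3k}^{\mathrm{anomaly}}(\vec\rho_W)+\mu_k\,\mathrm{sign}\,W$ does not involve $\vec f$ at all (it depends only on the framing data, which we may keep fixed along $J$), it suffices to prove that the principal term behaves correctly, namely that the change of $Z_{2k,3k}(\vec f_s)$ across a level exchange is zero and that the change across a birth/death point is compensated by the change in the underlying acyclic complex $\vec C_s$ and the combinatorial propagator $\vec g_s$. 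First I would treat the stretches of $J$ where no bifurcation occurs: there $\bcalM_2(f_{I_j})$ is a smooth manifold with corners by Proposition on ordered 1-parameter families, and $\bcalM_\Gamma(\vec f_{I_j})$ is a smooth bordism (Corollary~\ref{cor:comp_M_1-para}) between its two ends, so $\#\bcalM_\Gamma(\vec f_{s})$ is locally constant there for every $\Gamma\in\calG_{2k,3k}(\vec C)$ with no bivalent vertices — hence $Z_{2k,3k}(\vec f_s)$ is constant there.

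Next I would analyze a single level exchange point $u\in J$. Using the moduli space $\bcalM_2(f_{J_u})$ and $\bcalM_\Gamma(\vec f_{J_u})$ constructed in \S\ref{ss:mod_level_ex}, the space $\bcalM_\Gamma(\vec f_{J_u})$ is a 1-dimensional manifold whose boundary over the two endpoints of $J_u$ consists of $\bcalM_\Gamma(\vec f_{s})$ for $s$ near the endpoints, together with interior codimension-one strata coming from broken trajectories or collapsed subgraphs. By the orientation analysis of \S\ref{ss:ori_1-para}, the signed count of the interior boundary strata of $\bigcup_\Gamma$ (weighted by $[\Gamma]$ in $\calA_{2k,3k}$, or rather after applying $\tTr'_{\vec g}$) is exactly $\langle\tTr'_{\vec g}((1\otimes(d+d'))\wgamma_{2k,3k})\rangle$, which vanishes by Lemma~\ref{lem:Tr_closedness}(1) and by the relation $(d+d')\calG_{n,*,\Sigma(1,\ldots,1)}(\vec C)$ in $\calH_{2k}(\vec C)$; equivalently one invokes Proposition~\ref{prop:dM=Md} applied fiberwise. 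Therefore $\sum_\Gamma \tTr_{\vec g}[[\Gamma]]\cdot\#\bcalM_\Gamma$ takes the same value at the two endpoints of $J_u$, i.e. $Z_{2k,3k}$ is unchanged across the level exchange. Note that at a level exchange the set of critical points does not change, so $\vec C_s$, $\vec g_s$ and $\calA_{2k,3k}$ stay fixed and no reconciliation is needed.

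Finally I would handle a birth/death parameter $s_0$. Here the underlying acyclic complex jumps: for $s<s_0$ the complex $\widetilde C^{(1)}_s$ has two extra generators $p_1,q_1$ with $d(p_1)=d(q_1)+1$ that cancel for $s>s_0$. By Corollary~\ref{cor:glue_bd}, the gluing map $\varphi$ of Proposition~\ref{prop:gluing} gives, for each graph $\Gamma$ with no bivalent vertices,
\[
 \#\bcalM_{\Gamma(\emptyset,\emptyset)_1}(\vec f_{s_0+\ve'})
 = \#\bcalM_{\Gamma(p_1,q_1)_1}(\vec f_{s_0-\ve'})
 + \#\bcalM_{\Gamma(\emptyset,\emptyset)_1}(\vec f_{s_0-\ve'}),
\]
together with the analogous identities for the moduli spaces with a bivalent vertex (coming from $d'$-type graphs $\Gamma(x_1,q_1)_1$, $\Gamma(p_1,y_1)_1$), which are controlled by the orientation signs recorded in cases (1)--(2) of \S\ref{ss:ori_1-para} — precisely the signs built into the definition of $d'$ and of the $\xi$-relation. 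Summing these over all $\Gamma$ after applying $\tTr_{\vec g}$, and using that the combinatorial propagator $\vec g_{s_0-\ve'}$ for $\widetilde C_{s_0-\ve'}$ restricts/projects to one for $\widetilde C_{s_0+\ve'}$ (and that $\tTr'_{\vec g}$ descends through the $\xi$-relation by Lemma~\ref{lem:Tr(xi)=0}), the extra contributions involving $p_1,q_1$ assemble exactly into the image of the $\xi$-relation and hence vanish in $\calA_{2k,3k}$. Thus $Z_{2k,3k}(\vec f_{s_0-\ve'})=Z_{2k,3k}(\vec f_{s_0+\ve'})$. Combining the three cases across the finitely many bifurcations in $J$ gives $\widehat Z_{2k,3k}(\vec f_{s_0})=\widehat Z_{2k,3k}(\vec f_{s_1})$.

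The main obstacle I expect is the birth/death step: one must check that the reorganization of the combinatorial propagator $\vec g_s$ across $s_0$ (which involves a chain homotopy $h$ as in \S\ref{ss:acyclic_cpx}, since $\vec g_{s_0-\ve'}$ and any chosen $\vec g_{s_0+\ve'}$ need not agree) interacts with the gluing identities of Corollary~\ref{cor:glue_bd} exactly so that the difference lands in the $\xi$-relation — this is essentially a one-parameter analogue of the computation in the proof of Lemma~\ref{lem:Tr_closedness}(2), and getting all the orientation signs from \S\ref{ss:ori_1-para} to line up with the algebraic signs in $d'$ and the $\xi$-relation is where the care is needed.
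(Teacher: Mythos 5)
Your proposal has a fundamental gap: it does not account for the anomaly face contributions, and as a result it mislocates where the real work of the lemma happens. Two specific errors, each of which is fatal on its own.

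First, you assert that on a stretch of $J$ with no bifurcations, ``$\bcalM_\Gamma(\vec f_{I_j})$ is a smooth bordism between its two ends, so $\#\bcalM_\Gamma(\vec f_s)$ is locally constant,'' and that consequently $Z_{2k,3k}(\vec f_s)$ is constant. This is false. The boundary of $\bcalM_\Gamma(\vec f_J)$ contains, besides the two endpoint fibers and the broken-edge strata $\bcalM_{(d+d')\Gamma}(\vec f_J)$, the hidden faces $\partial^{\mathrm{hi}}\bConf_{2k}(M)\cap\bcalM_\Gamma(\vec f_J)$. Lemmas~\ref{lem:symmetry} and~\ref{lem:dilation} kill most of these, but when $E(\Gamma)=\Comp(\Gamma)$ the \emph{entire} graph $\Gamma$ can collapse to a point, and that stratum has no univalent or bivalent black vertices, so it survives and contributes $\#\calM_\Gamma^{\mathrm{local}}(-\mathrm{grad}_0 j^1\vec f_J)$. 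You invoke Proposition~\ref{prop:dM=Md} for the vanishing, but that proposition applies to graphs in $\calG_{2k,3k,\Sigma(1,\ldots,1)}(\vec C)$, where the collapsing subgraph is forced to have low-valence black vertices; it does not apply to the 1-parameter family of a graph in $\calG_{2k,3k}(\vec C)$ with all edges compact. So in general $\#\bcalM_\Gamma(\vec f_{s_1})-\#\bcalM_\Gamma(\vec f_{s_0})\neq 0$ even in the absence of any bifurcation, and after applying $\tTr'_{\vec g}$ and summing over $\Gamma$ the difference $Z_{2k,3k}(\vec f_{s_1})-Z_{2k,3k}(\vec f_{s_0})$ equals $Z_{2k,3k}^{\mathrm{anomaly}}(j^3\vec f_J)$ computed over the cylinder $M\times J$.

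Second, you claim the correction term $-Z_{2k,3k}^{\mathrm{anomaly}}(\vec\rho_W)+\mu_k\,\mathrm{sign}\,W$ ``does not involve $\vec f$ at all'' and can be kept fixed along $J$. It does depend on $\vec f$: the section $\vec\rho_W\in\Gamma(\calH(T^vW))^{3k}$ is constrained by the boundary condition $\vec\rho_W|_M=\theta_M(j^3\vec f)$, so changing $\vec f$ forces a change of $\vec\rho_W$ (the spin 4-manifold $W$ can be held fixed, but not the section). The actual mechanism of the lemma is that the change in $Z_{2k,3k}(\vec f_s)$ is exactly cancelled by the change in $Z_{2k,3k}^{\mathrm{anomaly}}(\vec\rho_W)$: one glues the cylinder $M\times J$ onto $W$ and applies the spin-cobordism argument from the proof of Proposition~\ref{prop:spin-cob_inv} to obtain $Z_{2k,3k}^{\mathrm{anomaly}}(j^3\vec f_J)-Z_{2k,3k}^{\mathrm{anomaly}}(\vec\rho_W)+Z_{2k,3k}^{\mathrm{anomaly}}(\vec\rho'_W)=0$. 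Your proposal sees neither this anomaly nor the cancellation, which is the entire content of the lemma. (Also, a smaller point: the hypothesis of Lemma~\ref{lem:inv_ordered} already excludes level exchanges and birth/death points, which are handled separately in Lemmas~\ref{lem:inv_exchange} and~\ref{lem:inv_bd}, so most of the case analysis you set up is out of scope for this statement.)
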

\begin{proof}
Note that the moduli space $\calM_{d''\Gamma}(\vec{f}_J)$ is empty since $f_J$ has no $i/i$-intersections. By Proposition~\ref{prop:dbM_1-para}, the difference $Z_{2k,3k}(\vec{f}_{s_1})-Z_{2k,3k}(\vec{f}_{s_0})$ equals
\[ \Tr_{\vec{g}}\Bigl[ \sum_{\Gamma\in\calG^0_{2k,3k}(\vec{C})} \#\calM_{(-d+d')\Gamma}(\vec{f}_J)\,\Gamma \Bigr]+Z_{2k,3k}^{\mathrm{anomaly}}(-\mathrm{grad}\,\vec{f}_J). \]
As in the proof of Lemma~\ref{lem:indep_g}, the sum $\Tr_{\vec{g}}\Bigl[\sum_{\Gamma\in \calG_{2k,3k}^0(\vec{C})}\#\calM_{-d\Gamma}(\vec{f}_J)\Gamma\Bigr]$ vanishes by the IHX relation. Moreover, $\Tr_{\vec{g}}\Bigl[\sum_{\Gamma\in \calG_{2k,3k}^0(\vec{C})}\#\calM_{d'\Gamma}(\vec{f}_J)\Gamma\Bigr]$ equals
\[\Tr_{\vec{g}}\Bigl[\sum_{i=1}^{3k}\sum_{{{\Gamma'(\tilde{p}_i,\tilde{q}_i)_i}\atop{i(\tilde{p}_i)=i(\tilde{q}_i)}}}
		(-1)^{i(\tilde{p}_i)+1}\#\calM_{\Gamma'(\tilde{p}_i,\tilde{q}_i)_i}(\vec{f}_J)\,d'^*\Gamma'(\tilde{p}_i,\tilde{q}_i)_i\Bigr],\]
where the second sum is taken over graphs of degree $(\eta_1,\ldots,\eta_{3k})$, $\eta_j=1$ ($j\neq i$), $\eta_i=0$, such that $\beta(i)\in\Se(\Gamma)$, and $d'^*\Gamma'(\tilde{p}_i,\tilde{q}_i)_i$ denotes
\[ \sum_{{{x_i\in P_*^{(i)}}\atop{i(x_i)=i(\tilde{q}_i)+1}}}\partial_{x_i\tilde{p}_i}^{(i)}\Gamma(x_i,\tilde{q}_i)_i
	+\sum_{{{y_i\in P_*^{(i)}}\atop{i(y_i)=i(\tilde{p}_i)-1}}}\partial_{\tilde{q}_iy_i}^{(i)}\Gamma(\tilde{p}_i,y_i)_i
	+\delta_{\tilde{p}_i\tilde{q}_i}\Gamma(\emptyset,\emptyset)_i.\]
For each $\tilde{p}_i,\tilde{q}_i\in P_*^{(i)}$ with $i(\tilde{p}_i)=i(\tilde{q}_i)$, we have
\[\begin{split}
\Tr_{\vec{g}}&\Bigl[
	\sum_{{{x_i\in P_*^{(i)}}\atop{i(x_i)=i(\tilde{q}_i)+1}}}\partial_{x_i\tilde{p}_i}^{(i)}\Gamma(x_i,\tilde{q}_i)_i
		+\sum_{{{y_i\in P_*^{(i)}}\atop{i(y_i)=i(\tilde{p}_i)-1}}}\partial_{\tilde{q}_iy_i}^{(i)}\Gamma(\tilde{p}_i,y_i)_i
		+\delta_{\tilde{p}_i\tilde{q}_i}\Gamma(\emptyset,\emptyset)_i
		\Bigr]\\
	=&\Tr_{\ldots,\partial^{(i)}g^{(i)}+g^{(i)}\partial^{(i)},\ldots}\Bigl[ \Gamma(\tilde{p}_i,\tilde{q}_i)_i\Bigr]+\Tr_{\vec{g}}\Bigl[\delta_{\tilde{p}_i\tilde{q}_i}\Gamma(\emptyset,\emptyset)_i\Bigr]\\
	=&\Tr_{\ldots,\mathrm{id},\ldots}\Bigl[\Gamma(\tilde{p}_i,\tilde{q}_i)_i\Bigr]+\Tr_{\vec{g}}\Bigl[\delta_{\tilde{p}_i\tilde{q}_i}\Gamma(\emptyset,\emptyset)_i\Bigr]=\delta_{\tilde{p}_i\tilde{q}_i}\Tr_{\vec{g}}\Bigl[-\Gamma(\emptyset,\emptyset)_i+\Gamma(\emptyset,\emptyset)_i\Bigr]=0.
\end{split}\]
Hence we have
\[ Z_{2k,3k}(\vec{f}_{s_1})-Z_{2k,3k}(\vec{f}_{s_0})=Z_{2k,3k}^\mathrm{anomaly}(-\mathrm{grad}\,\vec{f}_J). \]

For the correction terms of $\widehat{Z}_{2k,3k}(\vec{f}_{s_0})$ and $\widehat{Z}_{2k,3k}(\vec{f}_{s_1})$, we may choose the same spin 4-manifold $W$. Then we choose generic GM sections $\vec{\gamma}_W$ and ${\vec{\gamma}\phantom{|}'_{\kern-1mm W}}$ of $\Gamma(T^vW)^{3k}$ as in \S\ref{ss:framing} that are extensions of $-\mathrm{grad}\,\vec{f}_{s_0}$ and $-\mathrm{grad}\,\vec{f}_{s_1}$ respectively. Then it follows from Lemma~\ref{lem:Z(rho)=0} that
\[ Z_{2k,3k}^\mathrm{anomaly}(-\mathrm{grad}\,\vec{f}_J)-Z_{2k,3k}^\mathrm{anomaly}(\vec{\gamma}_W)+Z_{2k,3k}^\mathrm{anomaly}(\vec{\gamma}\phantom{|}'_{\kern-1mm W})=0. \]
This completes the proof.
\end{proof}

%%%%%%%%%%%%%%%%%%%%%%%%%%%%%%
\subsection{Invariance at level exchange bifurcation}

We check the invariance of $\widehat{Z}_{2k,3k}(\vec{f})$ with respect to bifurcations of type (1) in Lemma~\ref{lem:generic_1-para}.

\begin{Lem}\label{lem:inv_exchange}
Suppose that $s_0\in J$ is a level exchange bifurcation for the generic 1-parameter family $\{(f_s,\mu_s)\}_{s\in J}$ of Morse pairs. If $\ve$ is sufficiently small,
\[ \widehat{Z}_{2k,3k}(\vec{f}_{s_0-\ve})=\widehat{Z}_{2k,3k}(\vec{f}_{s_0+\ve}).\]
\end{Lem}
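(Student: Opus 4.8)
The plan is to run the cobordism argument of the proof of Lemma~\ref{lem:inv_ordered} essentially verbatim, the only genuinely new input being the behaviour of the compactified moduli spaces of gradient flow graphs across a level exchange parameter. First I would fix a small closed interval $J=[s_0-\ve,s_0+\ve]\subset[0,1]$ containing $s_0$ and no other bifurcation of the generic $1$-parameter family $\{\vec{f}_s\}$, small enough that the only event in Cerf's graphic over $J$ is the single crossing of the two critical value loci $p=p_s$, $q=q_s$ at $s_0$. The key structural remark is that a level exchange involves no handle slide: the critical set of $f_s$ is constant over $J$ and, since there is no gradient $i/i$-intersection at $s_0$, the incidence coefficients of the Morse complex do not jump either. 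Hence, using \S\ref{ss:acyclic}, the sequence $\vec{C}$ of acyclic complexes associated to the chosen pairs $(a_i,b_i)$ can be taken constant over the whole of $J$, so that $\calG_{2k,3k}(\vec{C})$, $\calA_{2k,3k}$, the trace maps $\tTr'_{\vec{g}}$ and the cycle $\wgamma_{2k,3k}$ coincide at the two endpoints $s_0\pm\ve$.

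Next, for each $\Gamma\in\calG_{2k,3k}(\vec{C})$ without bivalent vertices I would form the $1$-parameter moduli space $\bcalM_\Gamma(\vec{f}_J)$, built from the compactification $\bcalM_2(f_J)$ of \S\ref{ss:mod_level_ex} (semi-short trajectories near the level exchange, ordered trajectories elsewhere) together with the transversality of Proposition~\ref{prop:Ms_mfd}. By Corollary~\ref{cor:comp_M_1-para}, on a neighbourhood of its codimension $\le 1$ strata it is a smooth compact $1$-manifold with boundary, smoothly embedded in $J\times\bConf_{2k}(M)$, and by the orientation analysis of \S\ref{ss:ori_1-para} its oriented boundary is
\[ \partial\bcalM_\Gamma(\vec{f}_J)=-\bcalM_\Gamma(\vec{f}_{s_0+\ve})\tcoprod\bcalM_\Gamma(\vec{f}_{s_0-\ve})\tcoprod\bcalM_{(d+d')\Gamma}(\vec{f}_J)\tcoprod\bigl(\partial^{\mathrm{hi}}\bConf_{2k}(M)\cap\bcalM_\Gamma(\vec{f}_J)\bigr), \]
where the signs match exactly the signs in the definition of $d+d'$. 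The hidden faces contribute nothing unless $E(\Gamma)=\Comp(\Gamma)$: applying Lemmas~\ref{lem:symmetry} and \ref{lem:dilation} precisely as in the proof of Proposition~\ref{prop:dM=Md}, a face of $\partial^{\mathrm{hi}}\bConf_{2k}(M)\cap\bcalM_\Gamma(\vec{f}_J)$ coming from a subgraph with a bivalent black vertex cancels in pairs, and one coming from a subgraph with a univalent black vertex is empty. Counting signed endpoints of the oriented $1$-manifold then gives exactly formula (\ref{eq:dMGJ}):
\[ \#\bcalM_\Gamma(\vec{f}_{s_0+\ve})-\#\bcalM_\Gamma(\vec{f}_{s_0-\ve})=\#\bcalM_{(d+d')\Gamma}(\vec{f}_J)+\begin{cases}0 & \text{if }E(\Gamma)\neq\Comp(\Gamma),\\ \#\bcalM_{\Gamma}^{\mathrm{local}}(-\mathrm{grad}_0j^1\vec{f}_J) & \text{if }E(\Gamma)=\Comp(\Gamma).\end{cases} \]

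Finally I would substitute $\langle\tTr'_{\vec{g}}(\wgamma_{2k,3k})\rangle$ for $\Gamma$ in this identity. The $(d+d')$-term dies by Lemma~\ref{lem:Tr_closedness}(1) together with the defining relations of $\calA_{2k,3k}$, while the $E(\Gamma)=\Comp(\Gamma)$-term assembles into $Z_{2k,3k}^{\mathrm{anomaly}}(j^3\vec{f}_J)$; hence $Z_{2k,3k}(\vec{f}_{s_0+\ve})-Z_{2k,3k}(\vec{f}_{s_0-\ve})=Z_{2k,3k}^{\mathrm{anomaly}}(j^3\vec{f}_J)$. Choosing the same compact spin $4$-manifold $W$ for the correction terms at both endpoints, with generic sections $\vec{\rho}_W,\vec{\rho}'_W\in\Gamma(\calH(T^vW))^m$ as in \S\ref{ss:correction}, the argument proving Proposition~\ref{prop:spin-cob_inv} yields
\[ Z_{2k,3k}^{\mathrm{anomaly}}(j^3\vec{f}_J)-Z_{2k,3k}^{\mathrm{anomaly}}(\vec{\rho}_W)+Z_{2k,3k}^{\mathrm{anomaly}}(\vec{\rho}'_W)=0, \]
and combining this with the definition of $\widehat{Z}_{2k,3k}$ and the fact that $\mu_k\,\mathrm{sign}\,W$ is the same at both ends gives $\widehat{Z}_{2k,3k}(\vec{f}_{s_0-\ve})=\widehat{Z}_{2k,3k}(\vec{f}_{s_0+\ve})$. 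The only step needing real care — and the main obstacle — is the first one: verifying that across the level exchange the compactification $\bcalM_\Gamma(\vec{f}_J)$ genuinely has the codimension $\le 1$ boundary structure asserted by Corollary~\ref{cor:comp_M_1-para}, i.e. that the semi-short trajectory construction of \S\ref{ss:mod_level_ex} introduces no spurious faces and that the orientation bookkeeping of \S\ref{ss:ori_1-para} is unaffected by the crossing; once this is in place, the argument is word-for-word that of Lemma~\ref{lem:inv_ordered}.
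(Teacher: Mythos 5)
Your proposal is correct and is, modulo the level of detail, exactly the paper's approach: the paper's proof of this lemma consists of the single sentence ``The proof is the same as Lemma~\ref{lem:inv_ordered},'' and you have simply made explicit the two points that make that reuse legitimate, namely that a level exchange changes neither the critical set nor the incidence coefficients (so the complexes $\vec{C}$, the space $\calA_{2k,3k}$, and the cycle $\wgamma_{2k,3k}$ are unchanged across $J$) and that the semi-short-trajectory compactification of \S\ref{ss:mod_level_ex} supplies the required smooth structure with boundary as in Corollary~\ref{cor:comp_M_1-para}. No gap; you have merely unfolded what the paper leaves implicit.
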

The proof is the same as Lemma~\ref{lem:inv_ordered}.

%%%%%%%%%%%%%%%%%%%%%%%%%%%%%%
\subsection{Invariance at birth-death bifurcation}

We check the invariance of $\widehat{Z}_{2k,3k}(\vec{f})$ with respect to bifurcations of type (2) in Lemma~\ref{lem:generic_1-para}.

We say that a birth-death point $v$ in a 1-parameter family $\{(f_s,\mu_s)\}_{s\in [0,1]}$, say at $s=s_0$, is {\it independent} if on a neighborhood of $s_0$ in $[0,1]$ the descending and the ascending manifold of $v$ are disjoint from all the other descending and ascending manifolds of critical points. 

\begin{Lem}[\cite{HW}, page 62]
A 1-parameter family $\{(f_s,\mu_s)\}_{s\in[0,1]}$ of pairs of generalized Morse functions and metrics on $M_0$ can be deformed so that every birth-death points are independent.
\end{Lem}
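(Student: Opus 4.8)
The statement to prove is: \emph{A 1-parameter family $\{(f_s,\mu_s)\}_{s\in[0,1]}$ of pairs of generalized Morse functions and metrics on $M$ can be deformed so that every birth/death point is almost independent.} Here ``almost independent'' means that near a birth/death parameter $s_0$ the descending and ascending manifolds of the birth/death point $v$ meet no other descending/ascending manifolds except possibly those of the distinguished loci $(\widetilde a(s),\widetilde b(s))$ from Lemma~\ref{lem:1-acyclic}.

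\textbf{Overall approach.} The plan is a standard transversality-with-parameters argument, localized around each of the finitely many birth/death parameters $s_0^{(1)},\dots,s_0^{(N)}$. Fix one such $s_0$, with birth/death point $v$ of index pair $(i,i+1)$. On a small interval $J_{s_0}=(s_0-\delta,s_0+\delta)$ use the parametrized normal form for a birth/death singularity (\cite[Appendix]{Ig2}, \cite{Ce}) to pin down $v$ and the metric near $v$. For $s<s_0$ (say) there are two nearby critical loci $p_\pm(s)$ coming from $v$; for $s>s_0$ none. The objects to be separated are: the families $\widetilde{\calD}_{p_\pm}(f_s)$, $\widetilde{\calA}_{p_\pm}(f_s)$ of descending/ascending manifolds of the birth/death pair over $J_{s_0}$, against the families $\widetilde{\calD}_{w}(f_s)$, $\widetilde{\calA}_{w}(f_s)$ of all \emph{other} critical loci $w$, \emph{excluding} $\widetilde a,\widetilde b$. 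The key dimension count: in the $(d+1)$-dimensional total space $M\times J_{s_0}$, the family descending manifold of a locus of index $j$ has dimension $j+1$ and the family ascending manifold dimension $d-j+1$; for such a pair to intersect we need $(j_1+1)+(d-j_2+1)\ge d+1$, i.e.\ $j_1-j_2\ge -1$, and the expected dimension of a transverse intersection is $j_1-j_2+2$. For the birth/death pair we have the further constraint that $p_+$ has index $i+1$ and $p_-$ has index $i$, and both loci terminate at $s_0$, so any incidence with a genuinely $1$-parameter locus is expected to occur only at isolated parameters; we want to push it away from $s_0$ entirely, which is exactly the ``almost independence'' condition. Crucially, because we are excluding $\widetilde a,\widetilde b$, there are no forced incidences of the form $\widetilde{\calD}_{p_+}\cap\widetilde{\calA}_{b}$ etc.\ that cannot be removed.

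\textbf{Key steps, in order.} First I would set up the universal space: let $\mathcal V$ be a neighborhood in the space of $1$-parameter families $\{(f_s,\mu_s)\}$ that keep the birth/death normal form near $v$ fixed but allow the metric $\mu_s$ and the functions $f_s$ to vary freely \emph{away} from a fixed neighborhood $M_v$ of $v$ and away from the fixed distinguished loci $\widetilde a,\widetilde b$. Second, over $\mathcal V$ I would build the parametrized incidence map whose fiber over a family is the intersection $\widetilde{\calD}_{p_\pm}(f_s)\cap\widetilde{\calA}_w(f_s)$ (and the symmetric one), and invoke the parametric transversality theorem (Thom–Smale) exactly as in the proof of the Morse–Smale genericity and as in Proposition~\ref{prop:Ms_mfd}: a generic family in $\mathcal V$ makes all these intersections transverse in $M\times J_{s_0}$. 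The point is that the only family data entering $\widetilde{\calD}_{p_\pm}$ near $v$ is the metric, and the metric can be perturbed freely on the region $W_v=f_s^{-1}[c-\epsilon,c+\epsilon]\setminus M_v$ which every such trajectory must traverse before reaching any other critical locus — this is the same mechanism as in Lemma~\ref{lem:glue_M2} and the genericity of Morse–Smale. Third, with transversality in hand, the intersection $\widetilde{\calD}_{p_+}(f_s)\cap\widetilde{\calA}_w(f_s)$ is a submanifold of $M\times J_{s_0}$ of dimension $\le (i+2)+(d-j_w+1)-(d+1)=i-j_w+2$; projecting to $J_{s_0}$ and using that $p_+$ only exists for $s$ on one side and the projection generically has isolated critical values, one sees the set of $s$ at which the intersection with $M\times\{s\}$ is non-empty is a finite set of parameters, and moreover — because $p_+(s)\to v$ as $s\to s_0$ and $v$'s descending manifold is $\{x_{i+1}=\dots=x_d=0\}$ inside $M_v$, which is disjoint from every $\widetilde{\calA}_w$ for $w\neq\widetilde a,\widetilde b$ after the perturbation — we may shrink $\delta$ so that $s_0$ itself is not such a parameter. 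Fourth, a further tiny perturbation of $\{f_s\}$ supported near those bad parameters (which are now bounded away from $s_0$) moves them out of $J_{s_0}$ altogether, or alternatively one simply re-chooses $J_{s_0}$ smaller; this is legitimate since no bad parameter accumulates at $s_0$. Fifth, repeat for each of the finitely many birth/death parameters, and note that since the perturbations are supported near pairwise-disjoint intervals $J_{s_0^{(r)}}$ (after first arranging, as in \S\ref{ss:bifurcations}, that no two bifurcations coincide), they do not interfere; a finite intersection of residual sets is residual, so a single generic family works simultaneously.

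\textbf{Main obstacle.} The delicate point is the behavior \emph{exactly at} $s_0$: one must rule out an incidence $\widetilde{\calD}_{p_\pm}\cap\widetilde{\calA}_w$ (or ascending/descending swapped) occurring precisely at the birth/death parameter, where the two loci $p_\pm$ coalesce and the usual transversality count degenerates because the ``critical locus'' is not a manifold at $s_0$. The way I would handle this is to use the normal form: near $s_0$, inside $M_v$, the ascending manifold of the birth/death point is $\{x_2=\dots=x_i=0,\ x_1\ge -\sqrt{u}\}$ and the descending manifold is $\{x_{i+1}=\dots=x_d=0,\ x_1\le \sqrt{u}\}$ (from the display in \S4.4 of the excerpt), both varying continuously through $u=0$ where they become $\{x_2=\dots=x_i=0\}$ and $\{x_{i+1}=\dots=x_d=0\}$ — i.e.\ exactly the ascending/descending manifolds of the degenerate critical point $v$. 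So the question reduces to separating these two fixed cones, computed inside the fixed region $W_v$ by the (freely perturbable) gradient flow, from the finitely many other families $\widetilde{\calA}_w,\widetilde{\calD}_w$ — which is again pure parametric transversality, and the exclusion of $\widetilde a,\widetilde b$ is precisely what guarantees there is no algebraic-topological obstruction (e.g.\ a non-trivial Morse boundary coefficient) forcing a persistent intersection. Once this is phrased correctly, the argument is routine; the only real care needed is bookkeeping the dimensions at the single degenerate parameter and confirming that the excluded pair $(\widetilde a,\widetilde b)$ absorbs any unavoidable incidence.
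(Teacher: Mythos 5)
The paper does not prove this lemma; it cites [HW], page 62, and uses the result as a black box, so there is no paper argument against which to compare your proposal. Your framework — localize at each birth/death parameter, use the parametrized normal form to pin down the degenerate locus, and apply parametric transversality to the remaining data — is the right kind of skeleton for such a genericity statement, and in practice you would simply defer to [HW] here as the paper does.

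That said, step~3 of your argument contains a genuine gap. You assert that the transverse family intersection $\widetilde{\calD}_{p_+}(f_s)\cap\widetilde{\calA}_w(f_s)\subset M\times J_{s_0}$, of dimension $i-j_w+2$, projects to a finite set of parameters in $J_{s_0}$. That holds only when $i-j_w+2\le 0$, i.e.\ $j_w\ge i+2$. For $j_w\le i$ the fiberwise intersection $\calD_{p_+}(f_s)\cap\calA_w(f_s)$ has expected dimension $i+1-j_w\ge 1$ in $M\times\{s\}$: it is a positive-dimensional union of flow lines, transversality makes it a manifold but does not make it empty, and it is stable under small perturbation of $(f_s,\mu_s)$. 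The family intersection therefore projects \emph{onto} an open interval of $J_{s_0}$, not a finite set. Consequently, literal disjointness of the descending/ascending manifolds of the birth/death locus from those of every other critical locus except $\widetilde{a},\widetilde{b}$ cannot be obtained by a generic small perturbation. What can be arranged near $s_0$ — and what Lemma~\ref{lem:inv_bd} actually needs for the decomposition $\widetilde{C}_*^{(s_0-\ve)}\cong\widetilde{C}_*^{(s_0+\ve)}\oplus C_*^{\mathrm{elem}}$ — is the vanishing of the finitely many isolated incidences between $p,q$ and critical loci of adjacent index, i.e.\ $\bbcalM{f_{s_0-\ve}}{p}{w}=\emptyset$ for $w\ne q$ of index $i$ and $\bbcalM{f_{s_0-\ve}}{w'}{q}=\emptyset$ for $w'\ne p$ of index $i+1$; that is a strictly weaker condition than the one your step~3 tries to force, and it requires more from the birth/death normal form than transversality alone (the cancellation trajectory is short and confined to $M_v$, the stray incidences shrink with it as $s\to s_0$). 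You should either re-derive and prove that weaker formulation, or defer to [HW]. Two minor points: the general-purpose formula in step~2 should read $j_1-j_2+1$, not $j_1-j_2+2$, since the codimensions $(d-j_1)$ and $j_2$ add in the $(d+1)$-manifold $M\times J$ (your specific computation $i-j_w+2$ is consistent with this once you set $j_1=i+1$); and the reason $\widetilde{a},\widetilde{b}$ must be excluded is purely dimensional — $\calD_{\widetilde{a}}$ and $\calA_{\widetilde{b}}$ are open and $d$-dimensional, hence cannot be avoided — not a matter of Morse-boundary coefficients or algebraic obstructions.
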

\begin{Lem}\label{lem:inv_bd}
Suppose that $s_0\in J$ is a parameter on which an independent birth-death point $v$ occurs in a generic 1-parameter family. If $\ve$ is sufficiently small,
\[ \widehat{Z}_{2k,3k}(\vec{f}_{s_0-\ve})=\widehat{Z}_{2k,3k}(\vec{f}_{s_0+\ve}).\]
\end{Lem}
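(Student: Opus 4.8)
The plan is to compare $\widehat{Z}_{2k,3k}(\vec{f}_{s_0-\ve})$ and $\widehat{Z}_{2k,3k}(\vec{f}_{s_0+\ve})$ by running the same cobordism argument as in Lemma~\ref{lem:inv_ordered}, but now keeping careful track of the extra contributions that arise because a pair of critical points $(p_1,q_1)$ is born/annihilated at $v$. First I would shrink $J$ to a small interval $[s_0-\ve,s_0+\ve]$ containing only the death parameter $s_0$, and (using the almost independence hypothesis together with Lemma~\ref{lem:1-acyclic}) arrange that the acyclic Morse complexes $\vec{C}$ vary continuously across $s_0$, with the effect on $\widetilde{C}^{(1)}$ being precisely the algebraic death of a pair of generators in adjacent degrees. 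On the $u\le 0$ side the moduli space $\bcalM_\Gamma(\vec{f}_s)$ is defined for graphs $\Gamma$ whose edge $\beta(1)$ may be separated with label pair $(p_1,q_1)$; on the $u>0$ side such graphs no longer make sense, and the edge $\beta(1)$ must be compact. Corollary~\ref{cor:glue_bd} is the key local input: the gluing embedding $\varphi$ gives a cobordism identity
\[
\#\bcalM_{\Gamma(\emptyset,\emptyset)_1}(\vec{f}_{s_0+\ve})
	=\#\bcalM_{\Gamma(p_1,q_1)_1}(\vec{f}_{s_0-\ve})
	+\#\bcalM_{\Gamma(\emptyset,\emptyset)_1}(\vec{f}_{s_0-\ve}),
\]
and similarly for all graphs whose remaining structure is unchanged.

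Next I would feed this into the trace formalism. The point is that the combination appearing on the right of Corollary~\ref{cor:glue_bd}, once summed against the coefficients of $\langle\tTr'_{\vec{g}}(\wgamma_{2k,3k})\rangle$, is exactly the kind of expression governed by the $\xi$-relation of \S{}2: the separated-edge term $\Gamma(p_1,q_1)_1$ with $d(p_1)=d(q_1)+1$ enters through the $d'$ map, and the compact-edge term $\Gamma(\emptyset,\emptyset)_1$ is the $\delta_{p_1q_1}$ term in the definition of $d'$ and of the $\xi$-relation. So summing Corollary~\ref{cor:glue_bd} over all graphs $\Gamma$ and applying $\#\bcalM'_\bullet$ and $\tTr'_{\vec{g}}$, the difference $Z_{2k,3k}(\vec{f}_{s_0+\ve})-Z_{2k,3k}(\vec{f}_{s_0-\ve})$ collapses — via Lemmas~\ref{lem:Tr(xi)=0}, \ref{lem:dg=0} and the relations defining $\calA_{2k,3k}$ — to a boundary contribution together with a local (infinitesimal graph flow) term supported near the death point $v$. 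As in Lemma~\ref{lem:inv_ordered}, the local term is precisely $Z_{2k,3k}^{\mathrm{anomaly}}$ of a 1-parameter family of $1$-jets, and it is cancelled by a corresponding change in the correction term: we may use the same $W$ at $s_0-\ve$ and $s_0+\ve$, pick generic extensions $\vec{\rho}_W$ and $\vec{\rho'}_W$ of $\theta_M(j^3\vec{f}_{s_0\mp\ve})$, and invoke the computation in the proof of Proposition~\ref{prop:spin-cob_inv} to see that the anomaly difference plus the difference of correction terms is zero. The $\mu_k\,\mathrm{sign}\,W$ terms are literally unchanged since $W$ is fixed.

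One subtlety I would address explicitly is the behavior of graphs whose edge $\beta(1)$ is \emph{broken} or \emph{broken separated}, and of graphs where the critical point that dies, $p_1$ or $q_1$, sits on a white vertex of some \emph{other} edge (e.g. because $q_1$ coincides with $a_1$ or $b_1$ in the normalization, or appears as an intermediate node in a broken edge). Here the relevant cancellations come from the $\partial$-relation and $C$-relation, combined with the fact (Lemma~\ref{lem:1-acyclic}, almost independence) that the ascending/descending manifolds of $v$ meet only those of the designated pair $(\widetilde a(s),\widetilde b(s))$ whose generators are quotiented out in $\widetilde C^{(1)}$; this keeps the modification confined to the separated/compact edge alternative already handled by the $\xi$-relation. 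I would also have to check the orientation bookkeeping: the sign in Corollary~\ref{cor:glue_bd} (the $-1$ in front of both terms of the $u<0$ boundary) must match the sign in the definition of $d'$ and of the $\xi$-relation, which is exactly the computation already recorded in \S\ref{ss:ori_MG} and \S\ref{ss:ori_1-para}.

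The main obstacle I anticipate is precisely this matching of the birth/death gluing cobordism with the algebraic $\xi$-relation at the level of the trace and with the correct signs: one must verify that the three contributions — the genuine new compact-edge graph on the $u>0$ side, the separated-edge graph on the $u<0$ side with $d(p_1)=d(q_1)+1$, and the old compact-edge graph on the $u<0$ side — assemble into an instance of $(1\otimes(d+d'))\tTr'_{\vec{g}}(\wgamma_{2k,3k})$ evaluated on $\calM_\Gamma$ over the interval, rather than producing a leftover term. Granting the earlier lemmas (especially Lemma~\ref{lem:Tr_closedness} and Corollary~\ref{cor:glue_bd}) this is bookkeeping, but it is the delicate heart of the argument; everything else (choice of $W$, anomaly cancellation) is parallel to Lemma~\ref{lem:inv_ordered}.
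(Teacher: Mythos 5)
Your overall skeleton is right — shrink the interval, use Corollary~\ref{cor:glue_bd} as the local input, and observe that the correction-term side is parallel to Lemma~\ref{lem:inv_ordered} — but the central algebraic mechanism you propose is wrong. You claim that the combination
\[ \#\bcalM_{\Gamma(p_1,q_1)_1}(\vec{f}_{s_0-\ve})+\#\bcalM_{\Gamma(\emptyset,\emptyset)_1}(\vec{f}_{s_0-\ve}) \]
is ``exactly the kind of expression governed by the $\xi$-relation.'' It is not: the $\xi$-relation is stated for separated edges with $d(p_i)=d(q_i)$, i.e.\ $\eta_i=0$, whereas the dying pair at a death point has $d(p_1)=d(q_1)+1$, so $\Gamma(p_1,q_1)_1$ lives in $\calG_{2k,3k}(\vec{C})$ with $\eta_1=1$. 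There is also no $\delta_{p_iq_i}$ term in the definition of $d'$ itself, only in the $\xi$-relation. So Lemmas~\ref{lem:Tr(xi)=0} and~\ref{lem:dg=0} do not absorb the birth/death contribution, and trying to feed the death identity into the $(d+d')$-cycle machinery of \S2 leaves a genuine leftover term.

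What actually closes the gap is a different, more local device that your proposal never mentions. At a death, the reduced Morse complex decomposes as
\[ \widetilde{C}^{(s_0-\ve)}_* \cong \widetilde{C}^{(s_0+\ve)}_*\oplus C^{\mathrm{elem}}_*, \qquad C^{\mathrm{elem}}_*=\{0\to \Z^{\{p\}}\stackrel{\approx}{\to}\Z^{\{q\}}\to 0\}, \]
and one takes the combinatorial propagator on the $s_0-\ve$ side to be $g'=g+g^{\mathrm{elem}}$, where $g$ is any combinatorial propagator of $\widetilde{C}^{(s_0+\ve)}_*$ and $g^{\mathrm{elem}}(q)=p$. With this choice $g'_{qp}=1$, so
\[ \tTr_{\cdots,g',\cdots}\bigl[[\Gamma(p,q)_k]\bigr]=\tTr_{\cdots,g,\cdots}\bigl[[\Gamma(\emptyset,\emptyset)_k]\bigr], \]
and the two left-hand terms of Corollary~\ref{cor:glue_bd} acquire a common coefficient; the corollary then directly gives the count at $s_0+\ve$. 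The freedom to choose the propagator is exactly what Lemma~\ref{lem:Tr_closedness}(2) licenses, but you have to actually make this specific choice — the $\xi$-relation route does not substitute for it. This is the missing idea in your proposal.

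The remaining bookkeeping you flag (broken and broken-separated edges, other edges touching the dying pair) is instead disposed of by almost-independence plus the $\partial$- and $C$-relations, as you say, and these cases reduce to the arguments of Lemmas~\ref{lem:inv_ordered} and~\ref{lem:inv_exchange}. So the last two paragraphs of your write-up are sound in spirit; it is the second paragraph, where you invoke the $\xi$-relation in place of the elementary-propagator decomposition, that contains the real gap.
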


\begin{proof}
We prove the lemma only for death point since the case of birth point is symmetric. If $(p,q)$ is the critical point pair at $s=s_0-\ve$ that disappears on $s\geq s_0$, then the Morse complex at $s=s_0-\ve$ is the direct sum $C_*^{(s_0+\ve)}\oplus C_*^\mathrm{elem}$, where $C_*^{(s_0+\ve)}=(C_*^{(s_0+\ve)},\partial^{(s_0+\ve)})$ is the Morse complex at $s=s_0+\ve$ and 
\[ C_*^\mathrm{elem}=\{0\to C_{i+1}^\mathrm{elem}=\Z^{\{p\}} \stackrel{\approx}{\to} C_{i}^\mathrm{elem}=\Z^{\{q\}}\to 0\}. \]

Choose a combinatorial propagator $g$ of $C_*^{(s_0+\ve)}$. The acyclic complex $C_*^\mathrm{elem}$ has a unique combinatorial propagator $g^\mathrm{elem}$ defined by $g^\mathrm{elem}(q)=p$. We consider $g$ and $g^\mathrm{elem}$ as homogeneous degree 1 maps of $C_*^{(s_0+\ve)}\oplus C_*^\mathrm{elem}$ by setting $g(C_*^\mathrm{elem})=0$ and $g^\mathrm{elem}(C_*^{(s_0+\ve)})=0$. Then one can check that $g'=g+g^\mathrm{elem}$ is a combinatorial propagator for $C_*^{(s_0+\ve)}\oplus C_*^\mathrm{elem}$.

We need only to check the identity of the lemma in the case where a gluing of trajectories happens at $v$. Suppose that the separated edge labeled by 1 in a flow graph of $\Gamma(p,q)_1\in\calG_{2k,3k}^0(\vec{C}^{(s_0-\ve)})$ converges to a broken edge as $s\to s_0$ and that $p$ and $q$ converges to $v$. Then by Proposition~\ref{prop:glue_bd} we have
\begin{equation}\label{eq:sign_glue}
 \begin{split}
	&\Tr_{g',\cdots}\bigl(\Gamma(p,q)_1\bigr)\cdot
	\#\bcalM_{\Gamma(p,q)_1}(\vec{f}_{s_0-\ve})
	+\Tr_{g',\cdots}\bigl(\Gamma(\emptyset,\emptyset)_1\bigr)
	\cdot\#\bcalM_{\Gamma(\emptyset,\emptyset)_1}(\vec{f}_{s_0-\ve})\\
	&=\Tr_{g,\cdots}\bigl(\Gamma(\emptyset,\emptyset)_1\bigr)
	\Bigl(-\#\bcalM_{\Gamma(p,q)_1}(\vec{f}_{s_0-\ve})
	+\#\bcalM_{\Gamma(\emptyset,\emptyset)_1}(\vec{f}_{s_0-\ve})\Bigr)\\
	&=\Tr_{g,\cdots}\bigl(\Gamma(\emptyset,\emptyset)_1\bigr)\cdot
	\#\bcalM_{\Gamma(\emptyset,\emptyset)_1}(\vec{f}_{s_0+\ve}).
\end{split} 
\end{equation}
Here, we must check that the signs of the boundaries of the 1-cobordism are correct. It suffices to check the coorientations for the standard model $h_u$ in \S\ref{ss:gluing_separate} for a 1-parameter family around a death bifurcation. For $u<0$ with $|u|$ small and for $x=(x_1,\ldots,x_d)\in \calA_{p_+}(h_u), y=(y_1,\ldots,y_d)\in \calD_{p_-}(h_u)$, put
\[ o^*_M(\calA_{p_+}(h_u))_x=\alpha\,dx_2\cdots dx_i,\quad
o^*_M(\calD_{p_-}(h_u))_y=\beta\,dy_{i+1}\cdots dy_d\quad (\alpha,\beta\in\{-1,1\}).
\]
By convention, $o^*_{M\times M}(\calN_{p_-p_+}(h_u))_{(x,y)}=o^*_M(\calA_{p_+}(h_u))_x\wedge o^*_M(\calD_{p_-}(h_u))_y$.
On the other hand,
\[ \begin{split}
  &o^*_M(\calD_{p_-}(h_u))_x\wedge o^*_M(\calA_{p_+}(h_u))_x=\alpha\beta dx_{i+1}\cdots dx_ddx_2\cdots dx_i\\
  &=(-1)^{i-1}\alpha\beta dx_2\cdots dx_d=(-1)^{i-1}\alpha\beta\, \iota\Bigl(\frac{\partial}{\partial x_1}\Bigr)o(\R^d)_x.
\end{split} \]
Hence $\ve_{h_u}(p_-,p_+)=(-1)^{i-1}\alpha\beta$. For $u>0$ small, consider points $x'=(x_1',\ldots,x_d'),y'=(y_1',\ldots,y_d')\in \R^d$ such that $x'$ is close to $x$, $y'$ is close to $y$ and $y'=\Phi_{h_u}^t(x')$. By using the explicit solution (\ref{eq:solution}) and by convention, $o(\calM_2(h_u))_{(x',y')}$ is given by
\[ \displaystyle(-dh_u)_{y'}\wedge (dx_1'+\delta(x_1')dy_1')\wedge\bigwedge_{k=2}^i(dx_k'+e^tdy_k')\wedge\bigwedge_{k=i+1}^d(dx_k'+e^{-t}dy_k'), \]
where $\delta(x_1')=\gamma_1(t)$. Then
\[ \begin{split}
  &o(\calM_2(h_u))_{(x',y')}\wedge dx_2'\cdots dx_i'\wedge dy_{i+1}'\cdots dy_d'\\
  &=(-1)^{i-1}(e^t)^{i-1}(-y_1^2-u)dy_1'\cdots dy_d'dx_1'\cdots dx_d'.
\end{split}\]
Assuming that $x',y'$ converge to $x,y$ respectively as $u\to 0$, the coorientation $\lim_{u\to 0+}o^*_{M\times M}(\calM_2(h_u))_{(x',y')}$ is equivalent to 
\[ \lim_{u\to 0-}-\ve_{h_u}(p_-,p_+)\,o^*_M(\calA_{p_+}(h_u))_x\wedge o^*_M(\calD_{p_-}(h_u))_y.\]
This shows that the signs in (\ref{eq:sign_glue}) are correct. The proof of the invariance of the other terms in $\widehat{Z}_{2k,3k}(\vec{f}_{s_0-\ve})$ is the same as Lemma~\ref{lem:inv_ordered} since $v$ is independent.
\end{proof}

%%%%%%%%%%%%%%%%%%%%%%%%%%%%%%
\subsection{Invariance at $i/i$-intersection}

We check the invariance of $\widehat{Z}_{2k,3k}(\vec{f})$ with respect to bifurcations of type (3) in Lemma~\ref{lem:generic_1-para}.

\begin{Lem}\label{lem:inv_i/i}
Suppose that $s_0\in J$ is a point on which an $i/i$-intersection between critical points (loci) $p$ and $q$ occurs in a generic 1-parameter family $\{(f_s,\mu_s)\}_{s\in J}$. If $\ve$ is sufficiently small, then
\[ \widehat{Z}_{2k,3k}(\vec{f}_{s_0-\ve})
	=\widehat{Z}_{2k,3k}(\vec{f}_{s_0+\ve}). \]
\end{Lem}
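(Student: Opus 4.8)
The plan is to mimic the proof strategy of Lemma~\ref{lem:inv_ordered} and Lemma~\ref{lem:inv_bd}, but now the combinatorial propagator changes across the bifurcation parameter, so the main work is to track how $Z_{2k,3k}(\vec{f}_{s_0\mp\ve})$ and the anomaly terms conspire to cancel. First I would fix the acyclic complexes $\vec{C}$, $\vec{C}'$ and the propagators $g$, $g'=(1-h)g(1+h)$ as in the setup, and verify the two stated facts: that $g'$ is indeed a combinatorial propagator for $(\widetilde C_*^{(s_0+\ve)},\partial')$ (using Lemma~\ref{lem:dh}: $\partial'(1-h)=(1-h)\partial$ and $\partial(1+h)=(1+h)\partial'$, so $\partial'g'+g'\partial' = (1-h)(\partial g+g\partial)(1+h)=(1-h)(1+h)=\mathrm{id}$ since $h^2=0$), and that $g'-g=gh-hg$ (using $hgh=0$). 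This is the algebraic backbone and I expect it to be routine.

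Next, the cobordism input. Over $J=[s_0-\ve,s_0+\ve]$ the moduli space $\bcalM_\Gamma(\vec{f}_J)$ for $\Gamma\in\calG_{2k,3k}(\vec{C})$ is $1$-dimensional by Proposition~\ref{prop:Ms_mfd}, and by Corollary~\ref{cor:comp_M_1-para} and the orientation analysis of \S\ref{ss:ori_1-para} its oriented boundary is
\[
\partial\bcalM_\Gamma(\vec{f}_J)= -\bcalM_\Gamma(\vec{f}_{s_0+\ve})\tcoprod \bcalM_\Gamma(\vec{f}_{s_0-\ve})\tcoprod \bcalM_{(d+d'')\Gamma}(\vec{f}_J)\tcoprod(\partial^\mathrm{hi}\bConf_{2k}(M)\cap\bcalM_\Gamma(\vec{f}_J)),
\]
where the crucial difference from the ordered case is that the codimension-one faces coming from once-broken separated edges are now governed by $d''$ (broken edges with $d(r_i)=d(p_i)$ or $d(s_i)=d(q_i)$, since an $i/i$-intersection produces a $0$-dimensional moduli of trajectories between critical points of \emph{equal} index), rather than by $d'$. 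Taking signed counts and using Lemmas~\ref{lem:symmetry} and \ref{lem:dilation} to kill the hidden-face contributions against the local moduli (exactly as in Lemma~\ref{lem:inv_ordered}), I get
\[
\#\bcalM_\Gamma(\vec{f}_{s_0+\ve})-\#\bcalM_\Gamma(\vec{f}_{s_0-\ve})=\#\bcalM_{(d+d'')\Gamma}(\vec{f}_J)+(\text{anomaly-type local term if }E(\Gamma)=\Comp(\Gamma)).
\]

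Then I would apply $\tTr_{\vec g}$ at $s_0-\ve$ and $\tTr_{\vec g'}$ at $s_0+\ve$ to $\wgamma_{2k,3k}$ and compare. The difference $Z_{2k,3k}(\vec{f}_{s_0+\ve})-Z_{2k,3k}(\vec{f}_{s_0-\ve})$ splits into two pieces: (i) the change coming from $\vec g\to\vec g'$, which by $g'-g=gh-hg$ is handled by exactly the computation in the proof of Lemma~\ref{lem:Tr_closedness}(2) — i.e. $\tTr'_{gh-hg,\ldots}$ applied to the relevant sum vanishes in $\calA_{2k,3k}\otimes\calH_{2k}(\vec C)$ modulo a $d''$-type boundary term, since $h$ plays the role of the chain homotopy there; and (ii) the genuine cobordism term $\#\bcalM_{(d+d'')\Gamma}(\vec{f}_J)$ summed against $[\Gamma]$, which I must show vanishes in $\calA_{2k,3k}$. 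For the latter, the key is that $d''$ lands in $\calG_{n,m-1}$ with degree shifted so that the corresponding graphs lie in the span defining $\calH_{2k}(\vec C)$; precisely, the sum $\sum_\Gamma[\Gamma]\otimes\langle d''\Gamma\rangle$ should be expressible — using Lemma~\ref{lem:dh} to rewrite $\partial'-\partial$ in terms of $h$ — as something already in the image of the $(d+d')\calG_{n,*,\Sigma(1,\ldots,1)}$-relation of $\calH_{2k}(\vec C)$, together with the $\xi$-relation on the $\calA$-factor, so that $\#\bcalM'_\bullet(\vec{f}_J)$ annihilates it. Finally I would assemble: choosing the \emph{same} spin $4$-manifold $W$ and generic sections $\vec\rho_W$, $\vec\rho'_W$ for the two correction terms, the anomaly-type local term $\#\bcalM_\Gamma^\mathrm{local}(-\mathrm{grad}_0j^1\vec f_J)$ summed against $\tTr'_{\vec g}(\wgamma_{2k,3k})$ is exactly $Z_{2k,3k}^\mathrm{anomaly}(j^3\vec f_J)$, and by the proof of Proposition~\ref{prop:spin-cob_inv} this equals $Z_{2k,3k}^\mathrm{anomaly}(\vec\rho_W)-Z_{2k,3k}^\mathrm{anomaly}(\vec\rho'_W)$, so everything cancels and $\widehat Z_{2k,3k}(\vec f_{s_0-\ve})=\widehat Z_{2k,3k}(\vec f_{s_0+\ve})$.

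The main obstacle I anticipate is part (i) and the vanishing of the $d''$-term: I need the $\xi$-relation and the $\calH_{2k}$-defining relations to be \emph{exactly} compatible with the $d''$ operator (equal-index broken edges) rather than $d'$ (index-drop broken edges), and with the chain-homotopy $h$ rather than an $\End_2$-homotopy. Getting the signs in the $d''$ formula (the $(-1)^{d(p_i)-d(s_i)}$, already flagged in \S\ref{ss:ori_1-para}) to line up with the induced boundary orientations of \S\ref{ss:ori_1-para}, and checking that $\tTr_{\vec g'}-\tTr_{\vec g}$ produces precisely the $d''$-boundary correction and not some leftover term, is where the real bookkeeping lies; the rest parallels Lemmas~\ref{lem:inv_ordered}, \ref{lem:inv_exchange}, \ref{lem:inv_bd} closely.
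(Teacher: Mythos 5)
The algebraic ingredients you set up ($h$, $g'=(1-h)g(1+h)$, $g'-g=gh-hg$, $h^2=0$, $hgh=0$, the operator $d''$) agree with the paper's, and your instinct that the propagator change and the cobordism boundary interact is correct. The genuine gap is in how you propose to make them cancel. You assert that (ii), the boundary term $\sum_\Gamma\bigl[\Gamma\bigr]\,\#\bcalM_{d''\Gamma}(\vec f_J)$, ``vanishes in $\calA_{2k,3k}$'' via the $\calH_{2k}$- and $\xi$-relations, and that (i), the $\tTr_{gh-hg,\ldots}$-term, vanishes ``as in Lemma~\ref{lem:Tr_closedness}(2) modulo a $d''$-type boundary term.'' Neither step goes through as stated. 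The $\partial$-relation of $\calH_{2k}(\vec C)$ and the $\xi$-relation of $\calA_{2k,3k}(\vec C)$ are built from the boundary coefficients $\partial^{(i)}_{ab}$ (between adjacent-index critical points), whereas the $d''$-faces produce the \emph{equal-index} counts $\#\bbcalM{f_J}{p_k}{r_k}$ with $d(r_k)=d(p_k)$, i.e.\ the matrix entries of the degree-$0$ map $h$ (nonzero exactly on the $i/i$-intersecting pair). No relation in the definition of $\calH_{2k}(\vec C)$ or $\calA_{2k,3k}(\vec C)$ sees $h$, so (ii) is genuinely nonzero on its own. Similarly, Lemma~\ref{lem:Tr_closedness}(2) uses an $\End_2$-homotopy with $\partial h-h\partial=g-g'$ and runs through the $(d+d')\calG_{n,*,\Sigma(1,\ldots,1)}$-relation; here $h\in\End_0$ satisfies $\partial'h-h\partial=\partial'-\partial$ and produces $d''$-type, not $(d+d')$-type, terms. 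You flag this mismatch yourself, but the proposal does not resolve it.

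In the paper (i) and (ii) do not vanish separately: they cancel \emph{jointly}. Substituting the cobordism relation $\#\bcalM_\Gamma(\vec f_{s_0+\ve})=\#\bcalM_\Gamma(\vec f_{s_0-\ve})+\#\bcalM_{d''\Gamma}(\vec f_J)$, one rewrites the $\tTr_{gh-hg,\ldots}$-term (using $g'-g=gh-hg$) as a count over once-$d''$-broken configurations and finds the whole expression equal to $\sum\tTr_{g'-g,\ldots}\bigl[[\Gamma]\bigr]\cdot\#\bcalM_{d''\Gamma}(\vec f_J)$; iterating the same rewriting once more gives $-\sum\tTr_{g,\ldots}\bigl[[\Gamma]\bigr]\cdot\#\bcalM_{d''d''\Gamma}(\vec f_J)=0$, the vanishing coming precisely from $h^2=0$ and $hgh=0$ (two equal-index breaks cannot occur). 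Finally, the hidden-face and anomaly bookkeeping you import from Lemma~\ref{lem:inv_ordered} is unnecessary here: the paper shrinks $\ve$ so that, by genericity, the only codimension-one event in $[s_0-\ve,s_0+\ve]$ is the $i/i$-intersection, whence $\partial\bcalM_\Gamma(\vec f_J)$ is exactly the two fiber endpoints plus $\bcalM_{d''\Gamma}(\vec f_J)$, with no $d$-, $d'$-, hidden-face, or local-anomaly contributions.
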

\begin{proof}
By Proposition~\ref{prop:dbM_1-para}, we may assume without loss of generality that 
\[\sum_{\sigma\in\mathfrak{S}_{3k}}\sum_{\tau\subset E(\Gamma)}\Bigl(\#\bcalM_{\Gamma_\sigma^\tau}(\vec{f}_{s_0+\ve})-\#\bcalM_{\Gamma_\sigma^\tau}(\vec{f}_{s_0-\ve})
	-\#\bcalM_{d''\Gamma_\sigma^\tau}(\vec{f}_J)\Bigr)=0 \]
 if $\ve$ is sufficiently small. Let $g,g'$ be the combinatorial propagators considered in \S\ref{ss:change_g} and put $\vec{g}=(g,g_2,\ldots,g_{3k})$, $\vec{g}'=(g',g_2,\ldots,g_{3k})$, $\vec{C}=(C_*^{(s_0-\ve)},C_*^{(2)},\ldots,C_*^{(3k)})$ and $\vec{C}'=(C_*^{(s_0+\ve)},C_*^{(2)},\ldots,C_*^{(3k)})$. Using Corollary~\ref{cor:g'-g} we have
\[ \begin{split}
	&\sum_{\Gamma\in\calG_{2k,3k}^0(\vec{C})}\Tr_{\vec{g}'}\bigl(\Gamma\bigr)\cdot\#\bcalM_\Gamma(\vec{f}_{s_0+\ve})
	-\sum_{\Gamma\in\calG_{2k,3k}^0(\vec{C}')}\Tr_{\vec{g}}\bigl(\Gamma\bigr)\cdot\#\bcalM_\Gamma(\vec{f}_{s_0-\ve})\\
	&=\sum_\Gamma\Tr_{\vec{g}'}\bigl(\Gamma\bigr)\cdot\Bigl(\#\bcalM_\Gamma(\vec{f}_{s_0-\ve})+\#\bcalM_{d''\Gamma}(\vec{f}_J)\Bigr)-\sum_\Gamma\Tr_{\vec{g}}\bigl(\Gamma\bigr)\cdot\#\bcalM_\Gamma(\vec{f}_{s_0-\ve})\\
	&=\sum_\Gamma\Tr_{g'-g,\ldots}\bigl(\Gamma\bigr)\cdot\#\bcalM_\Gamma(\vec{f}_{s_0-\ve})+\sum_\Gamma\Tr_{\vec{g}'}\bigl(\Gamma\bigr)\cdot\#\bcalM_{d''\Gamma}(\vec{f}_J)\\
	&=\sum_\Gamma\Tr_{hg'-g'h,\ldots}\bigl(\Gamma\bigr)\cdot\#\bcalM_\Gamma(\vec{f}_{s_0-\ve})
		+\sum_\Gamma\Tr_{\vec{g}'}\bigl(\Gamma\bigr)\cdot\#\bcalM_{d''\Gamma}(\vec{f}_J)\\
	&=\Tr_{g',\ldots}\Bigl[\sum_{{{p_1,q_1}\atop{i(p_1)=i(q_1)+1}}}
		\sum_{\Gamma(p_1,q_1)_1}
		\Gamma(p_1,q_1)_1\cdot\#\bcalM_{(h*\Gamma-\Gamma*h)(p_1,q_1)_1}(\vec{f}_J)\Bigr]\\
		&\hspace{5mm}+\Tr_{\vec{g}'}\Bigl[\sum_{{{p_1,q_1}\atop{i(p_1)=i(q_1)+1}}}
		\sum_{\Gamma(p_1,q_1)_1}
		\Gamma(p_1,q_1)_1\cdot\#\bcalM_{d''\Gamma(p_1,q_1)_1}(\vec{f}_J)\Bigr]=0.
\end{split}\]
This completes the proof.
\end{proof}

\begin{proof}[Proof of Theorem~\ref{thm:Z_invariant}]
Lemmas~\ref{lem:inv_ordered}, \ref{lem:inv_exchange}, \ref{lem:inv_bd}, \ref{lem:inv_i/i} show that $\widehat{Z}_{2k,3k}(\vec{f}_s)$ is invariant under all possible bifurcations listed in \S\ref{ss:bifurcations}. This completes the proof.
\end{proof}

%%%%%%%%%%%%%%%%%%%%%%%%%%%%%%%
\appendix

%%%%%%%%%%%%%%%%%%%%%%%%%%%%
\mysection{Some facts on smooth manifolds with corners}{s:mfd_corners}

We follow the convention in \cite[Appendix]{BT} for manifolds with corners, smooth maps between them and their transversality. We write down some necessary terms from \cite[Appendix]{BT}, some of which are specialized than those in \cite[Appendix]{BT}.
\begin{Def}
\begin{enumerate}
\item A map between manifolds with corners is {\it smooth} if it has a local extension, at any point of the domain, to a smooth map from a manifold without boundary, as usual. 
\item Let $Y,Z$ be smooth manifolds with corners, and let $f:Y\to Z$ be a bijective smooth map. This map is a {\it diffeomorphism} if both $f$ and $f^{-1}$ are smooth.
\item Let $Y,Z$ be smooth manifolds with corners, and let $f:Y\to Z$ be a smooth map. This map is {\it strata preserving} if the inverse image by $f$ of a connected component $S$ of a stratum of $Z$ is a union of connected components of strata of $Y$. 
\item Let $X,Y$ be smooth manifolds with corners and $Z$ be a smooth manifold without boundary. Let $f:X\to Z$ and $g:Y\to Z$ be smooth maps. Say that $f$ and $g$ are {\it (strata) transversal} when the following is true: Let $U$ and $V$ be connected components in stratums of $X$ and $Y$ respectively. Then $f:U\to S$ and $g:V\to S$ are transversal.
\end{enumerate}
\end{Def}
We use the following proposition, which is a corollary of \cite[Proposition~A.5]{BT}.
\begin{Prop}\label{prop:BT}
Let $X,Y$ be smooth manifolds with corners and $Z$ be a smooth manifold without boundary. Let $f:X\to Z$ and $g:Y\to Z$ be smooth maps that are transversal. Then the fiber product
\[ X\times_Z Y=\{(x,y);f(x)=g(y)\}\subset X\times Y\]
is a smooth manifold with corners, whose strata have the form $U\times_Z V$ where $U\subset X$ and $V\subset Y$ are strata.
\end{Prop}

If $f,g$ are inclusions then $X\times_Z Y=(X\times Y)\cap \Delta_Z=\Delta_{X\cap Y}$, which is canonically diffeomorphic to $X\cap Y$. Thus we obtain the following corollary.

\begin{Cor}\label{cor:BT2}
Let $X,Y$ be smooth manifolds with corners that are submanifolds of a smooth manifold $Z$ without boundary. Suppose that the inclusions $X\to Z$ and $Y\to Z$ are transversal. Then the intersection $X\cap Y$ is a smooth manifold with corners, whose strata have the form $U\cap V$ where $U\subset X$ and $V\subset Y$ are strata.
\end{Cor}

The following elementary proposition is useful.

\begin{Prop}\label{prop:closure_corners}
Let $Z$ be a smooth manifold without boundary and let $X$ be a compact smooth submanifold of $Z$ with corners. Suppose that $\dim{X}>0$. Then the closure of the codimension 0 stratum $\mathrm{Int}\,X$ of $X$ in $Z$ agrees with $X$.
\end{Prop}
\begin{proof}
Let $n=\dim{X}$ and $N=\dim{Z}$. Let 
\[ \R^n\langle m\rangle=\{(x_1,\ldots,x_n); x_1\geq 0,\ldots,x_m\geq 0\}\subset\R^n\qquad(m\leq n). \]
Choose an open covering $\{O_\lambda\}_\lambda$ of $X$ by small open $N$-disks $O_\lambda$ in $Z$, say by open $\varepsilon$-balls with respect to the geodesic distance for a Riemannian metric on $Z$ for small $\ve$, so that for each $\lambda$ there is a chart $\varphi_\lambda:O_\lambda\to \varphi_\lambda(O_\lambda)\subset \R^N$ such that the restriction $\varphi_\lambda|_{O_\lambda\cap X}:O_\lambda\cap X\to \R^N$ factors as $\iota\circ \phi_\lambda$ where $\phi_\lambda:O_\lambda\cap X\to \R^n\langle m_\lambda\rangle$ is a chart and $\iota:\R^n\to \R^N$ is the inclusion $(x_1,\ldots,x_n)\mapsto (x_1,\ldots,x_n,0,\ldots,0)$. 

The codimension 0 stratum $\mathrm{Int}\,X$ of $X$ is the union of preimages of $\iota(\mathrm{Int}\,\R^n\langle m_\lambda \rangle)$ under charts $\varphi_\lambda$: $\mathrm{Int}\,X=\bigcup_\lambda O_\lambda \cap \varphi_\lambda^{-1}\iota(\mathrm{Int}\,\R^n\langle m_\lambda\rangle)$. The relation $\overline{\mathrm{Int}\,X}\subset X$ follows immediately from definition of the closure and the compactness of $X$. We prove the converse. Since $X$ is compact in $Z$, there is a finite subcovering $\{O_{\lambda_1},\ldots,O_{\lambda_r}\}$ of $X$. Then we have
\[ \begin{split}
  \overline{\mathrm{Int}\,X}&=
  \textstyle\bigcup_{i=1}^r \overline{O_{\lambda_i}\cap \varphi_{\lambda_i}^{-1}\iota(\mathrm{Int}\,\R^n\langle m_{\lambda_i}\rangle)}
  =\bigcup_{i=1}^r\varphi_{\lambda_i}^{-1}\overline{\varphi_{\lambda_i}(O_{\lambda_i})\cap\iota(\mathrm{Int}\,\R^n\langle m_{\lambda_i}\rangle)}\\
  &\textstyle\supset\bigcup_{i=1}^r\varphi_{\lambda_i}^{-1}(\varphi_{\lambda_i}(O_{\lambda_i})\cap\iota(\R^n\langle m_{\lambda_i}\rangle))
  =\bigcup_{i=1}^rO_{\lambda_i}\cap \varphi_{\lambda_i}^{-1}\iota(\R^n\langle m_{\lambda_i}\rangle)=X.
\end{split}\]
Here at the first equality we have used the identity $\overline{A_1\cup \cdots \cup A_r}=\overline{A_1}\cup\cdots\cup\overline{A_r}$ for arbitrary subsets $A_1,\ldots,A_r$ ($r<\infty$) of a topological space, and between the second and the third line we have used the relation $\overline{O\cap A}\supset O\cap \overline{A}$ for $O$ open, $A$ arbitrary, and the assumption $n\geq 1$. 
\end{proof}

\section{\bf Orientations on manifolds and their intersections}\label{s:ori}

%%%%%

For a $d$-dimensional orientable manifold $M$, we will represent an orientation on $M$ by a nowhere vanishing $d$-form of $\Omega_{\mathrm{dR}}^d(M)$ and denote by $o(M)$. If $M$ is a submanifold of an oriented Riemannian $e$-dimensional manifold $E$, then we may alternatively define $o(M)$ from an orientation $o^*_E(M)$ of the normal bundle of $M$ by the rule
\begin{equation}\label{eq:coori}
 o(M)\wedge o^*_E(M)\sim o(E). 
\end{equation}
Note that $o^*_E(M)$ is defined canonically by the Hodge star operator: $o^*_E(M)=*o(M)$. $o^*_E(M)$ is called a {\it coorientation} of $M$ in $E$. We assume that (\ref{eq:coori}) is always satisfied so that coorienation is just an alternative way to represent orientation. 

Let $N$ be an oriented smooth manifold and let $\pi:N\to E$ be a smooth map that is transversal to $M$. Then the preimage $\pi^{-1}M$ is naturally an oriented submanifold of $N$. We may define the coorientation of $\pi^{-1}M$ by $\pi^* o^*_E(M)$. We denote simply by $o^*_E(M)$ the coorientation $\pi^* o^*_E(M)$. For example, if $N=D\times E$ for an oriented manifold $D$ and if $\pi:D\times E\to E$ is the projection, then $D\times M=\pi^{-1}M$ is naturally cooriented by $o^*_E(M)$.  

If $M$ has boundary $\partial M$, we provide an induced orientation on $\partial M$ from $o(M)$ as follows: let $n$ be an inward normal vector field on $\partial M$, then we define 
\begin{equation}\label{eq:inward_first}
 o(\partial M)_x=\iota(n_x)o(M)_x. 
\end{equation}
In other words, if $n_x^*$ is the dual of $n_x$ with respect to the metric and if $o(M)_x=n_x^*\wedge \alpha_x$ for $\alpha_x\in\Omega^{d-1}_\mathrm{dR}(\partial M)$, then $o(\partial M)_x=\alpha_x$. This gives
\begin{equation}\label{eq:ind_ori_boundary}
o^*_E(\partial M)_x=(-1)^{e-1}o^*_E(M)_x\wedge n_x^*.
\end{equation}

Suppose $M$ and $M'$ are two cooriented submanifolds of $E$ of dimension $i$ and $j$ that intersect transversally. The transversality implies that at an intersection point $x$, the form $o^*_E(M)_x\wedge o^*_E(M')_x$ is a non-trivial $(2e-i-j)$-form. We define 
\begin{equation}\label{eq:coori_int}
 o^*_E(M\pitchfork M')_x=o^*_E(M)_x\wedge o^*_E(M')_x. 
\end{equation}
This depends on the order of the product. Note that if $M$ and $M'$ may have boundaries, then by (\ref{eq:ind_ori_boundary}), the induced coorientations on $\partial(M\pitchfork M')$ is
\begin{equation}\label{eq:ind_ori_bd_int}
 \begin{split}
    o^*_E(\partial M\pitchfork M')_x&=(-1)^{\mathrm{deg}\,o^*_E(M')_x}o^*_E(\partial M)_x\wedge o^*_E(M')_x,\\
    o^*_E(M\pitchfork \partial M')_x&=o^*_E(M)_x\wedge o^*_E(\partial M')_x. 
\end{split}
\end{equation}

%%%%%%%%%%%%%%%%%%%%%%%%%%%%%%
\section{\bf The complex of endomorphisms of an acyclic complex}\label{s:acyclic_cpx}

For a finitely generated, based free acyclic chain complex $(C_*,\partial)$, $C_i=\Z^{P_i}$, we consider the $\Z$-module $\End_k(C_*)$ of endomorphisms $C_*\to C_{*+k}$ of homogeneous degree $k$. The boundary operator $\partial':\End_k(C_*)\to \End_{k-1}(C_*)$ is defined by
\[ \partial'f=\partial\circ f+(-1)^{k+1}f\circ \partial. \]
Then the pair $(\End_*(C_*),\partial')$ forms a chain complex. By the canonical isomorphism $\End_k(C_*)\cong \bigoplus_{i\in\Z}C_{i+k}\otimes\Hom(C_i,\Z)$ of chain complexes and the K\"unneth theorem, one can show that the complex $(\End_*(C_*),\partial')$ is acyclic. For example, $f\in\End_0(C_*)$ is a cycle iff $\partial'f=\partial f-f\partial=0$. In particular, $\mathrm{id}\in\End_0(C_*)$ is a cycle and hence is a boundary. So there exists $g\in\End_1(C_*)$ such that 
\[ \partial' g=\partial g+g\partial =\mathrm{id}. \]
If two such endomorphisms $g,g'$ are given, then the difference $g-g'$ is a $\partial'$-cycle, since $\partial'(g-g')=\mathrm{id}-\mathrm{id}=0$. So there exists $h\in\End_2(C_*)$ such that
\[ \partial'h=\partial h-h\partial=g-g'. \]

%%%%%%%%%%%%%%%%%%%%%%%%%%%%%%%%%%%%%%%%%%%%%%%%%%%
\section{\bf Blow-up}\label{s:blow-up}

\subsection{Blow-up of the origin in $\R^i$}

Let $\wgamma^1(\R^i)$ denote the total space of the tautological oriented half-line ($[0,\infty)$) bundle over the oriented Grassmannian $\widetilde{G}_1(\R^i)\cong S^{i-1}$. Namely, $\wgamma^1(\R^i)=\{(x,y)\in S^{i-1}\times \R^i; \exists t\in[0,\infty), y=tx\}$. Then the tautological bundle is trivial and that $\wgamma^1(\R^i)$ is diffeomorphic to $S^{i-1}\times [0,\infty)$. Let
\[ B\ell(\R^i,\{0\})=\wgamma^1(\R^i) \]
and call $B\ell(\R^i,\{0\})$ the {\it blow-up} of $0$ in $\R^i$. Let $\pi:\wgamma^1(\R^i)\to \R^i$ be the map defined by $\pi=\mathrm{pr}_2\circ \varphi$ in the following commutative diagram:
\[ \xymatrix{
	\wgamma^1(\R^i) \ar[r]^-{\varphi} \ar[rd]_-{\pi}&
		S^{i-1}\times \R^i  \ar[d]^-{\mathrm{pr}_2} \ar[r]^-{\mathrm{pr}_1} & S^{i-1}\\
	& \R^i &
	}\]
where $\varphi:\wgamma^1(\R^i)\to S^{i-1}\times\R^i$ is the embedding which maps a pair $(x,y)\in S^{i-1}\times \R^i$ with $y=tx$ to $(x,y)$. If $y\neq 0$, then $\varphi(x,y)=(\frac{y}{|y|},y)$. We call $\pi$ the {\it projection} of the blow-up. Here, $\pi^{-1}(0)=\partial\wgamma^1(\R^i)$ is the image of the zero section of the tautological bundle $\mathrm{pr}_1\circ \varphi:\wgamma^1(\R^i)\to S^{i-1}$ and is diffeomorphic to $S^{i-1}$. 
\begin{Lem}\label{lem:bl_extention}\begin{enumerate}
\item The restriction of $\pi$ to the complement of $\pi^{-1}(0)=\partial\wgamma^1(\R^i)$ is a diffeomorphism onto $\R^i-\{0\}$. 
\item The restriction of $\varphi$ to the complement of $\pi^{-1}(0)$ has the image in $S^{i-1}\times \R^i$ whose closure agrees with the full image of $\varphi$ from $\wgamma^1(\R^i)$. 
\end{enumerate}
\end{Lem}
\subsection{Blow-up of $\R^i\subset \R^d$}

When $d>i\geq 0$, we put $B\ell(\R^d,\R^i)=\wgamma^1(\R^i)\times \R^{d-i}$ (the blow-up of $\R^i$ in $\R^d$) and define the projection $\varpi:B\ell(\R^d,\R^i)\to \R^d$ by $\pi\times\mathrm{id}_{\R^{d-i}}$. This can be straightforwardly extended to the blow-up $B\ell(Y,X)$ of a submanifold $X$ in a manifold $Y$ having oriented normal bundle, by replacing the normal bundle with the associated $\wgamma^1(\R^d)$-bundle over $X$.

\subsection*{Acknowledgments}
I would like to thank Professor Kenji Fukaya for encouraging me to write my result for publication. I would also like to thank Professor Masamichi Takase for valuable comments on spin 4-manifolds and would like to thank Professor Katrin Wehrheim and Tatsuro Shimizu for helpful comments. I would like to thank the referee for the careful reading and for lots of helpful and constructive comments. I am supported by JSPS Grant-in-Aid for Young Scientists (B) 23740040.

\end{document}